\patchcmd{\@addmarginpar}{\ifodd\c@page}{\ifodd\c@page\@tempcnta\m@ne}{}{}
\renewcommand{\marginpar}[2][]{}
\newtheorem{example}[theorem]{Example}
\newtheorem{remark}[theorem]{Remark}
\newtheorem{result}[theorem]{Result}
\newcommand{\generate}[2]{\langle #1 \rangle_{\mathsf{#2}}}
\newcommand{\Z}{\mathbb{Z}}
\newcommand{\R}{\mathbb{R}}
\newcommand{\C}{\mathbb{C}}
\newcommand{\D}{\mathrm{D}}
\newcommand{\cL}{\mathcal{L}}
\newcommand{\cP}{\mathcal{P}}
\newcommand{\fa}{\mathfrak{a}}
\newcommand{\SU}{{\rm SU}}
\newcommand{\SO}{{\rm SO}}
\newcommand{\GL}{{\rm GL}}
\newcommand{\SL}{{\rm SL}}
\newcommand{\so}{\mathfrak{so}}
\newcommand{\mf}[1]{\mathfrak{#1}}
\newcommand{\mb}[1]{\mathbf{#1}}
\newcommand{\mc}[1]{\mathcal{#1}}
\newcommand{\down}{\shortdownarrow}
\newcommand{\myparallel}{{\mkern3mu\vphantom{\perp}\vrule depth 0pt\mkern2mu\vrule depth 0pt\mkern3mu}}
\renewcommand{\parallel}{\myparallel}
\newcommand{\todo}[1]{
\textcolor{blue}{TODO: #1}
}
\newcommand{\Int}{{\rm Int}}
\renewcommand{\ker}{{\rm ker}}
\newcommand{\img}{{\rm im}}
\newcommand{\id}{{\mathds1}}
\newcommand{\conv}{{\rm conv}}
\newcommand{\linspan}{{\rm span}}
\newcommand{\interior}{{\rm int}}
\newcommand{\Ad}{{\rm Ad}}
\newcommand{\ad}{{\rm ad}}
\newcommand{\stab}{\mathsf{stab}}
\newcommand{\reach}{\mathsf{reach}}
\newcommand{\derv}{\mathsf{derv}}
\newcommand{\sol}{\mathsf{sols}}
\renewcommand{\epsilon}{\varepsilon}
\title{Reduced Control Systems on Symmetric Lie Algebras}
\author{
Emanuel Malvetti\thanks{School of Natural Sciences, Technische Universit\"at M\"unchen, 85737 Garching, Germany, and Munich Centre for Quantum Science and Technology (MCQST) \& Munich Quantum Valley (MQV), 80799 M{\"u}nchen, Germany}
\and 
Gunther Dirr\thanks{Department of Mathematics, University of W{\"u}rzburg, 97074 W{\"u}rzburg, Germany}
\and 
Frederik vom Ende\thanks{Dahlem Center for Complex Quantum Systems, Freie Universit{\"a}t Berlin, 14195 Berlin, Germany} 
\and
\mbox{Thomas Schulte-Herbr\"uggen\footnotemark[1]}}
\begin{document}


\maketitle
\centerline{\today}

\begin{abstract}
For a symmetric Lie algebra $\mf g=\mf k\oplus\mf p$ we consider a class of bilinear or more general control-affine systems on $\mf p$ defined by a drift vector field $X$ and control vector fields $\ad_{k_i}$ for $k_i\in\mf k$ such that one has
fast and full control on 
the corresponding compact group $\mb K$. We show that under quite general assumptions on $X$ such a control system is essentially equivalent to a natural reduced system on a maximal Abelian subspace $\mf a\subseteq\mf p$, and likewise to related differential inclusions defined on $\mf a$. We derive a number of general results for such systems and as an application we prove a simulation result with respect to the preorder induced by the Weyl group action. 
\end{abstract}

\begin{keywords}
Reduced control system, control-affine system, symmetric Lie algebra
\end{keywords}

\begin{MSCcodes}
37N20, 
93B03, 
93B05, 
93D99, 
93C10 
\end{MSCcodes}

\section{Introduction}

\marginpar{Can/should we cite the extended abstract:~\cite{MTNS_Malvetti}?}

\subsection{Motivation}

We consider control systems that admit fast controllability on certain degrees of freedom represented by a Lie group action. 
Intuitively, one should be able to factor out these degrees of freedom, and so our goal is to define an associated reduced control system on the remaining degrees of freedom, and to show that the two systems are essentially equivalent, in a sense which will be specified later.

This idea has been considered in~\cite[Ch.~22]{Agrachev04} for commuting controls under the assumption that the reduced state space is again a manifold. 
In our setting the controls do not commute and the reduced state space has singularities, which are the source of most complications.
The idea of considering a reduced state space---even if the reduced control system is not defined explicitly---has come up several times in quantum control theory. If the reduced state space is a Riemannian symmetric space, strong results can be derived~\cite{Khaneja01b,Burgarth23}. Unfortunately such systems are rare in practice. 
Often the quotient spaces are rather complicated, and one contents oneself with finding diameters of such spaces to derive speed limits~\cite{Gauthier21}.
Our paper will generalize the ideas presented in~\cite{Sklarz04,rooney2018,CDC19} in a mathematically rigorous manner.

\marginpar{Make notation consistent with worked example}
We give a simple example to motivate our work. Consider the closed unit disk $D\subset\R^2$ in the plane and let $X$ be some complete and sufficiently smooth vector field on $D$, such that $D$ is invariant under the flow of $X$. The compact Lie group $\SO(2)$ acts on the disk by rotations. Now consider a control system on $D$ with constant drift $X$ and fast control on the action of $\SO(2)$. Without the drift term, this means that we can move arbitrarily quickly within the orbits of the group action, which in this case are simply the concentric circles about the origin. Including the drift term this is still approximately true. Hence points on the same orbit may be considered equivalent, and the question becomes how one can move between orbits. This suggests that there should be a natural way to define a corresponding control system on the quotient space $D/\SO(2)\cong[0,1]$, which in our example is the set of all radii\footnote{Note that the two boundary points of the quotient space have a different meaning. Here $1$ comes from the boundary of the disk, whereas $0$ originates from the singular $\SO(2)$-orbit. This is important for defining the appropriate notion of differentiability in the quotient space, see~\cite[App.~B]{diag}.}. Moreover, we want this reduced control system to be equivalent to the original system in some precise sense, so that no information is lost.

Let us see what this reduced control system should look like in our simple example. Instead of working on the quotient space, which in general is not a manifold, we will look at a subspace of our state space which intersects all orbits a finite number of times, and does so orthogonally. Here we choose the intersection of the horizontal axis with the disk, i.e.~the line segment $A=[-1,1]\cdot e_1\subset\R^2$ where $e_1=(1,0)$. This will be our new reduced state space. If we restrict the drift vector field $X$ to the axis $A$ and project the vector field orthogonally onto the axis, this yields some possible dynamics on the reduced space.
Using the fast control we can rotate our horizontal axis $A$ to any other axis, and obtain a different vector field on the reduced space. Collecting all of these vector fields defines the reduced control system.

We can plot these vector fields all together in a single graph, where the abscissa is the reduced state space, see Figure~\ref{fig:motivation}. In the example $X$ is affine linear, and so are the restricted vector fields and hence the graph is a collection of lines. This can be seen as a set-valued function, and the reduced control system can be seen as the corresponding differential inclusion, as we will show below.

\begin{figure}
\centering
\includegraphics[width=0.35\textwidth]{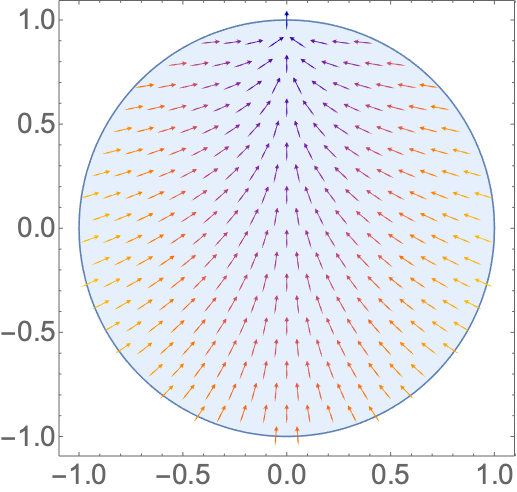}
\hspace{5mm}
\includegraphics[width=0.55\textwidth]{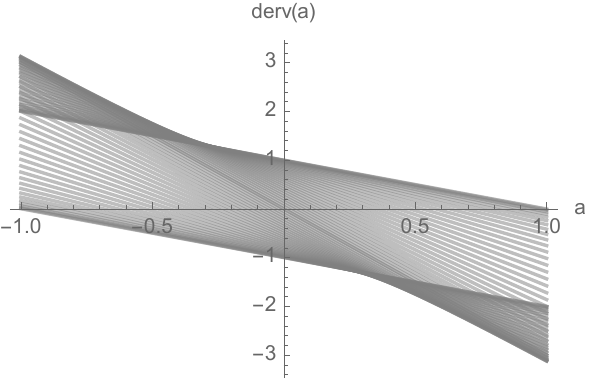}
\caption{(Color online) Left: A drift vector field $X$ on the disk $D$. Right: A plot of the corresponding differential inclusion defined on the interval $[-1,1]\cong A$.}
\label{fig:motivation}
\end{figure}

We will come back to this example in Section~\ref{sec:worked-example} where we use it to illustrate how the method of reduced control systems can be used in practice.

\subsection{Outline}

A concise introduction to symmetric Lie algebras is provided in Section~\ref{sec:sym-lie-alg} and made concrete with some well-known matrix examples.

In Section~\ref{sec:control-systems} we introduce the control systems studied in this paper, in particular we define the reduced control system in Section~\ref{sec:reduced-systems}. 
Moreover, in Sections~\ref{sec:op-lift-orig} and~\ref{sec:op-lift-red} we briefly address the operator lift of the original and reduced control systems. 
Some basic properties of the reduced control system are collected in Appendix~\ref{app:ctrl-sys-props}.

We then go on to prove our main results in Section~\ref{sec:equivalence}, establishing the equivalence of the reduced control system and the original one. 
We start out with a local equivalence result in Proposition~\ref{prop:local-equivalence}, followed by the global equivalence result, which will be separated into a projection, see Theorem~\ref{thm:projection-equivalence}, and a lift, see Theorem~\ref{thm:lift-equivalence}. 

In Section~\ref{sec:consequences} we explore how statements about important control theoretic concepts such as reachability, viability, controllability and accessibility can be determined using the reduced control system and lifted to the original one.

As an application, in Section~\ref{sec:simulation} we give a simulation result which establishes the preorder induced by the Weyl group action as a kind of resource, see Theorem~\ref{thm:simulation}.

Finally, the motivational example given above is worked out in detail in Section~\ref{sec:worked-example}.

\subsection{Symmetric Lie Algebras} \label{sec:sym-lie-alg}

The control systems studied in this paper are defined using symmetric Lie algebras. 
Although this setting might seem abstract, it is actually quite familiar as it generalizes common matrix diagonalizations, such as eigenvalue and singular value decompositions, as shown in the examples below.
In fact the results of this paper should be understandable without any prior knowledge of symmetric Lie algebras, and we recommend that the reader interprets the results using one of the concrete examples.

In order to introduce notation we give a very concise summary here. 
We will frequently use results from our previous work~\cite{diag}, and for a thorough introduction to symmetric Lie algebras we refer to Appendix A of said paper. 

A symmetric Lie algebra is a (real, finite dimensional) Lie algebra $\mf g$ together with an involutive Lie algebra automorphism $s$. 
This yields a vector space decomposition $\mf g=\mf k\oplus\mf p$ into $+1$ and $-1$ eigenspaces of $s$ which we call Cartan-like decomposition since it generalizes the usual Cartan decomposition. 
Importantly we have the following commutator relations:
$[\mf k,\mf k]\subseteq\mf k,\,[\mf k,\mf p]\subseteq\mf p,\,[\mf p,\mf p]\subseteq\mf k$.
Given a Lie group $\mb G$ with Lie algebra $\mf g$, let $\mb K\subseteq\mb G$ be the analytic subgroup generated by $\mf k$. We say that the pair $(\mb G,\mb K)$ is associated to the symmetric Lie algebra. One can show that the adjoint action of $\mb K$ on $\mf g$ leaves $\mf p$ invariant.
The corresponding quotient map is denoted $\pi:\mf p\to\mf p/\mb K$.
We will only consider symmetric Lie algebras which are semisimple and orthogonal. In particular there exists an inner product on $\mf g$ which is invariant under $s$ and $\ad_{\mf k}$, and this implies that the group $\Ad_{\mb K}$ is compact.
If $\mf a\subseteq\mf p$ is a maximal Abelian subspace, then every point $x\in\mf p$ can be mapped to $\mf a$ by some $K\in\mb K$, that is, $\Ad_K(x)\in\mf a$. This generalizes the idea of diagonalization. However, the resulting element $\Ad_K(x)\in\mf a$ is not unique, since the elements of $\mb K$ which leave $\mf a$ invariant can act non-trivially on $\mf a$. The resulting group of transformations of $\mf a$ is called the Weyl group, denoted $\mb W$, and it is a finite group generated by reflections. A convenient fact about Weyl groups is that they admit a (closed) Weyl chamber $\mf w\subseteq\mf a$, such that each orbit $\Ad_{\mb K}(x)$ intersects $\mf w$ in exactly one point\footnote{Put differently, and more generally, the Weyl chamber $\mf w$ and the quotients $\mf p/\mb K$ and $\mf a/\mb W$ are isometrically isomorphic.}. If this point lies in the relative interior of $\mf w$ (w.r.t.~$\mf a$), then $x$ is called regular. Note that even if we fix $\Ad_K(x)\in\mf a$, the element $K\in\mb K$ is still not unique. 

Orthogonality of the symmetric Lie algebra has some important geometric consequences. Let $x\in\mf p$ and consider
$\mf p_x:=\{y\in\mf p : [x,y]=0\}$, i.e.~the commutant of $x$ in $\mf p$. A key fact is that the orbit of $\mb K$ through $x$, denoted by $\mb Kx$, is orthogonal to $\mf p_x$ at $x$. Since the tangent space of the orbit at $x$ can be identified with $\ad_{\mf k}(x)$, we can define the orthogonal projection $\Pi_x:\mf p\to\mf p$ with image $\mf p_x$ and kernel $\ad_{\mf k}(x)$, yielding the useful equation
\begin{align} \label{eq:orth-orbs}
\Pi_x(\ad_k(x))=0 \quad \text{ for all } x\in\mf p,\,k\in\mf k.
\end{align}

Now let us give some examples relating certain matrix diagonalizations to symmetric Lie algebras.
In fact many common matrix diagonalizations---such as eigenvalue and singular value decompositions---and also some uncommon ones, can be rephrased in the setting of symmetric Lie algebras~\cite{Kleinsteuber}.

\begin{example}[Hermitian EVD] \label{ex:hermitian-evd}
Consider the pair $(\SL(n,\C), \SU(n))$ which is associated to the semisimple orthogonal symmetric Lie algebra $\mf{sl}(n,\C)=\mf{su}(n)\oplus\mf{herm}_0(n,\C)$. 
The adjoint action of $X\in\SL(n,\C)$ on $Y\in\mf{sl}(n,\C)$ is given by conjugation, that is, $\Ad_X(Y)=XYX^{-1}$ and similarly for $X,Y\in\mf{sl}(n,\C)$ it holds that $\ad_X(Y)=[X,Y]=XY-YX$.
The automorphism here is $s(X)=-X^*$. 
A convenient choice of a maximal Abelian subspace of $\mf{herm}_0(n,\C)$ is the subset of all diagonal matrices. 
These will automatically be real and traceless. 
We denote this set by $\mf d_0(n,\R)$. 
Hence the symmetric Lie algebra encapsulates the idea of unitary diagonalization of Hermitian matrices.
The corresponding Weyl group is isomorphic to the symmetric group $S_n$ acting on $n$ elements. The action on $\mf d_0(n,\R)$ is given by permutation of the diagonal elements of the matrix. A natural choice of a Weyl chamber is the subset of $\mf d_0(n,\R)$ with the diagonal elements in non-increasing order. 
\end{example}

\begin{example}[Real SVD] \label{ex:real-svd}
The real singular value decomposition also corresponds to a symmetric Lie algebra, although the connection is less obvious than in Example~\ref{ex:hermitian-evd}.
The pair $(\SO(p,q), \SO(p)\times\SO(q))$ is associated to the semisimple orthogonal symmetric Lie algebra $\mf{so}(p,q)$ with $\mf k=\so(p)\oplus\so(q)$ and $\mf p$ equal to the set of matrices of the form $\left(\begin{smallmatrix}0&B\\B^\top&0\end{smallmatrix}\right)$ where $B\in\R^{p\times q}$. 
A maximal Abelian subspace is given by such matrices with $B$ diagonal, and the Weyl group acts by permutations and sign flips, so it is isomorphic to the signed symmetric group $\Z_2\wr S_{p\wedge q}$ (here $\wr$ denotes the wreath product). 
The Weyl chamber consists of all diagonal matrices with non-negative diagonal elements in non-increasing order.
The connection to the SVD stems from the adjoint action which is $\Ad_{(V,W)}\left(\begin{smallmatrix}0&B\\B^\top&0\end{smallmatrix}\right)=\left(\begin{smallmatrix}0&VBW^\top\\(VBW^\top)^\top&0\end{smallmatrix}\right)$.
\end{example}

As a special case we obtain the following, which also covers the motivational example given above.

\begin{example}[Polar decomposition of $\R^n$] \label{ex:polar-dec}
Choosing $p=n$ and $q=1$ in Example~\ref{ex:real-svd} yields the polar decomposition of $\R^n$, meaning that $\mf p\cong\R^n$ and $\mf k\cong\mf{so}(n)$. The maximal Abelian subspaces are exactly the lines through the origin, with the Weyl group being isomorphic to $\Z_2$.
\end{example}

\section{Control Systems} \label{sec:control-systems}

For the remainder of the paper we will be working with a semisimple orthogonal symmetric Lie algebra $(\mf g,s)$ with Cartan-like decomposition $\mf g=\mf k\oplus\mf p$ and an associated pair $(\mb G,\mb K)$ with $\mb K$ compact and connected\footnote{This is always possible, for instance by choosing $\mb G=\Int(\mf g)$ and $\mb K=\Int_{\mf k}(\mf g)$, see~\cite[Lem.~A.20]{diag}.}. Moreover $\mf a\subseteq\mf p$ denotes some choice of a maximal Abelian subspace, with Weyl group $\mb W$ and a closed Weyl chamber $\mf w$.

We start by defining the class of control-affine systems on $\mf p$ that we want to study in the sequel. 
We are given a vector field $X$ on $\mf p$, called the \emph{drift vector field}, and a set of \emph{control directions} $k_1,\ldots,k_m\in\mf k$.
The control system we wish to study in this work is the following:
\begin{align} \label{eq:control-affine}
p'(t)=X(p(t)) + \sum_{i=1}^m u_i(t)\ad_{k_i}(p(t)), \quad p(0)=p_0\in\mf p    
\tag{\sf A}
\end{align}
where $\ad_x$ denotes the adjoint operator of $x$, that is, $\ad_x(y):=[x,y]$. 
We will always consider solutions on an interval $I$ of the form $[0,T]$ with $T\geq0$, or of the form $[0,\infty)$.
The \emph{control functions} $u_i:I\to\R$ are required to be locally integrable, 
refer to \cite[App.~C]{Sontag98}.
A solution $p:I\to\mf p$ is an absolutely continuous function satisfying~\eqref{eq:control-affine} almost everywhere for some choice of control functions. 
Of course when $X$ is linear then~\eqref{eq:control-affine} is in fact a bilinear control system~\cite{Elliott09}.

The two key assumptions made throughout this paper are:
\begin{enumerate}[(I)] 
\item \label{it:full-ctrl} The control directions generate the full Lie algebra: $\generate{k_1,\ldots,k_m}{LA}=\mf k$.
\item \label{it:fast-ctrl} The control functions $u_i:I\to\R$ may be unbounded, as they are only required to be locally integrable.
\end{enumerate}

\noindent Under these assumptions, and if we neglect the drift $X$, we can move between any two points of a given $\mb K$-orbit in $\mf p$ arbitrarily quickly, cf.~\cite[Prop.~2.7]{Elliott09}. We say that we have \emph{fast and full control} on the Lie group $\mb K$---and thus on its orbits in $\mf p$.
Some results will use the following strengthened version of~\ref{it:full-ctrl}:
\begin{enumerate}[(I')] 
\item \label{it:full-ctrl-2} The control directions span the full Lie algebra: $\linspan({k_1,\ldots,k_m})=\mf k$.
\end{enumerate}

\subsection{Reduced Control System} \label{sec:reduced-systems}

Assumptions~\ref{it:full-ctrl} \&~\ref{it:fast-ctrl} imply that we can move into the maximal Abelian subspace $\mf a$ at any time. This motivates us to define a reduced control system on $\mf a$. First we introduce some concepts. For every $K\in \mb K$, we define the \emph{induced vector field} on $\mf a$
\begin{align*}
X_K:=\Pi_{\mf a}\circ\Ad_K^\star(X)\circ\,\iota\,,
\end{align*}
where $\Pi_{\mf a}:\mf p\to\mf a$ is the orthogonal projection on $\mf a$, and $\iota:\mf a\hookrightarrow\mf p$ is the inclusion\footnote{In the following we will usually suppress the inclusion $\iota$ from the notation.}. By $\Ad_K$ we denote the adjoint action of $K$ on $\mf p$ and $\Ad_K^\star$ denotes the pullback action, that is, $\Ad_K^\star(X)=\Ad_K^{-1}\circ X\circ\Ad_K$. If $X$ is linear, then so are all $X_K$. We denote by $\mf X:=\{X_K:K\in\mb K\}$ the set of induced vector fields.

Now we can define the \emph{reduced control system} by
\begin{align} \label{eq:reduced}
a'(t)=X_{K(t)}(a(t)), \quad a(0)=a_0\in\mf a\,,
\tag{\sf R}
\end{align}
where the control function $K:I\to \mb K$ is required to be measurable. Again, a solution is an absolutely continuous function $a:I\to\mf a$ which satisfies~\eqref{eq:reduced} almost everywhere. 

Moreover, we define the set of achievable derivatives at $a\in\mf a$ by
\begin{align*}
\derv(a)=\{X_K(a):K\in\mb K\}\subset T_a\mf a\cong\mf a\,.
\end{align*}
Then we can also define a differential inclusion corresponding to~\eqref{eq:reduced} by
\begin{align} \label{eq:inclusion}
a'(t)\in \derv(a(t)), \quad a(0)=a_0\in\mf a\,,
\tag{\sf I}
\end{align}
where $a:I\to\mf a$ needs to be absolutely continuous and satisfy~\eqref{eq:inclusion} almost everywhere. In fact~\eqref{eq:reduced} and~\eqref{eq:inclusion} are equivalent, i.e.~they have the same solutions. This follows from Filippov's theorem, cf.~\cite[Thm.~2.3]{Smirnov02}.
The difference between~\eqref{eq:reduced} and~\eqref{eq:inclusion} is that the latter ``forgets'' about the controls, and leads to a more static, geometric picture. We will switch between both viewpoints whenever it simplifies things.

Often it will be convenient to consider a relaxed version of the differential inclusion above given by 
\begin{align} \label{eq:relaxed}
a'(t)\in \conv(\derv(a(t))), \quad a(0)=a_0\in\mf a\,,
\tag{\sf C}
\end{align}
where $\conv$ denotes the convex hull. This will slightly enlarge the set of solutions,
however, every solution of~\eqref{eq:relaxed} can still be approximated uniformly on compact time intervals by solutions to~\eqref{eq:inclusion}, see~\cite[Ch.~2.4, Thm.~2]{Aubin84}.

\subsection{Operator Lift of the Original Control System} \label{sec:op-lift-orig} 

Both the original and the reduced control system can be lifted to the operator level.
We collect some results here as they are of general interest, but they will not be important for the remainder of this paper.
For this section we use Assumption~\ref{it:fast-ctrl}, but we drop Assumption~\ref{it:full-ctrl}.
Let $X\in\mf{gl}(\mf p)$ be a linear\footnote{
If one wants to consider more general vector fields, the operator lift of the control-affine system will typically be defined on  
an infinite-dimensional space.} 
vector field on $\mf p$ and consider the operator lift of~\eqref{eq:control-affine} on $\GL(\mf p)$ given by the following bilinear system:
\begin{align} \label{eq:op-lift}
L'=\Big(X+\sum_{i=1}^m u_i(t)\ad_{k_i}\Big)L\,, \quad L(0)=\id\in\GL(\mf p)\,. \tag{\sf OP}
\end{align}

Such right-invariant control systems defined on Lie groups are highly structured and allow for the application of Lie semigroup theory, see~\cite{Lawson99} for a concise introduction. This system is characterized by the set $\Omega:=\{X+\ad_k:k\in\linspan(k_1,\ldots,k_m)\}$. The fact that $\Omega$ is not bounded causes some problems but will be remedied below by passing to the reduced control system. We write $\generate{\Omega}{wedge}, \generate{\Omega}{LW}, \generate{\Omega}{LS}, \generate{\Omega}{LA}$ for the wedge, Lie wedge, Lie saturate, and Lie algebra generated by $\Omega$, respectively. Again see~\cite{Lawson99} for the definitions and main results.

\begin{proposition} \label{prop:lift-wedges}
Let $\mf h=\generate{k_1,\ldots,k_m}{LA}$ and let $\mb H\subseteq\mb K$ be the corresponding analytic subgroup. 
The following statements hold.\smallskip
\begin{enumerate}[(i)]
\item \label{it:wedges:W} $\generate{\Omega}{wedge}=\R_+ X+\linspan(\ad_{k_1},\ldots,\ad_{k_m})$ and $\generate{\Omega}{LW}\supseteq\generate{\ad_{\mf h},\Ad_{\mb H}^\star(X)}{wedge}$.
\item \label{it:wedges:LW} Assume that there is a Lie wedge $\mf v$ such that $\ad_{\mf h}\subseteq E(\mf v)$ and such that $X\in \mf v\setminus E(\mf v)$.
Then $\generate{\Omega}{LW}=\generate{\ad_{\mf h},\Ad_{\mb H}^\star(X)}{wedge}$.
\item \label{it:wedges:LS} If, in addition, $\ad_{\mf h}$ and $\mf v$ are global\,\footnote{Recall that this means that the Lie algebra $\ad_{\mf h}$ generates a closed Lie subgroup in $\GL(\mf p)$.}, then $\generate{\Omega}{LS}=\generate{\ad_{\mf h},\Ad_{\mb H}^\star(X)}{wedge}$.
\item \label{it:wedges:LA} If $X,\ad_{h_i}\subseteq\mf l$ for some compact Lie algebra\footnote{By this we mean that the Lie group generated by $\mf l$ in $\GL(\mf p)$ is compact.} $\mf l$, then $\generate{\Omega}{LS}=\generate{X,\ad_{\mf h}}{LA}$ and, in particular $\overline{\reach_{\ref{eq:op-lift}}(\id)}$ is a Lie subgroup.
\end{enumerate}
\end{proposition}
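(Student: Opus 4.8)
\textbf{Proof proposal for Proposition~\ref{prop:lift-wedges}.}

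The plan is to handle the four parts in order, as each builds on the previous one, and to exploit the general theory of Lie semigroups from~\cite{Lawson99} together with the structural facts about $(\mf g,s)$ recalled above. For part~\eqref{it:wedges:W}, the wedge $\generate{\Omega}{wedge}$ is by definition the closed convex cone generated by $\Omega=\{X+\ad_k:k\in\linspan(k_1,\ldots,k_m)\}$; since $\linspan(\ad_{k_1},\ldots,\ad_{k_m})$ is a linear subspace contained in $\Omega-X$, and $X\in\Omega$ (take $k=0$), a direct computation of the conical hull gives $\R_+X+\linspan(\ad_{k_1},\ldots,\ad_{k_m})$. For the Lie wedge lower bound, I would note that the edge of any Lie wedge containing $\Omega$ must contain the Lie algebra generated by the linear directions $\ad_{k_i}$, namely $\ad_{\mf h}$; conjugating $X$ by the subgroup $\mb H=\langle\exp\ad_{\mf h}\rangle$ (which lies in the Lie wedge because the edge exponentiates to a subgroup acting by inner automorphisms) forces $\Ad_{\mb H}^\star(X)$ into any Lie wedge containing $\Omega$, hence so is the wedge it generates together with $\ad_{\mf h}$.

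For part~\eqref{it:wedges:LW}, the hypothesis supplies a candidate Lie wedge $\mf v$ with $\ad_{\mf h}\subseteq E(\mf v)$ and $X\in\mf v\setminus E(\mf v)$; I would first check that $\Omega\subseteq\mf v$ (since $\ad_k\in E(\mf v)\subseteq\mf v$ and $\mf v$ is closed under adding edge elements to interior points), and then verify that $\generate{\ad_{\mf h},\Ad_{\mb H}^\star(X)}{wedge}$ is itself a Lie wedge whose edge contains $\ad_{\mf h}$ — this is where the symmetric-space structure enters, because $\Ad_{\mb H}^\star(X)$ is an $\mb H$-orbit and the wedge over an orbit of a compact group with a fixed edge is automatically $E(\mf v)$-invariant, hence a Lie wedge by the standard criterion (invariance of $\mf w$ under $e^{\ad X}$ for $X\in E(\mf w)$). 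Combining the lower bound from~\eqref{it:wedges:W} with the upper bound $\generate{\Omega}{LW}\subseteq\mf v$ and minimality among Lie wedges gives equality. Part~\eqref{it:wedges:LS} then follows from the general principle that for \emph{global} Lie wedges the Lie saturate coincides with the Lie wedge itself — here one invokes the globality hypothesis on $\ad_{\mf h}$ and $\mf v$ so that the relevant one-parameter semigroups are closed, and applies the characterization of the Lie saturate as the largest Lie wedge contained in the tangent cone, using~\cite{Lawson99}.

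For part~\eqref{it:wedges:LA}, the key simplification is compactness: if $X,\ad_{h_i}\in\mf l$ with $\mf l$ generating a compact subgroup of $\GL(\mf p)$, then $\mf l$ itself is a (global) Lie wedge equal to its own edge, so the Lie saturate of any subset of $\mf l$ is the Lie algebra it generates — this is the well-known fact that on a compact group every reachable set is a subgroup, equivalently the system is controllable on the subgroup generated by the directions. Thus $\generate{\Omega}{LS}=\generate{X,\ad_{\mf h}}{LA}$, and since $\mf l$ is compact this generated Lie algebra integrates to a compact, hence closed, subgroup; by the standard Lie-semigroup correspondence the closure of the reachable set $\overline{\reach_{\ref{eq:op-lift}}(\id)}$ equals the connected subgroup with Lie algebra $\generate{\Omega}{LS}$, which is this compact subgroup. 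I expect the main obstacle to be part~\eqref{it:wedges:LW}: verifying that $\generate{\ad_{\mf h},\Ad_{\mb H}^\star(X)}{wedge}$ is genuinely a Lie wedge (not merely a wedge) requires care with the edge and with the $\mb H$-invariance, and matching it against the abstract $\mf v$ needs the minimality/uniqueness properties of Lie wedges generated by a set; the globality bookkeeping in~\eqref{it:wedges:LS} is the second delicate point, since one must ensure the exponentials of the edge directions close up.
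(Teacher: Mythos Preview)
Your overall strategy matches the paper's, and parts~\eqref{it:wedges:W} and~\eqref{it:wedges:LA} are essentially correct as sketched. However, there is a genuine gap in your treatment of~\eqref{it:wedges:LW}, and your argument for~\eqref{it:wedges:LS} is too vague to stand on its own.

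In~\eqref{it:wedges:LW} you correctly want to show that $\mf w:=\generate{\ad_{\mf h},\Ad_{\mb H}^\star(X)}{wedge}$ is a Lie wedge, but you only argue that $\ad_{\mf h}\subseteq E(\mf w)$ and that $\mf w$ is invariant under $e^{\ad_{\ad_h}}$ for $h\in\mf h$. That is not enough: the Lie-wedge condition demands invariance under $e^{\ad_Y}$ for \emph{every} $Y\in E(\mf w)$, and you have not ruled out $E(\mf w)\supsetneq\ad_{\mf h}$. This is exactly where the hypothesis $X\in\mf v\setminus E(\mf v)$ is used. The paper argues as follows: since $\mf v$ is a Lie wedge with $\ad_{\mf h}\subseteq E(\mf v)$, every $\Ad_H^\star(X)$ lies in $\mf v\setminus E(\mf v)$, and hence so does any convex combination; therefore every element of $\mf w\setminus\ad_{\mf h}$ has the form $\ad_h+\lambda Y$ with $\lambda>0$, $Y\in\conv(\Ad_{\mb H}^\star(X))$, and lies outside $E(\mf v)$. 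This yields $\mf w\cap E(\mf v)=\ad_{\mf h}$. Since $\mf w\subseteq\mf v$ one has $E(\mf w)\subseteq E(\mf v)$, so $E(\mf w)=\ad_{\mf h}$, and now your $\mb H$-invariance argument finishes the job. Your phrase ``automatically $E(\mf v)$-invariant'' is the wrong target and obscures precisely this step. (Also, $\mb H$ need not be compact here---it is merely the analytic subgroup of $\mb K$ with Lie algebra $\mf h$---though this does not affect the orbit-invariance argument.)

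For~\eqref{it:wedges:LS}, saying that globality makes the Lie saturate coincide with the Lie wedge is circular unless you actually show $\mf w$ is global. The paper applies the globality criterion \cite[Prop.~1.37]{HN93}, which requires $\mf w\subseteq\mf v$ with $\mf v$ global, $E(\mf w)=\ad_{\mf h}$ global, and $E(\mf v)\cap\mf w\subseteq E(\mf w)$. The first two are assumed; the third is precisely the claim $\mf w\cap E(\mf v)=\ad_{\mf h}$ established in~\eqref{it:wedges:LW}. You should invoke this criterion explicitly rather than appeal to a general principle.
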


\begin{proof}
\ref{it:wedges:W}:
The first part is clear since wedges are closed by definition.
Since $\generate{\Omega}{LW}$ is a wedge, it contains the linear span of all $\ad_{k_i}$, which must be contained in the edge $E(\generate{\Omega}{LW})$.
The latter is a Lie algebra, hence 
$E(\generate{\Omega}{LW})\supseteq\ad_{\mf h}$.
Since $\generate{\Omega}{LW}$ is a Lie wedge, for any $h\in\mf h$ it contains $e^{\ad_{\ad_h}} X=\Ad_{e^h}X\Ad_{e^{-h}}$.
\ref{it:wedges:LW}:
The inclusion $\supseteq$ was shown in~\ref{it:wedges:W}. 
As for the converse: let $\mf w=\generate{\ad_{\mf h},\Ad_{\mb H}^\star(X)}{wedge}$. 
It is enough to show that $\mf w$ is a Lie wedge.
First we show that $\mf w\cap E(\mf v)=\ad_{\mf h}$:
If $w\in\mf w\setminus\ad_{\mf h}$, then $w=\ad_h+\lambda Y$ where $h\in\mf h$, $\lambda>0$ and $Y\in\conv(\Ad_{\mb H}^\star(X))$. 
But since $\mf v$ is a Lie wedge, $Y\in\mf v\setminus E(\mf v)$, and $w\notin E(\mf v)$. This proves the claim.
Finally we can show that $\mf w$ is a Lie wedge.
Clearly $\mf w\subseteq \mf v$, and so $E(\mf w)\subseteq E(\mf v)$.
Hence, by the above claim, $E(\mf w)=\ad_{\mf h}$, and since $\mf w$ is invariant under the action of $\ad_{\mf h}$, it is a Lie wedge.
\ref{it:wedges:LS}:
It suffices to show that $\mf w$ is global.
For this we will use~\cite[Prop.~1.37]{HN93}. 
Since $\mf v$ and $E(\mf w)=\ad_{\mf h}$ (by~\ref{it:wedges:LW}) are global by assumption, we only need to show that $E(\mf v)\cap\mf w\subseteq E(\mf w)$. 
But this follows immediately from the claim above.
\ref{it:wedges:LA}: This is a consequence of \cite[Prop.~6.3]{Lawson99}.
\end{proof}

\subsection{Operator Lift of the Reduced Control System} \label{sec:op-lift-red}

The operator lift of~\eqref{eq:reduced}, still assuming that $X$ is linear, is defined by
\begin{align} \label{eq:reduced-lift} \tag{\sf RL}
L'(t)=X_{K(t)}L(t), \quad L(0)=\id\in\GL(\mf a)\,.
\end{align}


\begin{remark}
Although the control systems on $\mf p$ and $\mf a$ are equivalent, the same is not true for the operator lifts on $\GL(\mf p)$ and $\GL(\mf a)$. More precisely, a reachable transformation in $\GL(\mf a)$ will in general not correspond to the restriction of some reachable transformation in $\GL(\mf p)$.
In this sense the operator lift~\eqref{eq:reduced-lift} is a somewhat artificial construction which, however, turns out to be very useful.
\end{remark}

\begin{lemma} \label{lemma:reduced-wedge}
Let $X\in\mf{gl}(\mf p)$ be a linear vector field on $\mf p$.
Then it holds that $\generate{\mf X}{wedge}=\Pi_{\mf a}\circ\generate{\ad_{\mf k},\Ad_{\mb K}^\star(X)}{wedge}\circ\iota$.
\end{lemma}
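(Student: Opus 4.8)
The plan is to prove the identity $\generate{\mf X}{wedge}=\Pi_{\mf a}\circ\generate{\ad_{\mf k},\Ad_{\mb K}^\star(X)}{wedge}\circ\iota$ by establishing the two inclusions separately, exploiting the fact that both sides are wedges (closed convex cones) in the finite-dimensional vector space $\gl(\mf a)$, so it suffices to compare their generating sets and then take closed convex conical hulls. Write $\mf W:=\generate{\ad_{\mf k},\Ad_{\mb K}^\star(X)}{wedge}\subseteq\gl(\mf p)$ for brevity. Note first that the map $T\mapsto\Pi_{\mf a}\circ T\circ\iota$ is linear and continuous from $\gl(\mf p)$ to $\gl(\mf a)$, hence it sends wedges to wedges and commutes with taking closed convex conical hulls; this will let me push the operation $\Pi_{\mf a}\circ(-)\circ\iota$ through the \generate{\cdot}{wedge} on the right-hand side.

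For the inclusion $\subseteq$: each induced vector field $X_K=\Pi_{\mf a}\circ\Ad_K^\star(X)\circ\iota$ is by definition the image under $\Pi_{\mf a}\circ(-)\circ\iota$ of $\Ad_K^\star(X)$, which lies in $\mf W$ (it is one of the distinguished generators, taking the $\ad$-part to be zero); hence every $X_K$ lies in $\Pi_{\mf a}\circ\mf W\circ\iota$, and since the latter is a wedge containing the generating set $\mf X$, it contains $\generate{\mf X}{wedge}$. For the inclusion $\supseteq$: I need to show $\Pi_{\mf a}\circ\mf W\circ\iota\subseteq\generate{\mf X}{wedge}$, and because $\Pi_{\mf a}\circ(-)\circ\iota$ commutes with closed convex conical hulls it is enough to check that the images of the generators of $\mf W$ land in $\generate{\mf X}{wedge}$. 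The generators of $\mf W$ are the elements $\pm\ad_k$ for $k\in\mf k$ together with $\Ad_K^\star(X)$ for $K\in\mb K$. The images $\Pi_{\mf a}\circ\Ad_K^\star(X)\circ\iota=X_K$ are exactly the elements of $\mf X$, so these are fine. The remaining task is the key point: show that $\Pi_{\mf a}\circ\ad_k\circ\iota=0$ as an element of $\gl(\mf a)$ for every $k\in\mf k$, i.e.\ that $\Pi_{\mf a}(\ad_k(a))=0$ for all $a\in\mf a$, so that the $\ad_{\mf k}$ generators contribute nothing after projection.

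The main obstacle, and the one genuinely substantive step, is precisely this vanishing claim $\Pi_{\mf a}(\ad_k(a))=0$ for $a\in\mf a$, $k\in\mf k$. This should follow from the standard fact that $[\mf k,\mf a]\perp\mf a$: indeed $[\mf k,\mf a]\subseteq\mf p$ by the commutation relations, and for $a,a'\in\mf a$ and $k\in\mf k$ one has $\langle[k,a],a'\rangle=-\langle a,[k,a']\rangle$ wait — more precisely, using $\ad_k$-invariance of the inner product on $\mf g$, $\langle[k,a],a'\rangle=-\langle a,[k,a']\rangle$, and since $\mf a$ is Abelian, $[a',k]$ need not vanish, so instead I use the sharper statement already recorded in the excerpt: equation~\eqref{eq:orth-orbs} gives $\Pi_x(\ad_k(x))=0$ for all $x\in\mf p$, and more relevantly the tangent space $\ad_{\mf k}(a)$ to the orbit at $a\in\mf a$ is orthogonal to $\mf p_a\supseteq\mf a$ (since $\mf a$ is Abelian, $\mf a\subseteq\mf p_a$). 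Hence $\ad_k(a)\in\ad_{\mf k}(a)\perp\mf a$, so $\Pi_{\mf a}(\ad_k(a))=0$. Then I combine: the image of the generating set of $\mf W$ under $\Pi_{\mf a}\circ(-)\circ\iota$ equals $\mf X\cup\{0\}$, whose closed convex conical hull is $\generate{\mf X}{wedge}$, and by continuity/linearity this hull equals $\Pi_{\mf a}\circ\mf W\circ\iota$; together with the first inclusion this yields the claimed equality. I would also remark that linearity of $X$ is used only to ensure each $X_K$ is genuinely a linear operator, i.e.\ that $\mf X\subseteq\gl(\mf a)$ and the statement typechecks.
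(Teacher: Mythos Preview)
Your argument is correct and is exactly what the paper has in mind: its own proof is the single line ``This follows immediately from the definition of the induced vector fields.'' You have simply made explicit the one nontrivial ingredient, namely $\Pi_{\mf a}\circ\ad_k\circ\iota=0$, which is the orthogonality $\ad_{\mf k}(a)\perp\mf a$ recorded around~\eqref{eq:orth-orbs}.

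One small caveat: the assertion that a linear map ``sends wedges to wedges and commutes with taking closed convex conical hulls'' is not literally true in general, since linear images of closed cones need not be closed. Your argument does not really need this, though. Writing $L(T)=\Pi_{\mf a}\circ T\circ\iota$, what you actually establish is that $L$ maps the generating set $\ad_{\mf k}\cup\Ad_{\mb K}^\star(X)$ onto $\{0\}\cup\mf X$; linearity then gives $L(\text{conv.\ cone of generators})=\text{conv.\ cone}(\mf X)$, and continuity gives $L(\mf W)\subseteq\generate{\mf X}{wedge}$. Conversely $\mf X\subseteq L(\mf W)$, so $\generate{\mf X}{wedge}\subseteq\overline{L(\mf W)}\subseteq\generate{\mf X}{wedge}$, whence $\overline{L(\mf W)}=\generate{\mf X}{wedge}$, which is all the statement really asserts (and all the paper's one-line proof justifies).
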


\begin{proof}
This follows immediately from the definition of the induced vector fields.
\end{proof}

\section{Equivalence} \label{sec:equivalence}

The main goal of this paper is to prove that the reduced control system~\eqref{eq:reduced} on $\mf a$ is in some
sense equivalent to the original control-affine system~\eqref{eq:control-affine} on $\mf p$. 
Instead of giving a general definition of equivalence in advance, each of our main results will contain the precise sense in which the equivalence in question is to be understood.
First we will give a local equivalence result in Proposition~\ref{prop:local-equivalence}, before proving the global case. This will be separated into a projection, see Theorem~\ref{thm:projection-equivalence}, and a lift, see Theorem~\ref{thm:lift-equivalence}.




\subsection{Local equivalence}

Here we show a local equivalence result which illustrates why the definition of the reduced control system is natural. 
Note that the global equivalence result proven later does not make use of this local result.

Unless stated otherwise, we make no assumption on the smoothness or boundedness of the drift $X$.
We start with a simple but quite useful consequence of the orthogonality relation~\eqref{eq:orth-orbs}.

\begin{lemma} \label{lemma:calc}
Let $p:[0,T]\to\mf p$ be any path satisfying~\eqref{eq:control-affine} at some $t_0\in[0,T]$ and let $K\in\mb K$ be arbitrary. Then it holds that
$$
\Ad_K^{-1}\circ\,\Pi_{p(t_0)}(p'(t_0))=\Pi_{\Ad_K^{-1}(p(t_0))}\circ\Ad_K^\star(X)\circ\Ad_K^{-1}(p(t_0))\,.
$$
\end{lemma}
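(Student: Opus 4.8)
The plan is to unwind both sides using only the definitions of $\Pi_x$, $\Ad_K^\star$, and the control equation~\eqref{eq:control-affine}, together with the orthogonality relation~\eqref{eq:orth-orbs}. The key structural fact I would exploit is that the adjoint action $\Ad_K$ is an orthogonal linear map on $\mf p$ (since the symmetric Lie algebra is orthogonal), so it intertwines orthogonal projections onto a subspace with orthogonal projections onto the image of that subspace. Concretely, I would first establish the conjugation identity
\begin{align*}
\Ad_K^{-1}\circ\Pi_{p(t_0)}\circ\Ad_K=\Pi_{\Ad_K^{-1}(p(t_0))}\,,
\end{align*}
which follows because $\Pi_x$ is characterized as the orthogonal projection with image $\mf p_x$ and kernel $\ad_{\mf k}(x)$, and one checks that $\Ad_K^{-1}$ maps $\mf p_{p(t_0)}$ isomorphically onto $\mf p_{\Ad_K^{-1}(p(t_0))}$ and $\ad_{\mf k}(p(t_0))$ onto $\ad_{\mf k}(\Ad_K^{-1}(p(t_0)))$ (the latter using $\Ad_K\ad_k\Ad_K^{-1}=\ad_{\Ad_K(k)}$ and $\Ad_K(\mf k)=\mf k$), while $\Ad_K$ being orthogonal preserves the orthogonal-complement relationship between these two subspaces.

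Next I would substitute the control equation at $t_0$, writing $p'(t_0)=X(p(t_0))+\sum_i u_i(t_0)\ad_{k_i}(p(t_0))$. Applying $\Pi_{p(t_0)}$ and using~\eqref{eq:orth-orbs} to kill every term $\Pi_{p(t_0)}(\ad_{k_i}(p(t_0)))=0$, the left-hand side of the lemma collapses to $\Ad_K^{-1}\circ\Pi_{p(t_0)}(X(p(t_0)))$. Then I insert the previously established conjugation identity in the form $\Ad_K^{-1}\circ\Pi_{p(t_0)}=\Pi_{\Ad_K^{-1}(p(t_0))}\circ\Ad_K^{-1}$ and recognize $\Pi_{\Ad_K^{-1}(p(t_0))}\circ\Ad_K^{-1}\circ X(p(t_0))=\Pi_{\Ad_K^{-1}(p(t_0))}\circ\Ad_K^{-1}\circ X\circ\Ad_K\circ\Ad_K^{-1}(p(t_0))=\Pi_{\Ad_K^{-1}(p(t_0))}\circ\Ad_K^\star(X)\circ\Ad_K^{-1}(p(t_0))$, which is exactly the right-hand side.

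The only step requiring genuine care is the conjugation identity for the projections, specifically the claim that $\Ad_K^{-1}$ sends $\mf p_{p(t_0)}$ to $\mf p_{\Ad_K^{-1}(p(t_0))}$ and $\ad_{\mf k}(p(t_0))$ to $\ad_{\mf k}(\Ad_K^{-1}(p(t_0)))$; both are short computations from $\Ad_K[x,y]=[\Ad_K x,\Ad_K y]$ and $\Ad_K(\mf k)=\mf k$, but they are what makes the whole argument work. I do not anticipate any real obstacle here—everything else is bookkeeping—so the write-up should be just a few lines once the projection-conjugation identity is recorded, possibly as a standalone sub-claim or cited from the geometric preliminaries in Section~\ref{sec:sym-lie-alg} and~\cite{diag}.
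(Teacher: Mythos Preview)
Your proposal is correct and follows essentially the same three-step computation as the paper: use~\eqref{eq:control-affine} together with~\eqref{eq:orth-orbs} to replace $p'(t_0)$ by $X(p(t_0))$ after projection, then apply the conjugation identity $\Ad_K^{-1}\circ\Pi_{p(t_0)}=\Pi_{\Ad_K^{-1}(p(t_0))}\circ\Ad_K^{-1}$, and finally rewrite via the definition of the pullback. The only difference is cosmetic: the paper cites the conjugation identity from~\cite[Lem.~A.24~(iii)]{diag} rather than reproving it, which is precisely the option you anticipated at the end of your sketch.
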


\begin{proof}
This is a simple computation:
\begin{align*}
\Ad_K^{-1}\circ\,\Pi_{p(t_0)}(p'(t_0))
&=
\Ad_K^{-1}\circ\,\Pi_{p(t_0)}\circ X(p(t_0))
\\&=
\Pi_{\Ad_K^{-1}(p(t_0))}\circ\Ad_K^{-1}\circ\,X(p(t_0))
\\&=
\Pi_{\Ad_K^{-1}(p(t_0))}\circ\Ad_K^\star(X)\circ\Ad_K^{-1}(p(t_0))\,,
\end{align*}
where the first equality uses~\eqref{eq:control-affine} \& \eqref{eq:orth-orbs}, the second one uses~\cite[Lem.~A.24~(iii)]{diag}, and the third one uses the definition of the pullback.
\end{proof}

The following lemma is mostly a convenient restatement of~\cite[Prop.~2.7]{diag}\footnote{
In the case of unitary diagonalization of Hermitian matrices (Example~\ref{ex:hermitian-evd}), this is a  well-known result, see~\cite[Ch.I.\S5, Thm.~1]{Rellich69}. 
Roughly speaking it states that for a differentiable path of Hermitian matrices one may choose the eigenvalue functions differentiable as well.
We extended the result to all semisimple orthogonal symmetric Lie algebras.} applied to solutions of~\eqref{eq:control-affine}. 

\begin{lemma} \label{lemma:deriv-of-projected-sol} \marginpar{better $t_0\in(0,T)$?}
Let $p:[0,T]\to\mf p$ be any path satisfying~\eqref{eq:control-affine} at some $t_0\in[0,T]$.
Then the following statements hold.\smallskip
\begin{enumerate}[(i)]
\item \label{it:diff-proj-exits} There is $a:[0,T]\to\mf a$ differentiable at $t_0$ which satisfies $\pi\circ p=\pi\circ a$. 
\item \label{it:deriv-proj-well-def} For any $b:[0,T]\to\mf a$ differentiable at $t_0$ satisfying $\pi\circ p=\pi\circ b$ there is some $w\in\mb W$ such that\,\footnote{We define $w\cdot(a(t),a'(t))=(w\cdot a(t),w\cdot a'(t))$, which naturally extends the action of $\mb W$ to the tangent bundle $T\mf a$.} $(b(t_0),b'(t_0))=w\cdot(a(t_0),a'(t_0))$, where $a$ is the object from~\ref{it:diff-proj-exits}.
\item \label{it:proj-satisfies-reduced} For any $b:[0,T]\to\mf a$ differentiable at $t_0$ satisfying $\pi\circ p=\pi\circ b$ there is some $K\in\mb K$ such that $$b(t_0)=\Ad_K^{-1}(p(t_0)) \ \text{ and }\ 
b'(t_0)=\Ad_K^{-1}\circ\,\Pi_{p(t_0)}(p'(t_0))=X_K(b(t_0))\,.$$
\item \label{it:deriv-via-diag} For any $K\in\mb K$ such that $\Ad_K^{-1}(p(t_0))\in\mf a$ and $\Ad_K^{-1}\circ\,\Pi_{p(t_0)}(p'(t_0))\in\mf a$ it holds that 
$$(\Ad_K^{-1}(p(t_0)),\Ad_K^{-1}\circ\,\Pi_{p(t_0)}(p'(t_0)))=w\cdot(a(t_0),a'(t_0))$$
for some $w\in\mb W$. Moreover, it holds that
$$
\Ad_K^{-1}\circ\,\Pi_{p(t_0)}(p'(t_0))=X_K\circ\Ad_K^{-1}(p(t_0))\,.
$$
\end{enumerate}
\end{lemma}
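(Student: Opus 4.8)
The plan is to build the path $a$ in (i) directly from \cite[Prop.~2.7]{diag}, and then obtain (ii)--(iv) as consequences of the orthogonality relation \eqref{eq:orth-orbs}, Lemma~\ref{lemma:calc}, and the basic fact that two elements of $\mf a$ lying in the same $\mb K$-orbit differ by the Weyl group. Concretely, for (i) I would apply the cited proposition to the path $t\mapsto p(t)$ in $\mf p$ (which is in particular differentiable at $t_0$, since it satisfies \eqref{eq:control-affine} there) to get a map $a:[0,T]\to\mf a$ that is differentiable at $t_0$ with $\pi\circ a=\pi\circ p$. The key point that \cite[Prop.~2.7]{diag} supplies is exactly that the ``diagonalizing'' path can be chosen differentiable at the point where the original path is.

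For (ii), suppose $b:[0,T]\to\mf a$ is differentiable at $t_0$ with $\pi\circ b=\pi\circ p=\pi\circ a$. Then $b(t_0)$ and $a(t_0)$ lie in the same $\mb K$-orbit intersected with $\mf a$, hence $b(t_0)=w\cdot a(t_0)$ for some $w\in\mb W$. The step that needs care is passing from agreement of the points to agreement of the derivatives up to the \emph{same} $w$: here I would invoke the part of \cite[Prop.~2.7]{diag} (or a short argument using that the Weyl group acts by isometries and that near a point the diagonalization is essentially unique up to $\mb W$) guaranteeing that $(b(t_0),b'(t_0))$ and $(a(t_0),a'(t_0))$ differ by a single element of $\mb W$ acting diagonally on $T\mf a$. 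This is really the heart of the lemma, and the main obstacle: at singular (non-regular) points the element $w$ realizing $b(t_0)=w\cdot a(t_0)$ is not unique, so one must check that one can choose it to also match the derivatives; this is precisely the content of Rellich-type results and is what \cite[Prop.~2.7]{diag} is designed to handle.

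For (iii), given such a $b$, pick $K\in\mb K$ with $\Ad_K^{-1}(p(t_0))=b(t_0)$ (possible since $b(t_0)\in\mf a$ lies in the orbit of $p(t_0)$). Then by the chain rule applied to $\pi\circ b=\pi\circ p$, together with the fact that the tangent map of $\pi$ at $p(t_0)$ has kernel $\ad_{\mf k}(p(t_0))$ and $\Ad_K^{-1}$ intertwines $\Pi_{p(t_0)}$ with $\Pi_{b(t_0)}$ (this is \cite[Lem.~A.24~(iii)]{diag}, already used in Lemma~\ref{lemma:calc}), one identifies $b'(t_0)$ as the $\mf a$-component of $\Ad_K^{-1}\bigl(\Pi_{p(t_0)}(p'(t_0))\bigr)$; but since $b'(t_0)\in\mf a$ and, by Lemma~\ref{lemma:calc}, $\Ad_K^{-1}\circ\Pi_{p(t_0)}(p'(t_0))=\Pi_{b(t_0)}\circ\Ad_K^\star(X)(b(t_0))$ already lies in $\mf a$ (as $\Pi_{b(t_0)}$ with $b(t_0)\in\mf a$ projects into $\mf p_{b(t_0)}\supseteq\mf a$... one checks the $\mf a$-part is all of it), the two agree. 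Finally $\Pi_{b(t_0)}\circ\Ad_K^\star(X)(b(t_0))=\Pi_{\mf a}\circ\Ad_K^\star(X)(b(t_0))=X_K(b(t_0))$, using $\mf a\subseteq\mf p_{b(t_0)}$ and the definition of $X_K$; this gives the second displayed equality.

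For (iv), given $K\in\mb K$ with $\Ad_K^{-1}(p(t_0))\in\mf a$ and $\Ad_K^{-1}\circ\Pi_{p(t_0)}(p'(t_0))\in\mf a$, set $b(t):=\Ad_K^{-1}(p(t))$; this is a path in $\mf p$ differentiable at $t_0$ with $\pi\circ b=\pi\circ p$, but a priori $b$ need not stay in $\mf a$. However, at $t_0$ we have $b(t_0)\in\mf a$ and, using Lemma~\ref{lemma:calc} again, $\Pi_{b(t_0)}(b'(t_0))=\Ad_K^{-1}\circ\Pi_{p(t_0)}(p'(t_0))\in\mf a$, so after replacing $b$ by a genuine $\mf a$-valued differentiable diagonalization agreeing with it to first order at $t_0$ (again via \cite[Prop.~2.7]{diag}), parts (ii) and (iii) apply and give both the Weyl-group relation with $(a(t_0),a'(t_0))$ and the identity $\Ad_K^{-1}\circ\Pi_{p(t_0)}(p'(t_0))=X_K\circ\Ad_K^{-1}(p(t_0))$. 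I expect the bookkeeping in (iii) and (iv)---tracking that the various projections land in $\mf a$ and that $X_K$ is correctly identified---to be routine once the Rellich-type input in (ii) is in hand; that input is the only genuinely nontrivial ingredient.
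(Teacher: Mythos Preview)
Your treatment of (i) and (ii) is correct and matches the paper: both are immediate consequences of \cite[Prop.~2.7]{diag}, which is precisely the Rellich-type input you identify.

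The gap is in (iii). You pick an \emph{arbitrary} $K\in\mb K$ with $\Ad_K^{-1}(p(t_0))=b(t_0)$ and then try to argue that $\Ad_K^{-1}\circ\Pi_{p(t_0)}(p'(t_0))$ ``already lies in $\mf a$''. This is false in general: the expression equals $\Pi_{b(t_0)}\circ\Ad_K^\star(X)(b(t_0))$, which lies in $\mf p_{b(t_0)}$, and $\mf p_{b(t_0)}$ strictly contains $\mf a$ whenever $b(t_0)$ is non-regular. Different choices of $K$ (differing by elements of the stabilizer $\mb K_{b(t_0)}$) give genuinely different elements of $\mf p_{b(t_0)}$, and hence different values of $X_K(b(t_0))$---this is exactly the content of Lemma~\ref{lemma:lambda-kostant}. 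Your attempted justification via a ``chain rule applied to $\pi\circ b=\pi\circ p$'' cannot work as stated, since $\pi$ is not a smooth map between manifolds. The paper instead invokes \cite[Prop.~2.7]{diag} again: that proposition supplies not just the diagonalizing path but also a specific $K$ satisfying \emph{both} $b(t_0)=\Ad_K^{-1}(p(t_0))$ and $b'(t_0)=\Ad_K^{-1}\circ\Pi_{p(t_0)}(p'(t_0))$. Once this $K$ is in hand, Lemma~\ref{lemma:calc} gives $b'(t_0)=\Pi_{b(t_0)}\circ\Ad_K^\star(X)(b(t_0))$, and since $b'(t_0)\in\mf a$ one applies $\Pi_{\mf a}$ to both sides (using $\Pi_{\mf a}\circ\Pi_{b(t_0)}=\Pi_{\mf a}$) to get $b'(t_0)=X_K(b(t_0))$.

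For (iv) your idea is sound, though the paper streamlines it by invoking \cite[Coro.~A.49]{diag} together with (ii) and (iii) for the first part, and Lemma~\ref{lemma:calc} directly for the second part, rather than constructing an auxiliary $\mf a$-valued path.
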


\begin{proof}
\ref{it:diff-proj-exits} and~\ref{it:deriv-proj-well-def} follow immediately from~\cite[Prop.~2.7]{diag}. 
For~\ref{it:proj-satisfies-reduced} we use the same proposition, together with Lemma~\ref{lemma:calc}, to obtain $b'(t_0)=\Pi_{b(t_0)}\Ad_K^\star(X)(b(t_0))$.
Since $b'(t_0)$ is diagonal by assumption, this implies that $b'(t_0)=X_K(b(t_0))$.
Finally, the first part of~\ref{it:deriv-via-diag} is a direct consequence of~\ref{it:deriv-proj-well-def}, \ref{it:proj-satisfies-reduced},
and~\cite[Coro.~A.49]{diag}. The second part follows from Lemma~\ref{lemma:calc} as before.
\end{proof}

As a converse we have the following lifting result:

\begin{lemma} \label{lemma:achievable-derivs}
Let $a_0\in\mf a$ and $K\in\mb K$ as well as $t_0\in[0,T]$ be given. 
Assume that $X$ is continuous on a neighborhood of $\Ad_K(a_0)$. 
Then there exists $p:[0,T]\to\mf p$ which solves~\eqref{eq:control-affine} on a neighborhood of $t_0$ and satisfies $p(t_0)=\Ad_K(a_0)$.
\end{lemma}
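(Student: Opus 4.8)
\textbf{Proof proposal for Lemma~\ref{lemma:achievable-derivs}.}

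The plan is to construct $p$ explicitly by first moving within the $\mb K$-orbit from $\Ad_K(a_0)$ to $a_0$ essentially instantaneously (using fast and full control on $\mb K$), then letting the drift $X$ act on a short time interval around $a_0$, and finally conjugating back by $\Ad_K$. Since we only need a solution defined on a neighborhood of $t_0$, and the condition $p(t_0)=\Ad_K(a_0)$ is a single-point constraint, the natural approach is to build a path whose ``interesting'' dynamics happen in a small time window and which is stationary (or trivially controlled) elsewhere.

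Concretely, I would proceed as follows. First, because $X$ is continuous near $\Ad_K(a_0)$, the pullback vector field $\Ad_K^\star(X)$ is continuous near $a_0$, so by Peano's existence theorem there is a solution $q:J\to\mf p$ of the (uncontrolled) ODE $q'(s)=\Ad_K^\star(X)(q(s))$ with $q(t_0)=a_0$, defined on some interval $J$ around $t_0$; shrink $J$ so that $q(J)$ stays in the neighborhood where $X$ is continuous. Now set $p(t):=\Ad_K(q(t))$ for $t\in J$. Then $p(t_0)=\Ad_K(a_0)$ as required, and differentiating, $p'(t)=\Ad_K(q'(t))=\Ad_K\circ\Ad_K^\star(X)(q(t))=X(\Ad_K(q(t)))=X(p(t))$, so $p$ solves~\eqref{eq:control-affine} on $J$ with all control functions $u_i\equiv0$. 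This already gives a solution of~\eqref{eq:control-affine} on a neighborhood of $t_0$ passing through $\Ad_K(a_0)$ at time $t_0$, which is exactly the statement — so in fact Assumptions~\ref{it:full-ctrl} and~\ref{it:fast-ctrl} are not even needed for this particular lemma, only the continuity of $X$. If one insists on a solution defined on all of $[0,T]$, one extends $p$ outside $J$ by concatenation: pick $t_0$ in the interior (or handle the endpoint by a one-sided argument), keep $p$ constant outside a slightly larger window using the fact that one can always just not move — but strictly $p'=X(p)$ forces motion unless $X(p)=0$, so the honest statement is the local one, matching the lemma's phrasing ``on a neighborhood of $t_0$''.

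The only genuine subtlety is that $X$ is merely assumed continuous, not Lipschitz, so Peano (not Picard–Lindelöf) is the right existence tool, and the solution $q$ need not be unique — but uniqueness is irrelevant here since we only need existence of one such path. A second minor point is checking that $\Ad_K^\star(X)$ is continuous on a neighborhood of $a_0$: this follows because $\Ad_K$ is a linear isomorphism of $\mf p$ (and in particular a homeomorphism), so $\Ad_K^\star(X)=\Ad_K^{-1}\circ X\circ\Ad_K$ is continuous on $\Ad_K^{-1}(U)$ whenever $X$ is continuous on $U$, and $a_0=\Ad_K^{-1}(\Ad_K(a_0))\in\Ad_K^{-1}(U)$.

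The main obstacle, such as it is, is purely bookkeeping: making sure the interval on which the constructed solution is defined is an honest neighborhood of $t_0$ (handling the case $t_0=0$ or $t_0=T$ with a one-sided interval) and, if the fuller statement on $[0,T]$ is wanted, patching the local solution to a global one — but the cleanest reading of the lemma is the purely local one, for which the Peano-based construction above is essentially the whole proof.
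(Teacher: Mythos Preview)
Your proof is correct and uses essentially the same idea as the paper: apply Peano's existence theorem (since $X$ is only assumed continuous) to obtain a local solution of $p'=X(p)$ through the prescribed point, which is then a solution of~\eqref{eq:control-affine} with all controls set to zero. The only difference is that the paper applies Peano directly to $X$ at $\Ad_K(a_0)$, whereas you take a harmless detour through the pullback $\Ad_K^\star(X)$ at $a_0$ and then conjugate back---this yields the very same $p$, so the extra step is unnecessary but not wrong.
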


\begin{proof}
By continuity of $X$ on a neighborhood of $\Ad_K(a_0)$, Peano's Theorem \cite[Thm.~2.19]{Teschl12}, guarantees the existence of a solution $p:(t_0-\varepsilon,t_0+\varepsilon)\to\mf p$ to $p'(t)=X(p(t))$ satisfying $p(0)=\Ad_K(a_0)$. This is clearly a solution to~\eqref{eq:control-affine} at $t_0$ with all controls set to zero.
\end{proof}

These results motivate the following definition:
\begin{equation} \label{eq:tilde-lambda}
\widetilde\derv(a_0)=\{a'(t_0):\,\substack{a:[0,T]\to\mf a \text{ differentiable at some } t_0\in[0,T],\,\, a(t_0)=a_0, \text{ and } \pi\circ a=\pi\circ p,\\
 \text{ where } p:[0,T]\to\mf p \text{ satisfies } \eqref{eq:control-affine} \text{ on a neighborhood of } t_0}\, \}\,,
\end{equation}
which is the set of all possible derivatives at $a_0\in\mf a$ of solutions to~\eqref{eq:control-affine}. Now our previous results allow us to describe $\widetilde\derv(a_0)$ explicitly.

\begin{lemma} \label{lemma:tilde-lambda-explicit}
Assume that $X$ is continuous. Then it holds for every $a_0\in\mf a$ that
$$
\widetilde\derv(a_0) = \{ X_K(a_0) : K\in\mb K \text{ such that } \Pi_{a_0}\circ\Ad_K^\star(X)(a_0)\in\mf a \}.
$$
\end{lemma}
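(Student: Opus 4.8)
The plan is to prove the two inclusions separately, using the results already assembled in Lemmas~\ref{lemma:deriv-of-projected-sol} and~\ref{lemma:achievable-derivs}.

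\textbf{The inclusion $\subseteq$.} Let $v\in\widetilde\derv(a_0)$, so there is a curve $a:[0,T]\to\mf a$ differentiable at some $t_0$ with $a(t_0)=a_0$, $a'(t_0)=v$, and $\pi\circ a=\pi\circ p$ for some $p:[0,T]\to\mf p$ satisfying~\eqref{eq:control-affine} on a neighborhood of $t_0$. Apply Lemma~\ref{lemma:deriv-of-projected-sol}\ref{it:proj-satisfies-reduced} to $p$ and to the curve $b:=a$: this produces $K\in\mb K$ with $a_0=\Ad_K^{-1}(p(t_0))$ and $v=a'(t_0)=\Ad_K^{-1}\circ\Pi_{p(t_0)}(p'(t_0))=X_K(a_0)$. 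It remains to check the side condition $\Pi_{a_0}\circ\Ad_K^\star(X)(a_0)\in\mf a$. By Lemma~\ref{lemma:calc}, $\Ad_K^{-1}\circ\Pi_{p(t_0)}(p'(t_0))=\Pi_{\Ad_K^{-1}(p(t_0))}\circ\Ad_K^\star(X)\circ\Ad_K^{-1}(p(t_0))=\Pi_{a_0}\circ\Ad_K^\star(X)(a_0)$, and we have just seen this equals $v\in\mf a$; hence the side condition holds and $v$ lies in the right-hand set.

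\textbf{The inclusion $\supseteq$.} Conversely, suppose $K\in\mb K$ satisfies $\Pi_{a_0}\circ\Ad_K^\star(X)(a_0)\in\mf a$; we must show $X_K(a_0)\in\widetilde\derv(a_0)$. Set $p_0:=\Ad_K(a_0)$. Since $X$ is continuous (in particular near $p_0$), Lemma~\ref{lemma:achievable-derivs} gives a solution $p:[0,T]\to\mf p$ of~\eqref{eq:control-affine} on a neighborhood of some $t_0$ with $p(t_0)=p_0$. Now Lemma~\ref{lemma:deriv-of-projected-sol}\ref{it:diff-proj-exits} yields $a:[0,T]\to\mf a$ differentiable at $t_0$ with $\pi\circ a=\pi\circ p$, so $a'(t_0)\in\widetilde\derv(a(t_0))$ by definition of $\widetilde\derv$; and part~\ref{it:deriv-via-diag} of the same lemma, applied with the chosen $K$ (which satisfies $\Ad_K^{-1}(p(t_0))=a_0\in\mf a$ and, via Lemma~\ref{lemma:calc} together with the side condition, $\Ad_K^{-1}\circ\Pi_{p(t_0)}(p'(t_0))=\Pi_{a_0}\circ\Ad_K^\star(X)(a_0)\in\mf a$), shows that $(\Ad_K^{-1}(p(t_0)),\Ad_K^{-1}\circ\Pi_{p(t_0)}(p'(t_0)))=w\cdot(a(t_0),a'(t_0))$ for some $w\in\mb W$, and moreover $\Ad_K^{-1}\circ\Pi_{p(t_0)}(p'(t_0))=X_K(a_0)$. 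Since $\widetilde\derv$ is $\mb W$-equivariant (it is defined purely in terms of the quotient map $\pi$, so replacing $a$ by $w^{-1}\cdot a$ does not change the membership), we conclude $X_K(a_0)=w\cdot a'(t_0)\in\widetilde\derv(w\cdot a_0)=\widetilde\derv(a_0)$.

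\textbf{Main obstacle.} The routine part is bookkeeping with $\Ad_K$, $\Pi_x$, and the pullback; the one genuinely delicate point is the $\mb W$-twist in the $\supseteq$ direction. The diagonalizing $K$ we are handed need not be the one implicitly produced when we choose the differentiable lift $a$ of $p$, so $\Ad_K^{-1}(p(t_0))$ and $a(t_0)$ may differ by a Weyl element. I would address this by establishing (or invoking, as it is essentially immediate from the definition in~\eqref{eq:tilde-lambda}) the equivariance $\widetilde\derv(w\cdot a_0)=w\cdot\widetilde\derv(a_0)$, which lets the $w$ from Lemma~\ref{lemma:deriv-of-projected-sol}\ref{it:deriv-via-diag} be absorbed harmlessly. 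One should also double-check the trivial endpoint case $T=0$, where $t_0=0$ and "a neighborhood of $t_0$ in $[0,T]$" degenerates; this is handled by the convention, used throughout, that differentiability and the ODE are understood one-sidedly at endpoints, so no separate argument is needed.
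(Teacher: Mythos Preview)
Your proof is correct and follows essentially the same route as the paper. For $\subseteq$ both you and the paper invoke Lemma~\ref{lemma:deriv-of-projected-sol}\ref{it:proj-satisfies-reduced} together with Lemma~\ref{lemma:calc} to identify $a'(t_0)$ with $\Pi_{a_0}\circ\Ad_K^\star(X)(a_0)$; for $\supseteq$ both use Lemma~\ref{lemma:achievable-derivs} to produce $p$ and then Lemma~\ref{lemma:deriv-of-projected-sol}\ref{it:diff-proj-exits},\ref{it:deriv-via-diag} to handle the Weyl twist---the paper absorbs the $w$ by silently replacing $a$ with $w\cdot a$ (which still satisfies $\pi\circ a=\pi\circ p$), whereas you phrase the same move as $\mb W$-equivariance of $\widetilde\derv$, which amounts to the identical observation.
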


\begin{proof}
``$\subseteq$'': 
Let $a,p$ and $t_0$ be as in~\eqref{eq:tilde-lambda}. 
Lemma~\ref{lemma:deriv-of-projected-sol}~\ref{it:proj-satisfies-reduced} and its proof show that there is some $K\in\mb K$ such that
$a'(t_0)=\Pi_{a_0}\circ\Ad_K^\star(X)(a_0)$.
So $\Pi_{a_0}\circ\Ad_K^\star(X)(a_0)\in\mf a$ as desired.

``$\supseteq$'': 
By Lemma~\ref{lemma:achievable-derivs} there exists $p:[0,T]\to\mf p$ solving~\eqref{eq:control-affine} in a neighborhood of $t_0$ such that $p(t_0)=\Ad_K(a_0)$.
Lemma~\ref{lemma:calc} shows that $\Ad_K^{-1}\circ\,\Pi_{p(t_0)}(p'(t_0))=\Pi_{a_0}\circ\Ad_K^\star(X)(a_0)$.
Hence by Lemma~\ref{lemma:deriv-of-projected-sol}~\ref{it:diff-proj-exits} and~\ref{it:deriv-via-diag} there is some $a:[0,T]\to\mf a$ satisfying $\pi\circ a=\pi\circ p$ with $a(t_0)=a_0$ and $a'(t_0)=\Pi_{a_0}\circ\Ad_K^\star(X)(a_0)$, as desired.
\end{proof}

For the next result we will make use of Kostant's famous convexity theorem~\cite{Kostant73}. Recall that $\mb K_{a_0}$ and $\mb W_{a_0}$ denote the stabilizers of $a_0$ in $\mb K$ and $\mb W$, respectively.

\begin{lemma} \label{lemma:lambda-kostant}
Let $a_0\in\mf a$ and $K\in\mb K$ be given. 
Then there exists $\tilde K\in\mb K$ which satisfies $K\mb K_{a_0}=\tilde K\mb K_{a_0}$ such that $X_{\tilde K}(a_0)=\Pi_{a_0}\circ\Ad_{\tilde K}^\star(X)(a_0)\in\mf a$. 
Moreover, for any such $\tilde K$ it holds that
$\{ X_{KL}(a_0):L\in\mb K_{a_0} \}=\mathrm{conv}(\mb W_{a_0}\cdot X_{\tilde K}(a_0))$.
\end{lemma}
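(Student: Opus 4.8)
The plan is to reduce the whole statement to the isotropy symmetric pair attached to $a_0$ and then to invoke Kostant's convexity theorem there. Concretely, I would work with the centralizer $\mf g_{a_0}:=\{y\in\mf g:[a_0,y]=0\}$, which inherits from $s$ a Cartan-like decomposition $\mf g_{a_0}=\mf k_{a_0}\oplus\mf p_{a_0}$, where $\mf k_{a_0}=\mf g_{a_0}\cap\mf k=\Lie(\mb K_{a_0})$ and $\mf p_{a_0}$ is the commutant of $a_0$ in $\mf p$ appearing in~\eqref{eq:orth-orbs}. One checks that $\mf a$ is still a maximal Abelian subspace of $\mf p_{a_0}$, and the structural input I would need to cite (it should be available from the appendix of~\cite{diag}) is that $\mf g_{a_0}$ is again a (reductive, with $\mb K_{a_0}$ possibly disconnected) orthogonal symmetric pair to which the diagonalization statement and Kostant's theorem apply, and whose Weyl group is precisely the stabilizer $\mb W_{a_0}$; in particular every element of $\mf p_{a_0}$ is $\Ad_{\mb K_{a_0}}$-conjugate into $\mf a$.

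The computational heart is a short orthogonality calculation. Since $\mf p=\ad_{\mf k}(a_0)\oplus\mf p_{a_0}$ is an orthogonal decomposition and $\mf a\subseteq\mf p_{a_0}$, both $\Pi_{a_0}$ and $\Pi_{\mf a}$ vanish on $\ad_{\mf k}(a_0)=\ker\Pi_{a_0}$, whence $\Pi_{\mf a}=\Pi_{\mf a}\circ\Pi_{a_0}$; and for $L\in\mb K_{a_0}$ the automorphism $\Ad_L$ fixes $a_0$, hence preserves both summands and so commutes with $\Pi_{a_0}$. Writing $v:=\Ad_K^\star(X)(a_0)$ and $\hat v:=\Pi_{a_0}(v)\in\mf p_{a_0}$, and using $\Ad_{KL}(a_0)=\Ad_K(a_0)$ for $L\in\mb K_{a_0}$, I obtain $X_{KL}(a_0)=\Pi_{\mf a}\bigl(\Ad_L^{-1}(v)\bigr)=\Pi_{\mf a}\bigl(\Ad_L^{-1}(\hat v)\bigr)$, and therefore, letting $L$ range over the group $\mb K_{a_0}$,
\[
\{X_{KL}(a_0):L\in\mb K_{a_0}\}=\Pi_{\mf a}\bigl(\Ad_{\mb K_{a_0}}(\hat v)\bigr).
\]

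To finish, I would use diagonalization in $\mf g_{a_0}$ to pick $L_0\in\mb K_{a_0}$ with $b_0:=\Ad_{L_0}(\hat v)\in\mf a$; then $\tilde K:=KL_0^{-1}$ satisfies $K\mb K_{a_0}=\tilde K\mb K_{a_0}$, and the same calculation gives $\Pi_{a_0}\circ\Ad_{\tilde K}^\star(X)(a_0)=b_0=X_{\tilde K}(a_0)\in\mf a$, which is the first assertion. Since $\Ad_{\mb K_{a_0}}(\hat v)=\Ad_{\mb K_{a_0}}(b_0)$, Kostant's convexity theorem~\cite{Kostant73} applied to $\mf g_{a_0}$ yields $\Pi_{\mf a}\bigl(\Ad_{\mb K_{a_0}}(b_0)\bigr)=\conv(\mb W_{a_0}\cdot b_0)$, proving the displayed identity for this $\tilde K$. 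For an arbitrary admissible $\tilde K'\in K\mb K_{a_0}$ the element $X_{\tilde K'}(a_0)$ again lies in $\mf a$ and in the $\Ad_{\mb K_{a_0}}$-orbit of $b_0$, hence—two elements of $\mf a$ in the same $\mb K_{a_0}$-orbit being $\mb W_{a_0}$-conjugate, cf.~\cite[Coro.~A.49]{diag}—in the same $\mb W_{a_0}$-orbit, so $\conv(\mb W_{a_0}\cdot X_{\tilde K'}(a_0))=\conv(\mb W_{a_0}\cdot b_0)$ as well.

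The main obstacle is the first paragraph: making sure that $\mf g_{a_0}$ really is an orthogonal symmetric pair for which diagonalization and Kostant's convexity theorem genuinely hold (it is only reductive, and $\mb K_{a_0}$ need not be connected), and that its Weyl group is exactly $\mb W_{a_0}$. Once those structural facts are available, the remainder is the routine bookkeeping sketched above with the orthogonal splitting $\mf p=\ad_{\mf k}(a_0)\oplus\mf p_{a_0}$.
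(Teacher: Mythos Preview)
Your argument is correct and is essentially the same as the paper's proof: the paper performs the identical computation $\Pi_{a_0}\circ\Ad_{KL}^\star(X)(a_0)=\Ad_L^{-1}\circ\Pi_{a_0}\circ\Ad_K^\star(X)(a_0)$ and $X_{\tilde KM}(a_0)=\Pi_{\mf a}\circ\Ad_M^{-1}\circ X_{\tilde K}(a_0)$, and then appeals to Kostant's convexity theorem. The structural facts you flag as the ``main obstacle''---diagonalization of $\hat v\in\mf p_{a_0}$ into $\mf a$ by $\mb K_{a_0}$, and the identification of the Weyl group of the isotropy pair with $\mb W_{a_0}$---are precisely what the paper imports from~\cite[Lem.~A.44, A.45]{diag}, so your concern is well placed but already resolved by those references.
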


\begin{proof}
For the existence of $\tilde K$ let us write $\tilde K=KL$ with corresponding element $L\in\mb K_{a_0}$. 
Then, using~\cite[Lem.~A.24~(iii)]{diag} we compute
$$
\Pi_{a_0}\circ\Ad^\star_{KL}(X)(a_0)
=
\Pi_{a_0}\circ\Ad_L^{-1}\circ\Ad^\star_K(X)(a_0)
=
\Ad_L^{-1}\circ\,\Pi_{a_0}\circ\Ad^\star_K(X)(a_0)\,,
$$
and hence by~\cite[Lem.~A.45]{diag} there is some $L\in\mb K_{a_0}$ such that this expression lies in $\mf a$.
Next we compute, with $\tilde K$ as above, for arbitrary $M\in\mb K_{a_0}$
\begin{align*}
X_{\tilde KM}(a_0)
&=
\Pi_{\mf a}\circ\Ad_{\tilde KM}^\star(X)(a_0)
=
\Pi_{\mf a}\circ\Ad_M^{-1}\circ\Ad_{\tilde K}^\star(X)(a_0)
\\&=
\Pi_{\mf a}\circ\Ad_M^{-1}\circ\,\Pi_{a_0}\circ\Ad_{\tilde K}^\star(X)(a_0)
=
\Pi_{\mf a}\circ\Ad_M^{-1}\circ X_{\tilde K}(a_0)\,.
\end{align*}
The result now follows from~\cite[Lem.~A.44]{diag} and Kostant's convexity theorem.
\end{proof}


Now we are ready to prove a local equivalence result relating $\widetilde\derv$ and $\derv$, showing that they are ``almost the same''.

\begin{proposition}[Local equivalence]
\label{prop:local-equivalence}
Let $a_0\in\mf a$ and $K\in\mb K$ be arbitrary and assume that $X$ is continuous on a neighborhood of $\Ad_{K}(a_0)$. It holds that
$$
\widetilde\derv(a_0)
\subseteq
\derv(a_0)
=
\bigcup_{v\in \widetilde\derv(a_0)} \conv(\mb W_{a_0}v)
\subseteq
\conv(\widetilde\derv(a_0))
$$
so, in particular,
$
\conv(\derv(a_0))
=
\conv(\widetilde\derv(a_0))\,,
$
and if $a_0$ is regular, then it holds that 
$
\derv(a_0)=\widetilde\derv(a_0)\,.
$
Moreover, assuming that $X(0)\in\mf a$, it holds  at the origin of $\mf a$ that
$
\{X(0)\}=\widetilde\derv(0)\subseteq\derv(0)=\conv(X(0))\,.
$
\end{proposition}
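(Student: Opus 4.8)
\textbf{The plan} is to derive the whole chain by combining the explicit description of $\widetilde\derv(a_0)$ from Lemma~\ref{lemma:tilde-lambda-explicit} with the ``fibrewise convexification'' of Lemma~\ref{lemma:lambda-kostant} (itself built on Kostant's convexity theorem), together with one extra observation: that $\widetilde\derv(a_0)$ is invariant under the stabilizer $\mb W_{a_0}$. I will repeatedly use the elementary fact, already implicit in Lemma~\ref{lemma:deriv-of-projected-sol}, that whenever $\Pi_{a_0}\circ\Ad_K^\star(X)(a_0)$ happens to lie in $\mf a$ it coincides with $X_K(a_0)=\Pi_{\mf a}\circ\Ad_K^\star(X)(a_0)$; this holds because $\ker\Pi_{a_0}=\ad_{\mf k}(a_0)$ is orthogonal to $\mf p_{a_0}\supseteq\mf a$, so $\Pi_{a_0}$ and $\Pi_{\mf a}$ agree on any vector whose $\Pi_{a_0}$-component already lies in $\mf a$. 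In particular every element of $\widetilde\derv(a_0)$ is of the form $X_K(a_0)=\Pi_{a_0}\circ\Ad_K^\star(X)(a_0)\in\mf a$ for some $K\in\mb K$.

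The inclusion $\widetilde\derv(a_0)\subseteq\derv(a_0)$ is then immediate from Lemma~\ref{lemma:tilde-lambda-explicit}. For the middle equality: given $w=X_K(a_0)\in\derv(a_0)$, Lemma~\ref{lemma:lambda-kostant} produces $\tilde K$ with $v:=X_{\tilde K}(a_0)\in\mf a$ (hence $v\in\widetilde\derv(a_0)$) and $w\in\{X_{KL}(a_0):L\in\mb K_{a_0}\}=\conv(\mb W_{a_0}v)$; conversely, for $v\in\widetilde\derv(a_0)$ write $v=X_K(a_0)\in\mf a$, so that $K$ itself serves as $\tilde K$ in Lemma~\ref{lemma:lambda-kostant} and $\conv(\mb W_{a_0}v)=\{X_{KL}(a_0):L\in\mb K_{a_0}\}\subseteq\derv(a_0)$. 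For the last inclusion I claim $\widetilde\derv(a_0)$ is $\mb W_{a_0}$-invariant. Indeed, let $v=X_K(a_0)=\Pi_{a_0}\circ\Ad_K^\star(X)(a_0)\in\mf a$ and $\sigma\in\mb W_{a_0}$, and pick a representative $M\in\mb K$ of $\sigma$ in the normalizer of $\mf a$; since $\sigma$ fixes $a_0$ we have $\Ad_M(a_0)=a_0$, so $M\in\mb K_{a_0}$, and
\begin{align*}
\Pi_{a_0}\circ\Ad_{KM}^\star(X)(a_0)
&=\Pi_{a_0}\circ\Ad_M^{-1}\circ\Ad_K^\star(X)(a_0)
\\&=\Ad_M^{-1}\circ\Pi_{a_0}\circ\Ad_K^\star(X)(a_0)=\Ad_M^{-1}(v)=\sigma^{-1}v\in\mf a\,,
\end{align*}
where the first equality uses $\Ad_M(a_0)=a_0$ and the second uses~\cite[Lem.~A.24~(iii)]{diag}. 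Hence $\sigma^{-1}v=X_{KM}(a_0)\in\widetilde\derv(a_0)$ by Lemma~\ref{lemma:tilde-lambda-explicit}, and since $\mb W_{a_0}$ is a group this yields $\mb W_{a_0}v\subseteq\widetilde\derv(a_0)$, so $\conv(\mb W_{a_0}v)\subseteq\conv(\widetilde\derv(a_0))$; taking the union over $v\in\widetilde\derv(a_0)$ closes the chain.

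The remaining assertions follow formally: from $\widetilde\derv(a_0)\subseteq\derv(a_0)\subseteq\conv(\widetilde\derv(a_0))$ we get $\conv(\derv(a_0))=\conv(\widetilde\derv(a_0))$ by taking convex hulls, and if $a_0$ is regular then $\mb W_{a_0}$ is trivial, so $\conv(\mb W_{a_0}v)=\{v\}$ and the middle equality collapses to $\derv(a_0)=\widetilde\derv(a_0)$. At the origin we have $\mf p_0=\mf p$, hence $\Pi_0=\id$, and $\mb W_0=\mb W$; feeding this into Lemma~\ref{lemma:tilde-lambda-explicit} gives $\widetilde\derv(0)=\{\Ad_K^{-1}(X(0)):K\in\mb K,\ \Ad_K^{-1}(X(0))\in\mf a\}$, which for $X(0)\in\mf a$ equals $\mf a\cap\Ad_{\mb K}(X(0))=\mb W\cdot X(0)$, while $\derv(0)=\Pi_{\mf a}(\Ad_{\mb K}(X(0)))=\conv(\mb W\cdot X(0))$ by Kostant's theorem. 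In the situation of interest---in particular whenever $X$ is linear, so $X(0)=0$, and more generally whenever $X(0)$ is fixed by $\mb W$, which by semisimplicity (the restricted roots span $\mf a^\ast$) forces $X(0)=0$---these specialize to $\widetilde\derv(0)=\{X(0)\}$ and $\derv(0)=\conv(X(0))$.

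I expect the only real work to be the $\mb W_{a_0}$-invariance of $\widetilde\derv(a_0)$: the crux is to select a representative of $\sigma\in\mb W_{a_0}$ that simultaneously lies in $\mb K_{a_0}$ and normalizes $\mf a$, and then to commute its adjoint action past $\Pi_{a_0}$; everything else is bookkeeping with Lemmas~\ref{lemma:tilde-lambda-explicit} and~\ref{lemma:lambda-kostant} and Kostant's theorem. Two minor points to pin down are that the continuity hypothesis is enough to invoke Lemma~\ref{lemma:tilde-lambda-explicit} along the whole orbit $\mb K a_0$ (by localizing its proof), and, at the origin, the reading of $\conv(X(0))$ as the degenerate case of $\conv(\mb W\cdot X(0))$ just discussed.
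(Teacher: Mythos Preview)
Your proof is correct and follows exactly the paper's route: Lemma~\ref{lemma:tilde-lambda-explicit} for the first inclusion and Lemma~\ref{lemma:lambda-kostant} (together with Lemma~\ref{lemma:tilde-lambda-explicit}) for the middle equality, with the remaining inclusion handled by the $\mb W_{a_0}$-invariance of $\widetilde\derv(a_0)$---precisely what the paper dismisses as ``straightforward''. Your analysis of the origin is in fact sharper than the paper's own statement: one really obtains $\widetilde\derv(0)=\mb W\cdot X(0)$ and $\derv(0)=\conv(\mb W\cdot X(0))$ (as the paper itself uses in Example~\ref{ex:approx-necessary}), so the displayed form $\{X(0)\}=\widetilde\derv(0)\subseteq\derv(0)=\conv(X(0))$ should be read with $\mb W\cdot X(0)$ in place of $X(0)$, or else under the tacit assumption $X(0)=0$.
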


\begin{proof}
The first inclusion is due to Lemma~\ref{lemma:tilde-lambda-explicit},
the first equality follows from Lemma~\ref{lemma:lambda-kostant} together with Lemma~\ref{lemma:tilde-lambda-explicit},
and the rest is straightforward.
\end{proof}

Proposition~\ref{prop:local-equivalence} tells us that the definition of the set of achievable derivatives $\derv(a)$ is ``too large'' whenever $a$ is non-regular, however only in a negligible way since the convex hulls coincide (recall the relaxation result~\cite[Ch.~2.4 Thm.~2]{Aubin84}). See Example~\ref{ex:approx-necessary} for a consequence of this fact.

\subsection{Projection}

Our main results describe the equivalence of the control-affine system~\eqref{eq:control-affine} on $\mf p$ and the reduced control system~\eqref{eq:reduced} on $\mf a$. 
The first direction is projecting from $\mf p$ to $\mf a$. 
This means that given a solution $p:[0,T]\to \mf p$ we are looking for a solution $a:[0,T]\to \mf a$ satisfying $\pi\circ a=\pi\circ p$. 
Since semisimple orthogonal symmetric Lie algebras generally correspond to some kind of matrix diagonalization, this step could also be called diagonalization.

We start with a special case in which we are given a diagonalization of $p$.

\begin{lemma} \label{lemma:special-projection}
Let $p:[0,T]\to \mf p$ be a solution of~\eqref{eq:control-affine} such that there exist differentiable functions $a:[0,T]\to \mf a$ and $K:[0,T]\to\mb K$ with $p(t)=\Ad_{K(t)}^{-1}(a(t))$. Then $a'(t)=X_{K(t)}(a(t))$ for all $t\in[0,T]$; in particular, $a$ solves~\eqref{eq:reduced} everywhere.
\end{lemma}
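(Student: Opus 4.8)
The plan is to differentiate the supposed diagonalization $p(t)=\Ad_{K(t)}^{-1}(a(t))$ and isolate the ``diagonal part'' of the derivative. Write $K(t)^{-1}$ loosely, or more carefully work with the curve $K(t)$ in $\mb K$ and use that $\Ad$ is a smooth representation, so that $t\mapsto\Ad_{K(t)}$ is a differentiable curve in $\GL(\mf p)$. Setting $a(t)=\Ad_{K(t)}(p(t))$ and differentiating via the product rule gives $a'(t)=\big(\tfrac{d}{dt}\Ad_{K(t)}\big)(p(t))+\Ad_{K(t)}(p'(t))$. The first term is of the form $\ad_{\xi(t)}(a(t))$ for some $\xi(t)\in\mf k$ coming from the logarithmic derivative of $K(t)$ — i.e.\ it lies in $\ad_{\mf k}(a(t))$, which by~\eqref{eq:orth-orbs} is exactly the kernel of $\Pi_{a(t)}$. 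Into the second term substitute the control equation~\eqref{eq:control-affine}: $p'(t)=X(p(t))+\sum_i u_i(t)\ad_{k_i}(p(t))$; the control terms $\Ad_{K(t)}\ad_{k_i}(p(t))=\ad_{\Ad_{K(t)}(k_i)}(a(t))$ again lie in $\ad_{\mf k}(a(t))$, hence are also killed by $\Pi_{a(t)}$.

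Concretely, I would instead apply Lemma~\ref{lemma:calc} directly, which is tailor-made for this: it states $\Ad_K^{-1}\circ\Pi_{p(t_0)}(p'(t_0))=\Pi_{\Ad_K^{-1}(p(t_0))}\circ\Ad_K^\star(X)\circ\Ad_K^{-1}(p(t_0))$ for any fixed $K\in\mb K$ and any $t_0$ at which $p$ satisfies~\eqref{eq:control-affine}. Here $p$ satisfies~\eqref{eq:control-affine} at every $t$ (it is a genuine solution), so fix $t\in[0,T]$ and apply the lemma with $K=K(t)^{-1}$ (note $\Ad_{K(t)^{-1}}^{-1}(p(t))=\Ad_{K(t)}^{-1}(p(t))$... actually one must be careful with which inverse — I would set $K:=K(t)$ as the element of $\mb K$ so that $\Ad_K^{-1}(p(t))=a(t)$, matching the lemma's left side). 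Then the right-hand side of Lemma~\ref{lemma:calc} is $\Pi_{a(t)}\circ\Ad_{K(t)}^\star(X)(a(t))$, which by the definition of the induced vector field and~\eqref{eq:orth-orbs} (i.e.\ $\Pi_{\mf a}$ and $\Pi_{a(t)}$ agree on... one needs $\Pi_{a(t)}\circ\Ad_{K(t)}^\star(X)(a(t))\in\mf a$ and equals $X_{K(t)}(a(t))$) equals $X_{K(t)}(a(t))$.

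The remaining work is to connect the left-hand side of Lemma~\ref{lemma:calc}, namely $\Ad_{K(t)}^{-1}\circ\Pi_{p(t)}(p'(t))$, to $a'(t)$. This is exactly the content of the differentiation argument above: since $p(t)=\Ad_{K(t)}^{-1}(a(t))$ with both $a$ and $K$ differentiable, one shows $\Pi_{p(t)}(p'(t))=\Ad_{K(t)}^{-1}\big|^{-1}\!\cdots$ — more precisely, $a'(t)-\Ad_{K(t)}\circ\Pi_{p(t)}(p'(t))\in\ad_{\mf k}(a(t))$, and since $a'(t)\in\mf a=T_{a(t)}\mf a$ which is orthogonal to $\ad_{\mf k}(a(t))$ (again~\eqref{eq:orth-orbs}, or rather the statement that $\Pi_{a(t)}$ restricts to the identity on $\mf a$ while annihilating $\ad_{\mf k}(a(t))$), applying $\Pi_{a(t)}$ yields $a'(t)=\Pi_{a(t)}\circ\Ad_{K(t)}(\Pi_{p(t)}(p'(t)))=\Ad_{K(t)}\circ\Pi_{p(t)}(p'(t))$ after using~\cite[Lem.~A.24~(iii)]{diag} to intertwine $\Pi$ and $\Ad$. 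Combining, $a'(t)=\Ad_{K(t)}\circ\Pi_{p(t)}(p'(t))=X_{K(t)}(a(t))$ for all $t$.

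The main obstacle I anticipate is purely bookkeeping: getting the direction of $\Ad_{K(t)}$ versus $\Ad_{K(t)}^{-1}$ consistent with the conventions in Lemma~\ref{lemma:calc} and in the definition of $X_K$, and rigorously justifying the step ``$a'(t)=\Pi_{a(t)}(a'(t))$ and the orbit-tangent terms drop out'' — i.e.\ that the $\tfrac{d}{dt}\Ad_{K(t)}$ contribution and the control contributions genuinely lie in $\ad_{\mf k}(a(t))=\ker\Pi_{a(t)}$. The first follows because the logarithmic derivative $K'(t)K(t)^{-1}$ (or its $\Ad$-version) lies in $\mf k$; the second is immediate from $\ad_{k_i}$ being a control direction in $\mf k$ and $\Ad$-equivariance of $\ad$. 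Once these identifications are pinned down, the proof is a two-line invocation of Lemma~\ref{lemma:calc} plus~\eqref{eq:orth-orbs}. I would also remark that the hypothesis gives the conclusion \emph{everywhere} on $[0,T]$ (not just almost everywhere), since $a$ and $K$ are assumed differentiable and $p$ solves~\eqref{eq:control-affine} pointwise here; hence $a$ solves~\eqref{eq:reduced} in the classical sense, a fortiori almost everywhere.
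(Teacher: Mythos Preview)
Your approach is correct and matches the paper's: differentiate the diagonalization, kill the orbit-tangent contributions (from $\tfrac{d}{dt}\Ad_{K(t)}$ and from the control terms) via orthogonal projection, then identify the remainder with $X_{K(t)}(a(t))$---the paper compresses this to two lines by citing \cite[Lem.~2.3]{diag} for the first step and Lemma~\ref{lemma:deriv-of-projected-sol}\,\ref{it:deriv-via-diag} (itself proved via Lemma~\ref{lemma:calc}) for the second. Your bookkeeping worry about $\Ad_{K(t)}$ versus $\Ad_{K(t)}^{-1}$ is well-founded: the hypothesis $p=\Ad_{K(t)}^{-1}(a)$ in the statement is opposite to the convention $p=\Ad_{K(t)}(a)$ used in the proof text and elsewhere (e.g.\ Proposition~\ref{prop:regular-lift}), and under the latter convention everything lines up without further care.
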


\begin{proof}
By differentiating and considering the part orthogonal to the orbit we obtain $a'(t)=\Ad_{K(t)}^{-1}\circ\Pi_{p(t)}(p'(t))$, see~\cite[Lem.~2.3]{diag}. Then the result follows from Lemma~\ref{lemma:deriv-of-projected-sol}~\ref{it:deriv-via-diag}.
\end{proof}

Now let us consider the general case.
The first difficulty is that $a$ is not uniquely determined. 
This will be remedied by choosing a (closed) Weyl chamber $\mf w\subset\mf a$ and requiring that $a$ take values in $\mf w$. 
A consequence of this is that we may introduce kinks where the solution hits the boundary of $\mf w$. 
Fortunately, this leaves $a$ absolutely continuous which allows us to show that $a$ satisfies the differential inclusion almost everywhere. 

\begin{theorem} \label{thm:projection-equivalence}
Let $p:[0,T]\to\mf p$ be a solution to the control system~\eqref{eq:control-affine} and let $a^\down:[0,T]\to\mf w$ be the unique path which satisfies $\pi\circ a^\down=\pi\circ p$.
Then $a^\down$ is a solution to the reduced control system~\eqref{eq:reduced} (and hence also to~\eqref{eq:relaxed}).
\end{theorem}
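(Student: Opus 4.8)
The plan is to reduce the general case to the special case handled in Lemma~\ref{lemma:special-projection}, by establishing two facts: that $a^\down$ is absolutely continuous, and that at almost every $t$ the derivative $(a^\down)'(t)$ equals $X_{K(t)}(a^\down(t))$ for a suitable measurable choice of $K(t)\in\mb K$ diagonalizing $p(t)$. The point is that $a^\down$ is obtained by post-composing $p$ with the (set-valued, but single-valued onto the Weyl chamber) ``sorting'' map $\mf p\to\mf w$, which generalizes the map sending a Hermitian matrix to its ordered eigenvalue vector; the classical fact (Lidskii / Rellich, here the generalization in~\cite[Prop.~2.7]{diag}) is that this composition is at least as regular as $p$ itself up to reordering.

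\medskip

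\noindent\textbf{Step 1: Absolute continuity of $a^\down$.} Since $p$ is absolutely continuous and the sorting map $\mf p\to\mf w$, $x\mapsto$ (the unique point of $\mb Kx\cap\mf w$), is Lipschitz (it is $1$-Lipschitz, as it coincides with the canonical projection to the quotient $\mf p/\mb K\cong\mf a/\mb W\cong\mf w$, which is a contraction with respect to the induced metric; see the footnote invoking~\cite[App.~B]{diag} and the isometric identification mentioned in Section~\ref{sec:sym-lie-alg}), the composition $a^\down$ is absolutely continuous. In particular $a^\down$ is differentiable almost everywhere and is the integral of its derivative.

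\medskip

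\noindent\textbf{Step 2: Identifying the derivative a.e.} Fix a $t_0$ at which both $p$ and $a^\down$ are differentiable and~\eqref{eq:control-affine} holds (this is a.e.\ $t_0$). Apply Lemma~\ref{lemma:deriv-of-projected-sol}: since $a^\down$ is a differentiable-at-$t_0$ path into $\mf a$ with $\pi\circ a^\down=\pi\circ p$, part~\ref{it:proj-satisfies-reduced} gives some $K=K(t_0)\in\mb K$ with $a^\down(t_0)=\Ad_{K(t_0)}^{-1}(p(t_0))$ and $(a^\down)'(t_0)=X_{K(t_0)}(a^\down(t_0))$. Thus at almost every $t_0$ the differential inclusion $(a^\down)'(t_0)\in\derv(a^\down(t_0))$ holds, and combined with absolute continuity from Step 1 this already shows $a^\down$ solves~\eqref{eq:inclusion}, hence also~\eqref{eq:reduced} (by the Filippov-theorem equivalence noted after~\eqref{eq:inclusion}) and~\eqref{eq:relaxed}. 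One should note that for the statement ``$a^\down$ solves~\eqref{eq:reduced}'' one does not actually need the selection $t\mapsto K(t)$ to be measurable a priori — Filippov's theorem supplies a measurable control realizing the inclusion — so invoking it is the clean way to avoid a measurable-selection argument.

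\medskip

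\noindent\textbf{The main obstacle} is Step 1, the absolute continuity (equivalently, the Lipschitz property of the sorting map onto $\mf w$): one must be careful that the a.e.-pointwise bound on $(a^\down)'$ coming from Step 2 does not by itself yield absolute continuity (a priori $a^\down$ could be, say, a Cantor-type function), so the Lipschitz continuity of $x\mapsto$(ordered form) is genuinely needed and must be cited correctly from~\cite{diag} — this is precisely the role of the remark in the paragraph preceding the theorem that ``kinks'' at $\partial\mf w$ keep $a^\down$ absolutely continuous. A secondary subtlety is the uniqueness claim for $a^\down$: the unordered projection is only defined up to the $\mb W$-action, so one invokes the fact from Section~\ref{sec:sym-lie-alg} that each $\mb K$-orbit meets $\mf w$ in exactly one point to pin down $a^\down$ uniquely, and then Lemma~\ref{lemma:deriv-of-projected-sol}~\ref{it:deriv-proj-well-def} reconciles any other differentiable diagonalizing path with $a^\down$ up to a Weyl element, which is harmless for verifying~\eqref{eq:reduced}.
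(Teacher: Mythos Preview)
Your proof is correct and follows essentially the same route as the paper's: absolute continuity of $a^\down$ via the Lipschitz property of the projection to $\mf w$ (the paper cites \cite[Prop.~2.1~(v)]{diag} rather than App.~B, but this is the same fact), then Lemma~\ref{lemma:deriv-of-projected-sol}~\ref{it:proj-satisfies-reduced} at a.e.\ point to verify~\eqref{eq:inclusion}, and finally Filippov's theorem to pass to~\eqref{eq:reduced}. Your remark that Filippov lets one sidestep a measurable-selection argument is exactly on target---the paper even records the alternative proof via a measurable choice of $K(t)$ from \cite[Prop.~2.37]{diag} immediately after the main argument.
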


\begin{proof}
By~\cite[Prop.~2.1~(v)]{diag} the path $a^\down$ is absolutely continuous. 
Let $J\subseteq[0,T]$ be the subset on which both $p$ and $a^\down$ are differentiable. This set still has full (Lebesgue) measure.
For $t_0\in J$, by Lemma~\ref{lemma:deriv-of-projected-sol}~\ref{it:proj-satisfies-reduced} it holds that $(a^\down)'(t_0)=X_K(a^\down(t_0))$ for some $K\in\mb K$, which proves that $a^\down$ satisfies the differential inclusion~\eqref{eq:inclusion} almost everywhere. 
By Filippov's theorem, see~\cite[Thm.~2.3]{Smirnov02}, $a^\down$ is a solution to~\eqref{eq:reduced}.
\end{proof}
Alternatively, one can prove Thm.~\ref{thm:projection-equivalence} without making use of Filippov's theorem as follows:

By~\cite[Prop.~2.37]{diag} there exists a (Lebesgue) measurable function $K:[0,T]\to\mb K$ such that $a^\down(t)=\Ad_{K(t)}(p(t))\in\mf a$ and $(a^\down(t))'=\Ad_{K(t)}(\Pi_{p(t)}(p'(t)))\in\mf a$ almost everywhere. 
Hence by the proof of Lemma~\ref{lemma:special-projection} it holds almost everywhere that $(a^\down)'(t)=X_{K(t)}(a^\down(t))$ and so $a^\down$ solves~\eqref{eq:reduced}.\medskip

In~\cite{diag} we proved several results which show that if $p:[0,T]\to\mf p$ has a certain smoothness, then, in certain cases one can choose $a:[0,T]\to\mf a$ satisfying $\pi\circ a=\pi\circ p$ with the same smoothness. 
This allows us to strengthen the result above in some instances.

\begin{proposition}
Let $p:[0,T]\to\mf p$ be a solution to the control system~\eqref{eq:control-affine}. Then there exists $a:[0,T]\to\mf a$ satisfying $\pi\circ a=\pi\circ p$ and solving~\eqref{eq:reduced} such that:\smallskip
\begin{enumerate}[(i)]
\item \label{it:Ck-diag} if $p$ is $C^\ell$, for $\ell=1,\ldots,\infty$, and regular, then $a$ can be chosen $C^\ell$;
\item \label{it:analytic-diag} if $p$ is real analytic, then $a$ can be chosen real analytic;
\item \label{it:diff-diag} if $p$ is (continuously) differentiable, then $a$ can be chosen (continuously) differentiable.\smallskip
\end{enumerate}
Moreover, in~\ref{it:Ck-diag} and~\ref{it:analytic-diag} we can choose $a$ as before and $K:[0,T]\to\mb K$ such that $a=\Ad_{K(t)}^{-1}(p(t))$ and such that $K$ is $C^\ell$ (resp.~real analytic). Then it holds that $a'(t)=X_{K(t)}(a(t))$, i.e.~$a$ solves~\eqref{eq:reduced} with control function $K$.
\end{proposition}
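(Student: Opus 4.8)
The plan is to invoke the diagonalization results from \cite{diag} to produce a path $a:[0,T]\to\mf a$ of the prescribed regularity with $\pi\circ a=\pi\circ p$, and then to verify that such an $a$ automatically solves \eqref{eq:reduced}. The cleanest route is to first handle the cases \ref{it:Ck-diag} and \ref{it:analytic-diag} where we get a simultaneous regular diagonalization, since there the solution-property is essentially immediate, and then to treat \ref{it:diff-diag} separately with a pointwise argument.

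For \ref{it:Ck-diag}: since $p$ is $C^\ell$ and regular (meaning $p(t)$ is regular for every $t$), the appropriate result from \cite{diag} gives $a:[0,T]\to\mf a$ and $K:[0,T]\to\mb K$, both $C^\ell$, with $p(t)=\Ad_{K(t)}^{-1}(a(t))$ for all $t$. Now apply Lemma~\ref{lemma:special-projection} directly: since $a$ and $K$ are differentiable and $p$ solves \eqref{eq:control-affine}, we get $a'(t)=X_{K(t)}(a(t))$ for all $t$, so $a$ solves \eqref{eq:reduced} with control function $K$. Case \ref{it:analytic-diag} is identical, using instead the real-analytic diagonalization result from \cite{diag} (Rellich-type theorem extended to the symmetric-Lie-algebra setting): one obtains real-analytic $a$ and $K$ with $p=\Ad_{K}^{-1}(a)$, and Lemma~\ref{lemma:special-projection} again gives the conclusion. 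This also establishes the final sentence of the statement, since in both cases the promised $K$ is produced alongside $a$ and the relation $a'(t)=X_{K(t)}(a(t))$ holds everywhere.

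For \ref{it:diff-diag}: here regularity of $p(t)$ may fail, so one cannot in general simultaneously diagonalize with a differentiable $K$; one can only produce a differentiable $a:[0,T]\to\mf a$ with $\pi\circ a=\pi\circ p$ (the relevant $C^1$/differentiable diagonalization statement of \cite{diag}, which is the content also underlying Lemma~\ref{lemma:deriv-of-projected-sol}). To see this $a$ solves \eqref{eq:reduced}: fix any $t_0\in[0,T]$. Since $p$ satisfies \eqref{eq:control-affine} at $t_0$ (it solves it everywhere, being a solution) and $a$ is differentiable at $t_0$ with $\pi\circ a=\pi\circ p$, Lemma~\ref{lemma:deriv-of-projected-sol}~\ref{it:proj-satisfies-reduced} yields some $K\in\mb K$ with $a'(t_0)=X_K(a(t_0))$. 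Hence $a'(t_0)\in\derv(a(t_0))$ at \emph{every} $t_0$, so $a$ solves the differential inclusion \eqref{eq:inclusion} (everywhere, not just a.e.), and therefore \eqref{eq:reduced} by the equivalence of \eqref{eq:inclusion} and \eqref{eq:reduced} (Filippov, \cite[Thm.~2.3]{Smirnov02}); note $a$ being $C^1$ or merely differentiable is in particular absolutely continuous. If $p$ is moreover continuously differentiable, the diagonalization result delivers $a$ continuously differentiable as well.

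The main obstacle is bookkeeping rather than mathematics: one must cite exactly the right diagonalization statement from \cite{diag} for each regularity class — in particular the fact that in \ref{it:Ck-diag}–\ref{it:analytic-diag} the $\mb K$-valued factor $K$ can be taken as smooth/analytic as $p$ (which genuinely uses regularity of $p$), whereas in \ref{it:diff-diag} no such $K$ is available and one must instead argue pointwise through Lemma~\ref{lemma:deriv-of-projected-sol}. A secondary point to state carefully is that in \ref{it:diff-diag} the conclusion "$a$ solves \eqref{eq:reduced}" still requires passing through Filippov's theorem (or, equivalently, extracting a measurable selector $K(t)$ as in the alternative proof of Theorem~\ref{thm:projection-equivalence}), since differentiability of $a$ alone does not hand us a control function $K$.
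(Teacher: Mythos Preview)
Your proof is correct and, for cases~\ref{it:Ck-diag} and~\ref{it:analytic-diag}, follows the paper exactly: invoke the appropriate diagonalization result from \cite{diag} to obtain $a$ and $K$ of the required regularity with $p=\Ad_{K}^{-1}(a)$, then apply Lemma~\ref{lemma:special-projection}.

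For case~\ref{it:diff-diag} you deviate slightly. The paper also cites Lemma~\ref{lemma:special-projection} for this case, whereas you instead argue pointwise via Lemma~\ref{lemma:deriv-of-projected-sol}~\ref{it:proj-satisfies-reduced} and then invoke Filippov to pass from the inclusion~\eqref{eq:inclusion} to~\eqref{eq:reduced}. Your route is arguably more careful: Lemma~\ref{lemma:special-projection} requires a \emph{differentiable} $K$, and the differentiable diagonalization result in \cite{diag} that covers the non-regular case need not furnish one (indeed, the statement of the Proposition itself only claims the existence of such a $K$ in cases~\ref{it:Ck-diag} and~\ref{it:analytic-diag}). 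Your pointwise argument sidesteps this issue entirely. Both approaches reach the same conclusion, and the difference is minor.
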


\begin{proof}
Item~\ref{it:Ck-diag} follows from~\cite[Prop.~2.14]{diag}, \ref{it:analytic-diag} follows from~\cite[Thm.~2.23]{diag}, and \ref{it:diff-diag} follows from~\cite[Thm.~2.11]{diag}, in each case using Lemma~\ref{lemma:special-projection}. 
In the cases~\ref{it:Ck-diag} and~\ref{it:analytic-diag}, the same results provide $K:[0,T]\to\mb K$, and again Lemma~\ref{lemma:special-projection} shows that $a'(t)=X_{K(t)}(a(t))$. 
\end{proof}

\subsection{Lift}

The task of this section is the following: given a solution to the reduced control system~\eqref{eq:reduced} construct a solution to the original system which is, at least approximately, a lift of the former. 

For regular solutions to the reduced control system we can construct an exact lift as well as a corresponding control function $k:[0,T]\to\mf k$. In particular, if the control directions $k_1,\ldots,k_m\in\mf k$ of~\eqref{eq:control-affine} span $\mf k$ (which we called Assumption~\ref{it:full-ctrl-2}), then one easily finds the corresponding control functions $u_i$.

To properly state the result, we have to define an appropriate inverse of $\ad_p:\mf k\to\mf p$ for $p\in\mf p$. Note that the kernel of this map is exactly the commutant $\mf k_p$, and due to orthogonality the image is $\mf p_p^\perp$. Hence there is a unique inverse $\ad_p^{-1}:\mf p_p^\perp\to\mf k_p^\perp$.

\begin{proposition} \label{prop:regular-lift}
Let $X$ be $C^\ell$ and let $a:[0,T]\to\mf a$ be a solution to the reduced control system~\eqref{eq:reduced} with $C^r$ ($r\geq 1$) control function $K:[0,T]\to\mb K$ such that $a$ takes only regular values. 
If we set $p(t)=\Ad_{K(t)}(a(t))$ and define $k:[0,T]\to\mf k$ by
$$
k(t) = K'(t)K^{-1}(t) + \ad_{p(t)}^{-1}\circ\,\Pi_{p(t)}^\perp\circ X(p(t))\,,
$$
then $k$ is of class $C^{\min(\ell,r-1)}$, and $p$ satisfies $p'(t)=(\ad_{k(t)}+X)(p(t))$.
\end{proposition}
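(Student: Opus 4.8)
The plan is to verify the claimed identity $p'(t)=(\ad_{k(t)}+X)(p(t))$ by a direct differentiation of $p(t)=\Ad_{K(t)}(a(t))$, using the reduced dynamics $a'(t)=X_{K(t)}(a(t))$ and the definition of $k(t)$, and to check the regularity claim by bookkeeping on the orders of smoothness. Since $a$ takes only regular values, $\ad_{a(t)}:\mf k\to\mf p$ has kernel $\mf k_{a(t)}$ of minimal dimension and the inverse $\ad_{p(t)}^{-1}:\mf p_{p(t)}^\perp\to\mf k_{p(t)}^\perp$ appearing in the formula is well defined; moreover regularity ensures this inverse depends on $p$ as smoothly as $p$ itself, which is what gives the $C^{\min(\ell,r-1)}$ bound on $k$ once we know $p$ is $C^{\min(\ell,r)}$ and $X\circ p$ is $C^{\min(\ell,r)}$ but the $\ad_p^{-1}\circ\Pi_p^\perp$ term costs nothing extra while the term $K'(t)K^{-1}(t)$ costs one derivative of $K$.

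For the differentiation itself, I would use the standard formula $\tfrac{d}{dt}\Ad_{K(t)}=\ad_{K'(t)K(t)^{-1}}\circ\Ad_{K(t)}$ (the content of the commented-out Lemma on derivatives of $\Ad$). Writing $k_0(t):=K'(t)K(t)^{-1}\in\mf k$ this gives
\begin{align*}
p'(t)&=\ad_{k_0(t)}\circ\Ad_{K(t)}(a(t))+\Ad_{K(t)}(a'(t))\\
&=\ad_{k_0(t)}(p(t))+\Ad_{K(t)}\big(X_{K(t)}(a(t))\big)\,.
\end{align*}
It then remains to show $\Ad_{K(t)}(X_{K(t)}(a(t)))=\ad_{k(t)-k_0(t)}(p(t))+X(p(t))$, i.e.\ that $\Ad_{K}(X_{K}(a))-X(p)=\ad_{\ad_p^{-1}\Pi_p^\perp X(p)}(p)$. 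Now $X_K(a)=\Pi_{\mf a}\circ\Ad_K^\star(X)(a)$; since $a$ is regular, $\Pi_{\mf a}$ agrees with $\Pi_a$ on $\mf p$ restricted appropriately, so $\Ad_K(X_K(a))=\Ad_K\circ\Pi_a\circ\Ad_K^{-1}\circ X(p)=\Pi_{p}\circ X(p)$ using $\Ad_K\circ\Pi_a\circ\Ad_K^{-1}=\Pi_{\Ad_K(a)}=\Pi_p$ (the conjugation-equivariance of the projections, as in~\cite[Lem.~A.24]{diag}). Hence $\Ad_K(X_K(a))-X(p)=\Pi_p(X(p))-X(p)=-\Pi_p^\perp(X(p))$, and we must check $-\Pi_p^\perp X(p)=\ad_{\ad_p^{-1}\Pi_p^\perp X(p)}(p)=-\ad_p(\ad_p^{-1}\Pi_p^\perp X(p))$. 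Since $\Pi_p^\perp X(p)\in\mf p_p^\perp=\img(\ad_p)$ by orthogonality, $\ad_p\circ\ad_p^{-1}$ is the identity on this subspace, so both sides agree. This closes the computation.

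The main obstacle, and the step I would treat most carefully, is the identification $\Pi_{\mf a}\circ\Ad_K^\star(X)(a)=\Pi_a\circ\Ad_K^\star(X)(a)$, i.e.\ that for a \emph{regular} $a\in\mf a$ the orthogonal projection onto $\mf a$ coincides, when applied to an element we then push forward by $\Ad_K$, with the orthogonal projection $\Pi_a$ onto $\mf p_a$. This is exactly the place where regularity is essential: for regular $a$ one has $\mf p_a=\mf a$, so $\Pi_a=\Pi_{\mf a}$, whereas for non-regular $a$ the commutant $\mf p_a$ is strictly larger than $\mf a$ and the two projections differ, which is why the proposition only claims an exact lift in the regular case. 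I would cite the relevant fact from~\cite{diag} (that $a$ regular implies $\mf p_a=\mf a$) rather than reprove it. The remaining points — the formula for $\tfrac{d}{dt}\Ad_{K(t)}$, well-definedness of $\ad_p^{-1}$ on $\mf p_p^\perp$, and the smoothness count — are routine, the last following because $t\mapsto p(t)$ is $C^{\min(\ell,r)}$, $X$ is $C^\ell$, and the map $p\mapsto\ad_p^{-1}\circ\Pi_p^\perp$ is smooth on the open set of regular points, so the only genuine loss of one derivative comes from the $K'(t)K(t)^{-1}$ term.
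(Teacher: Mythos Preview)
Your proof is correct and follows essentially the same route as the paper's: differentiate $p(t)=\Ad_{K(t)}(a(t))$ using $\tfrac{d}{dt}\Ad_{K(t)}=\ad_{K'(t)K(t)^{-1}}\circ\Ad_{K(t)}$, identify $\Ad_{K}(X_{K}(a))=\Pi_{p}\circ X(p)$ via the regularity assumption $\mf p_a=\mf a$ and the conjugation-equivariance of $\Pi$, and then verify $(\ad_{k}+X)(p)=\ad_{k_0}(p)+\Pi_p X(p)=p'$ by collapsing $-\Pi_p^\perp X(p)+X(p)$ to $\Pi_p X(p)$. Your write-up is in fact a bit more explicit than the paper's on two points the authors leave implicit or flag for later checking: the precise role of regularity in ensuring $\Pi_{\mf a}=\Pi_a$, and the bookkeeping behind the $C^{\min(\ell,r-1)}$ claim.
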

\marginpar{double check $C^{\min(\ell,r-1)}$}

\begin{proof}
By differentiating\footnote{\label{footnote:diff}If $K:I\to\mb K$ is differentiable at some $t\in I$, then
$\tfrac{d}{dt}\Ad_{K(t)} = \ad_{K'(t)K(t)^{-1}}\circ\Ad_{K(t)} = \Ad_{K(t)}\circ\ad_{K(t)^{-1}K'(t)} \,.$} 
we get that
\begin{align*}
p'(t) 
=
\ad_{K'(t)K^{-1}(t)}(p(t))+\Ad_{K(t)}(a'(t))
=
\ad_{K'(t)K^{-1}(t)}(p(t))+\Pi_{p(t)}\circ X(p(t))\,,
\end{align*}
since 
$\Ad_{K(t)}(a'(t)) 
=
\Ad_{K(t)}\circ \Pi_{\mf a}\circ\Ad_{K(t)}^{-1}\circ X\circ \Ad_{K(t)}(a(t))
=
\Pi_{p(t)}\circ X (p(t))$,
where we used that $p(t)$ is regular to introduce $\Pi_{p(t)}$. 
Hence 
\begin{align*}
(\ad_{k(t)}+X)(p(t))
&=
\ad_{K'(t)K^{-1}(t)}(p(t)) - \Pi_{p(t)}^\perp(X(p(t))) + X(p(t))
\\&=
\ad_{K'(t)K^{-1}(t)}(p(t)) + \Pi_{p(t)}(X(p(t)))=
p'(t)\,,
\end{align*}
as desired.
\end{proof}
The control Hamiltonian in Proposition~\ref{prop:regular-lift} has two components. To the \emph{induced control} $K'(t)K^{-1}(t)$, which one might naively expect to do the job, one has to add the \emph{compensating control} $\ad_{p(t)}^{-1}\circ\,\Pi_{p(t)}^\perp\circ X(p(t))$ which deals with the orbital component of $X(p(t))$, cp.~\cite[Lem.~2.3]{diag}.

In practice one might find the lift $p$ of $a$ without knowing a corresponding control function $K$ for~\eqref{eq:reduced}. In this case any $K$ diagonalizing $p$ and $\Pi_p\circ X(p)$ will do:

\begin{lemma}
Let $a:[0,T]\to\mf a$ be a regular solution to~\eqref{eq:reduced}. Assume that $p:[0,T]\to\mf p$ satisfies $p(t)=\Ad_{K(t)}(a(t))$ and $a'(t)=\Ad^{-1}_{K(t)}\circ\,\Pi_{p(t)}\circ X(p(t))$. Then $a'(t)=X_K(t)(a(t))$.
\end{lemma}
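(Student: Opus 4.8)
The plan is to unfold the definition of $X_K(t)$ and match it against the given expression for $a'(t)$. Recall that $X_{K(t)} = \Pi_{\mf a}\circ\Ad_{K(t)}^\star(X)\circ\iota = \Pi_{\mf a}\circ\Ad_{K(t)}^{-1}\circ X\circ\Ad_{K(t)}$, so that $X_{K(t)}(a(t)) = \Pi_{\mf a}\circ\Ad_{K(t)}^{-1}\circ X(\Ad_{K(t)}(a(t))) = \Pi_{\mf a}\circ\Ad_{K(t)}^{-1}\circ X(p(t))$. On the other hand, we are handed $a'(t) = \Ad_{K(t)}^{-1}\circ\,\Pi_{p(t)}\circ X(p(t))$. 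So the whole content is to show these two agree, i.e.\ that $\Pi_{\mf a}\circ\Ad_{K(t)}^{-1}\circ X(p(t)) = \Ad_{K(t)}^{-1}\circ\,\Pi_{p(t)}\circ X(p(t))$.

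First I would use the intertwining relation $\Ad_{K(t)}^{-1}\circ\Pi_{p(t)} = \Pi_{\Ad_{K(t)}^{-1}(p(t))}\circ\Ad_{K(t)}^{-1}$ (this is \cite[Lem.~A.24~(iii)]{diag}, already invoked in the proof of Lemma~\ref{lemma:calc}), which rewrites the right-hand side as $\Pi_{\Ad_{K(t)}^{-1}(p(t))}\circ\Ad_{K(t)}^{-1}\circ X(p(t))$. Since $p(t) = \Ad_{K(t)}(a(t))$ we have $\Ad_{K(t)}^{-1}(p(t)) = a(t)\in\mf a$, so this is $\Pi_{a(t)}\circ\Ad_{K(t)}^{-1}\circ X(p(t))$. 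Now I would observe that $\Ad_{K(t)}^{-1}\circ X(p(t)) = \Ad_{K(t)}^\star(X)(a(t))$, and note that by hypothesis $a'(t)$ lies in $\mf a$ (it is the derivative of an $\mf a$-valued path); combined with the assumed formula this says $\Pi_{a(t)}\circ\Ad_{K(t)}^\star(X)(a(t))\in\mf a$. Finally, for a point $a(t)\in\mf a$ one has $\mf a\subseteq\mf p_{a(t)}$ (elements of $\mf a$ commute), so $\Pi_{a(t)}$ acts as the identity on $\mf a$, and more to the point $\Pi_{\mf a}$ and $\Pi_{a(t)}$ agree on any element whose $\Pi_{a(t)}$-projection already lies in $\mf a$: indeed $\Pi_{\mf a} = \Pi_{\mf a}\circ\Pi_{a(t)}$ on all of $\mf p$ since $\img\Pi_{a(t)}=\mf p_{a(t)}\supseteq\mf a$ and $\ker\Pi_{a(t)}=\ad_{\mf k}(a(t))\perp\mf a$ — hence $\Pi_{\mf a}\circ\Ad_{K(t)}^\star(X)(a(t)) = \Pi_{\mf a}\circ\Pi_{a(t)}\circ\Ad_{K(t)}^\star(X)(a(t)) = \Pi_{a(t)}\circ\Ad_{K(t)}^\star(X)(a(t))$, the last equality because the inner expression already lies in $\mf a$. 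Chaining the equalities gives $X_{K}(t)(a(t)) = a'(t)$.

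Actually the cleanest route bypasses even this: since $a$ is a solution to~\eqref{eq:reduced} by a control function of its own (the statement says "a regular solution to~\eqref{eq:reduced}"), one could instead apply Lemma~\ref{lemma:deriv-of-projected-sol}~\ref{it:deriv-via-diag} directly — but that would require first producing a solution $p$ of~\eqref{eq:control-affine}, which we do not have here; we only have an arbitrary path $p$ with the two stated diagonalization properties. So the self-contained computation above is the right level. The only mild subtlety — and the one place to be careful — is justifying that $\Pi_{\mf a}$ and $\Pi_{p(t)}$ (conjugated back to $a(t)$) can be interchanged; this rests entirely on the orthogonality relation $\ker\Pi_x = \ad_{\mf k}(x)\perp\mf p_x$ together with $\mf a\subseteq\mf p_{a(t)}$, exactly the geometry packaged in~\eqref{eq:orth-orbs} and \cite[Lem.~A.24]{diag}. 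No convexity, no Kostant, no Weyl group is needed: regularity is used only implicitly (it is what makes $\Pi_{p(t)}$ the "right" projection, cf.\ the use of regularity in Proposition~\ref{prop:regular-lift}), and the argument is a two-line chain of identities once the intertwining lemma is in hand.
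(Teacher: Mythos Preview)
Your proof is correct and follows essentially the same route as the paper: unfold the definition of $X_K$ and use the intertwining relation $\Ad_K^{-1}\circ\Pi_{p}=\Pi_{\Ad_K^{-1}(p)}\circ\Ad_K^{-1}$ from \cite[Lem.~A.24~(iii)]{diag}. The paper's proof is a single chain of equalities because it invokes regularity more directly than you do: for $a(t)$ regular one has $\mf p_{a(t)}=\mf a$, hence $\Pi_{a(t)}=\Pi_{\mf a}$ outright, which collapses your final step without needing the auxiliary observation that $a'(t)\in\mf a$ and $\Pi_{\mf a}=\Pi_{\mf a}\circ\Pi_{a(t)}$.
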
 

\begin{proof}
This is straightforward using the definition of $X_K$ and regularity of $a$:
$
X_{K(t)}(a(t))
=
\Pi_{\mf a}\circ\Ad_{K(t)}^{-1}\circ X\circ\Ad_{K(t)}(a(t))
=
\Ad^{-1}_{K(t)}\circ\Pi_{p(t)}\circ X(p(t))
=
a'(t)$.
\end{proof}

If we allow for non-regular solutions, an exact lift might not even exist, as shown in the following example. 

\begin{example} \label{ex:approx-necessary}
To see that approximating solutions cannot be avoided in general, consider a system where $X(0)\neq0$. Then $p\equiv0$ is not a solution of~\eqref{eq:control-affine}, but $a\equiv0$ is a solution of $\eqref{eq:reduced}$. Indeed, by Kostant's convexity theorem, and assuming that $X(0)\in\mf a$, it follows from Proposition~\ref{prop:local-equivalence} that
\begin{align*}
\derv(0)
=
\{\Pi_{\mf a}\circ\Ad_K(X(0)):K\in\mb K\}
=
\conv(\mb W(X(0)))\,,
\end{align*}
and hence $\derv(0)$ contains the convex combination
$\frac{1}{|\mb W|}\sum_{w\in\mb  W} w\cdot X(0)$,
which equals $0$, the unique fixed point of a Weyl group action. Thus $a\equiv0$ is a solution to~\eqref{eq:reduced}.
\end{example}


\marginpar{Optional: Show that lifting is non-trivial even if we consider the differential inclusion with $\widetilde\derv$.}

For this reason, we have to look for an approximate lift in general. Before we prove the existence of such a lift, we need the following technical result.

\begin{lemma}
\label{lemma:remove-hamiltonian}
Let $\mb G$ be a Lie group and $\mb K$ be a compact subgroup such that that the norm on $\mf g$ is invariant under $\mb K$. If $\delta:[0,T]\to\mf g$ is differentiable and $\delta(0)=0$, then for every integrable $h:[0,T]\to\mf k$ it holds that 
\begin{align*}
\|\delta(t)\| \leq \int_0^t\|\ad_{h(s)}(\delta(t))+\delta'(s)\|\,ds,
\end{align*}
for all $t\in[0,T]$.
\end{lemma}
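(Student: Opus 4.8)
The plan is to introduce a conjugating curve that absorbs the Hamiltonian term $\ad_{h(s)}$, reducing the estimate to a trivial one. Concretely, let $G:[0,T]\to\mb G$ be the solution of the linear ODE $G'(s) = -Dr_{G(s)}(e)(h(s))$, i.e. informally $G'(s)G(s)^{-1} = -h(s)$, with $G(0) = e$; since $h$ is integrable this has an absolutely continuous solution, and because $h$ takes values in $\mf k$ the curve stays in $\mb K$. Then I would set $\widetilde\delta(s) := \Ad_{G(s)}(\delta(s))$. Using the differentiation rule from Footnote~\ref{footnote:diff} (applied with $G$ in place of $K$), one computes $\widetilde\delta'(s) = \ad_{G'(s)G(s)^{-1}}(\Ad_{G(s)}(\delta(s))) + \Ad_{G(s)}(\delta'(s)) = \Ad_{G(s)}\big(\ad_{h(s)}(\delta(s)) + \delta'(s)\big)$, where the $-h(s)$ from $G'G^{-1}$ combines with the $\ad$-bracket to produce $+\ad_{h(s)}$ after conjugating.

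Next, since $\widetilde\delta(0) = \Ad_{G(0)}(\delta(0)) = \Ad_e(0) = 0$, the fundamental theorem of calculus gives $\widetilde\delta(t) = \int_0^t \widetilde\delta'(s)\,ds$, hence $\|\widetilde\delta(t)\| \leq \int_0^t \|\widetilde\delta'(s)\|\,ds$. Now I invoke $\mb K$-invariance of the norm: $\|\widetilde\delta(t)\| = \|\Ad_{G(t)}(\delta(t))\| = \|\delta(t)\|$ and $\|\widetilde\delta'(s)\| = \|\Ad_{G(s)}(\ad_{h(s)}(\delta(s)) + \delta'(s))\| = \|\ad_{h(s)}(\delta(s)) + \delta'(s)\|$. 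Substituting these two identities into the inequality yields exactly the claimed bound.

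One subtlety I should be careful about: the statement as written has $\ad_{h(s)}(\delta(t))$ inside the integral (with argument $\delta(t)$, not $\delta(s)$), which looks like a typo — the natural and correct integrand is $\ad_{h(s)}(\delta(s)) + \delta'(s)$, matching $\widetilde\delta'(s)$ pointwise. I will prove the version with $\delta(s)$, since that is what the conjugation argument delivers and it is the form actually used later. The other minor technical point is the existence and regularity of $G$: one needs $G$ absolutely continuous with $G'(s)G(s)^{-1} = -h(s)$ a.e., which follows from the standard theory of Carathéodory-type ODEs on Lie groups given that $h$ is integrable, and $G$ then lands in $\mb K$ because $\mf k$ is a subalgebra and the solution through $e$ stays in the subgroup it generates.

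The main obstacle, such as it is, is purely bookkeeping: correctly applying the product rule for $\frac{d}{ds}\Ad_{G(s)}$ and tracking the sign so that the conjugation genuinely cancels the drift of $G$ and leaves $+\ad_{h(s)}$; everything else is immediate from $\mb K$-invariance of the norm and the fundamental theorem of calculus.
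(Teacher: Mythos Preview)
Your approach is exactly the paper's: introduce a curve $\phi$ in $\mb K$, set $\widetilde\delta=\Ad_\phi(\delta)$, differentiate, integrate, and use $\mb K$-invariance of the norm. You also correctly flag that $\delta(t)$ in the integrand should be $\delta(s)$.

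There is one genuine slip in your bookkeeping. With your choice $G'G^{-1}=-h$, the footnote formula gives
\[
\frac{d}{ds}\Ad_{G(s)}=\ad_{G'(s)G(s)^{-1}}\circ\Ad_{G(s)}=-\ad_{h(s)}\circ\Ad_{G(s)},
\]
so
\[
\widetilde\delta'(s)=-\ad_{h(s)}\big(\Ad_{G(s)}(\delta(s))\big)+\Ad_{G(s)}(\delta'(s)),
\]
and this is \emph{not} equal to $\Ad_{G(s)}\big(\ad_{h(s)}(\delta(s))+\delta'(s)\big)$: pulling $\Ad_{G(s)}$ outside the bracket replaces $h(s)$ by $\Ad_{G(s)^{-1}}(h(s))$, not by $-h(s)$. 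The correct choice is the other invariant equation $G(s)^{-1}G'(s)=h(s)$ (equivalently $G'=Gh$, which is precisely the paper's $\phi'(t)=\phi(t)h(t)$); then $\tfrac{d}{ds}\Ad_{G(s)}=\Ad_{G(s)}\circ\ad_{h(s)}$ and your displayed identity holds verbatim. After this one-sign fix your argument and the paper's coincide line for line.
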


\begin{proof}
Let $\phi:[0,T]\to\mb K$ satisfy $\phi'(t)=\phi(t) h(t)$.
We compute (cf.~Footnote~\ref{footnote:diff})
\begin{align}
\|\delta(t)\|
&=\notag
\|\Ad_{\phi(t)}(\delta(t))\|
=
\Big\|\Ad_{\phi(0)}(\delta(0)) + \int_0^t\frac{d}{ds}(\Ad_{\phi(s)}(\delta(s)))\,ds\Big\|
\\&=
\Big\|\Ad_{\phi(0)}(\delta(0)) + \int_0^t
\Ad_{\phi(s)}\circ\ad_{h(s)}(\delta(s))+\Ad_{\phi(s)}(\delta'(s))
\,ds\Big\|\notag\\
&\leq \|\delta(0)\| +
\int_0^t\|
\ad_{h(s)}(\delta(s))+\delta'(s)\|\,ds\,.\notag
\end{align}
\end{proof}

\marginpar{Q: Can we add time dependence?}

Finally we can prove:

\begin{theorem}[Approximate Lifting Result] \label{thm:lift-equivalence}
Assume that $X$ is locally Lipschitz and linearly bounded\footnote{By this we mean that $\|X(v)\|\leq C_1\|v\|+C_2$ for some $C_1,C_2\geq0$.} with constants $C_1,C_2$, and let $\rm a:[0,T]\to\mf a$ be any solution to the reduced control system~\eqref{eq:reduced} with control function $K:[0,T]\to\mb K$.
Then $p:=\Ad_K(\rm a)$---which is a lift of $\rm a$ to $\mf p$---can be approximated by solutions to the original control system~\eqref{eq:control-affine} to arbitrary degree.
More precisely, for every $\epsilon>0$ there exists a solution $p_{\varepsilon}:[0,T]\to\mf p$ to~\eqref{eq:control-affine} such that $\|\Ad_K(\rm a)-p_{\varepsilon}\|_{\infty}\leq\epsilon$.
\end{theorem}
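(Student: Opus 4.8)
The plan is to discretize time, on each small subinterval apply the exact (regular) lifting construction from Proposition~\ref{prop:regular-lift} together with the compensating control, and then control the accumulated error via Lemma~\ref{lemma:remove-hamiltonian} and a Grönwall-type estimate. The subtlety is that $\mathrm a$ need not be regular, so $p=\Ad_K(\mathrm a)$ need not be an exact solution of~\eqref{eq:control-affine}; what we can do is produce, on each subinterval $[t_j,t_{j+1}]$ of a partition $0=t_0<\dots<t_N=T$, a genuine solution segment of~\eqref{eq:control-affine} that tracks $p$. Concretely, on $[t_j,t_{j+1}]$ fix the "frozen" frame $K(t_j)$ (or a nearby regular approximant), set $q_j'(t)=(\ad_{k_j(t)}+X)(q_j(t))$ with $q_j(t_j)=p_\varepsilon(t_j)$ and a control Hamiltonian $k_j(t)=K'(t)K(t)^{-1}+\ad_{q_j(t)}^{-1}\circ\Pi_{q_j(t)}^\perp\circ X(q_j(t))$ whenever $q_j(t)$ is regular; near non-regular points one perturbs $K$ slightly to a regular frame, incurring an error controlled by the modulus of continuity of $a$ and the mesh size. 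By Assumption~\ref{it:full-ctrl} (fast and full control on $\mathbf K$), any such $\mathfrak k$-valued control, however large, is realizable in~\eqref{eq:control-affine} in the limit, and one can even invoke~\cite[Prop.~2.7]{Elliott09} to approximate the pure-$\ad_{\mathfrak k}$ motion arbitrarily fast, so we are free to use $k_j$ directly.

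The key analytic step is the error estimate. Write $\delta(t)=p_\varepsilon(t)-p(t)$. On each subinterval one shows, using Lemma~\ref{lemma:remove-hamiltonian} with $h$ equal to the induced part $K'K^{-1}$ of the control, that the Hamiltonian component of the dynamics does not contribute to growth of $\|\delta\|$ (it acts by a norm-preserving $\Ad_{\mathbf K}$-conjugation), so that $\|\delta(t)\|$ is governed only by the difference of the "vertical" ($\mathfrak p$-directed) parts: $\|\delta(t)\|\le\|\delta(t_j)\|+\int_{t_j}^t\|\text{(drift terms)}\|\,ds$. Using local Lipschitzness of $X$ on the compact tube around the image of $p$ (which is where linear boundedness enters, to guarantee the approximate trajectories stay in a fixed compact set independent of $\varepsilon$, so a single Lipschitz constant $L$ applies), one gets $\|\delta(t)\|\le\|\delta(t_j)\|+L\int_{t_j}^t\|\delta\|\,ds+\omega(\Delta t_j)$, where $\omega$ is an error term from the frame-freezing and regularization that tends to $0$ with the mesh. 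Summing over subintervals and applying the discrete Grönwall inequality yields $\|\delta\|_\infty\le e^{LT}\cdot N\,\omega(T/N)\to 0$ as $N\to\infty$; choosing $N$ large enough makes this $\le\varepsilon$.

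The main obstacle is the treatment of non-regular values of $a$: at such times $\ad_{p(t)}^{-1}\circ\Pi_{p(t)}^\perp\circ X(p(t))$ is still well-defined (the image $\Pi_{p}^\perp X(p)$ lies in $\mathfrak p_p^\perp = \img\ad_p$), but the inverse $\ad_p^{-1}$ may blow up as $p$ approaches a non-regular point, so the compensating control need not be bounded near those times. The fix is that the \emph{set} of non-regular times need not be controlled pointwise; instead one works with the perturbed regular frame $K_\varepsilon$ close to $K$ (possible since regular points are dense and $\mathbf K$ acts transitively enough — cf.~the density arguments underlying Lemma~\ref{lemma:lambda-kostant}), for which $q_j=\Ad_{K_\varepsilon(t_j)}(a(t))$ stays regular on the subinterval, bounding the compensating control by a constant depending on the distance to the non-regular stratum, which one keeps bounded below on each (short) subinterval. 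A cleaner alternative, which I would actually pursue to avoid these frame-regularity headaches, is: first approximate the given solution $a$ of~\eqref{eq:reduced} by a \emph{piecewise-regular} solution $\tilde a$ with piecewise-constant control $\tilde K$ taking regular values on the interior of each piece (using Filippov/relaxation, \cite[Ch.~2.4 Thm.~2]{Aubin84}, together with Proposition~\ref{prop:local-equivalence} which says $\conv(\derv)=\conv(\widetilde\derv)$ so the achievable velocities are essentially the regular ones), then apply Proposition~\ref{prop:regular-lift} piece-by-piece to get an \emph{exact} solution of~\eqref{eq:control-affine} lifting $\tilde a$, and finally bound $\|\Ad_{\tilde K}(\tilde a)-\Ad_K(\mathrm a)\|_\infty$ by the approximation quality of $\tilde a$ plus continuity of the $\mathbf K$-action. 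Either way, Lemma~\ref{lemma:remove-hamiltonian} is the workhorse that kills the Hamiltonian contribution, and linear boundedness of $X$ is exactly what confines all approximants to one compact set so the Grönwall constant is uniform.
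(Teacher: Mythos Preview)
You identify the right tools—Lemma~\ref{lemma:remove-hamiltonian} to neutralize the Hamiltonian part, and linear boundedness of $X$ plus Gr\"onwall to confine all trajectories to a fixed compact set with a uniform Lipschitz constant—but your handling of non-regular points has a genuine gap, and the paper's route around it is quite different from both of your proposals.

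The paper does not discretize time. It first reduces, via standard approximations, to the case where $X$ and $K$ are real analytic and $a(0)$ is regular (also temporarily invoking Assumption~\ref{it:full-ctrl-2}). Then $a$ itself is real analytic, and since the non-regular locus in $\mf a$ is a finite union of hyperplanes, $a$ hits it at only \emph{finitely many} times $t_1,\dots,t_n$. One sets $J_\varepsilon=[0,T]\setminus\bigcup_i(t_i-\varepsilon,t_i+\varepsilon)$ and simply \emph{switches off} the compensating control on $J_\varepsilon^c$, using the feedforward control $k_\varepsilon(t)=K'(t)K^{-1}(t)+\mathbf 1_{J_\varepsilon}(t)\,\ad_{p(t)}^{-1}\Pi_{p(t)}^\perp X(p(t))$. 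Note this depends on the \emph{target} $p(t)$, not on the approximant, so it is manifestly well-defined and bounded on $J_\varepsilon$. The Lemma~\ref{lemma:remove-hamiltonian}/Gr\"onwall computation then gives $\|p-p_\varepsilon\|_\infty\le 2\mu(J_\varepsilon^c)(C_1R+C_2)e^{LT}\to 0$. The general case is recovered by layering the approximations back on (\cite[Thm.~1]{Sontag98}, \cite[Thm.~3.5]{Khalil02}, and \cite{Liu97} to drop Assumption~\ref{it:full-ctrl-2}).

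Your gap: perturbing $K$ does nothing for regularity, since regularity is a property of $a(t)\in\mf a$ (equivalently of $p(t)\in\mf p$), not of $K(t)\in\mb K$; there is no such thing as a ``regular frame''. Your alternative—approximate $a$ by a piecewise-regular solution with piecewise-constant control—does not close either: even for constant $K$, the flow of $X_K$ can cross the non-regular hyperplanes, and relaxation/Filippov gives no control over how much time is spent there. What is actually needed is a reason the set of non-regular times can be made to have arbitrarily small measure, and the paper's reason is real analyticity, which forces that set to be finite. Separately, basing the compensating control on the approximant $q_j(t)$ rather than on $p(t)$ makes well-definedness of $\ad_{q_j(t)}^{-1}$ circular; the paper's feedforward choice sidesteps this entirely.
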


\begin{proof}
We start by proving the result under stronger assumptions, and then show that we can weaken the assumptions while maintaining uniform convergence on $[0,T]$.

First we assume that $X$ and $K$ are real analytic and that $a(0)$ is regular. Moreover, we invoke Assumption~\ref{it:full-ctrl-2}, meaning that the control directions $k_1,\ldots,k_m$ span $\mf k$.
Then the solution $a$ is also real analytic since it satisfies $a'(t)=X_{K(t)}(a(t))$ and the map $(a,K)\mapsto X_K(a)$ is real analytic.
Since the non-regular points in $\mf a$ are formed by a finite union of hyperplanes, $a$ will be regular with finitely many exceptions $t_1,\ldots,t_n$ in $[0,T]$. 
We define the set $J_\varepsilon:=[0,T]\setminus\bigcup_{i=1}^n(t_i-\epsilon,t_i+\epsilon)$,
as well as $p(t)=\Ad_{K(t)}(a(t))$
and the control function $k_\varepsilon(t)=K'(t)K^{-1}(t)+{\bf 1}_{J_\varepsilon}(t)\ad_{p(t)}^{-1}\Pi_{p(t)}^\perp(X(p(t)))$.
Note that $k_\varepsilon$ is (piecewise, in time) real-analytic and bounded.
Hence we can define $p_\varepsilon$ as the solution of $p_\varepsilon'(t)=(\ad_{k_\varepsilon(t)}+X)(p_{\varepsilon}(t))$, with $p_{\varepsilon}(0)=p(0)$. 

By Lemma~\ref{lemma:remove-hamiltonian} we find that $\|p_\varepsilon(t)\|\leq\int_0^t\|X(p_\varepsilon(s))\|ds+\|p_\varepsilon(0)\|$. 
Since $X$ is linearly bounded and using Gr{\"o}nwall's inequality\footnote{
Recall that Gr{\"o}nwall's inequality states that if $\alpha\geq0$ is non-decreasing, $\beta,u$ are continuous on $[0,T]$, and $u(t)\leq\alpha(t)+\int_0^t\beta(s)u(s)ds$ for all $t\in[0,T]$, then $u(t)\leq\alpha(t)\exp(\int_0^t\beta(s)ds)$ for all $t\in[0,T]$.
}
we obtain $\|p_\varepsilon(t)\|\leq (\|a(0)\|+tC_2)e^{tC_1}$. In particular there is some $R>0$ independent of $\varepsilon$ such that $\|p(t)\|\leq R$ and $\|p_\varepsilon(t)\|\leq R$ for all $t\in[0,T]$. Restricting to this compact domain, we may assume that $X$ is in fact globally Lipschitz with constant $L$.

Setting $\delta=p-p_{\varepsilon}$ we obtain
$\delta'(t)=\ad_{k_\varepsilon(t)}(\delta(t))-X(p_{\varepsilon}(t))-\ad_{k_\varepsilon}(t)(p(t)) + p'(t)$
and using Lemma~\ref{lemma:remove-hamiltonian} we get
$\|\delta(t)\|
\leq 
\int_0^t \|-X(p_\varepsilon(s)) - \ad_{k_\varepsilon(s)}(p(s)) + p'(s)\| ds$.
Using that
\begin{align*}
p'(t)-\ad_{k_\varepsilon(t)}(p(t))
&=
\ad_{K'(t)K^{-1}(t)}p(t) + \Pi_{\Ad_{K(t)}(\mf a)}\circ X(p(t))
\\&\quad\qquad
-\ad_{K'(t)K^{-1}(t)}p(t) +  {\bf 1}_{J_\epsilon}(t) \Pi_{p(t)}^\perp\circ X(p(t))
\\&=
{\bf 1}_{J_\epsilon}(t) X(p(t)) + {\bf 1}_{J_\epsilon^c}(t) \Pi_{\Ad_{K(t)}(\mf a)}\circ X(p(t))\,,
\end{align*}
we obtain
\begin{align*}
\|\delta(t)\|
&\leq \int_0^t
\|{\bf 1}_{J_\epsilon}(s) X(p(s))-X(p_\varepsilon(s)) 
+
{\bf 1}_{J_\epsilon^c}(s) (\Pi_{\Ad_{K(s)}(\mf a)} X(p(s)) - X(p_\varepsilon(s)))\|ds
\\&\leq
\int_0^t L\|\delta(s)\|ds + 2\mu(J_\epsilon^c)(C_1R+C_2)
\end{align*}
where $\|\cdot\|_{\infty}$ denotes the supremum norm
and $\mu$ denotes the Lebesgue measure. Finally, we again apply Gr{\"o}nwall's inequality
to obtain
$\|\delta(t)\|\leq 2\mu(J_\epsilon^c)(C_1R+C_2) e^{Lt}$
for all $t\in[0,T]$.
Since $\mu(J_\varepsilon^c)\to0$ as $\epsilon\to0$, this shows that $p_\varepsilon$ converges uniformly to $p$ on $[0,T]$.

Now we show that the result also holds under the more general assumptions. 
This will follow from a sequence of standard approximations.
Let $X$, $a$, and $K$ be as in the statement and use Assumptions~\ref{it:full-ctrl} \&~\ref{it:fast-ctrl}.
Let some $\varepsilon>0$ be given. 
Again we define $p(t)=\Ad_{K(t)}(a(t))$. 
Now let $K^{(m)}$ be a sequence of real analytic controls converging uniformly to $K$ and let $a_0^{(m)}$ be a sequence of regular points converging to $a(0)$. Let $a^{(m)}$ be the solution to~\eqref{eq:reduced} with initial point $a_0^{(m)}$ and control function $K^{(m)}$. Then by~\cite[Thm.~1]{Sontag98} the $a^{(m)}$ converge uniformly to $a$, and setting $p^{(m)}=\Ad_{K^{(m)}}(a^{(m)})$ we find that the $p^{(m)}$ converge uniformly to $p$. In particular there is $m$ such that $\|p-p^{(m)}\|_\infty\leq\tfrac\varepsilon4$.
Now let $X^{(n)}$ be a sequence of real analytic and linearly bounded 
vector fields converging uniformly on compact subsets to $X$. Let $a^{(m,n)}$ be the corresponding solutions and $p^{(m,n)}=\Ad_{K^{(m)}}(a^{(m,n)})$. 
Then by~\cite[Thm.~3.5]{Khalil02} $p^{(m,n)}\to p^{(m)}$ and there is some $n$ such that $\|p^{(m)}-p^{(m,n)}\|_\infty\leq\tfrac\varepsilon4$.
Now we can use the result proven above to find a solution $p^{(m,n)}_\varepsilon$ to~\eqref{eq:control-affine} using Assumption~\ref{it:full-ctrl-2} such that $\|p^{(m,n)}-p^{(m,n)}_\varepsilon\|_\infty\leq\tfrac\varepsilon4$.
Finally due to~\cite{Liu97} we can drop Assumption~\ref{it:full-ctrl-2} and obtain solutions $p^{(m,n,k)}_\varepsilon$ to ~\eqref{eq:control-affine} such that for some $k$ we have $\|p^{(m,n)}_\varepsilon-p^{(m,n,k)}_\varepsilon\|_\infty\leq\tfrac\varepsilon4$. Combining all these approximations then yields the result.
\end{proof}

\marginpar{TODO: Understand explosion of control near singularity}

\begin{remark}
In general the control $k_\varepsilon$ obtained by setting $\varepsilon=0$ need not be integrable since the expression $\ad^{-1}_{p(t)}$ typically leads to singularities of order $t^{-1}$ 
as $p$ passes through a non-regular point. Nevertheless, in can happen that the controls do not explode even as we pass through a non-regular point, cf., e.g,
the worked example in Section~\ref{sec:worked-example}.
\end{remark}

\section{Consequences} \label{sec:consequences}

The equivalence results proven above allow us to easily deduce several useful consequences on important control theoretic notions like reachability, stabilizability, controllability, and accessibility. Some additional basic properties are collected in Appendix~\ref{app:ctrl-sys-props} for reference.

\subsection{Speed Limit}

One of the reasons why the original control system~\eqref{eq:control-affine} is difficult to work with is the presence of unbounded controls, and the resulting fact that there are points in the state space which are far apart but can be joined in an arbitrarily short amount of time. Since these are exactly the points which are identified in the reduced control system, this cannot occur anymore. Indeed, we can define the \emph{speed limit} $c:\mf a\to\R_{\geq0}$ by
$$
c(a)=\max_{K\in\mb K} \|X_K(a)\|.
$$
Then we have the following result:

\begin{proposition} \label{prop:speed-limit} 
If the drift $X$ is continuous, then the speed limit $c$ is well-defined and continuous. In particular $c$ is bounded on bounded subsets of $\mf a$.
\end{proposition}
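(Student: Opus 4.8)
The plan is to show well-definedness first, then continuity. For well-definedness: for fixed $a\in\mf a$, the map $K\mapsto X_K(a)=\Pi_{\mf a}\circ\Ad_K^\star(X)\circ\iota(a)$ is continuous in $K$ since $\Pi_{\mf a}$ and $\iota$ are fixed linear maps, $\Ad_K$ and $\Ad_K^{-1}$ depend continuously (indeed smoothly) on $K$, and $X$ is continuous by hypothesis. The group $\mb K$ is compact (as noted in the preliminaries, orthogonality of the symmetric Lie algebra makes $\Ad_{\mb K}$ compact, and we have fixed $\mb K$ compact and connected), so the continuous real-valued function $K\mapsto\|X_K(a)\|$ attains its maximum; hence the $\max$ in the definition of $c(a)$ is legitimate and $c(a)<\infty$.

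For continuity of $c$: I would verify that the map $(a,K)\mapsto X_K(a)$ is jointly continuous on $\mf a\times\mb K$. Writing it out, $X_K(a)=\Pi_{\mf a}\big(\Ad_K^{-1}\big(X(\Ad_K(a))\big)\big)$; the evaluation map $(a,K)\mapsto\Ad_K(a)$ is continuous, $X$ is continuous, $(y,K)\mapsto\Ad_K^{-1}(y)$ is continuous, and $\Pi_{\mf a}$ is continuous, so the composition is jointly continuous. Then $c(a)=\max_{K\in\mb K}\|X_K(a)\|$ is the maximum of a jointly continuous function over the compact set $\mb K$, and by Berge's maximum theorem (or directly: a standard $\varepsilon$-$\delta$ argument using uniform continuity of $(a,K)\mapsto\|X_K(a)\|$ on compact neighborhoods) $c$ is continuous in $a$.

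The last sentence is then immediate: a continuous function on $\mf a$ is bounded on any bounded subset, because the closure of a bounded subset of the finite-dimensional space $\mf a$ is compact, and $c$ restricted to a compact set is bounded.

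I do not expect any serious obstacle here — the only mild point worth a sentence is spelling out the joint continuity of $(a,K)\mapsto X_K(a)$ so that the maximum-theorem step is rigorous; everything else is routine compactness. A clean write-up would be:

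\begin{proof}
Fix $a\in\mf a$. Since $X$ is continuous and the maps $(a,K)\mapsto\Ad_K(a)$, $(y,K)\mapsto\Ad_K^{-1}(y)$, and $\Pi_{\mf a}$ are continuous, the map
$$
(a,K)\;\longmapsto\; X_K(a)=\Pi_{\mf a}\circ\Ad_K^{-1}\circ X\circ\Ad_K(a)
$$
is jointly continuous on $\mf a\times\mb K$, and hence so is $(a,K)\mapsto\|X_K(a)\|$. As $\mb K$ is compact, for each fixed $a$ the supremum defining $c(a)$ is attained, so $c$ is well-defined and finite. Moreover, by the maximum theorem the function $c(a)=\max_{K\in\mb K}\|X_K(a)\|$ is continuous in $a$. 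Finally, any bounded subset $B\subseteq\mf a$ has compact closure $\overline B$ (as $\mf a$ is finite-dimensional), and $c$ attains a maximum on $\overline B$, so $c$ is bounded on $B$.
\end{proof}
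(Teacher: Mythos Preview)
Your proof is correct and follows essentially the same approach as the paper: both establish that $(a,K)\mapsto\|X_K(a)\|$ is jointly continuous on $\mf a\times\mb K$ and then use compactness of $\mb K$ (the paper via uniform continuity on a compact neighborhood, you via the maximum theorem) to conclude continuity of $c$. Your write-up is slightly more explicit about the joint continuity step, which is the only point requiring care.
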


\begin{proof}
As $\mb K$ is compact and $K\mapsto X_K(a)$ is continuous, the image is also compact and hence $c$ is well defined.
Since all the vector fields $X_K$ are continuous, so is the map $f:\mb K\times (\overline{B_\epsilon(a_0)}\cap \mf a)\to\mathbb R$, $(K,a)\mapsto \|X_K(a)\|$ for all $a_0\in\mf a$, $\varepsilon>0$.
In particular $f$ is uniformly continuous which readily implies continuity of $a\mapsto \max_{K\in\mb K} \|X_K(a)\|=c(a)$.
\end{proof}

Given any solution $a:[0,T]\to\mf a$ to~\eqref{eq:relaxed} such that $c(a(t))\neq0$, it holds that  
$$
T
\geq\int_{a(0)}^{a(T)} \frac{\|da\|}{c(a)}
\geq \frac{\ell(a)}{\max_{t\in[0,T]}c(a(t))},
$$
where $\ell(a)$ denotes the length of $a$.

\subsection{Reachability} 

We start with the \emph{reachable set of $a_0$ at time $T$} for~\eqref{eq:reduced}. We denote
$$
\reach_{\ref{eq:reduced}}(a_0,T) := \{a(T): \,a:[0,T]\to\mf a \text{ solves } \eqref{eq:reduced},\, a(0)=a_0\}
$$
for any $T\geq0$. By
$
\reach_{\ref{eq:reduced}}(a_0) := \bigcup_{T\geq0} \reach_{\ref{eq:reduced}}(a_0,T)
$
we denote the \emph{all-time reachable set of $a_0$}, and we define the \emph{reachable set of $a_0$ up to time $T$} by
$
\reach_{\ref{eq:reduced}}(a_0,[0,T]) := \bigcup_{t\in[0,T]} \reach_{\ref{eq:reduced}}(a_0,t)
$
for any $ T\geq0$.
The definitions for the control systems (\ref{eq:control-affine}), (\ref{eq:inclusion}), (\ref{eq:relaxed}), (\ref{eq:op-lift}), and (\ref{eq:reduced-lift}) are analogous. 

\begin{remark}
Note that, although the reduced control system~\eqref{eq:reduced} is symmetric under the Weyl group action, the reachable set in general does not have the same symmetry as it depends on the initial state. 
However, due to Proposition~\ref{prop:sol-in-weyl}, if the solution starts in the Weyl chamber $\mf w$, then it holds that $\pi(\reach(a_0,T))=\pi(\reach(a_0,T)\cap\mf w)$. Together with Proposition~\ref{prop:equivalence-reach} below this shows that all relevant information concerning reachability is held in the Weyl chamber which contains the initial state.
\end{remark}

The equivalence results of Section~\ref{sec:equivalence} are formulated at the level of solutions, and they immediately imply the equivalence of reachable sets up to closure and $\mb K$-orbits.

\begin{proposition} \label{prop:equivalence-reach}
Assume that $X$ is locally Lipschitz and linearly bounded. Let $T>0$ and $p_0\in\mf p$, $a_0\in\mf a$ with $\pi(p_0)=\pi(a_0)$ be given. Then it holds that
\begin{align*}
\mathsf{reach}_{\ref{eq:control-affine}}(p_0,T)
\subseteq
\Ad_{\mb K}(\mathsf{reach}_{\ref{eq:relaxed}}(a_0,T))
\subseteq
\overline{\mathsf{reach}_{\ref{eq:control-affine}}(p_0,T)},
\end{align*}
where the reachable sets refer the the control-affine system~\eqref{eq:control-affine} on $\mf p$ and the relaxed control system~\eqref{eq:relaxed} on $\mf a$. In particular the closures coincide:
$$
\overline{\mathsf{reach}_{\ref{eq:control-affine}}(p_0,T)}
=
\Ad_{\mb K}(\overline{\mathsf{reach}_{\ref{eq:relaxed}}(a_0,T)})\,.
$$
Finally, all statements remain true is we substitute~\eqref{eq:relaxed} with~\eqref{eq:reduced}.
\end{proposition}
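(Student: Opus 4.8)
The plan is to establish the two inclusions in the displayed chain separately, then deduce the equality of closures, and finally observe that the relaxation from~\eqref{eq:relaxed} to~\eqref{eq:reduced} costs nothing at the level of closures of reachable sets.

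For the first inclusion, $\mathsf{reach}_{\ref{eq:control-affine}}(p_0,T)\subseteq\Ad_{\mb K}(\mathsf{reach}_{\ref{eq:relaxed}}(a_0,T))$, I would take a solution $p:[0,T]\to\mf p$ of~\eqref{eq:control-affine} with $p(0)=p_0$ and apply the projection theorem (Theorem~\ref{thm:projection-equivalence}) to obtain the unique path $a^\down:[0,T]\to\mf w$ with $\pi\circ a^\down=\pi\circ p$, which solves~\eqref{eq:reduced} and hence also~\eqref{eq:relaxed}. The only subtlety is the initial condition: $a^\down(0)$ need not equal $a_0$, but since $\pi(a^\down(0))=\pi(p_0)=\pi(a_0)$, there is some $w\in\mb W$ with $w\cdot a^\down(0)=a_0$ (or we can use a $\mb K$-element directly). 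Applying the Weyl group element to the whole trajectory, $w\cdot a^\down$ is again a solution of~\eqref{eq:relaxed} (by the Weyl symmetry of $\derv$, or Proposition~\ref{prop:sol-in-weyl}-type reasoning) with the correct initial point $a_0$, and $p(T)\in\Ad_{\mb K}(\{a^\down(T)\})=\Ad_{\mb K}(\{w\cdot a^\down(T)\})$ since $\Ad_{\mb K}$-orbits contain the full Weyl orbit. This gives $p(T)\in\Ad_{\mb K}(\mathsf{reach}_{\ref{eq:relaxed}}(a_0,T))$.

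For the second inclusion, $\Ad_{\mb K}(\mathsf{reach}_{\ref{eq:relaxed}}(a_0,T))\subseteq\overline{\mathsf{reach}_{\ref{eq:control-affine}}(p_0,T)}$, I would start from a solution $a:[0,T]\to\mf a$ of~\eqref{eq:relaxed} with $a(0)=a_0$ and a target point $\Ad_M(a(T))$ for some $M\in\mb K$. First use the relaxation result~\cite[Ch.~2.4, Thm.~2]{Aubin84} to approximate $a$ uniformly on $[0,T]$ by solutions of~\eqref{eq:inclusion}, equivalently~\eqref{eq:reduced} (via Filippov), so it suffices to treat a solution $a$ of~\eqref{eq:reduced} with control function $K:[0,T]\to\mb K$; adjusting $K$ at the endpoint by a stabilizer element we may also arrange $\Ad_{K(T)}(a(T))$ to be the desired representative of the orbit, i.e.\ absorb $M$. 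Now apply the approximate lifting theorem (Theorem~\ref{thm:lift-equivalence}), whose hypotheses (locally Lipschitz, linearly bounded $X$) are exactly those assumed here, to get solutions $p_\epsilon:[0,T]\to\mf p$ of~\eqref{eq:control-affine} with $\|\Ad_K(a)-p_\epsilon\|_\infty\le\epsilon$; in particular $p_\epsilon(T)\to\Ad_{K(T)}(a(T))$. The remaining wrinkle is the initial condition $p_\epsilon(0)$: in Theorem~\ref{thm:lift-equivalence} one has $p_\epsilon(0)=\Ad_{K(0)}(a(0))=\Ad_{K(0)}(a_0)$, which lies on the orbit of $p_0$ but may not equal $p_0$. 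Since $\mathsf{reach}_{\ref{eq:control-affine}}$ from any point of $\mb K p_0$ is contained in $\mathsf{reach}_{\ref{eq:control-affine}}(p_0,T)$ after prepending an arbitrarily fast rotation (using fast and full control on $\mb K$, Assumptions~\ref{it:full-ctrl}~\&~\ref{it:fast-ctrl}), we may pass to the closure and conclude $\Ad_{K(T)}(a(T))\in\overline{\mathsf{reach}_{\ref{eq:control-affine}}(p_0,T)}$.

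From the two inclusions the equality of closures follows by a routine argument: taking closures in the chain gives $\overline{\mathsf{reach}_{\ref{eq:control-affine}}(p_0,T)}\subseteq\overline{\Ad_{\mb K}(\mathsf{reach}_{\ref{eq:relaxed}}(a_0,T))}\subseteq\overline{\mathsf{reach}_{\ref{eq:control-affine}}(p_0,T)}$, so all three agree; and since $\Ad_{\mb K}$ is a continuous action of a compact group, $\overline{\Ad_{\mb K}(S)}=\Ad_{\mb K}(\overline S)$ for any set $S$, which yields the stated formula with $\overline{\mathsf{reach}_{\ref{eq:relaxed}}(a_0,T)}$. Finally, replacing~\eqref{eq:relaxed} by~\eqref{eq:reduced}: the inclusion $\mathsf{reach}_{\ref{eq:reduced}}\subseteq\mathsf{reach}_{\ref{eq:relaxed}}$ is trivial since every solution of~\eqref{eq:reduced} solves~\eqref{eq:relaxed}, and for the reverse up to closure one again invokes the uniform approximation of~\eqref{eq:relaxed}-solutions by~\eqref{eq:inclusion}-solutions (hence~\eqref{eq:reduced}-solutions); so the closures of the two reachable sets coincide and every occurrence of~\eqref{eq:relaxed} in the statement may be swapped for~\eqref{eq:reduced}. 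I expect the main obstacle to be bookkeeping the initial conditions correctly — matching $a(0)$ to $a_0$ via $\mb W$ in the first inclusion, and handling $p_\epsilon(0)\in\mb K p_0\setminus\{p_0\}$ via a fast preliminary rotation in the second — rather than any deep new estimate, since all the analytic work is already packaged in Theorems~\ref{thm:projection-equivalence} and~\ref{thm:lift-equivalence}.
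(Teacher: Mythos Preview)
Your proposal is correct and follows the same route as the paper: Theorem~\ref{thm:projection-equivalence} for the first inclusion and Theorem~\ref{thm:lift-equivalence} for the second, with the endpoint bookkeeping handled by fast $\mb K$-rotations afforded by Assumptions~\ref{it:full-ctrl} and~\ref{it:fast-ctrl}. You are in fact more explicit than the paper about matching $a^\down(0)$ to $a_0$ via a Weyl group element in the first step.

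One small slip in the second inclusion: you write that ``adjusting $K$ at the endpoint by a stabilizer element'' lets you arrange $\Ad_{K(T)}(a(T))=\Ad_M(a(T))$. This does not work as stated---right-multiplying $K(T)$ by $L\in\mb K_{a(T)}$ leaves $\Ad_{K(T)}(a(T))$ unchanged, while any other modification of $K$ near $T$ will generally change $X_{K(t)}(a(t))$ and hence destroy the solution $a$. The fix is exactly the device you already invoke at $t=0$ (and which the paper uses at both endpoints): squeeze the approximate lift into $[\varepsilon,T-\varepsilon]$ and use the interval $[T-\varepsilon,T]$ for a fast rotation taking $p_\varepsilon(T-\varepsilon)$, which is close to $\Ad_{K(T)}(a(T))$, to a point close to $\Ad_M(a(T))$. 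With that correction the argument is complete.
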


\begin{proof}
We prove the result only for~\eqref{eq:reduced} since the proof for~\eqref{eq:relaxed} is analogous.
First let $p:[0,T]\to\mf p$ be a solution to~\eqref{eq:control-affine}. 
By Theorem~\ref{thm:projection-equivalence} we obtain a solution $a^\down:[0,T]\to\mf w$ with $\pi(a(T))=\pi(p(T))$ to~\eqref{eq:reduced}. This proves the first inclusion.
Conversely, let $a:[0,T]\to\mf a$ be a solution to~\eqref{eq:reduced} and let $p_1,p_2\in\mf p$ be such that $\pi(p_1)=\pi(a(0))$ and $\pi(p_2)=\pi(a(T))$. 
Due to Theorem~\ref{thm:lift-equivalence} there exists for every $\varepsilon>0$ a solution $p:[0,T]\to\mf p$ to~\eqref{eq:control-affine} such that $d(\pi(a(t)),\pi(p(t)))\leq\varepsilon$ where $d$ refers to the quotient metric induced by $\pi$.
Now let some $\varepsilon>0$ be given and let $K_1,K_2\in\mb K$ be such that $\Ad_{K_1}(p(0))$ is $\varepsilon$-close to $p_1$ and such that $\Ad_{K_2}(p(T))$ is $\varepsilon$-close to $p_2$.
By approximately implementing $\Ad_{K_1}^{-1}$ on $[0,\varepsilon]$ and $\Ad_{K_2}$ on $[T-\varepsilon,T]$ we can find a solution to~\eqref{eq:control-affine} which equals $p$ on $[\varepsilon,T-\varepsilon]$ and approximately starts at $p_1$ and approximately ends at $p_2$.
Using arguments similar to those in the proof of Theorem~\ref{thm:lift-equivalence} one can deduce that $p_2\in\overline{\reach_{\ref{eq:control-affine}}(p_1,T)}$.
\marginpar{make lift more rigorous}
\end{proof}
Note that the analogous result is true for the all-time reachable sets. \medskip

Now consider $X$ linear.
We can also use the operator lift~\eqref{eq:reduced-lift} to understand reachability in the reduced system~\eqref{eq:reduced}. 
Indeed it is clear that $b\in\reach_{\ref{eq:reduced}}(a,T)$ if and only if there is some $L\in\reach_{\ref{eq:reduced-lift}}(\id,T)$ such that $La=b$. 
In fact it holds that $\overline{\reach_{\ref{eq:reduced-lift}}(\id)}$ is the Lie subsemigroup of $\GL(\mf a)$ generated by $\mf X$, see~\cite[Prop.~6.2]{Lawson99}.
Proposition~\ref{prop:speed-limit} shows that $\reach_{\ref{eq:reduced}}(a_0,[0,T])$ is bounded. 
If $X$ is Lipschitz, then Proposition~\ref{prop:lipschitz-diff-incl}~\ref{it:compact} guarantees compactness of $\reach_{\ref{eq:relaxed}}(a_0,[0,T])$, cp.~\cite[Thm.~3]{Boscain04}.

\subsection{Stabilizability} \marginpar{Reference definition?}

In practice one often wants to keep the system close to a certain state, i.e.~one wants to stabilize the state. We define the \emph{set of stabilizable states}, denoted $\stab_{\ref{eq:reduced}}$, as follows:
a point $a_0\in\mf a$ is in $\stab_{\ref{eq:reduced}}$ if for all $T>0$ and all $\varepsilon>0$ there is a solution $a:[0,T]\to\mf a$ to~\eqref{eq:reduced} with $a(0)=a_0$ and which takes values in $B_\varepsilon(a_0)$.
Moreover, we say that a point $a_0$ is \emph{strongly stabilizable}\footnote{Strongly stabilizable states are also called equilibrium states, see~\cite[p.~124]{Sontag98}.} if the constant path $a\equiv a_0$ is a solution to~\eqref{eq:reduced}.
The definition for the other control systems is analogous.
Note that we only consider open-loop controls here and that we are not using feedback.

\begin{lemma} \label{lemma:stab-inclusion}
Assume that $X$ is Lipschitz.
Given any point $a_0\in\mf a$ the following statements hold.\smallskip
\begin{enumerate}[(i)]
\item \label{it:strong-stab} $a_0$ is strongly stabilizable w.r.t~\eqref{eq:reduced} if and only if $0\in\derv(a_0)$.
\item \label{it:stab} $a_0$ is stabilizable w.r.t~\eqref{eq:reduced} if and only if $0\in\conv(\derv(a_0))$.\smallskip
\end{enumerate}
In fact these statements hold true for all continuous differential inclusions with closed values.
\end{lemma}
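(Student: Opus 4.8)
The plan is to reduce everything to the geometry of the compact set $\derv(a_0)=\{X_K(a_0):K\in\mb K\}$, using that in finite dimensions $\conv(\derv(a_0))$ is again compact and that the set-valued map $a\mapsto\derv(a)$ is continuous in the Hausdorff metric---indeed Lipschitz, since $X$ is, by the same reasoning as in Proposition~\ref{prop:speed-limit} and Proposition~\ref{prop:lipschitz-diff-incl}. Part~\ref{it:strong-stab} is then immediate: the constant path $a\equiv a_0$ has derivative $0$ almost everywhere, so by the equivalence of~\eqref{eq:reduced} with the differential inclusion~\eqref{eq:inclusion} it solves~\eqref{eq:reduced} precisely when $0\in\derv(a_0)$; for ``if'' take the constant control $K\equiv K_0$ with $X_{K_0}(a_0)=0$, and for ``only if'' evaluate $0=X_{K(t)}(a_0)$ at a Lebesgue point of the control.

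For the sufficiency in~\ref{it:stab}: if $0\in\conv(\derv(a_0))$, then $a\equiv a_0$ is a solution of the relaxed inclusion~\eqref{eq:relaxed}, and the relaxation theorem~\cite[Ch.~2.4, Thm.~2]{Aubin84} (applicable since $\derv$ is Lipschitz) yields, for each $T>0$ and $\varepsilon>0$, a solution of~\eqref{eq:inclusion}---hence of~\eqref{eq:reduced}---with the same initial value $a_0$ that stays within $\varepsilon$ of $a_0$ on $[0,T]$; shrinking $\varepsilon$ slightly if necessary, this solution takes values in $B_\varepsilon(a_0)$, witnessing $a_0\in\stab_{\ref{eq:reduced}}$. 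One could also bypass the relaxation theorem by a direct chattering argument: write $0$ as a finite convex combination $\sum_j\lambda_j X_{K_j}(a_0)$ via Carath\'eodory's theorem and cycle through the controls $K_j$, spending a fraction $\lambda_j$ of each cell of an ever finer partition of $[0,T]$, using that $\derv$ is bounded near $a_0$ to control the excursion.

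For the necessity in~\ref{it:stab} I would argue by contraposition. If $0\notin\conv(\derv(a_0))$, then, $\conv(\derv(a_0))$ being compact and convex, the separating hyperplane theorem gives $v\in\mf a$ and $c>0$ with $\langle v,w\rangle\ge c$ for every $w\in\derv(a_0)$. By upper semicontinuity of $\derv$ there is $\delta>0$ with $\langle v,w\rangle\ge c/2$ for all $w\in\derv(a)$ whenever $\|a-a_0\|<\delta$. Fix $\varepsilon\in(0,\delta)$ and $T>2\|v\|\varepsilon/c$. Any solution $a:[0,T]\to\mf a$ of~\eqref{eq:reduced} with $a(0)=a_0$ and values in $B_\varepsilon(a_0)$ would, viewed as a solution of~\eqref{eq:inclusion}, satisfy $\tfrac{d}{dt}\langle v,a(t)\rangle=\langle v,a'(t)\rangle\ge c/2$ almost everywhere, whence $\langle v,a(T)-a_0\rangle\ge cT/2>\|v\|\varepsilon$, contradicting $\|a(T)-a_0\|<\varepsilon$ by Cauchy--Schwarz. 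Hence no such solution exists and $a_0\notin\stab_{\ref{eq:reduced}}$.

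Finally, the only properties used are compactness (in particular closedness) of the values, continuity of the inclusion, and---only for sufficiency---the relaxation theorem, all of which hold for an arbitrary continuous differential inclusion with compact values; this gives the final sentence of the lemma. I expect the only real work to be bookkeeping: verifying that the Lipschitz hypothesis on $X$ genuinely delivers the continuity modulus of $a\mapsto\derv(a)$ and the hypotheses of the relaxation theorem (via Proposition~\ref{prop:lipschitz-diff-incl}). The geometric core---separating $0$ from $\derv(a_0)$ by a hyperplane and integrating against the separating functional---is elementary, so I do not anticipate a serious obstacle.
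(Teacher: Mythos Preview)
Your proof is correct and follows essentially the same route as the paper: part~\ref{it:strong-stab} is immediate from the definition, sufficiency in~\ref{it:stab} comes from the Relaxation Theorem applied to the constant solution of~\eqref{eq:relaxed}, and necessity in~\ref{it:stab} is the separating-hyperplane argument extended to a neighborhood by continuity of $\derv$ and then integrated. Your write-up is somewhat more explicit (the quantitative bound $T>2\|v\|\varepsilon/c$ and the optional chattering construction), but the underlying ideas match the paper's proof exactly.
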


\begin{proof}
\ref{it:strong-stab}: If $a\equiv a_0$ is a solution to~\eqref{eq:reduced}, then $0=a'(t)\in\derv(a_0)$ for almost all $t\in[0,T]$. Conversely, if $0\in\derv(a_0)$, then $a\equiv a_0$ is a solution to~\eqref{eq:reduced}.
\ref{it:stab}: If $0\in\conv(\derv(a_0))$ then $a\equiv a_0$ is a solution to~\eqref{eq:relaxed}. By the Relaxation Theorem~\cite[Ch.~2.4, Thm.~2]{Aubin84} (which requires the Lipschitz property) the constant solution can be approximated in~\eqref{eq:reduced} and hence $a_0$ is stabilizable. If $0\notin\conv(\derv(a_0))$ there is a linear functional $\alpha$ on $\mf a$ such that $\alpha\leq-\delta$ on $\derv(a_0)$ for some $\delta>0$. By continuity we may assume that this is true for all $b\in\mf a$ in some neighborhood $B_\varepsilon(a_0)$ of $a_0$. Hence there is some time $T>0$ such that every solution to~\eqref{eq:reduced} and starting at $a_0$ must leave $B_\varepsilon(a_0)$ until time $T$.
\end{proof} 
In particular a point is stabilizable for~\eqref{eq:reduced} if and only if it is strongly stabilizable for~\eqref{eq:relaxed}, and for~\eqref{eq:relaxed} both notions coincide.\medskip

We have the following specialization of Proposition~\ref{prop:regular-lift} for strongly stabilizable states.

\begin{proposition}
The following statements hold.\smallskip
\begin{enumerate}[(i)]
\item \label{it:strong-stab-ap} If there exists $p_0=\Ad_K(a_0)\in\mf p$ as well as $k\in\mf k$ such that $(X+\ad_k)(p_0)=0$, then $a_0$ is strongly stabilizable. In fact it holds that $X_K(a_0)=0$.
\item \label{it:strong-stab-regular} Conversely, assume that $a_0$ is regular and strongly stabilizable with $X_K(a_0)=0$. Then setting $k_c=\ad_{p_0}^{-1}\circ\,\Pi_{p_0}^\perp\circ X(p_0)$ it holds that $(X+\ad_k)(p_0)=0$, where $p_0=\Ad_K(a_0)$.
\end{enumerate}
\end{proposition}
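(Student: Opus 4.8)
The plan is to prove the two directions by direct computation, leveraging Proposition~\ref{prop:regular-lift} and the orthogonality relation~\eqref{eq:orth-orbs}.

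For part~\ref{it:strong-stab-ap}: Suppose $p_0=\Ad_K(a_0)$ and $k\in\mf k$ with $(X+\ad_k)(p_0)=0$, i.e.\ $X(p_0)=-\ad_k(p_0)$. First I would apply the orthogonal projection $\Pi_{p_0}$ onto $\mf p_{p_0}$ and use~\eqref{eq:orth-orbs}, which gives $\Pi_{p_0}(\ad_k(p_0))=0$, hence $\Pi_{p_0}\circ X(p_0)=0$. Since $a_0$ need not be regular here, one has to be slightly careful: the definition $X_K(a_0)=\Pi_{\mf a}\circ\Ad_K^\star(X)(a_0)$ involves $\Pi_{\mf a}$, not $\Pi_{p_0}$. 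But by Lemma~\ref{lemma:deriv-of-projected-sol}~\ref{it:proj-satisfies-reduced} (or directly by~\cite[Lem.~A.24~(iii)]{diag}, which lets one commute $\Ad_K^{-1}$ past $\Pi_{p_0}$) one gets $\Ad_K^{-1}\circ\Pi_{p_0}\circ X(p_0) = \Pi_{a_0}\circ\Ad_K^\star(X)(a_0) = 0$, and then, as in the proof of Lemma~\ref{lemma:lambda-kostant} / Lemma~\ref{lemma:tilde-lambda-explicit}, since this vanishes it a fortiori lies in $\mf a$ and equals $X_K(a_0)=\Pi_{\mf a}\circ\Ad_K^\star(X)(a_0)$. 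Hence $X_K(a_0)=0$, so $0\in\derv(a_0)$, and by Lemma~\ref{lemma:stab-inclusion}~\ref{it:strong-stab} the point $a_0$ is strongly stabilizable.

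For part~\ref{it:strong-stab-regular}: Assume $a_0$ is regular and $X_K(a_0)=0$ with $p_0=\Ad_K(a_0)$. Set $k_c=\ad_{p_0}^{-1}\circ\Pi_{p_0}^\perp\circ X(p_0)$, which is well-defined since $\Pi_{p_0}^\perp\circ X(p_0)\in\mf p_{p_0}^\perp$ and $\ad_{p_0}^{-1}:\mf p_{p_0}^\perp\to\mf k_{p_0}^\perp$ is the inverse discussed before Proposition~\ref{prop:regular-lift}. Then $\ad_{k_c}(p_0)=\Pi_{p_0}^\perp\circ X(p_0)$, so $(X+\ad_{k_c})(p_0)=X(p_0)+\Pi_{p_0}^\perp\circ X(p_0)$... wait, this should be $X(p_0)-\Pi_{p_0}^\perp(X(p_0))=\Pi_{p_0}(X(p_0))$ if we instead take $k_c$ with the appropriate sign; more carefully, the compensating control in Proposition~\ref{prop:regular-lift} is designed so that $\ad_{k_c}(p_0)=-\Pi_{p_0}^\perp\circ X(p_0)$, giving $(X+\ad_{k_c})(p_0)=\Pi_{p_0}(X(p_0))$. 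Now I use regularity: since $a_0$ is regular, $\Ad_K(\mf a)=\mf p_{p_0}$, so $\Pi_{p_0}\circ X(p_0)=\Ad_K\circ\Pi_{\mf a}\circ\Ad_K^{-1}\circ X(p_0)=\Ad_K(X_K(a_0))=0$ by hypothesis. Therefore $(X+\ad_{k_c})(p_0)=0$, as claimed.

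The main obstacle, as the mid-proof correction above signals, is getting the signs and the precise projection ($\Pi_{p_0}$ versus $\Pi_{\mf a}$ versus $\Pi_{p_0}^\perp$) exactly right, and in part~\ref{it:strong-stab-ap} being careful that the argument goes through without assuming regularity — this is where I would have to invoke~\cite[Lem.~A.24~(iii)]{diag} to move $\Ad_K$ past the projection at the base point $p_0$ rather than naively identifying $\Pi_{p_0}$ with $\Ad_K\circ\Pi_{\mf a}\circ\Ad_K^{-1}$ (which is only valid in the regular case). Everything else is a routine unwinding of definitions combined with~\eqref{eq:orth-orbs} and Lemma~\ref{lemma:stab-inclusion}.
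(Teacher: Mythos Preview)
Your argument is correct and follows essentially the same route as the paper's proof: part~\ref{it:strong-stab-ap} via~\eqref{eq:orth-orbs} and the commutation of $\Ad_K^{-1}$ with $\Pi_{p_0}$ from~\cite[Lem.~A.24]{diag}, and part~\ref{it:strong-stab-regular} by the direct computation $(X+\ad_{k_c})(p_0)=\Pi_{p_0}\circ X(p_0)=\Ad_K(X_K(a_0))=0$ using regularity. The only cosmetic point is that in~\ref{it:strong-stab-ap} the cleaner way to finish is to note $\mf a\subseteq\mf p_{a_0}$, whence $\Pi_{\mf a}=\Pi_{\mf a}\circ\Pi_{a_0}$ and so $\Pi_{a_0}\circ\Ad_K^\star(X)(a_0)=0$ immediately gives $X_K(a_0)=0$; your self-corrected sign in~\ref{it:strong-stab-regular} is the right one.
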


\begin{proof}
\ref{it:strong-stab-ap}: Using~\eqref{eq:orth-orbs} and~\cite[Lem.~A.24~(i)]{diag}, the assumption $(X+\ad_k)(p_0)=0$ yields $X_K(a_0)=0$ after a short computation.
\ref{it:strong-stab-regular}:
First note that for $a_0$ regular and $p_0=\Ad_K(a_0)$ it holds that $X_K(a_0)=\Ad_K^{-1}\circ\,\Pi_{p_0}\circ X(p_0)$, and in particular $X_K(a_0)=0$ if and only if $\Pi_{p_0}\circ X(p_0)=0$.
Then it just remains to plug in and compute $(X+\ad_{k_c})(p_0)=(X-\Pi_{p_0}^\perp\circ X)(p_0)=\Pi_{p_0}\circ X(p_0)=0$.
\end{proof}

Suppose that the control directions $k_1,\ldots,k_m$ in~\eqref{eq:control-affine} span the entire Lie algebra $\mf k$.
Then we can rephrase the proposition above as follows: If $p_0$ is strongly stabilizable, then so is $a_0$. Conversely, if $a_0$ is strongly stabilizable and regular, then there is a corresponding strongly stabilizable $p_0$ in the $\mb K$-orbit of $a_0$.

\subsection{Viability}

Let $R$ be a subset of $\mf a$. 
We call $R$ \emph{viable} for~\eqref{eq:reduced} if for every $a_0\in R$, there exists a solution $a:[0,\infty)\to\mf a$ to~\eqref{eq:reduced} with $a(0)=a_0$ which takes values only in $R$. 
For differential inclusions viability of closed subsets can be restated more geometrically using tangent cones, see~\cite[Thm.~5.2]{Smirnov02} as well as~\cite[Thm.~6.5.5]{Carja07} for the time-dependent version. 
Note that a point $a_0$ is strongly stabilizable if and only if $\{a_0\}$ is viable for~\eqref{eq:reduced}.

First some notation: for a set $S\subseteq\mf p$ we denote by $S^\flat\subseteq\mf a$ the set of all $a\in\mf a$ with $\pi(a)\in\pi(S)$. For a set $R\subseteq\mf a$ we denote by $R^\sharp\subseteq\mf p$ the set of all $p\in\mf p$ with $\pi(p)\in\pi(R)$.  
Note that $S^\flat$ is always $\mb W$-invariant and $R^\sharp$ is always $\mb K$-invariant.

\begin{lemma}
Let $S\subset\mf p$ be viable for~\eqref{eq:control-affine}, then $S^\flat$ is viable for~\eqref{eq:reduced}.
\end{lemma}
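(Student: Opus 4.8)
The goal is to take a set $S\subseteq\mf p$ that is viable for the original control system~\eqref{eq:control-affine} and show that its ``shadow'' $S^\flat\subseteq\mf a$ is viable for the reduced system~\eqref{eq:reduced}. The natural strategy is to start from an arbitrary point $a_0\in S^\flat$, lift it to a point $p_0\in S$ in the same $\mb K$-orbit (which exists precisely because $\pi(a_0)\in\pi(S)$), use viability of $S$ to obtain an infinite-horizon solution $p:[0,\infty)\to\mf p$ of~\eqref{eq:control-affine} staying inside $S$, and then project this solution back down to $\mf a$ using the projection equivalence result.

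\textbf{Key steps, in order.} First, given $a_0\in S^\flat$, choose $p_0\in S$ with $\pi(p_0)=\pi(a_0)$; this is possible by the very definition of $S^\flat$. Second, invoke viability of $S$ for~\eqref{eq:control-affine} to get a solution $p:[0,\infty)\to\mf p$ of~\eqref{eq:control-affine} with $p(0)=p_0$ and $p(t)\in S$ for all $t\geq 0$. Third, apply Theorem~\ref{thm:projection-equivalence} (on each finite interval $[0,T]$, then patch, or directly to the half-line) to obtain the unique path $a^\down:[0,\infty)\to\mf w$ with $\pi\circ a^\down=\pi\circ p$, which is a solution to~\eqref{eq:reduced}. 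Fourth, observe that for every $t$ we have $\pi(a^\down(t))=\pi(p(t))\in\pi(S)$, so $a^\down(t)\in S^\flat$ by definition of $S^\flat$; hence the solution stays in $S^\flat$. Fifth, note that $a^\down(0)$ lies in the same $\mb W$-orbit as $a_0$ (since $\pi(a^\down(0))=\pi(p_0)=\pi(a_0)$), so $a^\down(0)=w\cdot a_0$ for some $w\in\mb W$; applying $w^{-1}$ to the whole trajectory and using that~\eqref{eq:reduced} is $\mb W$-equivariant (the induced vector fields $X_K$ transform equivariantly under $\mb W$, since $\mb W$ is realized by elements of $\mb K$ normalizing $\mf a$) and that $S^\flat$ is $\mb W$-invariant, we get a solution starting exactly at $a_0$ and staying in $S^\flat$. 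This establishes viability of $S^\flat$.

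\textbf{Main obstacle.} The delicate point is the last step: reconciling the fact that Theorem~\ref{thm:projection-equivalence} produces a path valued in a fixed Weyl chamber $\mf w$ (hence starting at the chamber representative of $\pi(a_0)$, not necessarily at $a_0$ itself) with the requirement that a viable solution start at the prescribed point $a_0\in S^\flat$. This is handled by the $\mb W$-equivariance of~\eqref{eq:reduced} together with the $\mb W$-invariance of $S^\flat$, both of which are recorded earlier; one must check that a $\mb W$-translate of a solution of~\eqref{eq:reduced} is again a solution, which follows from $w\cdot X_K(a)=X_{\tilde w K}(w\cdot a)$ for a suitable lift $\tilde w\in\mb K$ of $w$. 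A secondary, more technical point is the passage from the finite-horizon statement of Theorem~\ref{thm:projection-equivalence} to the half-line: since the chamber-valued projection $a^\down$ is uniquely determined pointwise by $\pi\circ p$, the restrictions to $[0,T]$ for increasing $T$ are automatically consistent, so no real patching argument is needed — one simply applies the theorem on each $[0,T]$ and takes the union. Assuming Lipschitz or continuity hypotheses on $X$ as needed for Theorem~\ref{thm:projection-equivalence} to apply, the argument goes through without further complications.
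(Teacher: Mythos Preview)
Your proof is correct and follows essentially the same route as the paper: lift $a_0\in S^\flat$ to some $p_0\in S$, use viability of $S$ to obtain $p:[0,\infty)\to S$ solving~\eqref{eq:control-affine}, and project via Theorem~\ref{thm:projection-equivalence} to a solution $a^\down$ of~\eqref{eq:reduced} with values in $S^\flat$. In fact you are more careful than the paper on one point: the paper's proof does not explicitly address why the projected solution can be taken to start at the prescribed $a_0$ (rather than at its Weyl-chamber representative), whereas you correctly identify this and resolve it using the $\mb W$-equivariance of~\eqref{eq:reduced} (Lemma~\ref{lemma:weyl-symmetry-induced-vf}) together with the $\mb W$-invariance of $S^\flat$.
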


\begin{proof}
Let $a_0\in S^\flat$ and let $p_0\in S$ be any lift of $a_0$. By viability of $S$ there is a solution $p:[0,\infty)\to S$ and by Theorem~\ref{thm:projection-equivalence} there is a corresponding solution $a^\down:[0,\infty)\to\mf a$ with values in $S^\flat$.
\end{proof}

Due to Example~\ref{ex:approx-necessary} the converse cannot hold exactly. 
However, we have the following approximate result. 
We say that $S\subseteq\mf p$ is \emph{approximately viable} for~\eqref{eq:control-affine} if for every $p_0\in S$, every $T>0$ and every $\varepsilon$-neighborhood $U$ of $S$ there is a solution $p:[0,T]\to\mf p$ with $p(0)=p_0$ and taking values only in $U$.

\begin{proposition}
Let $R\subset\mf a$ be viable for~\eqref{eq:relaxed}.
Then $R^\sharp$ is approximately viable for~\eqref{eq:control-affine}.
\end{proposition}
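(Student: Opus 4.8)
The plan is to combine viability of $R$ for the relaxed inclusion~\eqref{eq:relaxed} with the approximate lifting result Theorem~\ref{thm:lift-equivalence}. First I would fix $p_0\in R^\sharp$, some $T>0$, and an $\varepsilon$-neighborhood $U$ of $R^\sharp$; let $a_0\in R$ be a lift of $p_0$ into $\mf a$ in the sense that $\pi(a_0)=\pi(p_0)$ (possible by definition of $R^\sharp$). By viability of $R$ for~\eqref{eq:relaxed}, there is a solution $a:[0,T]\to R\subseteq\mf a$ of~\eqref{eq:relaxed} with $a(0)=a_0$. By the relaxation result~\cite[Ch.~2.4, Thm.~2]{Aubin84}, $a$ can be approximated uniformly on $[0,T]$ by a solution $\tilde a:[0,T]\to\mf a$ of~\eqref{eq:reduced} with $\tilde a(0)=a_0$; note that $\tilde a$ takes values in a small neighborhood of $R$ in $\mf a$, hence $\Ad_{\mb K}(\tilde a([0,T]))$ lies in a small neighborhood of $R^\sharp$ in $\mf p$ (since $\pi$ is continuous and $R^\sharp=\pi^{-1}(\pi(R))$, a neighborhood of $R$ in $\mf a$ maps into a $\mb K$-invariant neighborhood of $R^\sharp$).

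Next I would apply Theorem~\ref{thm:lift-equivalence} to $\tilde a$ with its control function $K:[0,T]\to\mb K$: for every $\delta>0$ there is a solution $p_\delta:[0,T]\to\mf p$ to~\eqref{eq:control-affine} with $\|\Ad_{K}(\tilde a)-p_\delta\|_\infty\le\delta$. The lift $\Ad_{K(t)}(\tilde a(t))$ satisfies $\pi(\Ad_{K(t)}(\tilde a(t)))=\pi(\tilde a(t))$, so it takes values in a small neighborhood of $R^\sharp$; and $p_\delta$ is uniformly $\delta$-close to it. The remaining issue is the initial condition: Theorem~\ref{thm:lift-equivalence} produces a solution through $p(0)=\Ad_{K(0)}(\tilde a(0))=\Ad_{K(0)}(a_0)$, which need not equal $p_0$ — but $\pi(p_0)=\pi(a_0)=\pi(\Ad_{K(0)}(a_0))$, so $p_0$ and $\Ad_{K(0)}(a_0)$ lie on the same $\mb K$-orbit. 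Using fast and full control on $\mb K$ (Assumptions~\ref{it:full-ctrl} \&~\ref{it:fast-ctrl}), one can prepend a short time interval $[0,\eta]$ on which the drift is negligible and the controls approximately implement an $\Ad_{\mb K}$-element steering $p_0$ arbitrarily close to $\Ad_{K(0)}(a_0)$, exactly as in the proof of Proposition~\ref{prop:equivalence-reach}; since orbits are compact this correction stays in a small neighborhood of $R^\sharp$ (which is $\mb K$-invariant), and by linear boundedness of $X$ the drift moves the state only an $O(\eta)$ amount during $[0,\eta]$. Concatenating, and reparametrizing to the interval $[0,T]$, yields a solution of~\eqref{eq:control-affine} starting exactly at $p_0$.

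Finally, choosing the relaxation error, the lifting error $\delta$, and the orbit-correction time $\eta$ all small enough (quantitatively, each at most $\varepsilon/4$ say, using the modulus of continuity of $\pi$ and the Grönwall-type bounds from the proof of Theorem~\ref{thm:lift-equivalence}) makes the resulting trajectory stay within $U$ for all $t\in[0,T]$. This establishes approximate viability of $R^\sharp$ for~\eqref{eq:control-affine}. The main obstacle I anticipate is the bookkeeping around the initial-condition correction: one must argue carefully that steering along the $\mb K$-orbit of $p_0$ (which, since $R^\sharp$ is $\mb K$-invariant, stays inside $R^\sharp$ up to the drift-induced $O(\eta)$ error) can be done fast enough that the accumulated error from $X$ is controllable, and that the concatenated control remains admissible (locally integrable). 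All of this mirrors the end of the proof of Theorem~\ref{thm:lift-equivalence} and of Proposition~\ref{prop:equivalence-reach}, so it should go through, but it is where the real work lies rather than in the viability input itself.
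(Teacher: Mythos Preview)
Your proposal is correct and follows essentially the same approach as the paper: pick $a_0\in R$ with $\pi(a_0)=\pi(p_0)$, use viability of $R$ to get a solution $a$ in $R$, lift it approximately via Theorem~\ref{thm:lift-equivalence}, and fix the initial condition by a fast orbit correction as in Proposition~\ref{prop:equivalence-reach}. In fact you are more careful than the paper's short proof: Theorem~\ref{thm:lift-equivalence} is stated for solutions of~\eqref{eq:reduced}, not~\eqref{eq:relaxed}, so your intermediate relaxation step (approximating the~\eqref{eq:relaxed}-solution $a$ by an~\eqref{eq:reduced}-solution $\tilde a$ before lifting) fills a gap the paper glosses over.
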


\begin{proof}
Let $p_0\in S$, some $T>0$, and an $\varepsilon$-neighborhood $U$ of $R^\sharp$ be given.
Let $a_0\in R$ be such that $\pi(a_0)=\pi(p_0)$.
Since $R$ is viable, there exists some solution $a:[0,T]\to R$.
By Theorem~\ref{thm:lift-equivalence} there is some $\varepsilon$-approximate lift $p$ of $a$.
As in the proof of Proposition~\ref{prop:equivalence-reach} we may assume that $p(0)=p_0$.
\marginpar{make more rigorous}
Hence $p$ remains in $U$ and $R^\sharp$ is approximately viable.
\end{proof}

Note that even if $R$ consists of regular points, $R^\sharp$ need not be (exactly) viable.

\subsection{Invariant Subsets}

Again, let $R\subseteq\mf a$ be any subset. We say that $R$ is \emph{invariant} for~\eqref{eq:control-affine} if there does not exist a solution $a:[0,T]\to\mf a$ with $a(0)\in S$ and $a(T)\notin S$. For differential inclusions, invariance of closed subsets can be characterized using a tangent cone condition, cf.~\cite[Thm.~5.6]{Smirnov02}.

\begin{proposition}
Let $S\subseteq\mf p$ be a closed, $\mb K$-invariant subset. Then $S$ is invariant with respect to~\eqref{eq:control-affine} if and only if $S^\flat$ is invariant with respect to~\eqref{eq:reduced} (or, equivalently, \eqref{eq:relaxed}).
\end{proposition}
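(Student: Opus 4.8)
The plan is to deduce this equivalence directly from the two global equivalence results, Theorem~\ref{thm:projection-equivalence} and Theorem~\ref{thm:lift-equivalence}, together with the bookkeeping already set up for the sets $S^\flat$ and $R^\sharp$. Since $S$ is assumed $\mb K$-invariant and closed, note first that $S = (S^\flat)^\sharp$ and that a point $p_0 \in \mf p$ lies in $S$ if and only if any (equivalently, every) $a_0 \in \mf a$ with $\pi(a_0)=\pi(p_0)$ lies in $S^\flat$; this is the translation dictionary between ``being in $S$'' and ``being in $S^\flat$'' that makes the two invariance statements comparable.

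For the forward direction, suppose $S$ is invariant for~\eqref{eq:control-affine} and suppose toward a contradiction that there is a solution $a:[0,T]\to\mf a$ of~\eqref{eq:reduced} with $a(0)\in S^\flat$ but $a(T)\notin S^\flat$. By Theorem~\ref{thm:lift-equivalence} the lift $p:=\Ad_{K}(a)$ can be $\varepsilon$-approximated by genuine solutions $p_\varepsilon$ of~\eqref{eq:control-affine}; after a short-time correction as in the proof of Proposition~\ref{prop:equivalence-reach} we may arrange $p_\varepsilon(0)=p_0$ for a chosen $p_0\in S$ with $\pi(p_0)=\pi(a(0))$. Since $a(T)\notin S^\flat$ means $\pi(a(T))$ is at positive quotient-distance from $\pi(S)$, for $\varepsilon$ small enough $p_\varepsilon(T)\notin S$, contradicting invariance of $S$. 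Here one uses closedness of $S$ (hence of $\pi(S)$, since $\pi$ is a proper quotient map by a compact group) to get the positive distance gap. The same argument with~\eqref{eq:relaxed} in place of~\eqref{eq:reduced} works because solutions of~\eqref{eq:relaxed} are uniform limits of solutions of~\eqref{eq:reduced} by the relaxation theorem~\cite[Ch.~2.4, Thm.~2]{Aubin84}, and invariance is a closed condition on solutions.

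For the reverse direction, suppose $S^\flat$ is invariant for~\eqref{eq:reduced} and let $p:[0,T]\to\mf p$ be a solution of~\eqref{eq:control-affine} with $p(0)\in S$. By Theorem~\ref{thm:projection-equivalence} the projected path $a^\down:[0,T]\to\mf w$ with $\pi\circ a^\down=\pi\circ p$ is a solution of~\eqref{eq:reduced}, and $a^\down(0)\in S^\flat$ since $\pi(a^\down(0))=\pi(p(0))\in\pi(S)$. Invariance of $S^\flat$ forces $a^\down(T)\in S^\flat$, hence $\pi(p(T))=\pi(a^\down(T))\in\pi(S)$, and since $S$ is $\mb K$-invariant this gives $p(T)\in S$. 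Thus $S$ is invariant. The equivalence with~\eqref{eq:relaxed} then follows because invariance for~\eqref{eq:relaxed} trivially implies invariance for~\eqref{eq:reduced} (fewer solutions), while the converse direction of the theorem only used projected solutions, which land in~\eqref{eq:reduced} and a fortiori would need to be checked against~\eqref{eq:relaxed}; the cleanest route is to note $S^\flat$ invariant for~\eqref{eq:reduced} $\iff$ invariant for~\eqref{eq:relaxed}, which itself follows from the relaxation/approximation argument since invariance of a closed set is preserved under uniform limits of solutions.

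The main obstacle I expect is the forward direction: Theorem~\ref{thm:lift-equivalence} only gives \emph{approximate} lifts, so one cannot conclude exact invariance violation without the quantitative gap coming from closedness of $S$ and the compactness of $\mb K$ (to know the quotient metric is well-behaved and $\pi(S)$ is closed). One must also be slightly careful that the endpoint-matching surgery used to enforce $p_\varepsilon(0)=p_0$ does not accidentally leave $S$ near $t=0$ in a way that creates a spurious exit-and-return — but since we only care about whether $p_\varepsilon(T)\in S$, and the surgery happens on a short initial interval, this is harmless for the contradiction. Everything else is routine once the dictionary $p_0\in S \iff$ every preimage lies in $S^\flat$ is in place.
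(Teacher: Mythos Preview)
Your argument is correct and follows the same underlying route as the paper: both directions rest on the projection theorem (Theorem~\ref{thm:projection-equivalence}) and the approximate lifting theorem (Theorem~\ref{thm:lift-equivalence}), together with closedness of $S$ to turn approximate exits into exact ones. The paper's proof is simply a two-line version of yours: it observes that invariance of a closed set $S$ is equivalent to $\reach_{\ref{eq:control-affine}}(p_0,T)\subseteq S$ for all $p_0\in S$, $T>0$, and then invokes Proposition~\ref{prop:equivalence-reach} directly, which already packages the projection-and-lift argument into the statement $\overline{\reach_{\ref{eq:control-affine}}(p_0,T)}=\Ad_{\mb K}(\overline{\reach_{\ref{eq:relaxed}}(a_0,T)})$. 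Your proof essentially reproves the relevant half of that proposition inline; the short-time endpoint correction you invoke is exactly the manoeuvre used in its proof, and your separate handling of \eqref{eq:reduced} versus \eqref{eq:relaxed} via the relaxation theorem is fine but could likewise be absorbed by citing Proposition~\ref{prop:equivalence-reach} for \eqref{eq:relaxed} directly.
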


\begin{proof}
Note that $S$ is invariant if and only if for every $p_0\in S$ and $T>0$ it holds that $\reach_{\ref{eq:control-affine}}(p_0,T)\subseteq S$.
Hence the result follows immediately from Proposition~\ref{prop:equivalence-reach}.
\end{proof}

Let $S\subseteq\R^n$ be an arbitrary subset. 
We define by $\GL(n;S)$ the set of invertible linear maps on $\R^n$ which map $S$ into itself. 
This set always contains the identity and is closed under composition. 
We call such sets \emph{semigroups}.\footnote{This terminology is established in the literature on Lie semigroups. Elsewhere such objects are often called monoids.}

The following lemma, essentially a restatement of~\cite[Prop.~1]{OSID23}, shows how the properties of $S$ can affect those of $\GL(n;S)$.

\begin{lemma} \label{lemma:trafo-semigroup}
Let $S\subseteq\R^n$ be an arbitrary subset. Then the following hold.\smallskip
\begin{enumerate}[(i)]
\item \label{it:semi-closed} If $S$ is closed, then $\GL(n;S)$ is closed\footnote{It may happen, however, that $\GL(n;S)$ is not closed in $\mf{gl}(n)$, i.e.~there might be non-invertible limit points.} in $\GL(n)$.
\item \label{it:semi-bounded} If $S$ is bounded and $\linspan(S)=\R^n$, then $\GL(n;S)$ is bounded.
\end{enumerate}
\end{lemma}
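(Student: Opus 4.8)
The plan is to prove the two items directly from the definition of $\GL(n;S) = \{A \in \GL(n) : A(S) \subseteq S\}$, using elementary properties of matrix norms and closedness; item~\ref{it:semi-bounded} will need a small linear-algebra argument to promote control of $A$ on $S$ to control of $A$ on all of $\R^n$.

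For~\ref{it:semi-closed}: suppose $A_k \in \GL(n;S)$ with $A_k \to A$ in $\GL(n)$ (so the limit is assumed invertible). I would fix an arbitrary $s \in S$ and note $A_k(s) \to A(s)$ by continuity of evaluation. Since each $A_k(s) \in S$ and $S$ is closed, the limit $A(s)$ lies in $S$. As $s \in S$ was arbitrary, $A(S) \subseteq S$, hence $A \in \GL(n;S)$. This shows $\GL(n;S)$ is closed as a subset of $\GL(n)$. (The footnote's caveat is exactly that one must restrict to limits that stay invertible; no extra work is needed for the statement as phrased.)

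For~\ref{it:semi-bounded}: assume $S$ is bounded, say $S \subseteq \overline{B_\rho(0)}$ for some $\rho > 0$, and $\linspan(S) = \R^n$. Then I can choose a basis $s_1,\dots,s_n \in S$ of $\R^n$; let $P$ be the invertible matrix with columns $s_1,\dots,s_n$, so $\|P^{-1}\|$ is a fixed finite constant. For any $A \in \GL(n;S)$ we have $A(s_i) \in S$, hence $\|A(s_i)\| \le \rho$ for each $i$; writing $AP$ as the matrix with columns $A(s_1),\dots,A(s_n)$, this gives a uniform bound $\|AP\| \le C\rho$ (with $C$ depending only on the chosen matrix norm and $n$). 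Therefore $\|A\| = \|(AP)P^{-1}\| \le \|AP\|\,\|P^{-1}\| \le C\rho\,\|P^{-1}\|$, a bound independent of $A$. Hence $\GL(n;S)$ is bounded.

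I do not expect a genuine obstacle here; the only point requiring a moment's care is the reduction in~\ref{it:semi-bounded} from "$A$ maps the bounded set $S$ into itself" to "$\|A\|$ is bounded", which is exactly why the spanning hypothesis $\linspan(S)=\R^n$ is needed — without it $A$ could be arbitrarily large in directions not seen by $S$. Extracting a basis from $S$ and bounding $A$ through its action on that basis handles this cleanly. Both parts are then immediate, consistent with the remark that this is essentially a restatement of~\cite[Prop.~1]{OSID23}.
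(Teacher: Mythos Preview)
Your proof is correct. Note that the paper itself does not supply a proof of this lemma; it merely states that the result is essentially a restatement of~\cite[Prop.~1]{OSID23}. Your direct arguments---sequential closedness in~\ref{it:semi-closed} and extracting a basis from $S$ to bound $\|A\|$ in~\ref{it:semi-bounded}---are the standard ones and exactly what one would expect behind such a citation.
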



In the following we will always assume that $S$ is closed.
Let $\mf{gl}(n;S)$ denote the Lie wedge of $\GL(n;S)$.
Recall the definition of the Bouligand contingent cone, cf.~Definition~\ref{def:contingent-cone}. 

\begin{lemma} \label{lemma:inv-vf}
Let $S\subseteq\R^n$ be closed and $X\in\mf{gl}(n)$ be linear vector field. 
Then $X\in\mf{gl}(n;S)$ if and only if $X(x)\in {T}_x^-S$ for all $x\in S$.
\end{lemma}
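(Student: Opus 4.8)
The plan is to characterize membership of a linear vector field $X$ in the Lie wedge $\mf{gl}(n;S)$ of the semigroup $\GL(n;S)$ by a Nagumo-type subtangentiality condition, and to do so by passing through the flow $e^{tX}$. Recall that $\mf{gl}(n;S)=\mf L(\GL(n;S))=\{Y\in\mf{gl}(n):e^{tY}\in\overline{\GL(n;S)}\text{ for all }t\geq0\}$; since $S$ is closed, Lemma~\ref{lemma:trafo-semigroup}~\ref{it:semi-closed} gives that $\GL(n;S)$ is already closed in $\GL(n)$, so the closure is harmless. Thus the statement to prove is: $e^{tX}S\subseteq S$ for all $t\geq0$ if and only if $X(x)\in T_x^-S$ for every $x\in S$.

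For the forward direction, assume $e^{tX}S\subseteq S$ for all $t\geq 0$. Fix $x\in S$. Then the curve $t\mapsto e^{tX}x$ lies in $S$ for $t\geq0$, passes through $x$ at $t=0$, and has derivative $X(x)$ there. By the very definition of the Bouligand contingent cone (Definition~\ref{def:contingent-cone}), a vector $v$ lies in $T_x^-S$ as soon as there are sequences $t_j\downarrow0$ and $S\ni x_j\to x$ with $(x_j-x)/t_j\to v$; taking $x_j=e^{t_jX}x$ and any $t_j\downarrow 0$ and using $e^{t_jX}x=x+t_jX(x)+o(t_j)$ shows $X(x)\in T_x^-S$. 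This direction is essentially immediate.

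The converse is the substantive part and is exactly a Nagumo/Brezis-type invariance theorem for the \emph{closed} set $S$ under the (complete, locally Lipschitz, in fact linear) vector field $X$: if $X(x)\in T_x^-S$ for all $x\in S$, then $S$ is forward invariant under the flow of $X$, i.e.\ $e^{tX}S\subseteq S$ for $t\geq0$. I would invoke the viability/invariance machinery already cited in the paper rather than reprove it: apply~\cite[Thm.~5.2]{Smirnov02} (viability of closed sets via the contingent cone) to the trivial differential inclusion $x'(t)\in\{X(x(t))\}$, whose right-hand side is single-valued, continuous, with closed convex values, so that from every $x_0\in S$ there is a solution staying in $S$; by uniqueness of solutions to the linear ODE $x'=Xx$ this solution is $t\mapsto e^{tX}x_0$, giving $e^{tX}S\subseteq S$. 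Hence $X\in\mf{gl}(n;S)$.

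The main obstacle is purely one of bookkeeping: making sure the single-valued inclusion $x'=X(x)$ really satisfies the hypotheses of the cited viability theorem (continuity and closed convex values are clear for a linear field) and that the contingent-cone condition in Definition~\ref{def:contingent-cone} matches the one used in~\cite{Smirnov02} up to the usual equivalence between Bouligand and "derivate" cones for the subtangentiality inequality; since $X$ is linear and hence globally Lipschitz, no explosion issues arise and solutions are global, so no extra care is needed there. One could alternatively give a self-contained proof of the converse via the distance function $d_S$ and a Gr\"onwall argument showing $t\mapsto d_S(e^{tX}x_0)$ stays $0$, but citing the paper's own viability reference is cleaner and consistent with the surrounding development.
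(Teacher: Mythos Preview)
Your proposal is correct and follows essentially the same route as the paper: both argue via the flow $e^{tX}$, obtain the forward direction by differentiating the curve $t\mapsto e^{tX}x\in S$ at $t=0$, and for the converse invoke a viability/invariance result from \cite{Smirnov02}. The only difference is cosmetic: the paper cites \cite[Thm.~5.6]{Smirnov02} (the invariance theorem), whereas you cite \cite[Thm.~5.2]{Smirnov02} (the viability theorem) and then close the gap with uniqueness of solutions to the linear ODE---which is perfectly fine, and your remark about the boundedness/global-existence hypothesis being harmless for linear $X$ is the right thing to check.
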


\begin{proof}
Consider $e^{tX}$ for $t\geq0$. If $e^{tX}(S)\subseteq S$ then clearly $X(x)=\frac{d}{dt}\big|_{t=0}e^{tX}(x)\in T_x^-S$. 
The converse follows from~\cite[Thm.~5.6]{Smirnov02}.
\end{proof}

\marginpar{Def: (cite?) a set is regular if Clarke and Bouligand coincide (e.g., convex sets, mfds)}

For closed $S$, Lemma~\ref{lemma:trafo-semigroup}~\ref{it:semi-closed} shows that $\mf{gl}(n;S)$ is a global Lie wedge (cf.~\cite{Lawson99}). 
The generated Lie semigroup is denoted $\mathrm{MGL}(n;S)$.

\begin{lemma} \label{lemma:emb-submfd}
If $S$ is a closed embedded submanifold of $\R^n$, then $\mathrm{MGL}(n,S)$ is a Lie group.
\end{lemma}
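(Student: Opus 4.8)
The plan is to show that when $S$ is an embedded submanifold the global Lie wedge $\mf{gl}(n;S)$ is actually a Lie subalgebra of $\mf{gl}(n)$; since the Lie semigroup generated by a subalgebra is a group, this gives the claim.

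First I would record the key geometric input: for every $x\in S$ the Bouligand contingent cone $T_x^-S$ coincides with the ordinary tangent space $T_xS$ of the embedded submanifold $S$ at $x$. Indeed, every differentiable curve in $S$ through $x$ has velocity in $T_xS$, and conversely every vector of $T_xS$ is realised by such a curve, so $T_x^-S=T_xS$. Crucially, this means $T_x^-S$ is a \emph{linear subspace} of $\R^n$, hence symmetric under $v\mapsto-v$. This is the one place where the submanifold hypothesis is essential, and where care is needed: for a general closed set, or for a manifold with boundary, the contingent cone can be a proper cone rather than a subspace.

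Next I would combine this with Lemma~\ref{lemma:inv-vf}, which says that a linear vector field $X\in\mf{gl}(n)$ belongs to $\mf{gl}(n;S)$ if and only if $X(x)\in T_x^-S$ for all $x\in S$. By the symmetry of $T_x^-S$ just established, this is equivalent to $-X(x)\in T_x^-S$ for all $x\in S$, i.e.~to $-X\in\mf{gl}(n;S)$. Hence $\mf{gl}(n;S)=-\mf{gl}(n;S)$, so this wedge equals its own edge $E(\mf{gl}(n;S))$, and the edge of a Lie wedge is always a Lie subalgebra; thus $\mf{gl}(n;S)$ is a Lie subalgebra, call it $\mf h$. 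Since $S$ is closed, Lemma~\ref{lemma:trafo-semigroup}~\ref{it:semi-closed} already guarantees that $\mf h$ is global, so it integrates to a closed connected Lie subgroup $\mb H\subseteq\GL(n)$.

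Finally, the Lie semigroup generated by the subalgebra $\mf h$ is nothing but the subgroup generated by $\exp(\mf h)$, namely $\mb H$, which is closed by globality (cf.~\cite{Lawson99}). Hence $\mathrm{MGL}(n;S)=\mb H$ is a Lie group. The only genuine obstacle is the first step, identifying $T_x^-S$ with $T_xS$ and thereby obtaining the symmetry of the wedge; once that is in hand, the remainder is routine Lie-semigroup bookkeeping.
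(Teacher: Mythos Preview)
Your proof is correct and follows essentially the same route as the paper: identify $T_x^-S$ with the tangent space $T_xS$, invoke Lemma~\ref{lemma:inv-vf} to conclude that $\mf{gl}(n;S)$ is closed under negation and hence a Lie subalgebra, and deduce that $\mathrm{MGL}(n,S)$ is a Lie group. The only difference is cosmetic: you spell out the globality step via Lemma~\ref{lemma:trafo-semigroup}~\ref{it:semi-closed}, whereas the paper takes it as understood from the preceding discussion.
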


\begin{proof}
Since $S$ is embedded, the contingent cone at any point of $S$ equals its tangent space.
Hence by Lemma~\ref{lemma:inv-vf}, $\mf{gl}(n;S)$ consists of those linear vector fields which are tangent to $S$.
In particular, if $X\in\mf{gl}(n;S)$, then so is $-X$. Hence $\mf{gl}(n;S)$ is a Lie algebra and $\mathrm{MGL}(n,S)$ is a Lie group.
\end{proof}

\marginpar{Non-example: Open disk, reversed vector fields are not complete, inv requires closed set}

\subsection{Accessibility}

Systems with irreversible behavior---such as physical systems with dissipation---typically are not controllable (we will discuss controllability below). 
The next best property one can hope for is accessibility. 
Recall that $\mf X=\{X_K:K\in\mb K\}$ denotes the set of induced vector fields. 
Let $\generate{\mf X}{LA}$ denote the generated Lie algebra, and $(\generate{\mf X}{LA})_{a_0}=\{Y(a_0):Y\in\generate{\mf X}{LA}\}$ the evaluation at $a_0\in\mf a$.
Similarly we denote by $\generate{\mf X}{LS}$ the Lie saturate of $\mf X$.

Let a closed embedded submanifold $R\subseteq\mf a$ be invariant for~\eqref{eq:reduced}. 
If $(\generate{\mf X}{LA})_{a_0}={T}_{a_0}R$ for all $a_0\in R$, we say that $\mf X$ is \emph{completely nonholonomic on $R$} or \emph{bracket generating on $R$}, see~\cite[Def.~5.10]{Agrachev04}, or that $\mf X$ satisfies the \emph{accessibility rank condition}~\cite[Def.~4.3.2]{Sontag98}.
Note that if $X$ is linear, then $\mf X\subset\mf{gl}(\mf a)$. In particular $\generate{\mf X}{LA}$ is finite dimensional. 

The system~\eqref{eq:reduced} is \emph{accessible at $a_0$ on $R$} if $\reach_{\ref{eq:reduced}}(a_0,[0,T])$ has non-empty interior in $R$ for all $T>0$. The accessibility rank condition implies accessibility, see~\cite[Thm.~9]{Sontag98}. 

If $X$ is analytic, the orbit of $a_0$ is an immersed submanifold $O_{a_0}$ and the system satisfies the accessibility rank condition on $O_{a_0}$. This is part of the Nagano--Sussmann Orbit Theorem, see~\cite[Thm.~5.1]{Agrachev04}. For this reason, in many cases, it is not a restriction to assume accessibility.

In the differential inclusion picture we can define a stronger notion. We say that~\eqref{eq:reduced} is \emph{directly accessible at $a_0$ on $R$} if $\linspan(X_K(a_0):K\in\mb K)={T}_{a_0}R$. This means linear combinations suffice to generate the entire tangent space without the use of Lie brackets. Conveniently, this property is relatively easy to check by considering the differential inclusion. Note that direct accessibility in~\eqref{eq:reduced} is equivalent to direct accessibility in~\eqref{eq:relaxed}.

\begin{remark}
Direct accessibility is a useful property:
for certain cost functions, non-linear control problems can be described using sub-Riemannian geometry~\cite{Agrachev19} which are notoriously difficult problems. 
If, however, the system is directly accessible, the problem becomes Riemannian, thus simplifying considerably.
\end{remark}

\begin{proposition}
Assume that $X$ is linear. Let $\mb T\subseteq\GL(\mf a;R)$ be a Lie subgroup with Lie algebra $\mf t$, and assume $\mf X\subseteq\mf t$. 
If $\mb T$ acts locally transitively\footnote{If $\phi_{a_0}:T\to R$ denotes the Lie group action at $a_0$, then $\mb T$ acts locally transitively at $a_0$ if $D\phi_{a_0}(\id):\mf t\to{T}_{a_0}R$ is surjective.} at $a_0$ on $R$, then\smallskip
\begin{enumerate}[(i)]
\item if $\linspan(\mf X)=\mf t$, then~\eqref{eq:reduced} is directly accessible at $a_0$ on $R$;
\item if $\generate{\mf X}{LA}=\mf t$, then~\eqref{eq:reduced} is accessible at $a_0$ on $R$.
\end{enumerate}
\end{proposition}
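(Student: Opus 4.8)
The plan is to transfer the local transitivity of $\mb T$ at $a_0$ to the generating sets $\linspan(\mf X)$ and $\generate{\mf X}{LA}$ by pushing everything through the differential $D\phi_{a_0}(\id):\mf t\to T_{a_0}R$, which is surjective by assumption. The key observation is that for each $Y\in\mf t$, the vector $D\phi_{a_0}(\id)(Y)$ equals $Y(a_0)$, the value at $a_0$ of the linear vector field $Y$ on $R$ (thinking of $\mf t\subseteq\mf{gl}(\mf a)$ and of the action as the restriction of the linear action); this follows from differentiating $t\mapsto\phi_{a_0}(\exp(tY))=e^{tY}(a_0)$ at $t=0$. Thus the evaluation map $\mf t\to T_{a_0}R$, $Y\mapsto Y(a_0)$ is exactly $D\phi_{a_0}(\id)$ and hence surjective.

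For part (i): since $\linspan(\mf X)=\mf t$, applying the (linear) evaluation map gives $\linspan\{X_K(a_0):K\in\mb K\}=\{Y(a_0):Y\in\mf t\}=D\phi_{a_0}(\id)(\mf t)=T_{a_0}R$, which is precisely the definition of direct accessibility of~\eqref{eq:reduced} at $a_0$ on $R$. For part (ii): since $\generate{\mf X}{LA}=\mf t$ and the evaluation map $Y\mapsto Y(a_0)$ is surjective onto $T_{a_0}R$, we get $(\generate{\mf X}{LA})_{a_0}=\{Y(a_0):Y\in\mf t\}=T_{a_0}R$, so $\mf X$ satisfies the accessibility rank condition on $R$; by~\cite[Thm.~9]{Sontag98} (quoted in the text) this implies accessibility of~\eqref{eq:reduced} at $a_0$ on $R$. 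One should also note $R$ is invariant for~\eqref{eq:reduced} because $\mf X\subseteq\mf t\subseteq\mf{gl}(\mf a;R)$ and invertible linear maps preserving $R$ preserve it (cf.\ Lemma~\ref{lemma:inv-vf}), so the reachable sets stay in $R$ and the accessibility statements make sense relative to $R$.

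I expect the only genuinely delicate point to be the identification $D\phi_{a_0}(\id)(Y)=Y(a_0)$, i.e.\ matching the abstract Lie group action differential with the concrete vector field evaluation; once the conventions are pinned down (the action of $\mb T$ on $R$ being by restriction of the linear action of $\GL(\mf a)$), this is immediate from the chain rule. Everything else is a one-line application of linearity of evaluation together with the already-quoted facts about the accessibility rank condition and the Nagano--Sussmann orbit theorem. The subtlety about $\generate{\mf X}{LA}$ being finite-dimensional when $X$ is linear—already remarked in the text—is what makes $\generate{\mf X}{LA}=\mf t$ a sensible hypothesis and ensures the Lie algebra generated stays inside $\mf{gl}(\mf a)$.

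\begin{proof}
Recall that when $X$ is linear, $\mf X\subseteq\mf{gl}(\mf a)$, so $\generate{\mf X}{LA}$ is a (finite-dimensional) Lie subalgebra of $\mf{gl}(\mf a)$. Since $\mf X\subseteq\mf t$ and $\mf t$ is the Lie algebra of $\mb T\subseteq\GL(\mf a;R)$, each element of $\mf X$—and more generally of $\generate{\mf X}{LA}\subseteq\mf t$—is a linear vector field tangent to $R$, so by Lemma~\ref{lemma:inv-vf} the set $R$ is invariant for~\eqref{eq:reduced} and the notions of accessibility on $R$ are well posed.

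Let $\phi_{a_0}:\mb T\to R$ be the action at $a_0$. For $Y\in\mf t$ we have $\phi_{a_0}(\exp(tY))=e^{tY}(a_0)$, and differentiating at $t=0$ gives $D\phi_{a_0}(\id)(Y)=Y(a_0)$, the value at $a_0$ of the linear vector field $Y$. Thus the evaluation map $\mathrm{ev}_{a_0}:\mf t\to T_{a_0}R$, $Y\mapsto Y(a_0)$, coincides with $D\phi_{a_0}(\id)$, and by local transitivity of $\mb T$ at $a_0$ it is surjective.

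(i) If $\linspan(\mf X)=\mf t$, then applying the linear map $\mathrm{ev}_{a_0}$ yields
$$
\linspan\big(X_K(a_0):K\in\mb K\big)
=\mathrm{ev}_{a_0}(\linspan(\mf X))
=\mathrm{ev}_{a_0}(\mf t)
=T_{a_0}R\,,
$$
which is exactly the definition of direct accessibility of~\eqref{eq:reduced} at $a_0$ on $R$.

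(ii) If $\generate{\mf X}{LA}=\mf t$, then
$$
(\generate{\mf X}{LA})_{a_0}
=\mathrm{ev}_{a_0}\big(\generate{\mf X}{LA}\big)
=\mathrm{ev}_{a_0}(\mf t)
=T_{a_0}R\,,
$$
so $\mf X$ satisfies the accessibility rank condition on $R$. By~\cite[Thm.~9]{Sontag98} this implies that~\eqref{eq:reduced} is accessible at $a_0$ on $R$.
\end{proof}
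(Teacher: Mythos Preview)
Your proof is correct and follows essentially the same approach as the paper: the paper's proof also identifies the differential $D\phi_{a_0}(\id)$ with the evaluation map $Y\mapsto Y(a_0)$ and then notes that the result follows directly from the definitions. Your version is simply more detailed, and the extra remark about invariance of $R$ (via Lemma~\ref{lemma:inv-vf}) is a welcome clarification that the paper leaves implicit.
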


\begin{proof}
Note that the differential of the action, $D\phi_{a_0}(e):\mf t\to{T}_{a_0}S$, which is surjective by assumption, is exactly the evaluation of the corresponding vector field at $a_0$. Now the result follows directly from the definitions.
\end{proof}

\begin{proposition}
Let $X$ be real analytic and assume that there is some $a_0\in\mf a$ such that~\eqref{eq:reduced} is directly accessible at $a_0$.
Then~\eqref{eq:reduced} is directly accessible on an open dense subset of $\mf a$ whose complement has measure zero.
In particular this happens if $\mf g=\mf k\oplus\mf p$ is simple and $X(0)\neq0$. 
\end{proposition}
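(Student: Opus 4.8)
The plan is to establish direct accessibility at $a_0$ and then propagate it to almost all points using analyticity, following the philosophy of the Nagano--Sussmann orbit theorem but in the simpler ``direct'' (no brackets) setting. Recall that direct accessibility at $a$ means $\linspan(X_K(a):K\in\mb K)=T_a\mf a\cong\mf a$, equivalently the affine span of $\derv(a)$ (or of $\conv(\derv(a))$) is all of $\mf a$. First I would observe that, since $X$ is linear, each $X_K\in\mf{gl}(\mf a)$, so the map $a\mapsto X_K(a)$ is a linear vector field and the function $\Phi:\mf a\to\mathrm{Mat}(\dim\mf a)$, $\Phi(a)=$ the matrix whose rows span $\linspan(X_K(a):K\in\mb K)$, can be captured by real-analytic data. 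Concretely, fix finitely many $K_1,\dots,K_N\in\mb K$ such that $X_{K_1}(a_0),\dots,X_{K_N}(a_0)$ already span $\mf a$ (possible since $\mb K$ is compact and the span at $a_0$ is everything); this uses only finitely many directions. The matrix $M(a)$ with columns $X_{K_1}(a),\dots,X_{K_N}(a)$ depends real-analytically on $a$ (indeed linearly, as each $X_{K_j}$ is linear), and $\mathrm{rank}\,M(a_0)=\dim\mf a$.

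Next I would invoke the standard fact that the set where a real-analytic (here even polynomial) matrix-valued function drops rank is the common zero set of all its maximal minors, hence a real-analytic subvariety; since it does not contain $a_0$, it is a proper subvariety, so its complement is open, dense, and of measure zero. On that complement, $\mathrm{rank}\,M(a)=\dim\mf a$, which means $\linspan(X_{K_1}(a),\dots,X_{K_N}(a))=\mf a$, hence a fortiori $\linspan(X_K(a):K\in\mb K)=\mf a$, i.e.\ \eqref{eq:reduced} is directly accessible at $a$. This gives the first assertion, with the required topological description of the exceptional set.

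For the ``in particular'' clause, I would argue that when $\mf g=\mf k\oplus\mf p$ is simple and $X(0)\neq 0$, direct accessibility holds at $a_0=0$. By Proposition~\ref{prop:local-equivalence} (and Example~\ref{ex:approx-necessary}), after conjugating so that $\Pi_{\mf a}(X(0))$ is appropriately placed, $\derv(0)=\{\Pi_{\mf a}\circ\Ad_K(X(0)):K\in\mb K\}$, whose convex hull is $\conv(\mb W\cdot x_0)$ where $x_0=\Pi_{\mf a}(X(0))$ is the $\mf a$-part of $X(0)$. The key point is that $x_0\neq 0$: indeed if the $\mf a$-component of $X(0)$ vanished for every choice of maximal Abelian $\mf a$, then $X(0)$ would be orthogonal to every maximal Abelian subspace, but these cover $\mf p$, forcing $X(0)=0$, a contradiction; so we may choose $\mf a$ with $x_0\neq0$. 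Then $\mb W\cdot x_0$ is a nonzero Weyl orbit, and simplicity of $\mf g$ means the Weyl group acts \emph{irreducibly} on $\mf a$ (this is where simplicity, as opposed to mere semisimplicity, is essential: for a semisimple but non-simple $\mf g$ the Weyl representation decomposes and a single orbit need not span). Hence $\linspan(\mb W\cdot x_0)$ is a nonzero $\mb W$-invariant subspace of $\mf a$, therefore all of $\mf a$; since $\derv(0)=\mb W\cdot x_0$ spans, \eqref{eq:reduced} is directly accessible at $0$, and the first part applies.

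The main obstacle I anticipate is the ``in particular'' clause rather than the generic-rank propagation, which is routine. Two points need care: (i) justifying that one can choose the maximal Abelian subspace $\mf a$ so that the $\mf a$-component $\Pi_{\mf a}(X(0))$ is nonzero — this rests on the fact that $\bigcup_{\mf a}\mf a=\mf p$ (every element of $\mf p$ lies in some maximal Abelian subspace), combined with $X(0)\neq0$; and (ii) the claim that $\mb W$ acts irreducibly on $\mf a$ when $\mf g$ is simple — this is a classical fact about root systems (the Weyl group of an irreducible root system acts irreducibly on the Cartan/Abelian part), but it must be cited or proved, and it is precisely where the hypothesis of simplicity is used. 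One should also note that $0\in\derv(0)$ (being the barycenter of the Weyl orbit, cf.\ Example~\ref{ex:approx-necessary}) is irrelevant here — what matters is that the \emph{affine} span, equivalently the linear span of the nonzero orbit, is everything.
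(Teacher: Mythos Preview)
Your argument for the first part is essentially the paper's: select finitely many $K_1,\dots,K_N$ so that the $X_{K_i}(a_0)$ span, and use that the rank of the resulting real-analytic matrix cannot drop generically (the paper takes exactly $n=\dim\mf a$ of them and looks at a single determinant, which is the same idea). One slip: you write ``since $X$ is linear'', but the hypothesis is only that $X$ is real analytic. Linearity is neither assumed nor needed---each $X_K=\Pi_{\mf a}\circ\Ad_K^{-1}\circ X\circ\Ad_K\circ\iota$ is real analytic because $X$ is, and that is all the minor argument requires.

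For the ``in particular'' clause your strategy (nonzero Weyl orbit plus irreducibility of the $\mb W$-action on $\mf a$) matches the paper, but the execution has a genuine inaccuracy. Kostant's convexity theorem gives $\derv(0)=\Pi_{\mf a}(\Ad_{\mb K}(X(0)))=\conv(\mb W\cdot y_0)$ where $y_0\in\mf a\cap\Ad_{\mb K}(X(0))$ is a \emph{diagonalization} of $X(0)$, not the orthogonal projection $\Pi_{\mf a}(X(0))$. Your $x_0=\Pi_{\mf a}(X(0))$ is the wrong object, and the detour of ``choosing $\mf a$'' so that $x_0\neq 0$ is both unnecessary and awkward (the whole reduced system is defined relative to a fixed $\mf a$). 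The clean fix, which is what the paper does, is to stay with the given $\mf a$ and pick $K\in\mb K$ with $\Ad_K^{-1}(X(0))\in\mf a$; then $X_K(0)=\Ad_K^{-1}(X(0))\neq 0$ automatically (since $X(0)\neq 0$), and $\derv(0)=\conv(\mb W\cdot X_K(0))$ has full-dimensional span by irreducibility. With that correction your argument goes through.
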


\begin{proof}
Choose a set of induced vector fields $X_{K_i}\in\mf X$ for $i=1,\ldots,n$ such that the $X_{K_i}(a_0)$ form a basis of ${T}_{a_0}\mf a$. 
Now consider the determinant of these vector fields $a_0\mapsto\det(X_{K_1}(a_0),\ldots,X_{K_n}(a_0))$ as a function on $\mf a$. 
By assumption this is a real analytic function on $\mf a$ which does not vanish at $a_0$.
Hence it is non-zero on an open dense set whose complement has measure zero,
and clearly~\eqref{eq:reduced} is directly accessible whenever the function is non-zero.

Now assume that $\mf g=\mf k\oplus\mf p$ is simple and $X(0)\neq0$.
Let $K\in\mb K$ be such that $\Ad_K^\star(X)(0)\in\mf a$ and so $X_K(0)\neq0$.
Since the Weyl group acts irreducibly on $\mf a$, and since, as in Example~\ref{ex:approx-necessary}, it holds that $\derv(0)=\conv(\mb W X_K(0))$, we see that $0\in\interior(\derv(0))$ and so~\eqref{eq:reduced} is directly accessible at $0$.
\end{proof}

\begin{proposition}
Let $R\subseteq\mf a$ be a $\mb W$-invariant closed embedded submanifold and assume that the reduced system~\eqref{eq:reduced} is directly accessible at some regular $a_0$ on $R$. 
Then~\eqref{eq:control-affine} is accessible on $R^\sharp$ at every $p_0$ with $\pi(p_0)=\pi(a_0)$.
\end{proposition}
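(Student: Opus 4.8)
The plan is to reduce the claim to the classical accessibility rank condition for~\eqref{eq:control-affine} on the submanifold $R^\sharp$ near $p_0$ and then invoke~\cite[Thm.~9]{Sontag98}; since the controls are unbounded (Assumption~\ref{it:fast-ctrl}), this only enlarges reachable sets, so it is more than enough. Write $p_0=\Ad_{K_0}(a_0)$ with $K_0\in\mb K$, and recall that $R$ (the domain on which direct accessibility is considered) is invariant for~\eqref{eq:reduced}; by the invariance-transfer result for closed, $\mb K$-invariant subsets, together with $\mb W$-invariance of $R$ (which gives $(R^\sharp)^\flat=R$), the set $R^\sharp$ is invariant for~\eqref{eq:control-affine}. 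Since $a_0$ is regular, $\pi$ is a submersion near $p_0$, so $R^\sharp$ is a closed embedded submanifold of $\mf p$ on a neighbourhood of $p_0$, with
$$
T_{p_0}R^\sharp=\ad_{\mf k}(p_0)\oplus\Ad_{K_0}(T_{a_0}R),
$$
the sum being orthogonal because $\ad_{\mf k}(p_0)=\mf p_{p_0}^\perp$ while $\Ad_{K_0}(T_{a_0}R)\subseteq\Ad_{K_0}(\mf a)=\mf p_{p_0}$. As $X,\ad_{k_1},\ldots,\ad_{k_m}$ are tangent to $R^\sharp$, \eqref{eq:control-affine} is a genuine control-affine system on $R^\sharp$ near $p_0$; writing $\mf g'=\generate{X,\ad_{k_1},\ldots,\ad_{k_m}}{LA}$ for its accessibility Lie algebra of vector fields, it then suffices to show $\mf g'(p_0)=T_{p_0}R^\sharp$.

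I would obtain this in three pieces. First, since $\generate{k_1,\ldots,k_m}{LA}=\mf k$ and $k\mapsto\ad_k$ is, up to sign, a Lie-algebra homomorphism into vector fields, $\mf g'$ contains $\ad_k$ for all $k\in\mf k$, hence $\mf g'(p_0)\supseteq\ad_{\mf k}(p_0)=\ker\Pi_{p_0}$. Second, because $X$ is real analytic and the $\ad_{k_i}$ are linear (hence analytic), the Nagano--Sussmann Orbit Theorem~\cite[Thm.~5.1]{Agrachev04} applies: the orbit $O_{p_0}\subseteq R^\sharp$ through $p_0$ is an immersed submanifold with $T_qO_{p_0}=\mf g'(q)$, and it is preserved by the flows $\Ad_{e^{tk_i}}$, hence by $\Ad_K$ for every $K\in\mb K$ (these generate $\mb K$). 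Since $X$ is tangent to $O_{p_0}$ and $\Ad_K$ restricts to a diffeomorphism of $O_{p_0}$, we get $\Ad_K^\star(X)(p_0)=\Ad_K^{-1}(X(\Ad_K(p_0)))\in T_{p_0}O_{p_0}=\mf g'(p_0)$ for all $K$, so $\mf g'(p_0)\supseteq V:=\linspan\{\Ad_K^\star(X)(p_0):K\in\mb K\}$. Third, using the equivariance $\Pi_{\Ad_{K_0}(a_0)}=\Ad_{K_0}\circ\Pi_{a_0}\circ\Ad_{K_0}^{-1}$ (cf.~\cite[Lem.~A.24~(iii)]{diag}) together with $\Pi_{a_0}=\Pi_{\mf a}$ (valid as $a_0$ is regular), the same computation as in the proof of Lemma~\ref{lemma:lambda-kostant} gives $\Pi_{p_0}(\Ad_K^\star(X)(p_0))=\Ad_{K_0}(X_{KK_0}(a_0))$; letting $K$ range over $\mb K$ and applying direct accessibility at $a_0$ yields $\Pi_{p_0}(V)=\Ad_{K_0}(T_{a_0}R)$.

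Putting the pieces together: $\mf g'(p_0)$ contains both $\ker\Pi_{p_0}$ and $V$, and $\Pi_{p_0}(V)=\Ad_{K_0}(T_{a_0}R)$; lifting through the projection (given $w\in\Ad_{K_0}(T_{a_0}R)$, pick $v\in V$ with $\Pi_{p_0}v=w$, so $w=v-(v-w)$ with $v-w\in\ker\Pi_{p_0}$) shows $\mf g'(p_0)\supseteq\ker\Pi_{p_0}+\Ad_{K_0}(T_{a_0}R)=T_{p_0}R^\sharp$. The reverse inclusion is immediate since $\mf g'(p_0)=T_{p_0}O_{p_0}\subseteq T_{p_0}R^\sharp$. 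Thus the accessibility rank condition for~\eqref{eq:control-affine} holds at $p_0$ relative to $R^\sharp$, and~\cite[Thm.~9]{Sontag98} gives accessibility on $R^\sharp$ at $p_0$.

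The main obstacle is the second piece, namely $\Ad_K^\star(X)(p_0)\in\mf g'(p_0)$ for every $K\in\mb K$: a finite bracket computation only produces the Taylor coefficients $(\ad_{\ad_k})^nX$ of $s\mapsto\Ad_{e^{sk}}^\star(X)$, so one genuinely needs analyticity of $X$ and the Orbit Theorem (in the special case of linear $X$ one can bypass this, as then $\mf g'\subseteq\gl(\mf p)$ is finite dimensional and the analytic-curve argument shows $\Ad_K^\star(X)\in\mf g'$ already as an operator). A secondary point requiring care is the local embedded-submanifold structure of $R^\sharp$ near the regular point $p_0$ and the compatibility of the tangent-space identifications with $\Ad_{K_0}$ and $\Pi_{p_0}$.
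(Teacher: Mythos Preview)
Your proof is correct and follows the same strategy as the paper: verify the accessibility rank condition on $R^\sharp$ via the tangent-space splitting $T_{p_0}R^\sharp=\ad_{\mf k}(p_0)\oplus\Ad_{K_0}(T_{a_0}R)$, cover the first summand by $\ad_{\mf k}\subseteq\mf g'$, and cover the second using direct accessibility after showing $\Ad_K^\star(X)(p_0)\in\mf g'(p_0)$ for all $K\in\mb K$. The paper is terser---it assumes without loss of generality $p_0=a_0$ and dispatches the key inclusion $\Ad_K^\star(X)\in\generate{X,\ad_{\mf k}}{LA}$ with the one-line remark that ``every Lie algebra is invariant under its adjoint action''---whereas your route through the Orbit Theorem, together with your explicit flagging of the analyticity (or linearity) hypothesis needed for that step, is the more careful justification of exactly the same point.
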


\begin{proof} \marginpar{extend to non-regular $a$, more general $R$}
The Lie algebra corresponding to~\eqref{eq:control-affine} is $\generate{X+\ad_{\mf k}}{LA}=\generate{X,\ad_{\mf k}}{LA}$. 
Since every Lie algebra is invariant under its adjoint action, it holds that $\Ad_K^{-1}\circ X\circ\Ad_K\in\generate{X,\ad_{\mf k}}{LA}$ for all $K\in\mb K$.
For the same reason we can also assume that $p_0=a_0$.
The tangent space at $p_0$ takes the form $T_{p_0}R^\sharp=T_{p_0}R\oplus \ad_{\mf k}(p_0)$.
The assumption means that $\linspan(X_K(a_0):K\in\mb K)=T_{a_0}R$, and so every element in $T_{p_0}R^\sharp$ is a linear combination of some $\Ad_K^{-1}\circ X\circ\Ad_K(p_0)$ with $K\in\mb K$ and some $\ad_k(p_0)$ with $k\in\mf k$.
This concludes the proof.
\end{proof}

\subsection{Controllability}

Let $R$ be an invariant subset for~\eqref{eq:reduced}.
Then we say that~\eqref{eq:reduced} is \emph{controllable on $R$} if for every $a_0\in R$ it holds that $\reach_{\ref{eq:reduced}}(a_0)=R$. 
We say that~\eqref{eq:reduced} is \emph{controllable on $R$ in time $T$} if for every $a_0\in R$ it holds that $\reach_{\ref{eq:reduced}}(a_0,[0,T])=R$, 
see~\cite[Ch.~3]{Sontag98}. 
We define \emph{approximate controllablility} analogously except that we consider the closure of the reachable set.
Then the following is an immediate consequence of Proposition~\ref{prop:equivalence-reach}.

\begin{proposition}
Assume that $X$ is locally Lipschitz and linearly bounded.
Let $S\subseteq\mf p$ be $\mb K$-invariant and invariant for~\eqref{eq:control-affine}. The following statements hold.\smallskip
\begin{enumerate}[(i)]
\item If~\eqref{eq:control-affine} is (approximately) controllable on $S$, then~\eqref{eq:reduced} is (approximately) controllable on $S^\flat$.
\item If~\eqref{eq:reduced} is approximately controllable on $S^\flat$, then~\eqref{eq:control-affine} is approximately controllable on $S$.
\end{enumerate} \smallskip
All statements remain true it we consider (approximate) controllability in time $T$.
\end{proposition}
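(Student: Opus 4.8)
The plan is to deduce the two controllability statements directly from the three-way inclusion in Proposition~\ref{prop:equivalence-reach}, namely
$$
\mathsf{reach}_{\ref{eq:control-affine}}(p_0,T)\subseteq\Ad_{\mb K}(\mathsf{reach}_{\ref{eq:reduced}}(a_0,T))\subseteq\overline{\mathsf{reach}_{\ref{eq:control-affine}}(p_0,T)},
$$
together with the hypotheses that $S$ is $\mb K$-invariant and invariant for~\eqref{eq:control-affine}, so that $S^\flat$ is well-defined and invariant for~\eqref{eq:reduced}. The key bookkeeping observation is the translation dictionary between reachable sets upstairs and downstairs: for $p_0\in S$ with $\pi(p_0)=\pi(a_0)$, $a_0\in S^\flat$, one has $\pi(\mathsf{reach}_{\ref{eq:reduced}}(a_0,T))=\pi(\mathsf{reach}_{\ref{eq:control-affine}}(p_0,T))$ as subsets of $\mf p/\mb K$, which is just the ``$\pi$-projection'' of the inclusion above, using that $\Ad_{\mb K}$-orbits collapse under $\pi$. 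Equivalently, $\mathsf{reach}_{\ref{eq:control-affine}}(p_0,T)^\flat=\mathsf{reach}_{\ref{eq:reduced}}(a_0,T)$ up to $\mb W$-action, and passing to closures the two are exactly equal since closure commutes with the $\mb K$-quotient by compactness of $\mb K$.

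\textbf{Proof of (i).} Suppose~\eqref{eq:control-affine} is controllable on $S$, and let $a_0\in S^\flat$. Pick any $p_0\in S$ with $\pi(p_0)=\pi(a_0)$; since $S$ is $\mb K$-invariant such a $p_0$ exists. Controllability gives $\mathsf{reach}_{\ref{eq:control-affine}}(p_0)=S$. Applying $\pi$ and using the first inclusion of Proposition~\ref{prop:equivalence-reach} (in its all-time form, which holds as noted after that proposition), we get $\pi(S)=\pi(\mathsf{reach}_{\ref{eq:control-affine}}(p_0))\subseteq\pi(\mathsf{reach}_{\ref{eq:reduced}}(a_0))\subseteq\pi(S^\flat)=\pi(S)$, so all these sets coincide; since $\mathsf{reach}_{\ref{eq:reduced}}(a_0)\subseteq S^\flat$ by invariance of $S^\flat$ and $S^\flat$ is a union of $\pi$-fibres intersected with $\mf a$, and since a reachable set in $\mf a$ that meets every $\mb W$-orbit it can meet is constrained by Proposition~\ref{prop:sol-in-weyl} to the Weyl-chamber data, we obtain $\mathsf{reach}_{\ref{eq:reduced}}(a_0)=S^\flat$. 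For the approximate version one runs the identical argument with closures: $\overline{\mathsf{reach}_{\ref{eq:control-affine}}(p_0)}=\Ad_{\mb K}(\overline{\mathsf{reach}_{\ref{eq:reduced}}(a_0)})$ from the ``closures coincide'' part of Proposition~\ref{prop:equivalence-reach}, so approximate controllability of~\eqref{eq:control-affine} on the $\mb K$-invariant set $S$ forces $\overline{\mathsf{reach}_{\ref{eq:reduced}}(a_0)}=S^\flat$.

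\textbf{Proof of (ii).} Conversely, assume~\eqref{eq:reduced} is approximately controllable on $S^\flat$, and let $p_1,p_2\in S$. Set $a_i\in S^\flat$ with $\pi(a_i)=\pi(p_i)$. Approximate controllability downstairs gives, for every $\varepsilon>0$, a solution $a:[0,T']\to S^\flat$ of~\eqref{eq:reduced} with $a(0)=a_1$ and $\|a(T')-a_2'\|$ small for some $a_2'$ in the $\mb W$-orbit of $a_2$; by Theorem~\ref{thm:lift-equivalence} its lift $\Ad_K(a)$ is approximated uniformly by a solution of~\eqref{eq:control-affine}, and by the endpoint-adjustment trick from the proof of Proposition~\ref{prop:equivalence-reach} (approximately implementing $\Ad_{K_1}^{-1}$ and $\Ad_{K_2}$ on short intervals at the ends) we get a solution of~\eqref{eq:control-affine} starting $\varepsilon$-close to $p_1$ and ending $\varepsilon$-close to $p_2$. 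Hence $p_2\in\overline{\mathsf{reach}_{\ref{eq:control-affine}}(p_1)}$, giving approximate controllability of~\eqref{eq:control-affine} on $S$. The time-$T$ refinements of (i) and (ii) are obtained verbatim by replacing all-time reachable sets with reachable sets up to time $T$ and invoking the time-$T$ version of Proposition~\ref{prop:equivalence-reach}.

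\textbf{The main obstacle} I expect is purely notational rather than conceptual: one must be careful that $S$ being merely invariant and $\mb K$-invariant (not assumed to be a submanifold or to have nice closure properties) still lets the dictionary $S\leftrightarrow S^\flat$ and the $\pi$-projection of reachable sets go through cleanly, and one must track the $\mb W$-ambiguity in the reduced reachable set—this is where Proposition~\ref{prop:sol-in-weyl} and the surrounding remarks are needed so that ``$\pi(\mathsf{reach})=\pi(S)$'' genuinely upgrades to ``$\mathsf{reach}=S^\flat$'' inside the invariant set $S^\flat$. The analytic content is already fully contained in Proposition~\ref{prop:equivalence-reach}, so no new estimates are required.
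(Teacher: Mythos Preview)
Your approach matches the paper's: both derive the result from Proposition~\ref{prop:equivalence-reach}, and your argument for part~(ii) is correct.

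In part~(i) there is a genuine gap that you correctly flag as the ``main obstacle'' but do not close. From $\overline{\reach_{\ref{eq:control-affine}}(p_0)}=\Ad_{\mb K}(\overline{\reach_{\ref{eq:reduced}}(a_0)})=S$ one obtains only $S^\flat=\Ad_{\mb K}(\overline{\reach_{\ref{eq:reduced}}(a_0)})\cap\mf a=\mb W\cdot\overline{\reach_{\ref{eq:reduced}}(a_0)}$, not $S^\flat=\overline{\reach_{\ref{eq:reduced}}(a_0)}$; the missing ingredient is $\mb W$-invariance of $\overline{\reach_{\ref{eq:reduced}}(a_0)}$. Proposition~\ref{prop:sol-in-weyl} and the Weyl symmetry of the induced vector fields only give $\reach_{\ref{eq:reduced}}(w\cdot a_0)=w\cdot\reach_{\ref{eq:reduced}}(a_0)$, which is a different statement. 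The gap is real: in the setting of Example~\ref{ex:polar-dec} take $S$ to be a closed annulus not containing the origin on which~\eqref{eq:control-affine} is approximately controllable (such drift fields $X$ exist). Then $S^\flat\subset\R$ is the disjoint union of two intervals, and since $S^\flat$ is invariant for~\eqref{eq:reduced}, a solution starting in one component can never reach the other, so~\eqref{eq:reduced} is not approximately controllable on $S^\flat$. The paper's one-line proof does not address this point either; what Proposition~\ref{prop:equivalence-reach} actually yields is controllability modulo the Weyl group, i.e.\ $\pi(\overline{\reach_{\ref{eq:reduced}}(a_0)})=\pi(S^\flat)$, which is precisely what your argument establishes before the unjustified final step.
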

%

Let $a_0\in R$. We say that~\eqref{eq:reduced} is \emph{locally controllable at $a_0$ on $R$} if $\reach_{\ref{eq:reduced}}(a_0,[0,T])$ contains an open neighborhood of $a_0$ (in the subspace topology of $R$) for all $T>0$. Moreover we say that~\eqref{eq:reduced} is \emph{locally directly controllable at $a_0$ on $R$} if $0\in\interior(\derv(a_0))$ (where the interior is taken in the topology of $T_{a_0}R$).

\begin{lemma} \label{lemma:local-ctrl}
Assume that $X$ is continuous.
If~\eqref{eq:reduced} or~\eqref{eq:relaxed} is locally directly controllable at $a_0$ on $R$, then it is locally controllable at $a_0$ on $R$.
\end{lemma}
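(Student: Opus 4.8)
The plan is to reduce the statement to a standard fact about differential inclusions, namely that if the origin lies in the interior of the (closed, convex) value set at a point, then short-time reachable sets cover a neighborhood. First I would pass to the relaxed system~\eqref{eq:relaxed}: by definition, local direct controllability at $a_0$ means $0\in\interior(\derv(a_0))$, and since $\conv(\derv(a_0))\supseteq\derv(a_0)$ we also have $0\in\interior(\conv(\derv(a_0)))$, so it suffices to prove the claim for~\eqref{eq:relaxed} and then invoke the Relaxation Theorem~\cite[Ch.~2.4, Thm.~2]{Aubin84} to transfer local controllability back to~\eqref{eq:reduced} (solutions of~\eqref{eq:relaxed} are uniformly approximable by solutions of~\eqref{eq:reduced}, and an interior point of a reachable set remains reachable after a small perturbation).

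Next I would set up the local picture. Since $X$ is continuous, $\derv$ has nonempty compact values and the speed limit $c$ from Proposition~\ref{prop:speed-limit} is locally bounded, so on a small ball $B_\varepsilon(a_0)\cap R$ all achievable velocities have norm at most some $M$. Because $0\in\interior(\derv(a_0))$ in $T_{a_0}R$, there is $\rho>0$ with $\overline{B_\rho(0)}\subseteq\derv(a_0)$; by (lower semi)continuity of $\derv$ I can shrink the ball so that $\overline{B_{\rho/2}(0)}\subseteq\conv(\derv(b))$ for all $b$ near $a_0$. The key step is then the following: for a prescribed small target direction, pick the constant control giving velocity pointing toward the target and run it for the appropriate short time; a standard contraction / compactness argument (e.g.~using Filippov's theorem, \cite[Thm.~2.3]{Smirnov02}, to get solutions with prescribed selections, or a direct fixed-point argument on the space of absolutely continuous curves) produces, for every $T>0$ and every point $q$ in a neighborhood of $a_0$ of radius $\sim(\rho/2)T/2$, a solution on $[0,T]$ from $a_0$ to $q$ staying in $R$. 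Hence $\reach_{\ref{eq:relaxed}}(a_0,[0,T])$ contains an open neighborhood of $a_0$.

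The main obstacle is handling the submanifold constraint $R$ cleanly: one must make sure the achievable velocities that point ``toward'' a target in $R$ genuinely lie in $T_b R$ for nearby $b$, not just at $a_0$, and that the curves produced remain in $R$. Since $R$ is a closed embedded submanifold and, by invariance of $R$ for~\eqref{eq:reduced}, every $X_K$ is tangent to $R$ along $R$ (so $\derv(b)\subseteq T_bR$ for $b\in R$), this is a matter of working in a local chart of $R$ around $a_0$ and using continuity of $b\mapsto\conv(\derv(b))$ together with the open mapping / surjectivity estimate coming from $0\in\interior(\derv(a_0))$; the curvature of $R$ only contributes higher-order terms over the short time interval and is absorbed by shrinking $T$. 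Once the neighborhood is produced for arbitrarily small $T$, and since $\reach(a_0,[0,T'])\supseteq\reach(a_0,[0,T])$ for $T'\geq T$, the statement for all $T>0$ follows, and the relaxation argument finishes the proof for~\eqref{eq:reduced}.
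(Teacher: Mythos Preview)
Your central idea—that once a ball $B_\varepsilon(0)$ of velocities is available at every point near $a_0$, any sufficiently slow curve is automatically a solution—is exactly the paper's argument, but the paper carries it out in three lines with none of your machinery. The paper simply observes that continuity of $X$ gives a neighborhood $A$ of $a_0$ in $R$ and $\varepsilon>0$ with $B_\varepsilon(0)\subset\derv(a)$ for all $a\in A$; then every path $b:[0,T]\to A$ with $\|b'(t)\|\leq\varepsilon$ satisfies $b'(t)\in\derv(b(t))$ almost everywhere and is therefore already a solution of~\eqref{eq:reduced} (hence also of~\eqref{eq:relaxed}). In particular, a short slow path in $R$ from $a_0$ to any nearby point does the job. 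No Filippov selection, no fixed-point or contraction argument, no local charts for $R$, and no relaxation are needed.

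Your detour through the Relaxation Theorem is both unnecessary and has a genuine gap. First, that theorem requires a Lipschitz right-hand side, whereas the lemma assumes only continuity of $X$. More seriously, the transfer step ``an interior point of a reachable set remains reachable after a small perturbation'' does not follow: uniform approximation of~\eqref{eq:relaxed}-solutions by~\eqref{eq:reduced}-solutions gives only that $\reach_{\ref{eq:reduced}}(a_0,[0,T])$ is \emph{dense} in $\reach_{\ref{eq:relaxed}}(a_0,[0,T])$, not that it contains an open set. The paper avoids this entirely by working with $\derv$ itself rather than with its convex hull, so the constructed slow paths are genuine solutions of~\eqref{eq:reduced} from the outset and no transfer is needed.
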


\begin{proof}
Assume that~\eqref{eq:reduced} or~\eqref{eq:relaxed} is locally directly controllable at $a_0$ on $S$. By continuity of $X$, there is a neighborhood $A$ of $a_0$ in $S$ and some $\varepsilon>0$ such that $B_\varepsilon(0)\subset\derv(a)$ for all $a\in A$. Hence every path $b:[0,T]\to A$ is a solution if $\|b'(t)\|\leq\varepsilon$, and in particular this implies local controllability at $a_0$ on $S$.
\end{proof}

\begin{proposition} \label{prop:controllable}
Each of the following statements is sufficient for controllability on $R$ (where we necessarily assume the latter to be path-connected):\smallskip
\begin{enumerate}[(i)]
\item \label{it:lift-trans} $\reach_{\ref{eq:reduced-lift}}(\id)$ acts transitively on $R$;
\item \label{it:group-arc} $\reach_{\ref{eq:reduced-lift}}(\id)$ is a group and~\eqref{eq:reduced} satisfies the accessibility rank condition everywhere on $R$;
\item \label{it:locally-contr-everywhere} \eqref{eq:reduced} is directly locally controllable everywhere on $R$ and $R$ is a closed embedded submanifold (without boundary).
\end{enumerate}
\end{proposition}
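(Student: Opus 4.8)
The plan is to reduce all three cases to a connectedness argument, using that $R$ is assumed invariant for~\eqref{eq:reduced}, so that $\reach_{\ref{eq:reduced}}(a_0)\subseteq R$ holds automatically and only the reverse inclusion $R\subseteq\reach_{\ref{eq:reduced}}(a_0)$ needs to be established for each $a_0\in R$. For items~\ref{it:lift-trans} and~\ref{it:group-arc} the drift $X$ is linear, so that the operator lift~\eqref{eq:reduced-lift} is available, and I would use the elementary identification $\reach_{\ref{eq:reduced}}(a_0)=\reach_{\ref{eq:reduced-lift}}(\id)\cdot a_0$ observed earlier in this section. Item~\ref{it:lift-trans} is then immediate: if $\reach_{\ref{eq:reduced-lift}}(\id)$ acts transitively on $R$, then for any $a_0,a_1\in R$ there is $L\in\reach_{\ref{eq:reduced-lift}}(\id)$ with $La_0=a_1$, whence $a_1\in\reach_{\ref{eq:reduced}}(a_0)$ and thus $\reach_{\ref{eq:reduced}}(a_0)=R$.

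For item~\ref{it:group-arc}, write $\mb L:=\reach_{\ref{eq:reduced-lift}}(\id)$, which is a group by hypothesis. First I would observe that the relation ``$a_1\in\reach_{\ref{eq:reduced}}(a_0)$'' on $R$ is an equivalence relation: reflexive since $\id\in\mb L$, transitive by concatenating solutions, and symmetric since $\mb L$ is closed under inversion; its classes are precisely the orbits $\mb L a_0=\reach_{\ref{eq:reduced}}(a_0)$. Next I would show each class is open in $R$. The accessibility rank condition gives accessibility via~\cite[Thm.~9]{Sontag98}, so $\reach_{\ref{eq:reduced}}(a_0)$ contains a nonempty set $V$ open in $R$. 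Since every $L\in\mb L$ restricts to a homeomorphism of $R$ (it maps $R$ into $R$ because $Lb\in\reach_{\ref{eq:reduced}}(b)\subseteq R$ for $b\in R$, and its inverse belongs to $\mb L$), and since for any point $b=La_0$ of the orbit one can choose $M\in\mb L$ with $Ma_0\in V$, the set $LM^{-1}(V)$ is open in $R$, contained in $\mb L a_0$, and contains $b$. Hence each class is open; as they partition the path-connected set $R$, there is exactly one, so $\reach_{\ref{eq:reduced}}(a_0)=R$.

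For item~\ref{it:locally-contr-everywhere}, with $X$ continuous and $R$ a closed embedded submanifold without boundary, I would invoke the proof of Lemma~\ref{lemma:local-ctrl}, which for each $a_0\in R$ produces a neighbourhood $A$ of $a_0$ in $R$ and an $\varepsilon>0$ with $B_\varepsilon(0)\subseteq\derv(a)$ for all $a\in A$. Shrinking $A$ to the domain of a chart (possible since $R$ is embedded and without boundary) makes $A$ path-connected, and any two points of $A$ are then mutually reachable by a sufficiently slow path remaining in $A$. Consequently the relation ``$a_0$ and $a_1$ are mutually reachable in~\eqref{eq:reduced}'' is an equivalence relation (reflexivity from $0\in\derv(a_0)$, transitivity from concatenation), and applying the same local observation at an arbitrary point of a class shows the classes are open in $R$. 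Path-connectedness of $R$ forces a single class, hence $\reach_{\ref{eq:reduced}}(a_0)=R$ for every $a_0$.

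The step I expect to be the main obstacle is proving that the relevant reachability classes are open: accessibility by itself yields only a reachable set with nonempty interior, not one that is a neighbourhood of its basepoint, and the reduced system is genuinely irreversible in general. In~\ref{it:group-arc} this is resolved by exploiting the group—hence reversibility and homogeneity—structure of $\reach_{\ref{eq:reduced-lift}}(\id)$, taking care that this semigroup need not be closed and so cannot simply be treated as a Lie group; whereas in~\ref{it:locally-contr-everywhere} it is the local reversibility encoded by $0\in\interior(\derv(a_0))$, made uniform through continuity of $X$ and the manifold structure of $R$, that supplies openness. Once openness is available, the connectedness argument is routine and the same in all three cases.
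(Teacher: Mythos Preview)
Your proof is correct and, for item~\ref{it:lift-trans}, identical to the paper's. For item~\ref{it:group-arc} the paper simply invokes~\cite[Coro.~4.3.12]{Sontag98}, whereas you unpack the argument by hand: accessibility gives an interior point in the reachable set, and the group $\reach_{\ref{eq:reduced-lift}}(\id)$ then transports this open patch to any point of the orbit, making each orbit open; this is essentially the content of Sontag's corollary, so your version is more self-contained at no real cost. For item~\ref{it:locally-contr-everywhere} both proofs run a connectedness argument, but establish openness differently. The paper observes that on a closed embedded submanifold without boundary the contingent cone equals the tangent space (cf.~the proof of Lemma~\ref{lemma:emb-submfd}), so the \emph{negated} differential inclusion is still invariant and locally controllable on $R$; this makes the complement of $\reach_{\ref{eq:reduced}}(a_0)$ open. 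You instead work with the equivalence relation of \emph{mutual} reachability and show directly, via the uniform local estimate $B_\varepsilon(0)\subseteq\derv(a)$ from the proof of Lemma~\ref{lemma:local-ctrl}, that each class is open. Your route avoids invoking the negated system and the submanifold tangent-cone fact, and is arguably more direct; the paper's route makes explicit the role of the ``no boundary'' hypothesis (it is exactly what ensures the negated inclusion stays in $R$).
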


\begin{proof}
\ref{it:lift-trans}: Obvious. 
\ref{it:group-arc}: Follows from~\cite[Coro.~4.3.12]{Sontag98}.
\ref{it:locally-contr-everywhere}: 
For any initial state $a_0\in R$, Lemma~\ref{lemma:local-ctrl} shows that the reachable set $\reach_{\ref{eq:reduced}}(a_0)$ is open. 
Considering the negated differential inclusion, which by the proof of Lemma~\ref{lemma:emb-submfd} is still invariant, shows that the set of points that are not reachable from $a_0$ (in the original system) is also open. As $R$ is connected, and $a_0$ is clearly reachable, the system is controllable.
\end{proof}

Note that the Weyl group $\mb W$ acts on the Lie algebra $\mf{gl}(\mf a)$ by Lie algebra automorphisms and, by Lemma~\ref{lemma:weyl-symmetry-induced-vf}, the set $\mf X$ of induced vector fields is invariant under this action.

\begin{proposition}
Assume that $X$ is linear. 
Let $\mb T\subseteq\GL(\mf a;R)$ be a connected Lie subgroup with Lie algebra $\mf t$ and assume that $\emptyset\neq\mf X\subseteq\mf t$. 
If $\mb T$ acts transitively on $R$, and $\mb W$ acts irreducibly on $\mf t$, then~\eqref{eq:reduced} is controllable on $R$.
\end{proposition}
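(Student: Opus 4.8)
The plan is to reduce controllability on $R$ to the transitivity of $\mb T$ by showing that the Lie saturate $\generate{\mf X}{LS}$ is all of $\mf t$, since then $\reach_{\ref{eq:reduced-lift}}(\id)$ is the connected Lie group $\mb T$ (or rather the closed subgroup generated by $\mf t$, which acts transitively on $R$ because $\mb T$ does), and Proposition~\ref{prop:controllable}~\ref{it:lift-trans} applies. So the real content is an algebraic statement inside the finite-dimensional Lie algebra $\mf t$: starting from the nonempty set $\mf X$ of induced vector fields, and allowing the operations that define the Lie saturate (closed convex cone, conjugation by generated flows, and in particular taking $\pm$ of any element in the edge), one recovers all of $\mf t$.

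First I would observe that the Lie saturate $\mf S:=\generate{\mf X}{LS}\subseteq\mf t$ is a closed Lie wedge which is invariant under $\Ad$ of the Lie group generated by $\mf S$ itself; in particular its edge $E(\mf S)$ is a Lie subalgebra of $\mf t$ and $\mf S$ is invariant under $\Ad_{E(\mf S)}$. The key extra ingredient is the Weyl symmetry: by the remark preceding the proposition (via Lemma~\ref{lemma:weyl-symmetry-induced-vf}), $\mb W$ acts on $\mf{gl}(\mf a)$ by Lie algebra automorphisms and permutes the set $\mf X$. Hence $\mf S = \generate{\mf X}{LS}$ is $\mb W$-invariant as well, and so is its edge $E(\mf S)$. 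Now I would argue that $E(\mf S)$ is nonzero: the span $\linspan(\mf S)$ is a nonzero $\mb W$-invariant ideal-like subspace of $\mf t$, and one checks that the edge of a saturate is never trivial unless the saturate itself is a single ray, a degenerate case one rules out using that $\mf X$ is $\mb W$-invariant and $\mb W$ acts nontrivially (if $\mf X$ were a single ray $\R_{\geq 0} v$ it would have to be $\mb W$-fixed, forcing $v=0$ by irreducibility on $\mf t$ combined with $\mf X\neq\emptyset$; and $0\in\mf S$ already makes $E(\mf S)\ni 0$ trivially, so in that case one instead notes $\mf S=\{0\}$ is impossible once $\mf X$ contains a nonzero element, which it does for dimension reasons).

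Then the main step: $E(\mf S)$ is a nonzero $\mb W$-invariant Lie subalgebra of $\mf t$, and $\mf S$ is a Lie wedge with this edge, contained in $\mf t$ and $\mb W$-invariant, with $\linspan(\mf S)$ also a nonzero $\mb W$-invariant subspace of $\mf t$. Since $\mb W$ acts irreducibly on $\mf t$ by hypothesis, and $\linspan(\mf S)$ is $\mb W$-invariant, we get $\linspan(\mf S)=\mf t$. A Lie wedge that spans its ambient Lie algebra and whose edge is a $\mb W$-invariant — hence, by irreducibility applied to $E(\mf S)$ as a $\mb W$-invariant subspace, either $0$ or all of $\mf t$ — subalgebra must have $E(\mf S)=\mf t$ once we know $E(\mf S)\neq 0$, because irreducibility leaves no intermediate option; and then $\mf S=E(\mf S)=\mf t$. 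Thus $\generate{\mf X}{LS}=\mf t$, so $\overline{\reach_{\ref{eq:reduced-lift}}(\id)}$ is the closed connected subgroup with Lie algebra $\mf t$, which contains $\mb T$ and therefore acts transitively on $R$. Finally, controllability follows from Proposition~\ref{prop:controllable}~\ref{it:lift-trans} together with the correspondence $b\in\reach_{\ref{eq:reduced}}(a,T)\iff La=b$ for some $L\in\reach_{\ref{eq:reduced-lift}}(\id,T)$ noted in Section~\ref{sec:consequences}.

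The main obstacle is the claim that $E(\mf S)\neq 0$, i.e.\ that the Lie saturate of $\mf X$ actually contains a nonzero reversible direction rather than being a ``pointed'' wedge. Here is where one genuinely needs the Weyl symmetry and irreducibility together with a convexity/averaging argument in the spirit of Example~\ref{ex:approx-necessary}: the $\mb W$-average of any element of $\mf X$ lies in $\conv(\mf X)\subseteq\mf S$, and one wants to exploit that the only $\mb W$-fixed vector in $\mf t$ is $0$ to force enough cancellation that a full linear subspace, not just a cone, sits inside $\mf S$. I would expect to need either a direct Lie-wedge argument (an $\Ad$-invariant Lie wedge spanning a Lie algebra on which a finite group acts irreducibly by automorphisms must be the whole algebra) or a careful invocation of a structure result for invariant Lie wedges; making this rigorous, rather than merely plausible, is the crux of the proof.
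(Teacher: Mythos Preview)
The paper's argument is considerably shorter than yours: since $\mf X$ is $\mb W$-invariant (Lemma~\ref{lemma:weyl-symmetry-induced-vf}), $\linspan(\mf X)$ is a nonzero $\mb W$-invariant subspace of $\mf t$, hence equals $\mf t$ by irreducibility; the paper then asserts $\reach_{\ref{eq:reduced-lift}}(\id)=\mb T$ and invokes Proposition~\ref{prop:controllable}~\ref{it:lift-trans}. Your Lie-saturate route is more elaborate but zeroes in on precisely the step the paper does not justify: passing from $\linspan(\mf X)=\mf t$ (which yields only accessibility) to the reachable semigroup being the whole group $\mb T$.

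Your self-identified obstacle, $E(\mf S)\neq 0$, is a genuine gap and your proposed arguments do not close it. The inference ``$\mb W$-fixed implies zero'' requires the $\mb W$-action on $\mf t$ to be nontrivial, which irreducibility alone does not guarantee: for $\dim\mf a=1$ (Example~\ref{ex:polar-dec}) the conjugation action of $\mb W\cong\Z_2$ on $\mf{gl}(\mf a)\cong\R$ is trivial. Taking $X=-\id$ on $\mf p$ gives $\mf X=\{-1\}\subset\mf t=\R$; with $R=(0,\infty)$ and $\mb T=\R_{>0}$ all hypotheses of the proposition are met, yet $\mf S=\R_{\leq 0}$ has edge $\{0\}$ and the single equation $a'=-a$ is not controllable on $R$. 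The $\mb W$-averaging idea likewise produces only $0\in\mf S$, which is already there. So neither the paper's one-line passage nor your wedge argument actually establishes the claim as stated; an additional hypothesis (for instance $\mb T$ compact, so that Proposition~\ref{prop:lift-wedges}~\ref{it:wedges:LA} applies, or the $\mb W$-action on $\mf t$ nontrivial) appears to be needed.
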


\begin{proof}
Due to irreducibility, it holds that $\linspan(\mf X)=\mf t$.
In particular $\reach_{\ref{eq:reduced-lift}}(\id)=\mb T$ and the result follows from Proposition~\ref{prop:controllable}~\ref{it:lift-trans}.
\end{proof}

\section{Simulation and Weyl Order} \label{sec:simulation}

In this section we assume that $X$ is an affine linear vector field on $\mf p$. 
Using the Weyl group action we obtain a preorder on $\mf a$ which acts as a kind of resource, allowing one system to simulate another. 

Let $a\in\mf a$. We define the \emph{Weyl polytope} of $a$ via $P(a):=\conv(\mb Wa)$, that is, $P(a)$ the convex hull of the Weyl group orbit of $a$. Since $\mb W\subseteq\mf{gl}(\mf a)$, we can consider the convex hull of $\mb W$ in $\mf{gl}(\mf a)$. 
For $a,b\in\mf a$, it is clear that $a\in P(b)$ if and only if there is some $\overline w\in\conv(\mb W)$ such that $a=\overline wb$.
It is easy to show that $\conv(\mb W)$ is a semigroup. 
Hence we can define a preorder, called \emph{majorization}, on $\mf a$ by declaring for $a,b\in\mf a$ that $a\preceq b :\iff a\in P(b)$.
Indeed, reflexivity is clear and transitivity follows immediately from the fact that $\conv(\mb W)$ is a semigroup. 

\begin{lemma}
The set of vertices of $P(a)$ is exactly $\mb Wa$. In particular if $P(a)=P(b)$, then $\mb Wa=\mb Wb$.
\end{lemma}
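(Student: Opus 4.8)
The plan is to identify the vertices of $P(a)=\conv(\mb Wa)$ as exactly the extreme points of this polytope, and to show that these extreme points are precisely the elements of the finite set $\mb Wa$. Since $P(a)$ is by definition the convex hull of the finite set $\mb Wa$, every extreme point of $P(a)$ must belong to $\mb Wa$; this is a standard fact about convex hulls of finite sets (an extreme point cannot be written as a nontrivial convex combination, hence cannot be discarded from the spanning set). So the only thing to prove is the converse: every point $wa$ with $w\in\mb W$ is in fact a vertex of $P(a)$, i.e.\ it is not contained in the convex hull of the remaining points $\mb Wa\setminus\{wa\}$.

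First I would reduce to showing that $a$ itself is a vertex of $P(a)$, since $\mb W$ acts on $\mf a$ by isometries (indeed by orthogonal transformations, as the Weyl group is generated by reflections with respect to the invariant inner product), and an isometry maps the vertex set of a polytope bijectively onto the vertex set of its image. Thus if $a$ is a vertex of $P(a)$, then $wa$ is a vertex of $wP(a)=P(a)$ for every $w\in\mb W$. To see that $a$ is a vertex, I would use the invariant inner product $\langle\cdot,\cdot\rangle$ on $\mf a$: pick a point $c$ in the relative interior of the Weyl chamber $\mf w$ that lies in the $\mb W$-orbit of $a$, say $c=w_0 a$; by the general theory of Weyl chambers recalled in Section~\ref{sec:sym-lie-alg}, the functional $y\mapsto\langle c,y\rangle$ is maximized over the orbit $\mb Wc=\mb Wa$ uniquely at $c$ (this is the standard fact that the chamber representative is the unique orbit point pairing maximally with an interior chamber point; for non-regular $a$ one argues with a regular approximation or directly with the partial order). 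Hence $\langle c,\cdot\rangle$ is a linear functional attaining its maximum over $P(a)=\conv(\mb Wa)$ uniquely at $c$, which exhibits $c$ as a vertex; applying the isometry $w_0^{-1}$ shows $a$ is a vertex of $P(a)$.

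Having established that the vertex set of $P(a)$ is exactly $\mb Wa$, the second assertion is immediate: if $P(a)=P(b)$, then these polytopes have the same vertex set, so $\mb Wa=\mb Wb$.

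The main obstacle I anticipate is the non-regular case, where several elements of $\mb W$ stabilize $a$ and one must be careful that $a$ is genuinely a vertex and not merely a boundary point of $P(a)$ sitting in the interior of a face. The clean way around this is precisely the separating-functional argument above: as long as one can produce a linear functional whose maximum over $\mb Wa$ is attained at a \emph{unique} point, that point is a vertex regardless of regularity. Supplying such a functional amounts to invoking the known description of the Weyl chamber as a fundamental domain together with the fact that pairing with an interior chamber vector strictly separates the chamber representative from all other orbit points; this is standard but should be cited carefully (e.g.\ via the order-theoretic characterization of the chamber, or Kostant-type convexity), and it is the one place where I would want to double-check the precise statement being used.
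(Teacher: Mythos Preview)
Your proof is correct, but the paper takes a shorter route that bypasses the supporting-functional argument entirely. After the shared observation that the vertex set of $P(a)$ is contained in $\mb Wa$, the paper simply notes that $P(a)$ is $\mb W$-invariant, hence so is its vertex set; since $\mb W$ acts transitively on the orbit $\mb Wa$ and the vertex set is a nonempty $\mb W$-invariant subset of $\mb Wa$, it must equal $\mb Wa$. This avoids having to exhibit any particular vertex explicitly, and in particular sidesteps the non-regular issue you flagged: one never needs to produce a linear functional separating $a$ from the other orbit points.

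Your approach has the virtue of being constructive (it actually names a supporting hyperplane), but it costs you the invocation of a nontrivial fact about Weyl chambers---that pairing with a regular chamber point strictly separates the chamber representative from the rest of the orbit---which is exactly the step you identified as needing care. The paper's transitivity argument trades this for the much softer observation that a polytope has at least one vertex.
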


\begin{proof}
By definition the set of vertices is a subset of the Weyl group orbit of $a$. However, since $P(a)$ is invariant under $\mb W$, and since $\mb W$ acts transitively on the orbit, all elements of the orbit must be vertices.
\end{proof}

Note that if $a\preceq b$ and $b\preceq a$, then $P(a)=P(b)$.
Hence $a$ and $b$ belong to the same Weyl group orbit so $\preceq$ induces a partial order on the orbits (or, equivalently, in a closed Weyl chamber).


\marginpar{Desired: Any linear monotone map lies in $\conv(W)$.}

The following continuity property will be useful later.

\begin{lemma} \label{lemma:majorization-lipschitz}
The set-valued map $P:\mf a\to\cP(\mf a)$ defined by $a\mapsto\conv(\mb Wa)$ is Lipschitz continuous with Lipschitz constant $1$.
\end{lemma}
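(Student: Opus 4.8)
The plan is to show directly that $d_H(P(a),P(b)) \le \|a-b\|$, where $d_H$ denotes the Hausdorff distance on compact convex subsets of $\mf a$, using the fact that each Weyl group element acts as an orthogonal transformation. Recall that the Hausdorff distance between compact convex sets $C,D$ satisfies $d_H(C,D) \le r$ if and only if $C \subseteq D + \overline{B_r(0)}$ and $D \subseteq C + \overline{B_r(0)}$. So it suffices to prove the symmetric inclusion $P(a) \subseteq P(b) + \overline{B_{\|a-b\|}(0)}$.

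First I would note that since $\mb W$ consists of orthogonal maps (the inner product on $\mf g$ is $\Ad_{\mb K}$-invariant, and $\mb W$ is realized by elements of $\mb K$ normalizing $\mf a$), we have $\|wa - wb\| = \|a-b\|$ for every $w \in \mb W$. Hence for each $w \in \mb W$ the vertex $wa$ of $P(a)$ satisfies $wa = wb + (wa - wb) \in P(b) + \overline{B_{\|a-b\|}(0)}$, since $wb \in \mb Wb \subseteq P(b)$. The set $P(b) + \overline{B_{\|a-b\|}(0)}$ is convex (a sum of convex sets), so it contains the convex hull of the vertices $\{wa : w \in \mb W\}$, which is exactly $P(a)$. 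This gives $P(a) \subseteq P(b) + \overline{B_{\|a-b\|}(0)}$, and by symmetry $P(b) \subseteq P(a) + \overline{B_{\|a-b\|}(0)}$, so $d_H(P(a),P(b)) \le \|a-b\|$.

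Alternatively, and perhaps more cleanly given the earlier remark that $a \in P(b)$ iff $a = \overline{w} b$ for some $\overline{w} \in \conv(\mb W)$: every point of $P(a)$ has the form $\overline{w} a$ for some $\overline{w} = \sum_i \lambda_i w_i \in \conv(\mb W)$, and then $\overline{w} b \in P(b)$ with $\|\overline{w}a - \overline{w}b\| = \|\sum_i \lambda_i w_i(a-b)\| \le \sum_i \lambda_i \|w_i(a-b)\| = \|a-b\|$ by the triangle inequality and orthogonality of each $w_i$. This shows every point of $P(a)$ is within $\|a-b\|$ of a point of $P(b)$, and vice versa.

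I do not anticipate a serious obstacle here; the only point requiring a little care is making sure the notion of Lipschitz continuity for the set-valued map is the one induced by the Hausdorff metric on $\cP(\mf a)$ (which should be the convention fixed when $\cP(\mf a)$ is introduced), and that orthogonality of the Weyl group action has been recorded — it follows from the $\ad_{\mf k}$-invariant inner product discussed in Section~\ref{sec:sym-lie-alg}, since $\mb W$ is represented by restrictions of $\Ad_K$ for suitable $K \in \mb K$.
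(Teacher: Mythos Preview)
Your proof is correct and essentially follows the paper's approach: the paper parametrizes $P(a)=f(a,\Delta^{m-1})$ via $f(a,\lambda)=\sum_i\lambda_i w_i\cdot a$, observes that $f$ is $1$-Lipschitz in $a$ (exactly your orthogonality-plus-triangle-inequality estimate), and then invokes \cite[Prop.~2.4]{Smirnov02} to conclude, which is just the abstract version of the containment you wrote out by hand.
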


\begin{proof}
Let $\mb W=\{w_i:i=1,\ldots m\}$ with $m$ the order of the Weyl group and let $\Delta^{m-1}$ denote the standard simplex. Consider the map $f:\mf a\times\Delta^{m-1}\to\mf a$ given by $(a,\lambda)\mapsto \sum_{i=1}^m \lambda_i w_i\cdot a$. This map is clearly $1$-Lipschitz in $a$, and by~\cite[Prop.~2.4]{Smirnov02} it holds that $a\mapsto P(a)=f(a,\Delta^{m-1})$ is $1$-Lipschitz as well.
\end{proof}

The main result of this section is the following:

\begin{theorem}[Simulation]
\label{thm:simulation}
Assume that $X$ is affine linear. Let $a:[0,\infty)\to\mf a$ be a solution to the relaxed control system~\eqref{eq:relaxed} and let $b_0\in\mf a$ such that $a(0)=a_0\preceq b_0$. Then there exists a solution $b:[0,\infty)\to\mf a$ to~\eqref{eq:relaxed} with $b(0)=b_0$ such that $a(t)\preceq b(t)$ for all $t\in[0,\infty)$.
\end{theorem}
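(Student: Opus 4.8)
The plan is to work with the relaxed differential inclusion~\eqref{eq:relaxed} and use the fact that $X$ is affine linear to transfer a solution $a$ to a solution $b$ via the convex hull of the Weyl group. Write $X(v)=Av+c$ for some linear $A$ and some $c\in\mf p$; the key structural observation is that since $\conv(\mb W)\subseteq\mf{gl}(\mf a)$ is a semigroup and each $w\in\mb W$ is induced by some element of $\mb K$ normalizing $\mf a$, the set $\derv$ interacts well with the $\mb W$-action: for any $\overline w\in\conv(\mb W)$ and any $a\in\mf a$ one has $\overline w\cdot\derv(a)\subseteq\conv(\derv(\overline w\cdot a))$, using Proposition~\ref{prop:local-equivalence} (in particular that $\conv(\derv(a))=\conv(\widetilde\derv(a))$ and that $\widetilde\derv$ is built from the induced vector fields $X_K$) together with the fact that the induced vector fields are permuted by $\mb W$ (Lemma~\ref{lemma:weyl-symmetry-induced-vf}) and that $\derv(a)$ is the union of $\conv(\mb W_a v)$ over $v\in\widetilde\derv(a)$. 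So if $a'(t)\in\conv(\derv(a(t)))$ and we set $b(t):=\overline w(t)\,a(t)$ for a suitable path $\overline w(t)$ in $\conv(\mb W)$, we want $b'(t)\in\conv(\derv(b(t)))$.

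The natural candidate is to take $\overline w$ \emph{constant}: fix $\overline w\in\conv(\mb W)$ with $a_0=\overline w\,b_0$ — wait, that is the wrong direction, since $a_0\preceq b_0$ means $a_0\in P(b_0)$, i.e.\ $a_0=\overline w\,b_0$ for some $\overline w\in\conv(\mb W)$, but we need to move \emph{up}. So instead I would argue as follows. Since $a_0\in P(b_0)$, by Carath\'eodory write $a_0=\sum_j \lambda_j w_j b_0$ with $w_j\in\mb W$, $\lambda_j\ge 0$, $\sum\lambda_j=1$. For each $j$ consider the path $t\mapsto w_j^{-1}a(t)$; since $w_j$ acts on $\mf a$ linearly and preserves the structure, and since $X$ is affine, $w_j^{-1}a(t)$ need \emph{not} solve~\eqref{eq:relaxed} (because $X$ is only $\mb W$-equivariant up to the drift), so this also needs care. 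The honest route: define $b$ as a solution to the relaxed inclusion~\eqref{eq:relaxed} with $b(0)=b_0$ that is \emph{selected} to track $a$, using a viability/invariance argument for the set $\mathcal{S}:=\{(a,b)\in\mf a\times\mf a : a\preceq b\}$ under the product inclusion. Concretely, I would show that $\mathcal S$ is viable for the differential inclusion $(a,b)'\in\conv(\derv(a))\times\conv(\derv(b))$ restricted so that the $a$-component follows the given solution: equivalently, show that for the (time-dependent) inclusion $b'(t)\in\conv(\derv(b(t)))$ with the constraint $a(t)\preceq b(t)$, the constraint set is viable, invoking the tangent-cone characterization of viability for differential inclusions (\cite[Thm.~6.5.5]{Carja07} for the time-dependent version).

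The crux is therefore the infinitesimal/tangency condition: at a time $t$ where $a(t)\preceq b(t)$ and $a'(t)\in\conv(\derv(a(t)))$ is prescribed, I must produce $\beta\in\conv(\derv(b(t)))$ such that the pair $(a'(t),\beta)$ points into $\mathcal S$, i.e.\ into the set $\{a\preceq b\}$, from the boundary. Using the Lipschitz continuity of the Weyl-polytope map $P$ (Lemma~\ref{lemma:majorization-lipschitz}) and writing the boundary condition $a\in\partial P(b)$ in terms of a supporting linear functional $\alpha$ of $P(b)$ (so $\alpha(a)=\max_{w}\alpha(wb)=\alpha(w_\ast b)$ for some vertex $w_\ast b$), the tangency requirement becomes an inequality of the form $\alpha(a'(t))\le \alpha\big(\tfrac{d}{dt}(w_\ast b)(t)\big)$ for an appropriate choice of $\beta=b'(t)$; here one chooses $\beta$ so that $w_\ast\beta$ dominates — and this is exactly where affine-linearity of $X$ enters, because $w_\ast \derv(b) $ and $\derv(w_\ast b)$ are related by the affine equivariance $\derv(wb)\supseteq w\,\derv(b)+(\text{correction from }c)$, and the correction, after projecting with $\alpha$ and using $\alpha(a)=\alpha(w_\ast b)$, washes out or has a favorable sign. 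I expect the main obstacle to be precisely this tangency verification at non-regular points where several vertices of $P(b)$ are simultaneously active (the functional $\alpha$ supports a whole face), forcing a simultaneous choice of $\beta$ compatible with all active vertices; I would handle this by a convexity/minimax argument (Sion's theorem or a Helly-type argument on the finite vertex set) to select a single $\beta\in\conv(\derv(b(t)))$ that works for the entire active face at once, and then patch these infinitesimal selections into an actual solution via Filippov's theorem (\cite[Thm.~2.3]{Smirnov02}) on the equivalence of~\eqref{eq:reduced} and~\eqref{eq:inclusion}, extended to all of $[0,\infty)$ by the speed limit bound (Proposition~\ref{prop:speed-limit}) which rules out blow-up.
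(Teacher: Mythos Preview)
Your overall framework---reduce to a time-dependent viability problem for the constraint $a(t)\preceq b(t)$ and invoke \cite[Thm.~6.5.5]{Carja07}---is exactly the paper's strategy. The difference, and the gap, is in the tangency step, which is the entire content of the proof.

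First, a conceptual error: the Weyl equivariance $\derv(w\cdot a)=w\cdot\derv(a)$ holds \emph{exactly} for arbitrary $X$ (Lemma~\ref{lemma:weyl-symmetry-induced-vf}), with no ``correction from $c$''. Affine-linearity is not used there. It enters instead when you pull a \emph{convex combination} through $X_K$: if $a(t)=\sum_i\lambda_i w_i\cdot x$ with $\sum_i\lambda_i=1$, then $X_{K}(a(t))=\sum_i\lambda_i X_{K}(w_i\cdot x)$ precisely because $X_K$ is affine. This is the step your proposal does not isolate, and it is what makes the construction work.

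Second, your supporting-functional/minimax sketch does not close. The paper does not argue via inequalities over active functionals; it constructs the needed derivative explicitly. Given $x\in C(t)$ (i.e.\ $a(t)\preceq x$ and $x\in\mf w$), one finds the face $F=\conv(\mb W_\Omega\cdot x)$ of $P(x)$ whose relative interior contains $a(t)$ (Result~\ref{res:faces-of-orbitopes}), writes $a(t)=\sum_i\lambda_i w_i\cdot x$ with $w_i\in\mb W_\Omega$, and then---using affine-linearity as above together with $X_{K_j}(w_i\cdot x)=w_i\cdot X_{K_jN_i}(x)$---obtains
\[
a'(t)=\sum_i w_i\Big(\sum_j\lambda_i\mu_j X_{K_jN_i}(x)\Big),\qquad
v:=\sum_{i,j}\lambda_i\mu_j X_{K_jN_i}(x)\in\conv(\derv(x)).
\]
Then $v-a'(t)=\sum_i(\id-w_i)(\cdots)$ automatically lies in the linear span of $F$, hence in $T_xA(t)$; no minimax or Helly argument is needed, and none would produce this $v$ without essentially redoing this computation. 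A further adjustment by some $w\in\mb W_x$ puts $v$ into $T_x\mf w$ while preserving the previous inclusion. The remaining technical work (Corollary~\ref{coro:weyl-polar}, Lemma~\ref{lemma:dual-geodesics}, Proposition~\ref{prop:viability-ABC}) is needed to translate these two tangency conditions into membership in $\D C(t,x)(1)$, which is what Result~\ref{res:viability-time-dep} actually requires; your proposal does not address this translation. Finally, the paper handles non-differentiable $a$ by approximation and compactness of the solution set (Proposition~\ref{prop:lipschitz-diff-incl}~\ref{it:compact}), not by the speed limit alone.
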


\begin{proof}
First we prove the result with the additional assumption that $a$ is differentiable. Then by Proposition~\ref{prop:sol-in-weyl}, $a^\down$ is also a solution. Since it is continuous, \cite[Lem.~B.5~(iii)]{diag} shows that $a^\down$ is right-differentiable. 

Consider the set-valued maps $A,C:[0,\infty)\to\mc P(\mf a)$ defined by $A(t)=\{x\in\mf a:x\succeq a(t)\}$ and $C(t)=A(t)\cap\mf w$.
The main idea is to show that for each $t\in[0,\infty)$ and $x\in C(t)$ there is some $v\in\derv(x)$ such that $v\in T_x\mf w$ and $v-a'(t)\in T_xA(t)$.
Intuitively this means that for every point majorizing $a(t)$, there exists a derivative preserving majorization and the Weyl chamber for an infinitesimal amount of time.

By assumption, $a(t)\in {\rm relint}(F)$ for some face $F$ of $\conv(\mb Wx)$. By Result~\ref{res:faces-of-orbitopes} there is some $\Omega\subset\mf w$ such that $F=\conv(\mb W_\Omega x)$ 
For some enumeration $w_i$, with $i=1,\ldots,k$, of $\mb W_\Omega$ and some $\lambda\in\Delta^{k-1}$ it holds that
$a(t)=\sum_{i=1}^k \lambda_i w_i\cdot x$.
Hence using affine linearity of $X$ and Lemma~\ref{lemma:weyl-symmetry-induced-vf} we compute
$$
a'(t)
=\sum_{j} \mu_j X_{K_j}\big(\sum_i\lambda_i w_i x\big)
=\sum_{i,j} \lambda_i\mu_j w_i X_{K_j N_i} (x)
=\sum_{i}w_i \sum_j\lambda_i\mu_j X_{K_j N_i} (x)
$$
where $N_i\in\mb K$ is any representative of $w_i$.
Now consider the achievable derivative
$$
v= \sum_{i,j}\lambda_i\mu_j X_{K_j N_i}(x) \in \conv(\derv(x))\,,
$$
then
$$
v-a'(t)=\sum_{i}(\id-w_i)\sum_j\lambda_i\mu_j X_{K_j N_i} x,
$$
which lies in the affine span of $F$, which coincides with the tangent space $T_{a(t)}F$. 
This shows that $v-a'(t)\in T_xA(t)$.
Moreover there exists some $w\in\mb W_x$ such that $\overline v=w\cdot v\in T_x\mf w$. 
But then we still have $\overline v-a'(t) = (w\cdot v-v)+(v-a'(t)) \in T_xA(t)$.

To show that existence of $\bar v$ implies existence of the desired solution $b$, we employ a sequence of rather technical results detailed in Appendix~\ref{app:simulation}.
It follows from Corollary~\ref{coro:weyl-polar} that $T_{a(t)}A(t)$ is the negative dual of $\mf w$.
Thus we can in fact apply Lemma~\ref{lemma:dual-geodesics} and Proposition~\ref{prop:viability-ABC}, showing that $\overline v\in \D C(t,c)(1)$, and together with Result~\ref{res:viability-time-dep} this tells us that there exists a solution $b$ to the relaxed control system~\eqref{eq:relaxed} such that $b(0)=b_0$ and such that $b(t)\in C(t)$ for all $t\geq0$, or equivalently, $b(t)\preceq a(t)$ and $b(t)\in\mf w$. This concludes the proof in the differentiable case.

Now we drop the assumption that $a$ is differentiable. 
By~\cite[Thm.~1]{Sontag98}  and~\cite[Ch.~2.4 Thm.~2]{Aubin84} there exists a sequence $a_n$ of differentiable solutions to the relaxed control system converging uniformly to $a$ on compact time intervals. 
By the above, there exist solutions $b_n$ satisfying $a_n(t)\preceq b_n(t)$ for all $t$. 
By compactness of solution set on compact time interval, cf.~Proposition~\ref{prop:lipschitz-diff-incl}~\ref{it:compact}, there is a uniformly converging subsequence $b_{k_n}$ with limit $b$. 
Since $a_n(t)\to a(t)$ it holds that $a(t)\preceq b(t)$ by~\cite[Prop.~2.1]{Smirnov02} since $b\mapsto\conv(\mb W b)$ is upper semi-continuous, see Lemma~\ref{lemma:majorization-lipschitz}, with closed values.
\end{proof}

\marginpar{Corollary: In the reduced control system this should still work in some approximate sense}

\marginpar{Corollary: In the bilinear control system:}

\marginpar{Find counterexample to reversed result.}

\marginpar{Optimal derivatives for $\derv(a)$ convex polytope. (e.g., toy model $\derv(\mb W)$)}

\section{Worked Example} \label{sec:worked-example}

We now revisit the motivating example given in the introduction in order to apply to it the theory we have developed. We will consider the following vector field $X$ on the disk $D=\{(y,z)\in\R^2:y^2+z^2\leq1\}$:
\begin{align*}
X(y,z)=(-\Gamma y,-\gamma(z-1))
\end{align*}
where $\Gamma,\gamma>0$. This system is ubiquitous in quantum mechanics since it describes the relaxation of a two-level system under the Bloch equations. The corresponding control system has been studied in~\cite{Lapert10} using the Pontryagin maximum principle. In this example we will show how the same control system can be studied using our reduction method.

By rescaling it suffices to consider $\gamma=1$. Moreover, to ensure that the flow does not leave the disk we have to require $\Gamma\geq\tfrac\gamma2=\tfrac12$. In fact we will consider $\Gamma\geq\tfrac32$ in the following to simplify the exposition. (All figures use the value $\Gamma=3$.)

We already stated that this problem can be described using the symmetric Lie algebra given in Example~\ref{ex:polar-dec}. Equivalently it can be obtained using~\cite[Ex.~1.1]{diag}.
The reduced control system is defined on the set $[-1,1]$, which can be seen as the intersection of the disk with $z$-axis\footnote{Any axis would do as they are equivalent under rotation, but the $z$-axis is special due to the symmetry of the vector field (and the physical origin of the example).}. Since the map $(a,\phi)\mapsto X_\phi(a)$ is continuous, the values of the set-valued map $\derv$ are compact intervals. In order to understand $\derv$, it suffices to find the upper envelope $u(a):=\max(\derv(a))$. One can show that
\begin{align} \label{eq:envelope}
u(a)=\begin{cases}
-\Big(\frac{1}{4(\Gamma-1)a}+\Gamma a\Big) & a\leq a_0:=\frac{-1}{2(\Gamma-1)} \\
1-a & a\geq a_0\,.
\end{cases}
\end{align}

We consider the optimal control problems of moving from the boundary of the disk to the center and vice-versa. In the reduced control system this is equivalent to moving from $-1$ to $1$. The form of~\eqref{eq:envelope} shows that this is indeed possible, but it takes infinite time to reach $1$. The optimal solution is then given by the differential equation
$a'(t)=u(a(t)), \quad a(0)=-1$,
which can be solved explicitly, and one obtains
$$
a^\star(t)=\begin{cases}
-\frac{\sqrt{-1+(1-2\Gamma)^2e^{-2\Gamma t}}}{2\sqrt{\Gamma(\Gamma-1)}} 
& t\leq t_0:=\frac{\log((\Gamma-1)(2\Gamma-1))}{2\Gamma}  \\
1-\frac{2\Gamma-1}{2(\Gamma-1)}((\Gamma-1)(2\Gamma-1))^{\frac{1}{2\Gamma}} e^{-t}
& t\geq t_0\,.
\end{cases}
$$

The next step is to lift the optimal solution $a^\star$ to the original control system to obtain a solution $p^\star$ on the disk. This solution will start on the boundary of the disk, pass through the center, and again approach the boundary of the disk. Once this optimal solution is found, we will determine the corresponding control function $\omega^\star$.

Above we have determined the upper envelope $u(a)$ of $\derv(a)$. More precisely, one can show that $u(a)=X_{\phi^\star}(a)$ where 
$$
\phi^\star(a)=
\begin{cases}
\arccos\Big(\frac{1}{2(\Gamma-1)a}\Big)+\frac\pi2 & a\leq a_0 \\
\frac\pi2 & a\geq a_0\,.
\end{cases}
$$
Thus the optimal path $p^\star(t)$ in the disk (in polar coordinates) is $(a^\star(t), \phi^\star(a^\star(t)))$,
cf.~Figure~\ref{fig:phi-p-opt}.

\begin{figure}
\centering
\includegraphics[width=0.35\textwidth]{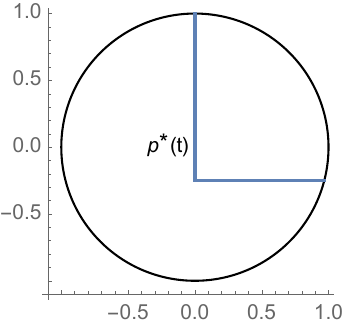}
\hspace{5mm}
\includegraphics[width=0.55\textwidth]{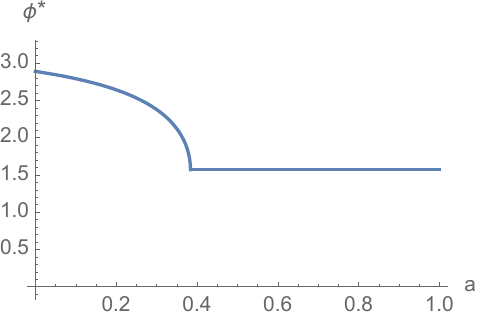}
\caption{(Left) Optimal path $p^\star(t)$ on the unit disk. The horizontal part satisfies $z=a_0=\frac{-1}{2(\Gamma-1)}$. (Right) Optimal angle $\phi^\star$ as a function of radius $a$.}
\label{fig:phi-p-opt}
\end{figure}

Finally, it remains to calculate the control function $\omega^\star$ which generates the optimal solution $p^\star(t)$.
There are two components,  $\omega_0$ and the compensation term $\omega_c$:
$$
\omega^\star = \omega_0 + \omega_c = \frac{d}{dt}\phi^\star(a^\star(t)) 
+ \ad_{p(t)}^{-1}\circ\Pi^\perp_{p(t)}\circ X(p(t))\,.
$$
Explicitly we obtain for $t<t_0$:
\begin{align*}
\omega_0(t) = -\Gamma\frac{\delta(t)+1}{\delta(t)}\sqrt{\frac{\eta}{\delta(t)-\eta}}, \quad
\omega_c(t) = -\frac{\Gamma}{\delta(t)}\sqrt{\frac{\delta(t)-\eta}{\delta(t)}}
\end{align*}
where $\eta=\frac{\Gamma}{\Gamma-1}$ and $\delta(t)=(1-2\Gamma)^2e^{-2\Gamma t}-1$.
For $t\geq t_0$ it holds that $\omega_0(t)=\omega_c(t)=0$. The optimal controls are plotted in Figure~\ref{fig:opt-ctrl}.

\begin{figure}[htb] \marginpar{integrable?}
\centering
\includegraphics[width=0.55\textwidth]{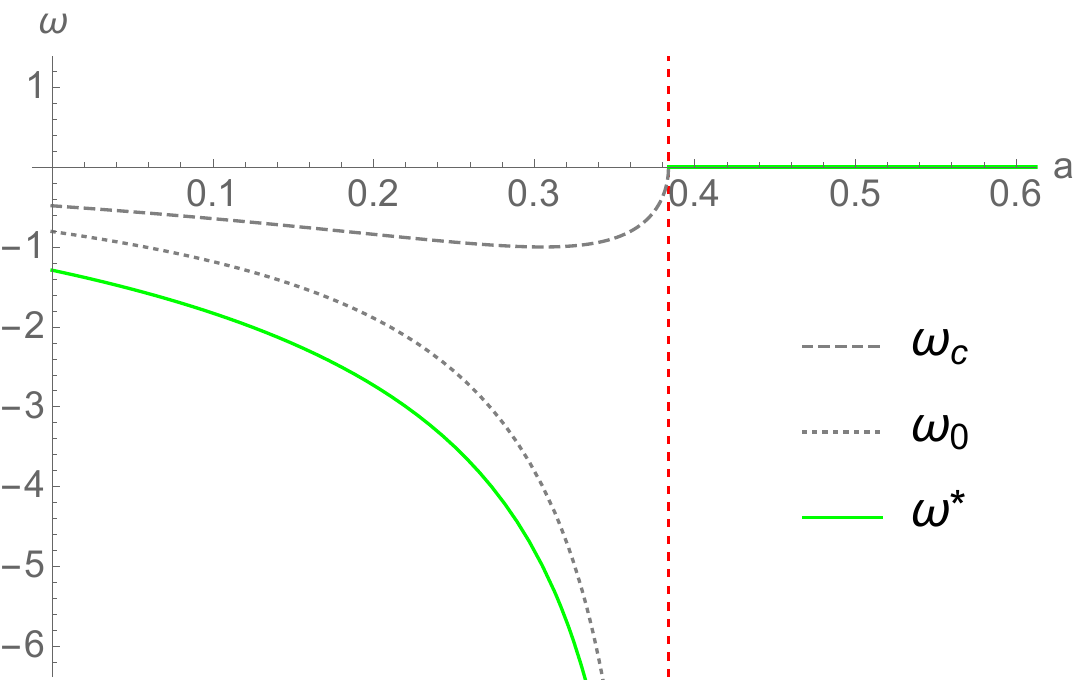}
\caption{Optimal controls for the worked example.}
\label{fig:opt-ctrl}
\end{figure}

Since $\delta(t_0)=\eta$, we see that $\lim_{t\to t_0}\omega_0(t)=-\infty$ and $\lim_{t\to t_0}\omega_c(t)=0$. In particular $\omega_0$ explodes at $t_0$ whereas $\omega_c$ is continuous.
Note also that the controls are smooth when $a^\star(t)=0$, so in this example the orbifold singularity does not pose any problems.

\section*{Acknowledgments}

The project was funded i.a.~by the Excellence Network of Bavaria under ExQM, by {\it Munich Quantum Valley} of the Bavarian State Government with funds from Hightech Agenda {\it Bayern Plus} (E.M., F.v.E. and T.S.H.), as well as the Einstein Foundation (Einstein Research Unit on quantum devices) and the {\sc math}+ Cluster of Excellence (F.v.E.).

\appendix

\section{Basic Properties of the Reduced Control Systems} \label{app:ctrl-sys-props}

Here we give some basic properties of the control systems defined in Section~\ref{sec:control-systems}. 

\subsection{Weyl Symmetry}

We start with some symmetry considerations.\footnote{ 
Recall that the Weyl group is defined as $\mb W=N_{\mb K}(\mf a)/Z_{\mb K}(\mf a)$ where $N_{\mb K}(\mf a)$ denotes the normalizer of $\mf a$ in $\mb K$ and $Z_{\mb K}(\mf a)$ the centralizer.} 

\begin{lemma} \label{lemma:pi-a-equiv}
If $w=NZ_{\mb K}(\mf a)\in\mb W$ with $N\in N_{\mb K}(\mf a)$, then it holds that $\Pi_{\mf a}\circ\Ad_N=w\circ\Pi_{\mf a}$.
\end{lemma}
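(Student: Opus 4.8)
The plan is to exploit that $\Ad_N$ is an orthogonal transformation of $\mf p$ which preserves $\mf a$, and hence also preserves the orthogonal complement $\mf a^\perp$ of $\mf a$ in $\mf p$. First I would recall that since the symmetric Lie algebra is orthogonal, the chosen inner product on $\mf g$ is $\Ad_{\mb K}$-invariant, so in particular $\Ad_N$ restricts to a linear isometry of $\mf p$. Since $N\in N_{\mb K}(\mf a)$ we have $\Ad_N(\mf a)=\mf a$, and because $\Ad_N$ is an isometry this forces $\Ad_N(\mf a^\perp)=\mf a^\perp$ as well, where $\mf p=\mf a\oplus\mf a^\perp$ is the orthogonal decomposition.

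Next I would take an arbitrary $x\in\mf p$ and write $x=\Pi_{\mf a}(x)+(x-\Pi_{\mf a}(x))$ with $\Pi_{\mf a}(x)\in\mf a$ and $x-\Pi_{\mf a}(x)\in\mf a^\perp$. Applying $\Ad_N$ gives
$$
\Ad_N(x)=\Ad_N(\Pi_{\mf a}(x))+\Ad_N(x-\Pi_{\mf a}(x)),
$$
where the first summand lies in $\mf a$ and the second in $\mf a^\perp$ by the previous paragraph. By uniqueness of the orthogonal decomposition, $\Pi_{\mf a}(\Ad_N(x))=\Ad_N(\Pi_{\mf a}(x))$.

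Finally I would invoke the definition of the Weyl group action: the coset $w=NZ_{\mb K}(\mf a)$ acts on $\mf a$ precisely as the restriction $\Ad_N|_{\mf a}$ (this is well defined since $Z_{\mb K}(\mf a)$ acts trivially on $\mf a$). Hence $\Ad_N(\Pi_{\mf a}(x))=w\big(\Pi_{\mf a}(x)\big)$, and combining with the identity above yields $\Pi_{\mf a}\circ\Ad_N=w\circ\Pi_{\mf a}$, as claimed. There is no real obstacle here; the only point requiring (brief) care is the step that an isometry preserving $\mf a$ also preserves $\mf a^\perp$, which is immediate from orthogonality of the inner product.
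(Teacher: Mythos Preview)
Your proof is correct and follows essentially the same approach as the paper. The paper's version cites an external lemma stating $\Pi_{\Ad_K(\mf a)}\circ\Ad_K=\Ad_K\circ\Pi_{\mf a}$ for arbitrary $K\in\mb K$ and then specializes to $N\in N_{\mb K}(\mf a)$ (so that $\Ad_N(\mf a)=\mf a$), whereas you unpack this directly from the fact that the isometry $\Ad_N$ preserves both $\mf a$ and $\mf a^\perp$; the underlying idea is identical.
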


\begin{proof}
First note that for $x\in\mf p$ and $K\in\mb K$ it holds that $\Pi_{\Ad_K(\mf a)}\circ\Ad_K(x) = \Ad_K\circ\,\Pi_{\mf a}(x)$, see for instance~\cite[Lem.~A.24~(iii)]{diag}.
Since $N\in N_{\mb K}(\mf a)$ we have that 
$w\cdot\Pi_{\mf a}(x)
= \Ad_N\circ\,\Pi_{\mf a}(x)
= \Pi_{\mf a}\circ\Ad_N(x)$, as desired.
\end{proof}

\begin{lemma} \label{lemma:weyl-symmetry-induced-vf}
Let $N\in N_{\mb K}(\mf a)$ and $w = N Z_{\mb K}(\mf a) \in W$ and $K\in \mb K$, then it holds that 
$X_{KN} = w^{-1}\circ X_K\circ w$, 
and hence for all $a\in\mf a$ we get 
$\derv(w\cdot a)=w\cdot \derv(a)$.
\end{lemma}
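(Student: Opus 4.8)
The claim is that $X_{KN} = w^{-1}\circ X_K\circ w$ and, as a consequence, $\derv(w\cdot a)=w\cdot\derv(a)$. The plan is to unwind the definition $X_{KN}=\Pi_{\mf a}\circ\Ad_{KN}^\star(X)\circ\iota$ and repeatedly push the factor $\Ad_N^{\pm1}$ through the projection using Lemma~\ref{lemma:pi-a-equiv}, turning it into the Weyl element $w$.

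First I would recall that $\Ad_{KN}=\Ad_K\circ\Ad_N$, so $\Ad_{KN}^\star(X)=\Ad_N^{-1}\circ\Ad_K^{-1}\circ X\circ\Ad_K\circ\Ad_N=\Ad_N^\star\big(\Ad_K^\star(X)\big)$. Hence
\begin{align*}
X_{KN}=\Pi_{\mf a}\circ\Ad_N^{-1}\circ\big(\Ad_K^\star(X)\big)\circ\Ad_N\circ\iota .
\end{align*}
Now I would note that, since $N\in N_{\mb K}(\mf a)$, the map $\Ad_N$ restricts to $\mf a$, and on $\mf a$ it acts precisely as $w$ (with inverse $w^{-1}$). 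So $\Ad_N\circ\iota=\iota\circ w$ as maps $\mf a\to\mf p$. For the outer factor I would apply Lemma~\ref{lemma:pi-a-equiv}, which gives $\Pi_{\mf a}\circ\Ad_N=w\circ\Pi_{\mf a}$; applying it to $N^{-1}$ (whose class in $\mb W$ is $w^{-1}$) yields $\Pi_{\mf a}\circ\Ad_N^{-1}=w^{-1}\circ\Pi_{\mf a}$. Substituting these two identities:
\begin{align*}
X_{KN}
= w^{-1}\circ\Pi_{\mf a}\circ\Ad_K^\star(X)\circ\iota\circ w
= w^{-1}\circ X_K\circ w ,
\end{align*}
which is the asserted formula. (One should double-check that $\iota\circ w = \Ad_N\circ\iota$ is consistent with the direction of the composition; since $w$ is the transformation of $\mf a$ induced by $\Ad_N$ this is just the definition of the Weyl action, so there is nothing subtle here.)

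For the second statement, by definition $\derv(w\cdot a)=\{X_K(w\cdot a):K\in\mb K\}$. Writing $K=K'N$ for arbitrary $K'$ (the map $K\mapsto KN$ is a bijection of $\mb K$), and using the formula just proved in the form $X_{K'N}=w^{-1}\circ X_{K'}\circ w$, we get $X_{K'N}(a) = w^{-1}\big(X_{K'}(w\cdot a)\big)$, i.e.\ $X_{K'}(w\cdot a)=w\cdot X_{K'N}(a)$. Ranging over all $K'\in\mb K$ (equivalently all $K=K'N\in\mb K$) gives $\derv(w\cdot a)=w\cdot\derv(a)$.

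I do not expect a genuine obstacle here: the only point requiring care is bookkeeping of which side $w$ acts on and making sure the class of $N^{-1}$ in $\mb W$ is $w^{-1}$ (so that Lemma~\ref{lemma:pi-a-equiv} applies in the form $\Pi_{\mf a}\circ\Ad_N^{-1}=w^{-1}\circ\Pi_{\mf a}$), together with the observation that $\Ad_N$ restricted to $\mf a$ is exactly the linear map $w$ — all of which are immediate from the definitions recalled in the excerpt. The statement is essentially a formal consequence of Lemma~\ref{lemma:pi-a-equiv} and the multiplicativity of the pullback $\Ad^\star$.
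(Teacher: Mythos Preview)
Your proof is correct and follows essentially the same route as the paper: both expand $\Ad_{KN}^\star=\Ad_N^\star\circ\Ad_K^\star$, then use $\Ad_N\circ\iota=\iota\circ w$ together with Lemma~\ref{lemma:pi-a-equiv} (applied to $N^{-1}$) to pull the $w$'s outside. Your treatment of the second claim via the bijection $K'\mapsto K'N$ is spelled out more explicitly than in the paper, which simply notes that $X_K\circ w=w\circ X_{KN}$ and leaves the rest to the reader.
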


\begin{proof}
We compute using Lemma~\ref{lemma:pi-a-equiv}:
\begin{align*}
X_{KN} 
&= \Pi_{\mf a} \circ \Ad_{KN}^\star(X) \circ\iota
= \Pi_{\mf a} \circ \Ad_N^\star(\Ad_K^\star(X)) \circ\iota\\
&= \Pi_{\mf a} \circ \Ad_N^{-1} \circ \Ad_K^\star(X) \circ \Ad_N \circ\,\iota
\\&= w^{-1} \circ \Pi_{\mf a} \circ \Ad_K^\star(X) \circ\iota\circ w
= w^{-1} \circ X_K \circ w\,.
\end{align*}
In particular $X_K\circ w=w\circ X_{NK}$, which also shows the second claim.
\end{proof}

\begin{lemma}
Let $\mb K_X = \{K\in\mb K : \Ad_K^\star X=X\}$. Then $X_{SK} = X_K$ for all $S\in\mb K_X$ and $K\in\mb K$. 
\end{lemma}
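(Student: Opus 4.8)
The plan is simply to unwind the definition of the induced vector field and use that $\Ad\colon\mb K\to\GL(\mf p)$ is a group homomorphism, so that the pullback $\Ad^\star$ is compatible with the group multiplication and the factor $S\in\mb K_X$ gets absorbed.

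First I would write $X_{SK}=\Pi_{\mf a}\circ\Ad_{SK}^\star(X)\circ\iota$ directly from the definition. Since $\Ad_{SK}=\Ad_S\circ\Ad_K$, we have $\Ad_{SK}^{-1}=\Ad_K^{-1}\circ\Ad_S^{-1}$, and hence, recalling that $\Ad_{K}^\star(X)=\Ad_K^{-1}\circ X\circ\Ad_K$,
\[
\Ad_{SK}^\star(X)=\Ad_{SK}^{-1}\circ X\circ\Ad_{SK}=\Ad_K^{-1}\circ\bigl(\Ad_S^{-1}\circ X\circ\Ad_S\bigr)\circ\Ad_K=\Ad_K^\star\bigl(\Ad_S^\star(X)\bigr).
\]
Now invoking the hypothesis $S\in\mb K_X$, i.e.\ $\Ad_S^\star X=X$, this becomes $\Ad_{SK}^\star(X)=\Ad_K^\star(X)$, so that $X_{SK}=\Pi_{\mf a}\circ\Ad_{SK}^\star(X)\circ\iota=\Pi_{\mf a}\circ\Ad_K^\star(X)\circ\iota=X_K$, as claimed.

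There is essentially no obstacle here: the statement is an immediate formal consequence of the fact that $K\mapsto\Ad_K^\star$ reverses products (so $\mb K_X$ acts trivially ``on the left'' in the composition defining $X_{(-)}$). The only point requiring a little care is the order of composition in the pullback, which is why I would spell out $\Ad_{SK}^{-1}=\Ad_K^{-1}\circ\Ad_S^{-1}$ explicitly rather than quote it. If desired, one can additionally remark that $\mb K_X$ is a subgroup of $\mb K$, so the lemma says precisely that $K\mapsto X_K$ descends to the coset space $\mb K_X\backslash\mb K$.
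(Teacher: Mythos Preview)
Your proof is correct and follows exactly the same route as the paper: both arguments reduce to the identity $\Ad_{SK}^\star=\Ad_K^\star\circ\Ad_S^\star$ and then use $\Ad_S^\star X=X$ to conclude. The only difference is cosmetic---you spell out the inverse composition explicitly, whereas the paper states the pullback identity directly.
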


\begin{proof}
As $\Ad_{SK}^\star = (\Ad_S \circ \Ad_K)^\star = \Ad_K^\star \circ \Ad_S^\star$,  we have
$X_{SK} = \Pi_{\mf a}\circ\Ad_{SK}^\star(X)\circ\iota = \Pi_{\mf a}\circ\Ad_K^\star(X)\circ\iota = X_K$.
\end{proof}

\begin{proposition} \label{prop:sol-in-weyl}
If $a:[0,\infty)\to\mf a$ is a solution to~\eqref{eq:inclusion} or \eqref{eq:relaxed}, then the unique $a^\down:[0,\infty)\to\mf w$ satisfying $\pi\circ a=\pi\circ a^\down$ is also a solution.
\end{proposition}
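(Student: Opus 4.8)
The plan is to push the defining differential inclusion forward from $a$ to $a^\down$ using two ingredients: the $\mb W$-equivariance of the set-valued map $\derv$ established in Lemma~\ref{lemma:weyl-symmetry-induced-vf}, and the fact that the passage from $a$ to $a^\down$ is, at almost every time, realized by the action of a \emph{single} Weyl group element on both position and velocity simultaneously.

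First I would record that $a^\down$ is absolutely continuous. Indeed $\pi\circ a$ is absolutely continuous because $a$ is and the quotient map $\pi$ is non-expanding, and $a^\down$ is obtained from $\pi\circ a$ via the isometric identification $\mf p/\mb K\cong\mf w$ recalled in Section~\ref{sec:sym-lie-alg}; equivalently, $\|a^\down(t)-a^\down(s)\|$ equals the quotient distance between $\pi(a(t))$ and $\pi(a(s))$ and is therefore at most $\|a(t)-a(s)\|$. One may also simply cite~\cite[Prop.~2.1]{diag}. Consequently $a^\down$ is differentiable on a subset $J\subseteq[0,\infty)$ of full measure on which $a$ is differentiable as well.

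The core step is the pointwise argument at $t_0\in J$. Since $a$ and $a^\down$ are both differentiable at $t_0$ and satisfy $\pi\circ a=\pi\circ a^\down$, I would invoke the Rellich-type result~\cite[Prop.~2.7]{diag} (the same one underlying Lemma~\ref{lemma:deriv-of-projected-sol}\ref{it:deriv-proj-well-def}) to obtain a single $w\in\mb W$ with $(a^\down(t_0),(a^\down)'(t_0))=w\cdot(a(t_0),a'(t_0))$. Then, for almost every $t_0$ one has $a'(t_0)\in\derv(a(t_0))$ when $a$ solves~\eqref{eq:inclusion}, and Lemma~\ref{lemma:weyl-symmetry-induced-vf} gives $(a^\down)'(t_0)=w\cdot a'(t_0)\in w\cdot\derv(a(t_0))=\derv(w\cdot a(t_0))=\derv(a^\down(t_0))$. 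For the relaxed case~\eqref{eq:relaxed} the same chain applies with $\derv$ replaced by $\conv(\derv(\cdot))$, using in addition that each $w\in\mb W$ acts linearly on $\mf a$, so $w\cdot\conv(\derv(a(t_0)))=\conv(w\cdot\derv(a(t_0)))=\conv(\derv(a^\down(t_0)))$. Combined with the absolute continuity established above, this shows $a^\down$ is a solution of~\eqref{eq:inclusion} (resp.~\eqref{eq:relaxed}).

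The only place with genuine content is the pointwise step, and specifically the assertion that a \emph{single} Weyl element relates the $(\text{value},\text{derivative})$ pairs of $a$ and $a^\down$; since this is precisely what~\cite[Prop.~2.7]{diag} delivers, I do not expect a real obstacle once that result is in hand. The remaining care is to make sure the absolute continuity of $a^\down$ is correctly justified by the distance non-increasing property of the Weyl-chamber section of the quotient.
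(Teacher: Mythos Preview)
Your proof is correct and follows essentially the same route as the paper: absolute continuity of $a^\down$ via the companion paper, then at any common differentiability point a single Weyl element carries $(a(t_0),a'(t_0))$ to $(a^\down(t_0),(a^\down)'(t_0))$, and finally the $\mb W$-equivariance of $\derv$ from Lemma~\ref{lemma:weyl-symmetry-induced-vf}. Two minor differences: the paper invokes \cite[Lem.~B.5~(i)]{diag} rather than \cite[Prop.~2.7]{diag} for the pointwise Weyl-element step (Lemma~\ref{lemma:deriv-of-projected-sol} as stated is about solutions of~\eqref{eq:control-affine}, so your parenthetical pointer to it is slightly off, but citing the underlying result directly is fine), and the paper writes out the control explicitly as $(a^\down)'(t)=X_{KN^{-1}}(a^\down(t))$ whereas you stay at the level of the inclusion; your added sentence for the relaxed case~\eqref{eq:relaxed} is a nice touch the paper leaves implicit.
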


\begin{proof}
By~\cite[Prop.~2.1~(v)]{diag}, $a^\down$ is still absolutely continuous. 
Assume that $a$ and $a^\down$ are both differentiable at $t$. 
By~\cite[Lemma~B.5~(i)]{diag} there is some $w\in\mb W$ such that $a^\down(t)=w\cdot a(t)$ and $(a^\down)'(t)=w\cdot a'(t)$. 
By Lemma~\ref{lemma:weyl-symmetry-induced-vf} we get, using $N\in N_{\mb K}(\mf a)$ with $w = N Z_{\mb K}(\mf a)$:
$$
(a^\down)'(t)=w\cdot a'(t)=w X_K a(t)= X_{KN^{-1}} wa(t)=X_{KN^{-1}}a^\down(t)
$$
and so $a^\down$ satisfies the differential inclusion at $t$, and hence almost everywhere.
\end{proof}




\subsection{Continuity and Compactness}

\begin{lemma}
If $X$ is Lipschitz, then the set-valued function $\derv$ is also Lipschitz. This means that for all $x,y\in\mf a$,
$$
\derv(x) \subseteq \derv(y)+L\|x-y\|B_1
$$
for some (global) Lipschitz constant $L>0$ and where $B_1$ denotes the closed unit ball in $\mf a$.
\end{lemma}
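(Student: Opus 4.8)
The plan is to show that $\derv$ inherits the Lipschitz property of $X$ directly from the formula $\derv(a)=\{X_K(a):K\in\mb K\}$, using that the family $\{X_K\}_{K\in\mb K}$ is \emph{uniformly} Lipschitz with a constant that does not depend on $K$. The key observation is that $X_K=\Pi_{\mf a}\circ\Ad_K^{-1}\circ X\circ\Ad_K\circ\iota$, so that for $x,y\in\mf a$ we have
$$
\|X_K(x)-X_K(y)\|
=\|\Pi_{\mf a}\circ\Ad_K^{-1}\bigl(X(\Ad_K x)-X(\Ad_K y)\bigr)\|
\leq \|X(\Ad_K x)-X(\Ad_K y)\|,
$$
where I use that $\Pi_{\mf a}$ is an orthogonal projection (norm $\leq1$) and that $\Ad_K$ is an isometry of $\mf p$ (since the inner product is $\Ad_{\mb K}$-invariant), hence so is $\Ad_K^{-1}$. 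If $L$ is a Lipschitz constant for $X$ on $\mf p$, the right-hand side is at most $L\|\Ad_K x-\Ad_K y\|=L\|x-y\|$, again by the isometry property. So every $X_K$ is $L$-Lipschitz with the \emph{same} $L$.

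First I would fix $x,y\in\mf a$ and let $v\in\derv(x)$, so $v=X_K(x)$ for some $K\in\mb K$. Then $X_K(y)\in\derv(y)$, and by the uniform bound above $\|v-X_K(y)\|=\|X_K(x)-X_K(y)\|\leq L\|x-y\|$. Hence $v\in X_K(y)+L\|x-y\|B_1\subseteq\derv(y)+L\|x-y\|B_1$. Since $v\in\derv(x)$ was arbitrary, this gives $\derv(x)\subseteq\derv(y)+L\|x-y\|B_1$, which is exactly the claim. (Interchanging $x$ and $y$ gives the symmetric inclusion, so this really is the Hausdorff-Lipschitz condition, but only one inclusion is asserted in the statement.)

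I do not expect a genuine obstacle here; the proof is essentially a one-line estimate once one records the two facts that $\Ad_K$ is an isometry of $\mf p$ and that $\Pi_{\mf a}$ is norm-nonexpansive. The only point deserving a word of care is the global versus local nature of the Lipschitz constant: the lemma as stated assumes $X$ is (globally) Lipschitz, so $L$ can be taken global and the argument goes through verbatim; if one only had local Lipschitzness the same computation would give a $\derv$ that is locally Lipschitz on bounded sets, with constant inherited from $X$ on the corresponding bounded region of $\mf p$ (which is itself bounded since $\Ad_K$ is an isometry). I would state the proof in the global form to match the hypothesis.
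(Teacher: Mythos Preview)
Your argument is correct and is exactly the natural proof: uniform $L$-Lipschitzness of each $X_K$ follows from $\|\Pi_{\mf a}\|\leq1$ and the fact that $\Ad_K$ acts isometrically on $\mf p$, and the set-valued Lipschitz inclusion then follows by choosing the same $K$ at $y$ as at $x$. The paper in fact states this lemma without proof, so there is nothing to compare against beyond noting that your argument supplies precisely the one-line estimate the authors evidently regarded as routine.
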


This implies some convenient properties of the relaxed control system~\eqref{eq:relaxed}, see~\cite[Ch.~4]{Smirnov02}. Here we denote the set of solutions $a:[0,T]\to\mf a$ to~\eqref{eq:relaxed} with $a(0)=a_0$ by $\sol_{\ref{eq:relaxed}}(a_0,[0,T])$.

\begin{proposition} \label{prop:lipschitz-diff-incl}
Let $X$ be Lipschitz and let $a_0\in\fa$. The following holds.\smallskip
\begin{enumerate}[(i)]
\item \label{it:path-connected} The set solutions $\sol_{\ref{eq:relaxed}}(a_0,[0,T])$ is path-connected in the AC-topology\footnote{By $AC([0,T],\mf a)$ we denote the Banach space of absolutely continuous functions $a:[0,T]\to\mf a$ equipped with the norm $\|a\|_{AC}=|a(0)|+\int_0^T|a'(t)|dt$.}.
\item \label{it:compact} If $\derv$ is bounded, then $\sol_{\ref{eq:relaxed}}(a_0,[0,T])$ is compact in the standard C-topology of uniform convergence.
\item \label{it:boundary} If $a\in\sol_{\ref{eq:relaxed}}(a_0,[0,T])$ is a solution to~\eqref{eq:relaxed} with $a(T)\in\partial\reach_{\ref{eq:relaxed}}(a_0,[0,T])$, then $a(t)\in\partial\reach_{\ref{eq:relaxed}}(a_0,[0,t])$ for all $t\in[0,T]$.
\item \label{it:optimality} If $\derv$ is bounded, then there exist time-optimal solutions to~\eqref{eq:relaxed} starting in a given compact set and ending in a given closed set, assuming any such solution exists in the first place.
\item \label{it:lipschitz} If $X$ is Lipschitz with Lipschitz constant $L$, then the map $\mf a\to \cP(AC([0,T],\mf a))$ given by $a_0\mapsto\sol_{\ref{eq:relaxed}}(a_0,[0,T])$ is Lipschitz with Lipschitz constant $1+TLe^{TL}$.
\end{enumerate}
\end{proposition}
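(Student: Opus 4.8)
The plan is to treat the five items as essentially standard facts from the theory of Lipschitz differential inclusions with bounded or linearly bounded right-hand side, invoking the references already cited and using the Lipschitz property of $\derv$ established in the preceding lemma (which follows from the Lipschitz property of $X$) together with the boundedness coming from the speed limit (Proposition~\ref{prop:speed-limit}). The key structural inputs are: $\derv$ is a Lipschitz set-valued map with closed (in fact compact, since $X$ is continuous and $\mb K$ compact) values, and $\conv(\derv)$ is then also Lipschitz with compact convex values; moreover $X$ linearly bounded gives a linear growth bound on $\conv(\derv)$, so solutions on $[0,T]$ stay in a compact set and no blow-up occurs.

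I would proceed item by item. For~\ref{it:path-connected}, I would cite the connectedness results for solution sets of Lipschitz convex-valued inclusions, e.g.~\cite[Ch.~4]{Smirnov02}: the solution map admits a continuous (in the AC-topology) parametrization by controls in an $L^1$-ball, and a ball is path-connected, hence so is its continuous image. For~\ref{it:compact}, boundedness of $\derv$ gives a uniform Lipschitz bound on all solutions, so $\sol_{\ref{eq:relaxed}}(a_0,[0,T])$ is equibounded and equicontinuous; by Arzel\`a--Ascoli it is relatively compact in $C([0,T],\mf a)$, and closedness of the solution set (limits of solutions of a Lipschitz convex-valued inclusion are solutions, again \cite[Ch.~4]{Smirnov02} or the closure/relaxation machinery of \cite{Aubin84}) yields compactness. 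For~\ref{it:boundary}: this is the standard ``boundary trajectories stay on the boundary'' principle; if $a(t_0)$ were interior to $\reach_{\ref{eq:relaxed}}(a_0,[0,t_0])$ for some $t_0<T$, one could perturb the initial segment, and then---using the Lipschitz dependence from~\ref{it:lipschitz} applied on $[t_0,T]$ (or a Filippov-type estimate)---conclude that a whole neighborhood of $a(T)$ is reachable by time $T$, contradicting $a(T)\in\partial\reach_{\ref{eq:relaxed}}(a_0,[0,T])$. For~\ref{it:optimality}: the set of solutions starting in a compact set $K_0$ and reaching a closed set $K_1$ is, by~\ref{it:compact} and continuity of evaluation, a compact subset of $C([0,T_{\max}],\mf a)$ for $T_{\max}$ an a priori bound on the optimal time (finite because some admissible solution exists and solutions depend continuously on data); the transfer time is a lower semicontinuous function on this compact set, hence attains its minimum. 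For~\ref{it:lipschitz}: this is the Filippov--Gr\"onwall estimate. Given $a_0,b_0$ and a solution $a$ with $a(0)=a_0$, Filippov's theorem (cf.~\cite[Thm.~2.3]{Smirnov02}) produces a solution $b$ with $b(0)=b_0$ and $\|a'(t)-b'(t)\|\le L\|a(t)-b(t)\|$ a.e.; Gr\"onwall gives $\|a(t)-b(t)\|\le e^{Lt}\|a_0-b_0\|$, whence in the AC-norm $\|a-b\|_{AC}=\|a(0)-b(0)\|+\int_0^T\|a'-b'\|\le \|a_0-b_0\|\big(1+\int_0^T Le^{Lt}\,dt\big)=\|a_0-b_0\|(1+Te^{TL}L)$, which by symmetry gives the stated Lipschitz bound $1+TLe^{TL}$ on the solution-set map in Hausdorff distance.

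The main obstacle I anticipate is not any single deep step but rather the bookkeeping: making sure that each cited theorem's hypotheses (closed convex values, Lipschitz regularity, linear growth, completeness of solutions on the full interval $[0,T]$) are genuinely met in our setting, in particular that the linear-boundedness of $X$---rather than global boundedness of $\derv$---is what rules out finite-time blow-up so that solutions exist on all of $[0,T]$ for items where $\derv$ is not assumed bounded (\ref{it:path-connected}, \ref{it:boundary}, \ref{it:lipschitz}). I would therefore begin the proof with a short paragraph recording that $\conv(\derv)$ is Lipschitz with nonempty compact convex values and has at most linear growth, so that all solutions on $[0,T]$ remain in a fixed compact set, and then dispatch the five items as above.
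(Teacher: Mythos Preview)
Your proposal is essentially correct and aligned with the paper's own treatment: the paper does not give a proof at all, but simply precedes the proposition with the sentence ``This implies some convenient properties of the relaxed control system~\eqref{eq:relaxed}, see~\cite[Ch.~4]{Smirnov02}.'' Your plan is precisely an unpacking of that citation, and the structural inputs you identify (Lipschitz $\Rightarrow$ $\conv(\derv)$ Lipschitz with compact convex values and linear growth, hence no blow-up on $[0,T]$) are exactly what is needed to invoke the standard results.

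One small correction in item~\ref{it:lipschitz}: you write $\int_0^T Le^{Lt}\,dt = TLe^{TL}$, but of course $\int_0^T Le^{Lt}\,dt = e^{LT}-1$. Since $e^{LT}-1\le TLe^{TL}$ (bound the integrand by its maximum), your argument still yields the stated constant $1+TLe^{TL}$, just with an inequality rather than an equality at that step.
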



\section{Technical Results for the Simulation Theorem} \label{app:simulation}


We recall some basic facts from convex analysis and prove some technical results needed for the proof of Theorem~\ref{thm:simulation}. Our main reference is~\cite{Smirnov02}.

We start with the concept of a tangent cone to a convex set at a certain point.

\begin{definition}[Tangent cone]
Let $X$ be a normed space and let $C\subseteq X$ be a convex subset. Given any $x\in C$, the \emph{tangent cone to $C$ at $x$} is defined by
$$
T_xC = \overline{\bigcup_{\lambda>0}\frac{C-x}{\lambda}}
=\{v\in X:\lim_{\lambda\to0^+} \,d(x+\lambda v,A)/\lambda=0 \}.
$$
\end{definition}

Intuitively, $T_xC$ is the closure of the set of all directions which lie in $C$ for some small enough distance. As soon as non-convex sets come into play, the situation becomes more complicated. 

\begin{definition}[Bouligand contingent cone] \label{def:contingent-cone}
Let $X$ be a normed space and let $A\subseteq X$ be any subset. Given any $x\in A$, the \emph{contingent cone to $A$ at $x$} is defined by
$$
T_x^-A = \{v\in X:\liminf_{\lambda\to0^+} \,d(x+\lambda v,A)/\lambda=0 \}
$$
\end{definition}

The contingent cone is indeed a closed cone and for convex sets it coincides with the tangent cone, cf.~\cite[p.~38]{Smirnov02}.

A common geometric way to think of the derivative of a function $f$ in standard calculus is as a tangent space to the graph $\Gamma_f$ of the function at a given point. Using the Bouligand contingent cone we can define a derivative for set-valued function in much the same way, cf.~\cite[p.~41]{Smirnov02}.

\begin{definition}[Contingent derivative]
Let $X,Y$ be normed spaces, $F:X\to\mc P(Y)$, and $(x_0,y_0)\in\Gamma_F$. The set-valued map $D^-F(x_0,y_0):X\to\mc P(Y)$ defined by
\begin{align*}
\Gamma_{D^-F(x_0,y_0)}=T^-_{(x_0,y_0)}\Gamma_F
\end{align*}
is called the \emph{contingent derivative of $F$ at $(x_0,y_0)$}.
\end{definition}

\begin{example}
Let $f(x)=x\sin(1/x)$ (with $f(0)=0$). Then $f$ is continuous, but not differentiable at $0$. The contingent derivative is $D^-F(0,0)(x)=[-|x|,|x|]$. In particular $D^-F(0,0)(1)=[-1,1]$.
\end{example}

\begin{result}[Thm.~6.5.5 in \cite{Carja07}] \label{res:viability-time-dep}
Assume that the set-valued map $C:\R\to\R^n$ has a closed graph and the set-valued map $F:\Gamma_C\to\R^n$ is upper semi-continuous and has closed, convex values. Then the following statements are equivalent.\smallskip
\begin{enumerate}[(i)]
\item For any point $(t_0,x_0)\in\Gamma_C$ there is a solution $x:[t_0,\infty]$ to the differential inclusion $x(t)\in F(t,x(t))$ with $x(t_0)=x_0$.
\item For any $(t,x)\in\Gamma_C$ it holds that $F(t,x)\cap D^-C(t,x)(1)\neq\emptyset$.
\end{enumerate}
\end{result}

\begin{lemma} \label{lemma:hausdorff-triangle}
Let $X$ be a metric space and consider $\cP(X)$ with the Hausdorff distance $d$. Then it holds that 
$
d(x,A)\leq d(x,B)+d(B,A)
$.
\end{lemma}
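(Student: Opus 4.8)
The plan is to unwind the definitions and run a routine two-step $\varepsilon$-approximation. Recall that for a point $x\in X$ and a nonempty set $A\subseteq X$ one writes $d(x,A)=\inf_{a\in A}d(x,a)$, while for nonempty sets $A,B$ the Hausdorff distance is $d(B,A)=\max\{e(B,A),e(A,B)\}$ where $e(B,A)=\sup_{b\in B}\inf_{a\in A}d(b,a)$ is the directed excess. The only conceptual point worth isolating is that we do not need the full symmetry of the Hausdorff distance: the bound $e(B,A)\le d(B,A)$ will be the one piece of the definition we invoke, after which the argument is pure triangle inequality in $X$. (If $A$ or $B$ is empty, or if $d(x,B)=\infty$, the inequality is vacuous, so we may assume both sets nonempty and $d(x,B)<\infty$.)

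First I would fix $\varepsilon>0$ and pick $b\in B$ with $d(x,b)\le d(x,B)+\varepsilon$, which is possible by the definition of the infimum. Next, since $\inf_{a\in A}d(b,a)\le e(B,A)\le d(B,A)$, I would choose $a\in A$ with $d(b,a)\le d(B,A)+\varepsilon$. Then the triangle inequality in $X$ gives
\begin{align*}
d(x,A)\le d(x,a)\le d(x,b)+d(b,a)\le d(x,B)+d(B,A)+2\varepsilon.
\end{align*}
Letting $\varepsilon\to0^+$ yields $d(x,A)\le d(x,B)+d(B,A)$, as claimed. I do not expect any genuine obstacle here; the statement is a standard stability property of the point-to-set distance under Hausdorff perturbations, and the proof is complete once the two approximating points $b$ and $a$ are chosen in the correct order.
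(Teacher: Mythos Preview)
Your proof is correct and is the standard $\varepsilon$-approximation argument. The paper states this lemma without proof, treating it as an elementary fact, so there is no comparison to make.
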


Recall that for a cone $C\subseteq\R^n$, the dual cone of $C$ is defined as $C^*=\{x'\in\R^n: \braket{x',x}\geq0 \,\forall x\in C\}$. Note that if $C\subseteq D$ for two cones, then $C^*\supseteq D^*$. Moreover, for a convex set $A$ and some $x\in A$, the \emph{normal cone} of $A$ at $x$ is defined by $N_xA:=-(T_xA)^*$, i.e.~the negative of the dual of the tangent cone. The next result follows for instance from~\cite[Sec.~11.2]{Clarke13}.

\begin{lemma} \label{lemma:length-min-normal-cone}
Let a closed convex set $A\subseteq\R^n$ and a point $x\in\R^n$ be given.
If $\bar x\in A$ denotes the closest point in $A$ to $x$, then $x-\bar x\in N_{\bar x}A$.
\end{lemma}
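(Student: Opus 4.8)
The plan is to establish the standard variational inequality characterizing the metric projection onto a closed convex set, and then to translate it into the language of tangent and normal cones fixed just before the lemma. (Existence of a closest point $\bar x$ is presupposed in the statement; in $\R^n$ it is a consequence of the closedness of $A$ together with coercivity and continuity of $y\mapsto\|x-y\|^2$.)

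First I would fix an arbitrary $y\in A$. By convexity the whole segment $\gamma(t):=\bar x+t(y-\bar x)$ lies in $A$ for every $t\in[0,1]$, so the scalar function $g(t):=\|x-\gamma(t)\|^2=\|x-\bar x\|^2-2t\langle x-\bar x,\,y-\bar x\rangle+t^2\|y-\bar x\|^2$ attains its minimum over $[0,1]$ at $t=0$, since $\bar x$ is a closest point to $x$. Hence $g'(0)=-2\langle x-\bar x,\,y-\bar x\rangle\geq 0$, i.e.
\[ \langle x-\bar x,\,y-\bar x\rangle\leq 0\qquad\text{for every }y\in A. \]

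Next I would propagate this inequality to the tangent cone $T_{\bar x}A=\overline{\bigcup_{\lambda>0}(A-\bar x)/\lambda}$. For $\lambda>0$ and $y\in A$ the vector $w=(y-\bar x)/\lambda$ satisfies $\langle x-\bar x,w\rangle=\lambda^{-1}\langle x-\bar x,y-\bar x\rangle\leq 0$ by the displayed inequality; since $\{v:\langle x-\bar x,v\rangle\leq 0\}$ is closed, the bound persists for every limit of such $w$, i.e.\ for every $v\in T_{\bar x}A$. Therefore $\langle -(x-\bar x),v\rangle\geq 0$ for all $v\in T_{\bar x}A$, which says precisely that $-(x-\bar x)\in(T_{\bar x}A)^{*}$, hence $x-\bar x\in N_{\bar x}A=-(T_{\bar x}A)^{*}$, as claimed.

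The argument contains no genuinely hard step; the only points requiring a little care are the sign conventions for the dual cone and the normal cone (the ones fixed immediately above the lemma) and the harmless passage to the closure in the definition of $T_{\bar x}A$. One could alternatively just invoke~\cite[Sec.~11.2]{Clarke13}, as indicated in the text, but the self-contained two-step argument above seems preferable here.
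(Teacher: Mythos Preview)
Your proof is correct. The paper itself does not give an argument for this lemma but simply points to \cite[Sec.~11.2]{Clarke13}; your self-contained derivation of the variational inequality and its extension to the tangent cone is the standard route and matches the sign conventions for $C^*$ and $N_xA$ fixed just before the lemma.
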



\begin{lemma} \label{lemma:dual-geodesics}
Let $C\subseteq\R^n$ be a closed convex cone and let $x\in C$.
Then for any $y\in C$ it holds that $d(y,C^*+x)=d(y,(C^*+x)\cap C)$.
\end{lemma}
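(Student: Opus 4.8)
The plan is to deduce this from the variational characterization of the metric projection onto a closed convex set supplied by Lemma~\ref{lemma:length-min-normal-cone}, together with the bipolar identity $(C^*)^*=C$ valid for the closed convex cone $C$. Since $(C^*+x)\cap C\subseteq C^*+x$, the inequality $d(y,C^*+x)\le d(y,(C^*+x)\cap C)$ is automatic, and the right-hand distance is finite because $x\in(C^*+x)\cap C$ (as $0\in C^*$ and $x\in C$). So everything reduces to the reverse inequality. To prove it I would let $\bar z$ be the unique nearest point of the closed convex set $C^*+x$ to $y$, so $\|y-\bar z\|=d(y,C^*+x)$, and write $\bar z=w+x$ with $w\in C^*$. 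The crux is the claim that $\bar z\in C$; granting it, $\bar z\in(C^*+x)\cap C$ and hence $d(y,(C^*+x)\cap C)\le\|y-\bar z\|=d(y,C^*+x)$, which closes the argument.

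To establish $\bar z\in C$: by Lemma~\ref{lemma:length-min-normal-cone} we have $y-\bar z\in N_{\bar z}(C^*+x)$, and since normal cones are invariant under translation, $N_{\bar z}(C^*+x)=N_w(C^*)$. For a closed convex cone $K$ and $w\in K$ one has the standard description $N_wK=(-K^*)\cap w^\perp$: if $v\in N_wK$ then $\braket{v,u-w}\le 0$ for all $u\in K$, and inserting $u=0$ and $u=2w$ forces $\braket{v,w}=0$, whence $\braket{v,u}\le 0$ for all $u\in K$, i.e.\ $-v\in K^*$. Applying this with $K=C^*$ and using the bipolar theorem $(C^*)^*=C$ gives $y-\bar z\in-(C^*)^*=-C$, so $\bar z-y\in C$. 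Since also $y\in C$ and $C$, being a convex cone, is closed under addition, we conclude $\bar z=y+(\bar z-y)\in C$, as claimed.

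I do not expect a serious obstacle. The only points requiring care are the elementary convex-analytic facts invoked — translation invariance of the normal cone, the description $N_wK=(-K^*)\cap w^\perp$ for a closed convex cone, and the bipolar identity — all routine in finite dimensions, cf.~\cite{Clarke13}. If one prefers to avoid naming the normal cone at all, the computation can be inlined: Lemma~\ref{lemma:length-min-normal-cone} already yields $\braket{y-\bar z,\,u-w}\le 0$ for all $u\in C^*$, and taking $u=0$, $u=2w$, and then arbitrary $u\in C^*$ shows directly that $\bar z-y\in(C^*)^*=C$. I would present the proof in exactly the order above: trivial inequality and reduction, then the projection point, then the normal-cone/bipolar computation.
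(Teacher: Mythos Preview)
Your proposal is correct and follows essentially the same route as the paper: take the metric projection $\bar z$ of $y$ onto $C^*+x$, use Lemma~\ref{lemma:length-min-normal-cone} to place $y-\bar z$ in the normal cone, translate to $N_w(C^*)$, and apply bipolarity to conclude $\bar z-y\in C$, hence $\bar z\in C$. The only cosmetic difference is that the paper obtains the inclusion $N_w(C^*)\subseteq -(C^*)^*$ from the cruder observation $T_aA\supseteq A$ (so $(T_aA)^*\subseteq A^*$) rather than via your sharper identity $N_wK=(-K^*)\cap w^\perp$; both lead to the same conclusion.
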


\begin{proof}
Since $C^*$ is closed and convex, there exists unique $\overline y\in C^*+x$ such that $d(y,C^*+x)=d(y,\overline y)$.
We will show that $\overline y\in C$. 
Let $v=y-\overline y$. 
By Lemma~\ref{lemma:length-min-normal-cone} it holds that $v\in N_{\overline y}(C^*+x)$. 
Since for any closed convex set $A$ and $a\in A$ we have $T_aA\supseteq A$, we compute
$$
v\in N_{\overline y}(C^*+x) = N_{\overline y-x}(C^*) = -(T_{\overline y-x}(C^*))^* \subseteq -(C^*)^* = -C,
$$
so $-v\in C$ and hence $\overline y=y-v\in C$, as desired.
\end{proof}

\begin{proposition} \label{prop:viability-ABC}
Let $I\subseteq\R$ be an open interval and let $\tilde A,\tilde B\subseteq\R^n$ be subsets.
Assume that $\tilde B$ is a closed, convex, polyhedral cone.
Let $a:I\to\tilde B$ be right differentiable and define the set-valued functions $A(t)=\tilde A+a(t)$, and $C(t)=A(t)\cap \tilde B$.
Assume that for all $b\in\tilde B$ and $t\in I$ it holds that $d(b,A(t))=d(b,C(t))$.
Let $t\in I$ and $c\in C(t)$, and assume that there is $v$ such that $v\in T_c\tilde B$ and $v-a_+'(t)\in T_{c}A(t)$. Then $v\in \mathrm D^-C(t,c)(1)$.
\end{proposition}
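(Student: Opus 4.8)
The plan is to unwind the definition of the contingent derivative and produce, for a suitable sequence $\lambda_n\to0^+$, points of the graph $\Gamma_C$ that are $o(\lambda_n)$-close to $(t,c)+\lambda_n(1,v)=(t+\lambda_n,\,c+\lambda_n v)$. Recall that $v\in\mathrm D^-C(t,c)(1)$ means exactly $(1,v)\in T^-_{(t,c)}\Gamma_C$. Since, choosing the time coordinate $s=t+\lambda$ in the definition of the distance to $\Gamma_C$, one has $d\big((t+\lambda,\,c+\lambda v),\,\Gamma_C\big)\le d\big(c+\lambda v,\,C(t+\lambda)\big)$, it suffices to show
$$
\liminf_{\lambda\to0^+}\frac{d\big(c+\lambda v,\,C(t+\lambda)\big)}{\lambda}=0.
$$

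First I would handle the ``$\tilde B$-side''. Writing the polyhedral cone $\tilde B$ as a finite intersection of homogeneous halfspaces $\{x:\langle n_i,x\rangle\le0\}$ and noting that every $v\in T_c\tilde B$ satisfies $\langle n_i,v\rangle\le0$ for each constraint active at $c$, one gets $c+\lambda v\in\tilde B$ for all sufficiently small $\lambda>0$ (for merely closed convex $\tilde B$ one would instead replace $c+\lambda v$ by its metric projection onto $\tilde B$, which is $o(\lambda)$ away because $v\in T_c\tilde B$). Applying the hypothesis $d(b,A(s))=d(b,C(s))$ with $b=c+\lambda v\in\tilde B$ and $s=t+\lambda\in I$ (legitimate for small $\lambda$ since $I$ is open) and using $A(t+\lambda)=\tilde A+a(t+\lambda)$,
$$
d\big(c+\lambda v,\,C(t+\lambda)\big)=d\big(c+\lambda v,\,A(t+\lambda)\big)=d\big(c+\lambda v-a(t+\lambda),\,\tilde A\big).
$$

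Next I would invoke the right-differentiability of $a$: $a(t+\lambda)=a(t)+\lambda a_+'(t)+r(\lambda)$ with $\|r(\lambda)\|/\lambda\to0$. Since $c\in C(t)\subseteq A(t)$ we have $c-a(t)\in\tilde A$, and by translation-covariance of the contingent cone $v-a_+'(t)\in T_cA(t)=T^-_{c-a(t)}\tilde A$. Using that $x\mapsto d(x,\tilde A)$ is $1$-Lipschitz,
$$
d\big(c+\lambda v-a(t+\lambda),\,\tilde A\big)\le d\big((c-a(t))+\lambda(v-a_+'(t)),\,\tilde A\big)+\|r(\lambda)\|.
$$
Dividing by $\lambda$ and taking $\liminf$ as $\lambda\to0^+$, the term $\|r(\lambda)\|/\lambda$ vanishes, while the first term has $\liminf$ equal to $0$ directly from the definition $v-a_+'(t)\in T^-_{c-a(t)}\tilde A$; chaining the inequalities back up gives the displayed limit, hence $(1,v)\in T^-_{(t,c)}\Gamma_C$, i.e.\ $v\in\mathrm D^-C(t,c)(1)$.

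The one step that needs genuine care — the main obstacle — is the passage from a distance estimate against $A(t+\lambda)$ to one against $C(t+\lambda)=A(t+\lambda)\cap\tilde B$: a priori the point of $A(t+\lambda)$ nearest to $c+\lambda v$ need not lie in $\tilde B$, so one cannot simply intersect. This is precisely what the hypothesis $d(b,A(s))=d(b,C(s))$ is designed to supply (in the application of Theorem~\ref{thm:simulation} it is furnished by Lemma~\ref{lemma:dual-geodesics}), but it only applies to $b\in\tilde B$, which is why the preliminary observation that $c+\lambda v$ lies in (or is $o(\lambda)$-close to) $\tilde B$ — using the polyhedral, or at least the convex, structure of $\tilde B$ — is essential rather than cosmetic.
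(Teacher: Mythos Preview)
Your proposal is correct and follows essentially the same route as the paper's proof: reduce to showing $\liminf_{\lambda\to0^+}\lambda^{-1}d(c+\lambda v,C(t+\lambda))=0$, use polyhedrality of $\tilde B$ to get $c+\lambda v\in\tilde B$ for small $\lambda$, invoke the hypothesis $d(b,A)=d(b,C)$ to pass from $C(t+\lambda)$ to $A(t+\lambda)$, and then split the remaining distance using right-differentiability of $a$ and the assumption $v-a_+'(t)\in T_cA(t)$. The only cosmetic difference is that you translate by $-a(t+\lambda)$ and use the $1$-Lipschitz property of $x\mapsto d(x,\tilde A)$, whereas the paper keeps the set translated and appeals to the Hausdorff triangle inequality $d(x,A)\le d(x,B)+d(B,A)$; these are equivalent bookkeeping choices.
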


\begin{proof} \marginpar{write for general $t$}
We assume that $t=0$.
By definition, $v\in\D^-C(0,c)(1)$ if and only if $(1,v)\in\Gamma_{\D^-C(0,c)}=T^-_{(0,c)}\Gamma_C$. So we have to show that
$$
\liminf_{\varepsilon\to 0^+}\frac{1}{\varepsilon} d((\varepsilon,c+\varepsilon v),\Gamma_C)=0\,.
$$
In fact it is easy to see that
$d((\varepsilon,c+\varepsilon v),\Gamma_C)\leq d(c+\varepsilon v,C(\varepsilon))$.
For $\varepsilon$ small enough, $c+\varepsilon v\in\tilde B$ since $\tilde B$ is a convex polyhedron and $v\in T_c\tilde B$. So, for any $x\in\tilde B$ we have by assumption $d(c+\varepsilon v,C(\varepsilon))=d(c+\varepsilon v,A(\varepsilon))$.
Moreover using Lemma~\ref{lemma:hausdorff-triangle} we find
\begin{align*}
d(c+\varepsilon v,A(\varepsilon))
&= 
d(c+\varepsilon v,A(0) + a(\varepsilon) - a(0))
\\&\leq
d(c+\varepsilon v,A(0) + \varepsilon a_+'(0)) + d(A(0) + a(\varepsilon) - a(0),A(0) + \varepsilon a_+'(0))
\\&\leq
d(c+\varepsilon v,A(0) + \varepsilon a_+'(0)) + |a(\varepsilon)-a(0) - \varepsilon a'_+(0)|\,.
\end{align*}
Combining the results above and the assumption that $v-a_+'(0) \in T_{c}A(0)$ we see that
$$
\liminf_{\varepsilon\to 0^+} \frac{1}{\varepsilon} d((\varepsilon,c+\varepsilon v),\Gamma_C)
\leq 
\liminf_{\varepsilon\to 0^+} \frac{1}{\varepsilon}d(c+\varepsilon v,A(0) + \varepsilon a_+'(0)) + \Big|\frac{a(\varepsilon) - a(0)}{\varepsilon}-a'_+(0)\Big|
=0
$$
which concludes the proof.
\end{proof}

Note that although $T_x(A\cap B)\subseteq T_xA\cap T_xB$, the converse need not hold, which complicates the proof above. 
\marginpar{See also the following counter-example. (\todo{compare~\cite[Thm.~2.4.1]{Carja07}}).}




The following result is a restatement of \cite[Thm.~4.1]{McCarthy03}.

\begin{result} \label{res:faces-of-orbitopes}
Let $\mb W$ be a Coxeter group acting on a real, $n$-dimensional vector space $V$, and let $\mf w$ be a (closed) Weyl chamber.
Let $F$ be a codimension-$k$ face of the orbitope $\conv(\mb Wx)$ for some $x\in\mf w$. 
Then there exists a set $\Omega$ of $k$ fundamental weights belonging to the same Weyl chamber $\mf w$ such that 
$F=\conv(\mb W_\Omega x)$.
\end{result}

\begin{corollary} \label{coro:weyl-polar}
Fix a Weyl chamber $\mf w$, and let $x\in\mf w$ be a regular point. Then $T_x(\conv(\mb Wx))=-\mf w^*$, i.e.~the negative dual cone of $\mf w$.
\end{corollary}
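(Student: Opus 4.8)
The plan is to view $P(x):=\conv(\mb W x)$ as a polytope with $x$ as a vertex -- valid because $x$ regular means $\mb W x$ consists of $|\mb W|$ distinct points, its vertex set being precisely $\mb W x$ -- and to read off its tangent cone at $x$ from the edges emanating from $x$. Write $\alpha_1,\dots,\alpha_n\in\mf a$ for the simple roots, i.e.\ the inward normals of the $n$ walls of $\mf w$, so that $\mf w=\{y\in\mf a:\langle\alpha_i,y\rangle\ge0,\ i=1,\dots,n\}$ is exactly the dual cone of $\mathrm{cone}(\alpha_1,\dots,\alpha_n)$, and let $s_1,\dots,s_n$ be the corresponding simple reflections, which generate $\mb W$ (here $n=\dim\mf a$, using that the restricted root system spans $\mf a$).

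First I would determine the edges of $P(x)$ through $x$. Applying Result~\ref{res:faces-of-orbitopes} with $k=n-1$ (the case of edges), together with the $\mb W$-invariance of the orbitope, every edge is a $\mb W$-translate of $\conv(\mb W_\Omega x)$ for some set $\Omega$ of $n-1$ fundamental weights; for $\Omega=\{\omega_j:j\neq i\}$ one has $\mb W_\Omega=\langle s_i\rangle$, giving the segment $[x,s_i x]$. Since $x$ is regular, $wx=x$ forces $w=e$, so the only such translates containing $x$ are $[x,s_1 x],\dots,[x,s_n x]$ themselves. Now, since $P(x)-x=\conv(\mb W x-x)$ we have $T_xP(x)=\mathrm{cone}(\mb W x-x)$, and this cone is generated by the edge directions at $x$ (a standard fact about polytopes; equivalently, $x-wx$ is a nonnegative combination of the simple roots for $x\in\mf w$). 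Hence
\[
T_xP(x)=\mathrm{cone}(s_1 x-x,\ \dots,\ s_n x-x).
\]

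Next I would compute $s_i x-x=-\tfrac{2\langle\alpha_i,x\rangle}{\langle\alpha_i,\alpha_i\rangle}\,\alpha_i=-c_i\alpha_i$ with $c_i>0$, the strict positivity coming from $x\in\mathrm{relint}(\mf w)$; therefore $T_xP(x)=-\,\mathrm{cone}(\alpha_1,\dots,\alpha_n)$. Finally, since $\mathrm{cone}(\alpha_1,\dots,\alpha_n)$ is a closed convex polyhedral cone and $\mf w$ is its dual cone, the bipolar theorem gives $\mf w^*=\mathrm{cone}(\alpha_1,\dots,\alpha_n)$, whence $T_xP(x)=-\mf w^*$, as claimed.

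The step that needs the most care is the first one: correctly matching the face parametrization of Result~\ref{res:faces-of-orbitopes} (fundamental weights, parabolic stabilizers $\mb W_\Omega$) with the concrete list of edges at the vertex $x$, and invoking regularity to be certain this list is complete and consists of exactly the $n$ segments $[x,s_i x]$. Once the edge directions are pinned down, the remainder is elementary convex geometry.
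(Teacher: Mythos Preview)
Your argument is correct, but it takes the dual route to the paper's. The paper applies Result~\ref{res:faces-of-orbitopes} with $k=1$ (facets) rather than $k=n-1$ (edges): the facets of $P(x)$ through $x$ have outward normals given by the fundamental weights $\omega_1,\dots,\omega_n$, so $T_xP(x)=\{v:\langle\omega_i,v\rangle\le 0\ \forall i\}$; since $\mf w=\mathrm{cone}(\omega_1,\dots,\omega_n)$, this is $-\mf w^*$ directly. You instead describe $T_xP(x)$ by its extreme rays (the negative simple roots) and then invoke the bipolar theorem via $\mf w=\mathrm{cone}(\alpha_1,\dots,\alpha_n)^*$.

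Both approaches are legitimate applications of the same result, just at opposite ends of the face lattice. The paper's version is marginally shorter because the facet normals already live in $\mf w$ and no bipolar step is needed. Your version has the advantage that the edge directions $s_ix-x=-c_i\alpha_i$ are computed explicitly, which makes the geometry very concrete; your parenthetical remark that $x-wx$ is a nonnegative combination of simple roots for $x\in\mf w$ is in fact an independent proof of the edge identification that bypasses Result~\ref{res:faces-of-orbitopes} entirely, so the caution you express about matching conventions is not really a risk.
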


\begin{proof} \marginpar{details}
Applying Result~\ref{res:faces-of-orbitopes} with $k=1$ we see that the fundamental weights $\omega_i$ of $\mf w$ are exactly the outward normals of the facets of $T_x(\conv(\mb Wx))$. Hence $v\in T_x(\conv(\mb Wx))$ if and only if $\braket{\omega,v}\leq0$, and since the fundamental weights generate the Weyl chamber $\mf w$, it holds that $T_x(\conv(\mb Wx))=-\mf w^*$.
\end{proof}

\bibliographystyle{spmpsci}
\bibliography{../control28vFeb21}

\end{document}

\section{Lie Theory and Convexity} \label{sec:lie-theory} \todo{Very concise introduction to the results/terminology used in the main text}

The main reference on symmetric Lie algebras is~\cite{Helgason78}. We provide a concise introduction to the pertinent results in~\cite[App.~A]{diag}. 

\begin{result}[Kostant's convexity theorem]
\label{res:kostant}
... Need for reductive, orthogonal, symmetric Lie algebras.

Let $p\in\mf p$. Then
\begin{align}
\Pi_{\mf a}(\Ad_{\mb K}(p)) = \conv(\mb W\cdot\pi_{\mf w}(p))
\end{align}
\end{result}

(proven in the non-compact case in Helgason2)

We will be interested in polytopes whose vertices are Weyl group orbits, as provided by Kostant's convexity theorem. In particular we want to understand the face structure of such polytopes. This can be done slightly more generally in the context of finite reflection groups, so-called Coxeter groups. We summarize some basic results, for more details see~\cite{McCarthy03}.

TODO: Introduce concepts necessary for understanding this result.

\begin{result}[Thm. 4.1 in \cite{McCarthy03}] \label{res:faces-of-orbitopes}
Let $G$ be a Coxeter group naturally acting on a real, $n$-dimensional vector space $V$. Let $F$ be a codimension $k$ face of the orbitope $\conv(Gx)$ for some $x\in V$. Then there exists a set $\Omega$ of $k$ fundamental weights belonging to the same Weyl chamber $\mf w$ such that 
$$
F=\conv(G_\Omega\pi_{\mf w}(x))\,.
$$
\end{result}

\begin{corollary} \label{coro:weyl-polar} (needed)
Fix Weyl chamber $\mf w$, and let $x\in\mf w$ be regular. Then $T_x(\conv(Gx))=\mf w^\circ$, the polar of $\mf w$.
\end{corollary}

\section{Dynamical and Control Systems} \label{app:control-systems}

\subsection{Differential equations}

First we consider the existence and uniqueness of solutions to certain differential equations. Furthermore we are interested in the smoothness of such solutions. These results can be found in~\cite{Sontag98}.

Let $X\subseteq\R^n$ be open, let $I\subseteq\R$ be an interval\footnote{The interval may be unbounded, and it does not matter if the endpoints are included.} and let $f:X\times I\to\R^n$. An initial value problem has the form
\begin{align}\label{eq:ivp}
\xi'(t)=f(\xi(t),t), \quad \xi(t_0)=x_0\in X. \tag{\sf IVP}
\end{align}
A solution to~\eqref{eq:ivp} on an interval $J\subseteq I$ containing $t_0$ is a locally absolutely continuous function $\xi:J\to X$ satisfying the equation almost everywhere on $J$. We have the following existence and uniqueness result, see~\cite[Thm.~54]{Sontag98}.

\begin{result} \label{res:existence-uniqueness-ode}
Assume that
\begin{enumerate}
\item $f$ is locally Lipschitz in $x$, meaning that  for each $x_0\in X$ it holds that
$$
\|f(x,t)-f(y,t)\|\leq\alpha_{x_0}(t)\|x-y\| \quad\text{ for all } x,y\in B(x_0,r),\, t\in I,
$$
for some $r>0$ with $B(x_0)\subseteq X$ and $\alpha_{x_0}:I\to\R$ locally integrable. 
\item $f$ is locally integrable in $t$ for each fixed $x\in X$.
\end{enumerate}
Then, for each $(x_0,t_0)\in X\times I$ there is some non-empty, relatively open subinterval $J\subseteq I$ and some solution $\xi:J\to X$ to~\eqref{eq:ivp} on $J$ such that any other solution is defined on a subinterval of $J$ and coincides with $\xi$ on this subinterval. We call $\xi$ a \emph{maximal solution}.
\end{result}

\begin{result}[Prop.~C.3.11 and C.3.12 in  \cite{Sontag98}] 
\label{res:smoothness-ode}
Assume that $f:X\times I\to\R^n$ is $C^k$ where $k=1,\ldots,\infty,\omega$. Then Result~\ref{res:existence-uniqueness-ode} applies and the maximal solutions are $C^k$. 
\end{result}

\subsection{Control systems}

Let $X\subseteq\R^n$ be an open subset and $U$ a metric space. A time-independent, continuous-time control system is characterized by a function $f:X\times U\to\R^n$ (called a \emph{right-hand side}), satisfying the following conditions:
\begin{enumerate}[(i)]
\item $f(\cdot,u)$ is $C^1$ for each fixed $u\in U$;
\item $f$ and $\partial_xf$ are continuous on $X\times U$.
\end{enumerate}
See~\cite[Sec. 2.6]{Sontag98} for details, for instance on the definition of time-dependent systems.

Under these assumptions, if $I\subseteq\R$ is a bounded interval, and $u\in L^\infty(I;U)$, then there exists a non-empty, relatively open subinterval $J\subseteq I$ containing $x_0\in X$, and there is a unique, maximal solution on $J$ to the equation
\begin{align} \label{eq:control-system} 
\xi'(t)=f(x(t),u(t)), \quad x(\inf I)=x_0\in X. \tag{\sf CS}
\end{align}
This follows from Result~\ref{res:existence-uniqueness-ode}, see~\cite[Lem. 2.6.2]{Sontag98}.

\begin{result}[Thm. 1 in \cite{Sontag98}] \label{res:control-system-approximation}
Let $X$, $U$, and $f$ be as above. Let $I\subset\R$ be a bounded interval. Let $x\in X$ and $u$ admissible\footnote{Meaning that the maximal solution exists on $I$.} for $x$. If $u_i$ is an equibounded (i.e.~their values lie a.e. in a common compact set $K$) sequence of controls converging a.e. to $u$ and $x_j\in X$ converge to $x$, then the corresponding solutions converge uniformly.
\end{result}

\subsection{Set-valued maps}

Let $X,Y$ be topological spaces and $F:X\to\mc P(Y)$ be a set-valued map. We say that $F$ is \emph{upper semi-continuous} at $x_0\in X$ if for every open set $E$ containing $F(x_0)$ there exists some open set $D$ containing $x_0$ such that $F(D)\subseteq E$. 
The map $F$ is called \emph{lower semi-continuous} at $x_0$ if for every open set $E$ intersecting $F(x_0)$ there exists some open set $D$ containing $x_0$ such that $F(x)\cap E\neq\emptyset$ for all $x\in D$.
A set-valued map is \emph{continuous} at $x_0$ if it is both upper and lower semicontinuous.

\begin{lemma}[Image and kernel] \label{lemma:img-ker-semi-cont}
Let $X$ be a topological space and $V$ a finite-dimensional real vector space. Let $A:X\to\cL(V)$ be continuous, then $x\mapsto \img A(x)$ is lower semi-continuous, and $x\mapsto K\cap\ker A(x)$ is upper semi-continuous for $K$ compact.
\end{lemma}

\begin{proof}
First let $F(x)=\img A(x)$. Let $x_0\in X$, let $y_0\in F(x_0)$ and let $E$ be an open set containing $y_0$. If $a_i(x)$ denoted the columns of $A(x)$ (in some basis), then $y_0=\sum_{i=1}^n c_i a_i(x_0)$ for some $c_i\in\R$. Define the continuous function $y:X\to V$ as $y(x)=\sum_{i=1}^n c_i a_i(x)$. Then $D:=y^{-1}(E)\subseteq X$ is an open set containing $x_0$. Since $y(x)\in F(x)$ and $y(x)\in E$ for all $x\in D$, $F(x)\cap E\neq\emptyset$, showing that $F$ is lower semi-continuous.

Now consider $G(x)=K\cap\ker A(x)$. Let $x_0\in X$ and let $E$ be an open set containing $G(x_0)$. We argue by contradiction. Assume that there is a sequence $x_n\to x_0$ such that there is a corresponding sequence $y_n$ such that $y_n\in G(x_n)$ and $y_n\notin E$. By compactness of $K\setminus E$ this implies that, on a subsequence, $y_{k_n}\to y\in K\setminus E$. By continuity, $A(x_0)y=\lim_{n\to\infty}A(x_{k_n})y_{k_n}=0$, and so $y\in G(x_0)$. Contradiction.
\end{proof}


\begin{lemma} \label{lemma:closed-cone}
Let $V$ be a finite-dimensional real normed vector space. A cone $C\subseteq V$ is closed if and only if $B_V(1)\cap C$ is compact.
\end{lemma}

\begin{proof}
If $C$ is closed, then $B_V(1)\cap C$ is closed and bounded and hence compact. Conversely, assume that $B_V(1)\cap C$ is compact. Towards a contradiction, let $x_n$ be a sequence in $C$ converging to some $x\notin C$. Then the sequence is bounded, and scaling it by an appropriate constant, we may assume that it lies in $B_V(1)$. But then $x\in C$, yielding the desired contradiction.
\end{proof}

\begin{result}[Thm.~2.1 and Thm.~3.1 in \cite{Hiriart85}]
\label{res:semi-cont-cpt-connected}
Let $X$ and $Y$ be topological spaces and let $F:X\to\mc P(Y)$ be a set-valued function. Then
\begin{enumerate}[(i)]
\item if $F$ is upper semi-continuous with compact values on a compact set $K\subseteq X$, then the image $F(K)\subseteq Y$ and the graph $\Gamma_F(K)\subseteq X\times Y$ are compact;
\item if $F$ is upper or lower semi-continuous, $C\subseteq X$ is connected, and $F$ takes connected, non-empty values on $C$, then the image $F(C)\subseteq Y$ is connected.
\end{enumerate}
\end{result}

\begin{proposition} \label{prop:stab-closed-connected}
Let $U$ be a compact topological space and $V$ a finite-dimensional real vector space. Let $A:U\to\cL(V)$ be continuous, then 
$$
C=\bigcup_{u\in U} \ker A(u)
$$
is closed and path-connected.
\end{proposition}

\begin{proof}
Clearly $C$ is a cone, and hence it is path-connected.
By Lemma~\ref{lemma:img-ker-semi-cont} the map $u\to B_V(1)\cap\ker A(u)$ is upper semi-continuous. Since $U$ is compact, Result~\ref{res:semi-cont-cpt-connected} shows that $B_V(1)\cap C$ is compact, and by Lemma~\ref{lemma:closed-cone} $C$ is closed, as desired.
\end{proof}

Let $X,Y$ be normed spaces. A set-valued map is \emph{Lipschitz (continuous) with constant $L$} if for all $x_1,x_2\in X$ it holds that
$$
F(x_1)\subseteq F(x_2)+L|x_1-x_2|B_Y(1).
$$

\begin{lemma}
Let $f:X\times U\to Y$ be ..., then $F(x)=f(x,U)$ is ...
\end{lemma}

\begin{lemma}
Let $F:X\to\mc P(Y)$ be a set-valued function. If $F$ is ... then $x\mapsto\conv(F(x))$ is ...
\end{lemma}

\begin{lemma}[Prop.~2.1 in \cite{Smirnov02}] \label{lemma:usc-closed-graph}
Let $F:X\to\cP(Y)$ be upper semi-continuous with closed values. Then $\Gamma_F$ is closed. Equivalently, if $x_n\to x$ in $X$ and $y_n\to y$ in $Y$ with $y_n\in F(x_n)$, then $y\in F(x)$. 
\end{lemma}

\subsection{Differential inclusions}

In this section we give some background on differential inclusions. Our main reference is~\cite{Smirnov02}. Let $X\subseteq\R^n$ be open and let $F:X\to\mc P(\R^n)$ be a set-valued map\footnote{Here $\mc P$ denotes the powerset.}. A differential inclusion has the form
\begin{align} \label{eq:diff-incl}
\xi'(t)\in F(\xi(t)), \quad \xi(t_0)=x_0\in X. \tag{\sf DI}
\end{align}
A solution to~\eqref{eq:diff-incl} on an interval $J\subseteq I$ containing $t_0$ is a \todo{(locally?)} absolutely continuous function $\xi:J\to X$ satisfying the equation almost everywhere on $J$.

We assume that $F$ Lipschitz, and has closed and convex values. Then (TODO: existence of sols through some $x_0$).

\begin{result}[Equivalence of control systems and differential inclusion] \label{res:equivalence-control-inclusion}
Let $X\subseteq\R^n$ be open, $U\subset\R^m$ a compact subset, and let $I\subseteq\R$ be an interval. Let $f:X\times U\to\R^n$ be continuous. Then the control system~\eqref{eq:control-system} is equivalent to the differential inclusion~\eqref{eq:diff-incl} with $F(x)=f(x,U)$ in the sense that they have the same solutions $\xi:I\to X$.
\end{result}

\begin{proof}
First assume that $\xi:I\to X$ is a solution to~\eqref{eq:control-system} for some control function $u:I\to U$. Then $\xi'(t)=f(\xi(t),u(t))\in F(\xi(t))$, and hence $\xi$ is a solution to~\eqref{eq:diff-incl}.

Conversely, let $\xi:I\to X$ be some solution to~\eqref{eq:diff-incl}. Define the function $g:I\times\R^m\to\R^n$ as $g(t,u)=f(\xi(t),u)$ and $v:I\to\R^n$ as $v(t)=\xi'(t)$. Then it holds that $v(t)=\xi'(t)\in f(\xi(t),U)\in g(t,U)$ for almost every $t$. Using Filippov's theorem, cf.~\cite[Thm.~2.3]{Smirnov02}\footnote{The result there is formulated such that it works for $I=\R$, however the proof works equally well for any interval $I$.}, to $(g,v)$ we obtain a measurable function $u:I\to U$ satisfying $v(t)=g(t,u(t))$ and so $\xi'(t)=f(x(t),u(t))$. Hence $\xi$ is a solution to~\eqref{eq:control-system}. 
\end{proof}

TODO:

\begin{result}[Relaxation theorem] \label{res:relaxation}
\end{result}

The \emph{viability problem} is the problem of finding solutions satisfying certain constraints, for instance solutions that remain in a certain set. Intuitively it is clear that we can find a solution remaining in a set $C$ if at each point in $C$ there exists an element in its tangent cone which is a valid derivative for the differential inclusion. Here we make this idea precise.

\begin{definition}[Tangent cone]
Let $X$ be a normed space and let $C\subseteq X$ be a convex subset. If $x\in C$, then \emph{tangent cone to $C$ at $x$} is defined by
$$
T_xC = \overline{\bigcup_{\derv>0}\frac{C-x}{\derv}}
=\{v\in X:\lim_{\derv\to0^+} \,d(x+\derv v,A)/\derv=0 \}.
$$
\end{definition}

\begin{definition}[Bouligand contingent cone]
Let $X$ be a normed space and let $A\subseteq X$ be any subset. If $x\in A$, then \emph{contingent cone to $A$ at $x$} is defined by
$$
T_x^-A = \{v\in X:\liminf_{\derv\to0^+} \,d(x+\derv v,A)/\derv=0 \}
$$
\end{definition}

The contingent cone is indeed a closed cone and for convex sets it coincides with the tangent cone, cf. \cite[p.~38]{Smirnov02}.

\begin{result}[Thm. 5.2 in \cite{Smirnov02}]
\label{res:viability}
Let $C\subseteq\R^n$ be a closed set. Let $F:C\to\mc P(\R^n)$ be a bounded, upper-semicontinuous set-valued map with convex values. Then the following are equivalent.
\begin{enumerate}[(i)]
\item For any $x_0\in C$ there exists a solution $x\in S_{[0,\infty)}(F,x_0)$ taking values in $C$;
\item For any $x\in C$ it holds that $F(x)\cap T_x^-C\neq\emptyset$.
\end{enumerate}
\end{result}

\begin{definition}[Contingent derivative, see p.41 in \cite{Smirnov02}]
Let $X,Y$ be normed spaces, $F:X\to\mc P(Y)$, and $(x_0,y_0)\in\Gamma_F$. The set-valued map $D^-F(x_0,y_0):X\to\mc P(Y)$ defined by
\begin{align*}
\Gamma_{D^-F(x_0,y_0)}=T^-_{(x_0,y_0)}\Gamma_F
\end{align*}
is called the \emph{contingent derivative of $F$ at $(x_0,y_0)$}.
\end{definition}

\begin{example}
Let $f(x)=x\sin(1/x)$ (with $f(0)=0$). Then $f$ is continuous, but not differentiable at $0$. The contingent derivative is $D_-F(0,0)(x)=[|x|,-|x|]$. In particular $D_-F(0,0)(1)=[1,-1]$.
\end{example}

\begin{result}[Thm.~6.5.5 in \cite{Carja07}] \label{res:viability-time-dep}
Assume that the set-valued map $C:\R\to\R^n$ has a closed graph and the set-valued map $F:\Gamma_C\to\R^n$ is upper semi-continuous and has closed, convex valued. Then the following are equivalent.
\begin{enumerate}[(i)]
\item For any point $(t_0,x_0)\in\Gamma_C$ there is a solution $x:[t_0,\infty]$ to the differential inclusion $x(t)\in F(t,x(t))$ with $x(t_0)=x_0$.
\item For any $(t,x)\in\Gamma_C$ it holds that $F(t,x)\cap D^-C(t,x)(1)\neq\emptyset$.
\end{enumerate}
\end{result}

\begin{lemma} \label{lemma:hausdorff-triangle}
Let $X$ be a metric space and consider $\cP(X)$ with the Hausdorff distance. Then it holds that 
$$
d(x,A)\leq d(x,B)+d(B,A).
$$
\end{lemma}

\begin{proposition} \label{prop:viability-ABC}
Let $I\subseteq\R$ be an open interval and let $\tilde A,\tilde B\subseteq\R^n$ be subsets.
Assume that $\tilde B$ is a closed, convex, polyhedral cone.
Let $a:I\to\tilde B$ be right differentiable and define the set-valued functions $A(t)=\tilde A+a(t)$, and $C(t)=A(t)\cap \tilde B$.
Assume that for all $b\in\tilde B$ and $t\in I$ it holds that $d(b,A(t))=d(b,C(t))$.
Let $t\in I$ and $c\in C(t)$ and assume that there is $v$ such that $v\in T_cB$ and $v-a_+'(t)\in T_{c-a(t)}\tilde A$. Then $v\in \mathrm DC(t,c)(1)$.
\end{proposition}

\begin{proof}
By definition, $v\in\D C(0,c)(1)$ if and only if $(1,v)\in\Gamma_{\D C(0,c)}=T_{(0,c)}\Gamma_C$. So we have to show that
$$
\liminf_{\varepsilon\to 0^+}\frac{1}{\varepsilon} d((\varepsilon,c+\varepsilon v),\Gamma_C)=0.
$$
In fact it is easy to see that
$$
d((\varepsilon,c+\varepsilon v),\Gamma_C)\leq d(c+\varepsilon v,C(\varepsilon)).
$$
For $\varepsilon$ small enough, $c+\varepsilon v\in B$ since $B$ is a convex polyhedron and $v\in T_cB$. So, for any $x\in B$
$$
d(c+\varepsilon v,C(\varepsilon))=d(c+\varepsilon v,A(\varepsilon)).
$$
Moreover using Lemma~\ref{lemma:hausdorff-triangle} we find
\begin{align*}
d(c+\varepsilon v,A(\varepsilon))
&= 
d(c+\varepsilon v,A(0) + a(\varepsilon) - a(0))
\\&\leq
d(c+\varepsilon v,A(0) + \varepsilon a_+'(0)) + d(A(0) + a(\varepsilon) - a(0),A(0) + \varepsilon a_+'(0))
\\&\leq
d(c+\varepsilon v,A(0) + \varepsilon a_+'(0)) + a(\varepsilon)-a(0) - \varepsilon a'_+(0).
\end{align*}
Combining the results above and the assumption that $v-a_+'(0) \in T_c A(0)$ we see that
$$
\liminf_{\varepsilon\to 0^+} \frac{1}{\varepsilon} d((\varepsilon,c+\varepsilon v),\Gamma_C)
\leq 
\liminf_{\varepsilon\to 0^+} \frac{1}{\varepsilon}d(c+\varepsilon v,A(0) + \varepsilon a_+'(0)) + \frac{a(\varepsilon) - a(0)}{\varepsilon}-a'_+(0)
=0
$$
This concludes the proof.
\end{proof}

Note that although $T_x(A\cap B)\subseteq T_xA\cap T_xB$, the converse need not hold, which complicates the proof above. See also the following counter-example. 

\begin{example}
Consider $A=[0,1]$ and $B=[-1,0]$ and $a(t)=t^2$. Then for $c=0$ and $v=0$ it holds that $v\in T_cB$ and $v-a'(c)\in T_{c-a(t)}A$. However, $C(t):=(A+a(t))\cap B=\emptyset$ for $t>0$ and hence $DC(t,c)(1)=\emptyset$. In particular it does not contain $v$.
\end{example}

\subsection{Geometric control theory}

We consider a general (non-linear) control system given by~\eqref{eq:control-system}. 

(Local theory, global theory on manifolds)

\begin{definition}[Accessibility]
A control system is called \emph{accessible} at $x$ for some $x\in X$ if $\reach(x)$ has non-empty interior.
\end{definition}

A sufficient condition for accessibility at $x$ is the \emph{accessibility rank condition}, see~\cite[Thm.~9]{Sontag98}.

Accessibility, control-affine

Fast control on orbits and non-holonomic systems

On (non-compact) Lie groups (relation to Lie wedges??)

\section{Lie semigroups}

In this appendix we give some background on the theory of Lie semigroups. The main references are \cite{Lawson99}, \cite{HN93} and the seminal book~\cite{HHL89}.

First recall the following fundamental results of Lie group theory: TODO: Refs

\begin{result}[Lie group Lie algebra correspondence] \label{res:lie-correspondence}
The following are true.
\begin{enumerate}[(i)]
\item (Cartan) The category of simply connected real Lie groups is equivalent to the category of finite-dimensional real (abstract) Lie algebras.
\item If G is a Lie group and $\mathfrak h$ is a Lie subalgebra of $L(G)$, then there is a unique connected Lie subgroup $H$  (which is not necessarily closed) of $G$ with Lie algebra $\mathfrak h$. 
\item (Yamabe) A subgroup $H$ of a finite dimensional Lie group $G$ is analytic if and only if it is path-connected.
\item TODO: (Ado)
\end{enumerate}
\end{result}

The situation is much more intricate for the semigroup case. 

Let $\mathfrak g$ be a finite dimensional real Lie algebra.
A \emph{wedge} in $\mf g$ is a closed convex cone. The \emph{edge} of the wedge $\mf w$ is the vector space $E(\mf w) := \mf w \cap -\mf w$. We say that $\mf w$ is \emph{pointed} if $E(\mf w) = \{0\}$. A wedge $\mathfrak w \subset \mathfrak g$ is a \emph{Lie wedge} if is is invariant under the adjoint action of its edge, that is,
$$
e^{\ad_{E(\mathfrak w)}} \mathfrak w = \mathfrak w.
$$
This implies that $E(\mathfrak w)$ is itself a Lie algebra (cf.~\cite[Coro.~II.1.8]{HHL89}), and similarly if $\mf w$ is a subspace, it is also a Lie algebra. Thus Lie wedges can be seen as a generalization of Lie algebras. 

In the following we always consider subsemigroups $S$ of an ambient Lie group $G$. 
The \emph{tangent wedge} of $S$ is $L(S):=T_e(\exp^{-1}_{G(S)}(S\cap B))\subseteq\mf g$, where we use the contingent cone (called subtangent cone in~\cite{HHL89}) (give idea, dense winding, ~\cite[p.~xxix]{HHL89}). By Result~\ref{res:lie-correspondence}, a Lie subalgebra generates a unique connected Lie subgroup.
Lie's fundamental theorem for semigroups states that a subset $\mf w\subseteq\mf g$ is a Lie wedge if and only if it is the tangent set of some local subsemigroup, see~\cite[Coro.~IV.8.7]{HHL89}.

For closed subsemigroups $S\subseteq G$ (this means relatively closed in $G$) it holds that
$$
L(S) 
:= 
\{X \in L(G) : e^{tX} \subset S \,\forall t\geq0\}
=
\{\gamma'_+(0) : \gamma : [0,1] \to S \text{ right-differentiable}, \gamma(0) = e\}.
$$

Then \cite[Prop.~1.14]{HN93}, if $S$ is a closed submonoid, $L(S)$ is a Lie wedge and $E(L(S)) = L(E(S))$. (Here $E(S) = S\cap S^{-1}$ is the largest subgroup of $S$, called the \emph{group of units of $S$}.)

\begin{definition}[Lie semigroup]
A \emph{Lie semigroup} is a pair $(S,G)$, where $G$ is a connected Lie group and $S\subseteq G$ is a closed subsemigroup satisfying
$$
S =  \overline{\langle\exp(L(S))\rangle}_{\mathrm{semi}}.
$$
\end{definition}

A Lie wedge $\mathfrak w \subseteq \mathfrak g$ is \emph{global in $G$}, if it is the Lie wedge of a Lie semigroup $(S,G)$, or equivalently if and only if
$$
\mathfrak w = L(\overline{\langle\exp(\mathfrak w)\rangle}_{\mathrm{semi}}).
$$
In particular, a Lie subalgebra $\mf h\subseteq\mf g$ is global in $G$ if and only if it is the Lie algebra of a closed subgroup. (\emph{weakly global}?)

\begin{result}[Globality criterion I, Prop.~1.37 in \cite{HN93}]
\label{res:globality-criterion}
Let $\mb G$ be a connected Lie group and let $\mf w\subseteq \mf v\subseteq L(\mb G)$ be Lie wedges and suppose that
\begin{enumerate}[(i)]
\item $\mf w\cap E(\mf v)\subseteq E(\mf w)$
\item $E(\mf w)$ is global in $\mb G$, and
\item $\mf v$ is global in $\mb G$.
\end{enumerate}
Then $\mf w$ is global in $\mb G$.
\end{result}

\begin{corollary}
\label{coro:pointed-wedge-global}
If $\mf w$ is a pointed Lie wedge contained in a global Lie wedge $\mf v$ such that $\mf w\cap E(\mf v)=\{0\}$, then $\mf w$ is global.
\end{corollary}

\begin{result}[Globality criterion II, Prop.~1.39 in \cite{HN93}]
Let $G$ be a connected Lie group, $K\subseteq G$ a compact subgroup, and $\mf w\subseteq\mf g$ a Lie wedge. Then
\begin{enumerate}
\item if $\mf w$ is global in $G$, then $L(K)\cap\mf w\subseteq E(\mf w)$; and
\item if $L(K)\cap\mf w\subseteq E(\mf w)$, and $\Ad_K(\mf w)=\mf w$, and $E(\mf w)$ is global in $G$, then $\mf w$ is global in $G$ if and only if $\mf w+\mf k$ is global in $G$.
\end{enumerate}
\end{result}

\begin{result}[Prop V.1.14 in \cite{HHL89}]
\label{res:closed-subsemi-global}
Let $S \subset G$ be a closed subsemigroup. Then its Lie wedge $L(S)$ is global. (But $S$ need not be a Lie semigroup).
\end{result}

\begin{result}[Prop. 6.2 in \cite{Lawson99}]
\label{res:lie-saturate}
Let $G$ be a connected Lie group with Lie algebra $\mathfrak g$, and let $S\subset G$ be a non-empty closed subset. Then the following are equivalent:
\begin{enumerate}[(i)]
\item $S=\overline{\langle \exp (\R^+\Omega) \rangle_{\mathrm{semi}}}$;
\item $S$ is a Lie semigroup in $G$.
\end{enumerate}
If $S$ satisfies one of these conditions, then $L(S)$ is the largest $\Omega$ satisfying the first condition. For any $\Omega$, $L(S)$ is the smallest global Lie wedge containing $\Omega$. We call $L(S)$ the \emph{Lie saturate} of $\Omega$.
\end{result}

\section{Quotient control system (OLD)}

We start with a semisimple, orthogonal, symmetric Lie algebra $\mf g = \mf k \oplus \mf p$.

\begin{definition}[Bilinear control system on $\mf p$]
Let $X\in\mf X(\mf p)$ be a complete vector field on $\mf p$. Then we define the control system on $\mf p$ by
\begin{align}
\tag{\sf B}
p'(t) = \ad_{k_0}(p(t)) + \sum_{i=1}^m u_i(t) \ad_{k_i}(p(t)) +  \gamma(t) X(p(t)),
\quad p(0) \in \mf p
\end{align}
where the $u_i:[0,T]\to\R$ are integrable, and $\gamma:[0,T]\to{0,1}$ has finitely many switching points.
for $k_0,k_1,\ldots,k_m \in\mf k$. We call $k_0$ the \emph{drift}, the $u_i$ the \emph{controls}, and $X$ the noise. Then
\begin{align}
\mf k_c &= \generate{k_i : i=1,\ldots,m}{Lie} \\
\mf k_s &= \generate{k_i : i=0,1,\ldots,m}{Lie}
\end{align}
are called the \emph{control Lie algebra} and the \emph{system Lie algebra} respectively. If $\mf k_c=\mf k$ we call the system \emph{fast orbit controllable} and if $\mf k_s=\mf k$ we call the system \emph{orbit controllable}.
\end{definition}

\subsection{Control systems on $\mf p$ and $\mf a$}

\begin{definition}
Let $\iota : \mf a \to \mf p$ be the inclusion and let $\Pi_{\mf a} : \mf p\to \mf a$ denote the orthogonal projection. By identifying $\mf p$ with its tangent spaces, we get for $x\in\mf a$ that $\Pi_{\mf a} : T_x\mf p \to T_x\mf a$. Now we define
\begin{align}
M : K \to \GL(\mf p) : k \mapsto M_k = \Pi_{\mf a}\circ\Ad_k^\star(X)\circ\iota.
\end{align}
Furthermore we define the set-valued function of \emph{achievable derivatives}
\begin{align}
\mf a \to \mathcal P (\mf a) : 
a\mapsto\derv(a) = 
\{M_ka : k\in K\},
\end{align}
where we identify $T\mf a$ with $\mf a$.
\end{definition}

\begin{lemma}
If $w = nZ_K(\mf a)\in W$ It holds that $\Pi_{\mf a}\circ\Ad_n = w \circ \Pi_{\mf a}$.
\end{lemma}

\begin{proof}
First note that for $x\in\mf p$ and $k\in K$ it holds that $\Pi_{\Ad_k(\mf a)}(\Ad_k(x)) = \Ad_k\circ\Pi_{\mf a}(x)$.
Since $n\in N_K(\mf a)$ we have that 
\begin{align}
w \cdot \Pi_{\mf a}(x)
= \Ad_n\circ\Pi_{\mf a}(x)
= \Pi_{\mf a}(\Ad_n(x)),
\end{align}
as desired.
\end{proof}

\begin{definition}[Quotient control system on $\mf a$]
Then we define the control system on $\mf a$ by
\begin{align}
\tag{\sf Q}
\label{eq:quotient-control-system}
a'(t) 
=
M_{k(t)} a(t)
=
\Pi_{\mf a} ((\Ad_{k(t)}^\star X)(a(t)))
= 
(\Pi_{\mf a}\circ\Ad_{k(t)}^{-1} \circ X \circ \Ad_{k(t)})(a(t))
, \quad k(t)\in K,
\end{align}
where $k : [0,T]\to K$ is measurable (with the Lebesgue measure on $[0,T]$ and the Borel measure on $K$). Equivalently (by Result~\ref{res:control-diff-incl}) one can consider the corresponding differential inclusion
\begin{align}
\label{eq:quotient-control-inclusion}
\tag{\sf I}
a'(t) \in \derv(a(t))
\end{align}
with $a$ absolutely continuous.
\end{definition}

\begin{lemma}
Let $n\in N_K(\mf a)$ and $w = nZ_K(\mf a) \in W$ and $k\in K$, then it holds that $M_{nk} = w^{-1} M_k w$ and hence for all $a\in\mf a$ we get $\derv(w\cdot a)=w\cdot \derv(a)$.
\end{lemma}

\begin{proof}
We compute
\begin{align}
M_{nk} 
&= \Pi_{\mf a} \Ad_{nk}^\star(X) \iota
\\&= \Pi_{\mf a} \Ad_n^\star(\Ad_k^\star(X)) \iota
\\&= \Pi_{\mf a} \Ad_n^{-1}(\Ad_k^\star(X))\Ad_n\iota
\\&= w^{-1} \Pi_{\mf a} (\Ad_k^\star(X))\iota w
\\&= w^{-1} M_k w.
\end{align}
In particular $M_k w = w M_{nk}$, which also shows the second claim.
\end{proof}

\begin{lemma}
Let $K_X = \{k\in K : \Ad_k^\star X=X\}$, then $M_{ks} = M_k$ for all $s\in K_X$ and $k \in K$.
\end{lemma}

\begin{proof}
Since $\Ad_{ks}^\star = (\Ad_k \circ \Ad_s)^\star = \Ad_k^\star \circ \Ad_s^\star$, we have that $M_{ks} = \Pi_{\mf a}\circ\Ad_{ks}^\star(X)\circ\iota = \Pi_{\mf a}\circ\Ad_{k}^\star(X)\circ\iota = M_k$.
\end{proof}

\begin{definition}
A set valued function $f:X\to Y$ between normed spaces is called $L$-Lipschitz if $L\geq 0$ and for all $x_1,x_2\in X$ it holds that
$$
f(x_1)\subseteq f(x_2) + L|x_1-x_2|B_Y,
$$
where $B_Y$ is the closed unit ball in $Y$.
\end{definition}

\begin{lemma}
The set-valued function $\derv : \mf a \to \mathcal P (T\mf a)$ is $L$-Lipschitz continuous with compact values. With $L=\max_{k\in K} \|M_k\|$, where $\|\cdot\|$ denotes the operator norm.
\end{lemma}

\begin{proof}
Let $x_1,x_2\in\mf a$ and $k\in K$, then $\|M_k x_1 - M_k x_2\| \leq \|M_k\| \|x_1-x_2\|$.
\end{proof}

\begin{definition}[Relaxed control system]
The relaxed control system on $\mf a$ is given by the differential inclusion
\begin{align}
\label{eq:relaxed-control-system}
\tag{\sf R}
a'(t) \in \conv(\derv(a(t))), \quad a : [0,T] \to \mf a\text{ absolutely continuous}.
\end{align}
\end{definition}

By Result~\ref{res:relaxed-control-system} every solution to the relaxed control system~\eqref{eq:relaxed-control-system} can be uniformly approximated on any compact time interval using solutions to the quotient control system~\eqref{eq:quotient-control-system}.

\subsection{Projecting solutions}

\begin{theorem}
Let $\rho:[0,T]\to\mf p$ be a solution to the bilinear control system~\eqref{eq:control-affineear-control-system} and let $\derv:[0,T]\to\mf w\subseteq\mf a$ be given by $\derv = \pi\circ\rho$, where we identify the quotient $\mf a\setminus W$ with some Weyl chamber $\mf w$. Then $\derv$ is a solution to the quotient control system~\eqref{eq:quotient-control-system}.
\end{theorem}

\begin{proof}
Since by Lemma~\ref{lemma:pi-contraction} the quotient map $\pi$ is a contraction, and $\rho$ is absolutely continuous, $\derv$ is also absolutely continuous. Let $J\subseteq[0,T]$ be the set on which both $\rho$ and $\derv$ are differentiable. This set still has full measure. For $t\in J$ it holds that $\derv'(t) \sim \pi_{\rho(t)}\circ\Pi_{\rho(t)}(\rho'(t))$ by Lemma~\ref{prop:deriv-of-projected-path} and Lemma~\ref{lemma:deriv-well-def}. It suffices to show that $\derv'(t)\in\derv(\derv(t))$. Let $k\in K$ be such that $x=\Ad_k(\rho(t))\in\mf a$ and setting $v=\Ad_k(\rho'(t))$ such that $v^\parallel =\Pi_x(v)\in\mf a$. Then
\begin{align}
v^\parallel = \Pi_{\mf a} v^\parallel
&= \Pi_{\mf a}(v)
\\&= \Pi_{\mf a}(\Ad_k(\rho'(t)))
\\&= \Ad_k \Pi_{\Ad_k(\mf a)}(\rho'(t))
\\&= \Ad_k \Pi_{\Ad_k(\mf a)}(X(\rho(t)))
\\&= \Pi_{\mf a}(\Ad_k X(\Ad_k^{-1}(x)))
\\&= \Pi_{\mf a}(\Ad_{k^{-1}}^\star (X)(x))
\\&= M_{k^{-1}}(x)
\end{align}
Since we can choose $k$ such that $x=\derv(0)$ and $v^\parallel = \derv'(0)$, and hence $\derv$ satisfies the differential inclusion~\eqref{eq:quotient-control-inclusion}.
\end{proof}

\subsection{Lifting solutions}

\begin{lemma}
\label{lemma:derivative-Ad}
Let $G$ be a Lie group with Lie algebra $\mf g=T_eG$. Let $g:I\to G$ be differentiable at some $t\in I$. Then
\begin{align}
\frac{d}{dt}\Ad_{g(t)} = \ad_{(Dr_{g(t)}(e))^{-1}(g'(t))}\circ\Ad_{g(t)} =: \ad_{g'(t)g(t)^{-1}}\circ\Ad_{g(t)}\,,
\end{align}
where $r_g:G\to G$ is the right multiplication by $g\in G$. Analogously is holds that
\begin{align}
\frac{d}{dt}\Ad_{g(t)} = \Ad_{g(t)}\circ\ad_{g(t)^{-1}g'(t)}.
\end{align}
\end{lemma}

\begin{proposition}
Let $\derv:[0,T]\to\mf a$ be a solution to the quotient control system~\eqref{eq:quotient-control-system} taking only regular values and with $C^n$ control function $k:[0,T]\to K$ with $n\geq 1$. Then there exists a ($C^{n-1}$?) function $h:[0,T]\to\mf k$ such that $\rho(t)=\Ad_{k(t)}(\derv(t))$ satisfies $\rho'(t)=(\ad_{h(t)}+X)(\rho(t))$.
\end{proposition}

\begin{proof}
We set $\rho(t)=\Ad_{k(t)}(a(t))$ for all $t\in[0,T]$. Then
we get
\begin{align}
\rho'(t) 
&=
\ad_{k'(t)k^{-1}(t)}(\rho(t))+\Ad_{k(t)}(a'(t))
\\&=
\ad_{k'(t)k^{-1}(t)}(\rho(t))+\Pi_{\rho(t)}X(p(t)),
\end{align}
since 
\begin{align}
\Ad_{k(t)}(a'(t)) 
&=
\Ad_{k(t)}(\Pi_{\mf a} \Ad_{k(t)}^{-1} X \Ad_{k(t)}(a(t)))
\\&=
\Pi_{\Ad_{k(t)}(\mf a)} X (\rho(t))
\\&=
\Pi_{\rho(t)} X (\rho(t))\,,
\end{align}
where we used that $\rho(t)$ is regular. Note that $\ad_{\rho(t)}$ is bijective when seen as a map from $\mf k\cap(\ker\ad_x)^\perp$ to $\mf p_x^\perp$. Hence setting $h(t) = k'(t)k^{-1}(t) + \ad_{\rho(t)}^{-1}\Pi_{\rho(t)}^\perp(X(\rho(t)))\in\mf k$ yields 
\begin{align}
(\ad_{h(t)}+X)(\rho(t))
&=
\ad_{k'(t)k^{-1}(t)}(\rho(t)) + [\ad_{\rho(t)}^{-1}\Pi_{\rho(t)}^\perp(X(\rho(t))),\rho(t)] + X(\rho(t))
\\&=
\ad_{k'(t)k^{-1}(t)}(\rho(t)) - \Pi_{\rho(t)}^\perp(X(\rho(t))) + X(\rho(t))
\\&=
\ad_{k'(t)k^{-1}(t)}(\rho(t)) + \Pi_{\rho(t)}(X(\rho(t)))
\\&=
\rho'(t)\,,
\end{align}
as desired.
\end{proof}

\begin{lemma}
\label{lemma:remove-hamiltonian}
Let $G$ be a Lie group and $K$ a compact subgroup. If $\delta:[0,T]\to\mf g$ is differentiable and $\delta(0)=0$, then for every integrable $h:[0,T]\to\mf k$ it holds that 
\begin{align}
\|\delta(t)\| \leq \int_0^t\|\ad_{h(s)}(\delta(t))+\delta'(s)\|ds,
\end{align}
for all $t\in[0,T]$ and where we assume that the norm is invariant under $K$.
\end{lemma}

\begin{proof}
Let $\phi:[0,T]\to K$ satisfy $\phi'(t)=\phi(t) h(t)$.
We use Lemma~\ref{lemma:derivative-Ad} to compute
\begin{align}
\|\delta(t)\|
&=
\|\Ad_{\phi(t)}(\delta(t))\|
\\&=
\Big\|\int_0^t\frac{d}{ds}(\Ad_{\phi(s)}(\delta(s)))ds\Big\|
\\&=
\Big\|\int_0^t
\Ad_{\phi(s)}\circ\ad_{h(s)}(\delta(s))+\Ad_{\phi(s)}(\delta'(s))
ds\Big\|
\\&\leq
\int_0^t\Big\|
\ad_{h(s)}(\delta(s))+\delta'(s)
\Big\|ds,
\end{align}
as desired.
\end{proof}

\todo{Work out the proof. Fix the assumptions on $X$, e.g., Lipschitz or (locally Lipschitz and linearly bounded). Can we avoid assuming analyticity? Can we add time dependence?}

\begin{theorem} \label{thm:lift}
Let $\derv:[0,T]\to\mf a$ be a solution to the quotient control system~\eqref{eq:quotient-control-system}, then for every $\epsilon>0$ there exists a solution $\eta:[0,T]\to\mf p$ to the bilinear control system~\eqref{eq:control-affine} such that $\|\Ad_k(\derv)-\eta\|_{\infty}\leq\epsilon$, which implies in particular that $\|\pi\circ\derv-\pi\circ\eta\|_{\infty}\leq\epsilon$.
\end{theorem}

\begin{proof} Sketch
\begin{enumerate}
\item Let $k:[0,T]\to K$ denote the control function corresponding to $\derv$. By~\cite[Thm.~1]{Sontag98} we may assume without loss of generality that $k$ is (piecewise?) real analytic.
\item Hence $\derv$ is real analytic (this probably requires that $X$ is real analytic).
\item We may choose a regular initial point (why?), and since the non-regular points in $\mf p$ are formed by a finite union of hyperplanes, $\derv$ will be regular with finitely many exceptions $t_i$. We define the set $J = [0,T]\setminus\bigcup_i (t_i-\epsilon,t_i+\epsilon)$.
\item We define $\rho(t)=\Ad_{k(t)}(\derv(t))$ and $h=k'(t)k^{-1}(t) + 1_J \ad_{\rho(t)}^{-1}\Pi_{\rho(t)}^\perp(X(\rho(t)))$ and $\eta$ the solution of $\eta'(t)=(\ad_{h(t)}+X)(\eta(t))$, with $\eta(0)=\rho(0)$. Then, setting $\delta = \rho-\eta$ we obtain
\begin{align}
\delta'(t)=\ad_{h(t)}(\delta) -X(\eta) - \ad_h\rho + \rho'(t)
\end{align}
and using Lemma~\ref{lemma:remove-hamiltonian} we get
\begin{align}
\|\delta(t)\|
\leq 
\int_0^t \|-X(\eta) - \ad_h\rho + \rho'(t)\| ds.
\end{align}
Using that
\begin{align}
\rho'(t)-\ad_{h(t)}(\rho(t))
&=
\ad_{k'(t)k^{-1}(t)}\rho(t) + \Pi_{\Ad_{k(t)}(\mf a)} X(\rho(t))
\\&\quad-
\ad_{k'(t)k^{-1}(t)}\rho(t) +  1_{J}(\Pi_{\rho(t)}^\perp (X(\rho(t))))
\\&=
1_{J} X(\rho(t)) + 1_{J^c} \Pi_{\Ad_{k(t)}(\mf a)} X(\rho(t)),
\end{align}
we obtain
\begin{align}
\|\delta(t)\|
&\leq \int_0^t
\|1_{J} X(\rho(s)-\eta(s)) 
+
1_{J^c} (\Pi_{\Ad_{k(s)}(\mf a)} X(\rho(s)) - X(\eta(s)))\|ds
\\&\leq
\|X\| \int_0^t\|\delta(s)\|ds + \int_0^t 1_{J^c} \|  (\Pi_{\Ad_{k(s)}(\mf a)} X(\rho(s)) - X(\eta(s)))\|ds
\\&\leq
\|X\|(2\mu(J^c) + \int_0^t\|\delta(s)\|ds)
\end{align}
($X$ is not bounded on all of $\mf p$, so we will have to restrict to some compact subset, but this should be possible.)
\item Now we can apply Gronwall's inequality, which states that if $\alpha\geq0$ is non-decreasing and $\beta,u$ are continuous on $[0,T]$ and if
\begin{align}
u(t)\leq\alpha(t)+\int_0^t\beta(s)u(s)ds
\end{align}
for all $t\in[0,T]$, then it holds that
\begin{align}
u(t)\leq\alpha(t)\exp\left(\int_0^t\beta(s)ds\right)
\end{align}
for all $t\in[0,T]$. Hence we can use $\alpha(t)=2\|X\|\mu(J^c)$, and $\beta(t)=\|X\|$, and $u(t)=\|\delta(t)\|$. Then Gronwall's inequality implies that
\begin{align}
\|\delta(t)\| \leq \|\delta(T)\| \leq \alpha(T)e^{T\|X\|}.
\end{align}
Since $\alpha(T)\to0$ as $\epsilon\to0$, this shows that $\eta$ converges uniformly to $\rho$ on $[0,T]$.
\item \todo{cite reference that this gives approximate solution to bilinear system (e.g., \cite{Liu97})}
\end{enumerate}
\end{proof}

\todo{Do we need this? Attempted alternate way not using analyticity.}

\begin{lemma}
Let $a:[0,T]\to\mf a$ be a solution of the reduced control system~\eqref{eq:quotient-control-system} on $\mf a$ and assume that $a(0)$ is regular and lies in the Weyl chamber $\mf w$. Then for every $\epsilon>0$ there exists another solution $b:[0,T]\to\mf w$ which is regular at all times and satisfies $\|a-b\|_\infty\leq\epsilon$.
\end{lemma}

\begin{proof}
Let $a:[0,T]\to\mf a$ be a solution of the reduced control system~\eqref{eq:quotient-control-system} on $\mf a$ with control function $k:[0,T]\to K$. Now we define a modified solution $b$ as follows. Let $t_1$ be the first time such that $d(a(t),\partial\mf w)\leq\delta$. At time $t_1$ we switch to a modified control function $\tilde k(t)=\sum_{w\in \bigcup_{x\in B_\delta(b(t))} W_x }n_w k$ where $n_w\in K$ is any representative of $w$. Whenever we hit $d(a(t),\partial\mf w)\geq2\delta$ we switch back to $k$. 

TODO: this is well def
TODO: this remains regular for all $t$
TODO: uniform approximation

First we compute
\begin{align}
\|a(t)-b(t)\|
&=
\left\|\int_0^t a'(s)-b'(s) ds\right\|
\\&=
\left\|\int_0^t (M_{k(s)}-M_{\tilde k(s)})b(s) + M_{k(s)}(a(s)-b(s)) ds \right\|
\\&\leq
\int_0^t \|(M_{k(s)}-M_{\tilde k(s)})b(s)\| + \|M_{k(s)}(a(s)-b(s))\| ds
\\&\leq
\int_0^t \|(M_{k(s)}-M_{\tilde k(s)})b(s)\| ds +  \int_0^t m^\star\|(a(s)-b(s))\| ds
\end{align}
where $m^\star=\max_{k\in K}\|M_k\|<\infty$. 
Now we can use Gronwall's inequality with $u(t)=\|a(t)-b(t)\|$, and $\alpha(t)=\int_0^t \|(M_{k(s)}-M_{\tilde k(s)})b(s)\| ds$, and $\beta(t)=m^\star$. We just need to show that $\alpha(T)\to0$ as $\delta\to0$.

Sketch:

Let $I\subseteq [0,T]$ be the set of times for which $d(a(t),\partial\mf w)<\delta$. Then $I$ is open and we can write $I$ as an at most countable union of open intervals $I=\bigcup_k I_k$. Now define $J$ as the union of all $I_k$ which have length greater than some $\delta'$. Then $J$ is a union of finitely many open intervals and its complement is a union of finitely many closed intervals. Note that the Lebesgue measure $\mu(I\setminus J)\to0$ as $\delta'\to0$ (continuity of measure from above, using finiteness). Then it suffices to show uniform approximation on each of the intervals of $J$ and of its complement (by the approximation lemmata). For the intervals in $[0,T]\setminus J$ this is analogous to the previous theorem. For the intervals in $J$ we can use the modified $\tilde k$.
\end{proof}

\subsection{Some consequences}

\todo{In this section we should give some direct consequences for the differential inclusion and what this means for the original bilinear system.}

\todo{Equivalence of reachable sets,  direct consequences for Lipschitz differential inclusions}